\numberwithin{equation}{section}
\newcommand{\tree}[1]{%
\vcenter{\hbox{\includegraphics{t_#1}}}%
}
\newcommand{\wald}[1]{%
  \vcenter{\hbox{${#1}$}}  %
}
\newcommand{\Graph}[2][1.0]{%
\vcenter{\hbox{\includegraphics[scale=#1]{g_#2}}}%
}
\newcommand{\K}{
\mathbb{K}
}
\newcommand{\Q}{
\mathbb{Q}
}
\newcommand{\N}{
\mathbb{N}
}
\newcommand{\Z}{
\mathbb{Z}
}
\newcommand{\R}{
\mathbb{R}
}
\newcommand{\C}{
\mathbb{C}
}
\newcommand{\tp}{
\otimes
}
\newcommand{\coeff}[2][]{c_{#2}^{#1}}
\newcommand{\rp}{\mu} % Subtraction point for momentum scheme
\newlength{\wurelwidth}
\newcommand{\urel}[2][=]{\mathrel{\mathop{#1}\limits_{\!\scalebox{0.5}{#2}\!}}}
\newcommand{\wurel}[2][=]{\mathrel{\mathop{#1}_{\!\scalebox{0.5}{\makebox[\the\wurelwidth]{#2}}\!}}}
\newcommand{\toymor}{\rho}
\newcommand{\Plin}{P_{\mathrm{lin}}}
\newcommand{\toycocycle}{%
B_+^{\!\raisebox{1mm}{\includegraphics[scale=0.6]{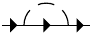}}\!}%
}
\newcommand{\Htoy}{%
H^{}_{\!\includegraphics[scale=0.6]{g_+-.pdf}}%
}
\newcommand{\toyform}{\eta}
\newcommand{\toycc}{L}
\newcommand{\loops}[1]{\abs{#1}}
\newcommand{\internals}[1]{I(#1)}
\newcommand{\externals}[1]{E(#1)}
\newcommand{\nodes}[1]{V(#1)}
\newcommand{\master}[2]{L(#1,#2)}
\newcommand{\conj}[1]{#1^{\ast}}
\newcommand{\reg}{z} % Parameter fuer dimreg und analytische Regularisierung
\newcommand{\toy}[1][\reg]{%
{_{#1}}\phi
}% Feynmanregeln fuer das Toymodel
\newcommand{\toyR}[1][]{%
{^{}_{\reg}}\phi^{}_{R\ifthenelse{\equal{#1}{}}{}{,#1}}
}% Renormierte Feynmanregeln fuer das Toymodel
\newcommand{\dd}[1][]{\mathrm{d}^{#1}} % Integrations-differential
\newcommand{\toyphy}[1][]{{}^{}_0\phi^{}_{#1}}
\newcommand{\toyZ}{Z} % Counterterms fuer das Toymodel
\newcommand{\toylog}{\gamma} %beta-function bzw. anomalous dimension
\newcommand{\treeabc}{\tree{+a+b-+c--}}
\newcommand{\cupdot}{\mathbin{\dot{\cup}}}
\newcommand{\trees}{\mathcal{T}}
\newcommand{\dsecoeff}{\sigma}
\newcommand{\orderedtrees}{\widetilde{\mathcal{T}}}
\newcommand{\orderedforests}{\widetilde{\mathcal{F}}}
\newcommand{\intrules}[1][]{{^{#1}}\varphi}
\newcommand{\forests}{\mathcal{F}}
\newcommand{\convolution}{\star}
\newcommand{\counit}{\varepsilon}
\newcommand{\unit}{u}
\newcommand{\autoconc}{\triangleright} % Produkt auf H_R' induziert von der Konkatenation in Aut(H_R) ueber die universelle Eigenschaft
\newcommand{\mul}{m}
\newcommand{\unimor}[1]{{^{#1}}\!\rho} % Morphismus induziert von der universellen Eigenschaft von H_R
\newcommand{\convalg}[2]{{\Hom \left( #1, #2 \right)}_{\convolution}}
\newcommand{\convend}[1]{{\End (H)}_{\convolution}}
\newcommand{\units}[1]{#1^{\times}} % Einheitengruppe eines Ringes
\newcommand{\cored}{\widetilde{\Delta}} % Koproduct ohne 1 tensor id und id tensor 1
\newcommand{\hide}[1]{}
\newcommand{\leaves}{\text{\leafNE}}
\newcommand{\convgroup}[2]{G^{#1}_{#2}}
\newcommand{\convliealg}[2]{\mathfrak{g}_{#2}^{#1}}
\newcommand{\convkommu}[2]{{\left[ #1, #2 \right]}_{\convolution}} % Kommutator induziert von Konvolution
\newcommand{\chars}[2]{\widetilde{G}_{#2}^{#1}} % Gruppe der Charaktere (Algebrenmorphismen) unter Konvolution
\newcommand{\Rms}{R_{\text{\tiny MS}}} % Minimal Subtraction Scheme
\newcommand{\auto}[1]{{^{#1}}\chi} % Automorphismen von H_R aus Funktionalen mittels universeller Eigenschaft
\newcommand{\cmodcp}{\psi} % Strukturabbildung von Komoduln
\newcommand{\alg}[1][A]{\mathcal{#1}}
\newcommand{\decor}{\mathcal{D}}
\newcommand{\bigo}[1]{\mathcal{O}\left(#1\right)}
\newcommand{\landautheta}[1]{\Theta\left(#1\right)}
\newcommand{\momsch}[1]{R_{#1}}
\newcommand{\restrict}[2]{%
{\left. #1 \right|}_{#2}%
}
\DeclareMathOperator{\Hom}{Hom}
\DeclareMathOperator{\lin}{lin}
\DeclareMathOperator{\Aut}{Aut}
\DeclareMathOperator{\End}{End}
\DeclareMathOperator{\dilog}{dilog}
\DeclareMathOperator{\sdd}{sdd}
\newcommand{\aboveeq}{\preceq}
\newcommand{\beloweq}{\succeq}
\newcommand{\indep}{I}
\newcommand{\Prim}{
\mathrm{Prim}
}
\newcommand{\Grp}{
\mathrm{Grp}
}
\newcommand{\ev}{
\mathrm{ev}
}
\DeclareMathOperator{\im}{im}
\renewcommand{\1}{
\mathbbm{1}
}
\newcommand{\isomorph}{
\cong
}
\newcommand{\defas}{
\mathrel{\mathop:}=
}
\newcommand{\safed}{
=\mathrel{\mathop:}
}
\newcommand{\gdw}{
\ensuremath{\Leftrightarrow}
}
\newcommand{\set}[1]{
\left\{ #1 \right\}
}
\newcommand{\setexp}[2]{
\left\{ #1\!:\ #2 \right\}
}
\newcommand{\abs}[1]{
\left\lvert #1 \right\rvert
}
\newcommand{\todo}{
\text{\huge TODO}
\addcontentsline{toc}{subsection}{TODO}
}
\newcommand{\id}{
\mathrm{id}
}
\newcommand{\qft}{%
quantum field theory%
}
\newcommand{\qfts}{%
quantum field theories%
}
\newtheorem{satz}{Theorem}[section]
\newtheorem{definition}[satz]{Definition}
\newtheorem{lemma}[satz]{Lemma}
\newtheorem{korollar}[satz]{Corollary}
\newtheorem{proposition}[satz]{Proposition}
\title{Hopfalgebraic renormalization of Kreimer's toy model}
\author{Erik Panzer}
\begin{document}

\begin{titlepage}
	\begin{center}

		\vspace*{2cm}
		{\Huge
			Hopf-algebraic renormalization\\[0.8cm] of Kreimer's toy model
		}\\[1.0cm]
		
		Erik Panzer\footnote{\href{mailto:panzer@mathematik.hu-berlin.de}{panzer@mathematik.hu-berlin.de}}\\
		{\today}\\[0.5cm]

		A masters thesis written at the research group\\[0.5cm]
		{\large\textsc{Structure of Local Field Theories}}\\[1cm]
		\includegraphics[scale=3.0]{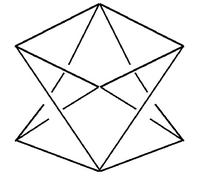}\\[1cm]
		lead by\\[0.5cm]
		{\textit{Prof. Dr. Dirk Kreimer}}\\[0.5cm]
		at the Humboldt-Universit\"{a}t zu Berlin\\
		Rudower Chaussee 25\\
		12489 Berlin\\[0.5cm]

		\includegraphics[width=.75\textwidth]{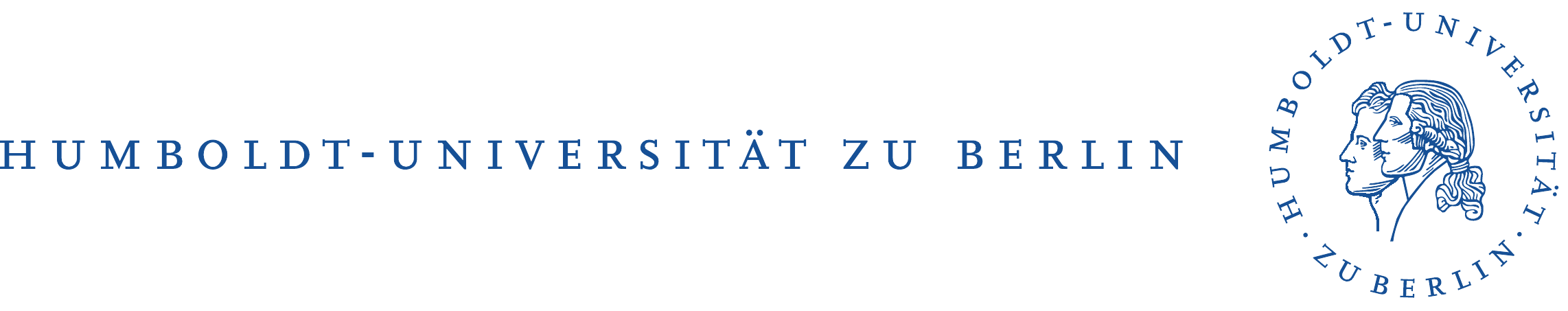}
	\end{center}
\end{titlepage}

\tableofcontents

\chapter{Introduction}

Out of the development of {\qfts} during the last century emerged the incredibly successful standard model of particle physics. The accuracy to which its theoretic predictions match the high precision measurements in particle colliders is amazing and leaves no doubt of the usefulness and importance of \qft.

Therefore it is most astounding that still today, the mathematical framework of this family of theories is far from being well understood. Mathematicians have been working incredibly hard to establish a consistent definition of {\qfts} allowing for the desired physical properties, but have so far only succeeded in this business in space-time dimensions different than four.

%These are clearly reasons for the fascination {\qft} has on mathematicians. In fact, it became a fruitful pool of ideas and concepts leading to new discoveries and methods in pure mathematics.
For this and other reasons {\qft} remains a fascinating subject and continues to pose extremely challenging problems to mathematics.

Recently, the intricate problem of renormalization (a procedure necessary for physical {\qfts}) has been formulated in an illuminating manner by Dirk Kreimer and collaborators -- giving it a precise mathematical definition and prescription. It is the aim of this work to provide a brief introduction into the algebraic structures employed by this mechanism and to learn about its implications and the benefits of its use while studying a particular example -- the \emph{toy model}.

In the following chapter, we develop the necessary algebra to formulate renormalization and perturbative {\qft} using \emph{Hopf algebras}. Key concepts are the \emph{convolution product}, the \emph{algebraic Birkhoff decomposition} and \emph{Hochschild cohomology}.

Chapter \ref{chap:toymodel} is mostly devoted to the investigation of Kreimer's a toy model and traces the path of defining a perturbative {\qft}: Starting with the definition of \emph{Feynman rules} on a combinatoric Hopf algebra, requiring \emph{regularization}, we study \emph{renormalization} using a subtraction scheme in section \ref{sec:renormalization}. The last step is to take the \emph{physical limit} to remedy the regulator introduced earlier.

At this stage we have well-defined renormalized Feynman rules at hand and discuss how to obtain physically meaningful quantities, the \emph{correlation functions}. In this setting we will encounter \emph{Dyson-Schwinger equations} and the \emph{renormalization group}.

Finally, section \ref{sec:toymodel-origin} will exhibit how the just studied toy model is indeed realistic in the sense that it occurs as a subset of certain physical {\qfts}.

\chapter{Hopf algebras}

The fundamental mathematical structure behind perturbative renormalization is the Hopf algebra as discovered in \cite{Kreimer:HopfAlgebraQFT}. In the first section we will merely state the basic definitions and properties of bialgebras and refer to \cite{Manchon,Sweedler} for details and omitted proofs.

Thereafter, we focus on the ingredients of particular relevance to us: the convolution product, the concept of connected filtrations and finally the algebraic Birkhoff decomposition, which effectively describes the recursive process of renormalization.

We will then introduce the Hopf algebra $H_R$ of rooted trees which provides a model for nested and disjoint subdivergences of Feynman graphs (see section \ref{sec:toymodel-origin}). It forms the starting point of the toy model to be discussed in the following chapter.

Finally we define the Hochschild cohomology of bialgebras and apply it to $H_R$ and the Hopf algebra $\K[x]$ of polynomials. We stress the universal property \eqref{eq:H_R-universal} of $H_R$ and obtain a result on how it behaves under coboundaries (proposition \ref{satz:change-coboundary-equals-auto}).

\section{Bialgebras}
We consider algebras as well as co-, bi- and Hopf algebras over a field $\K$, usually thinking of $\Q$ or $\C$ (though for the algebraic properties it suffices that $\K$ enjoys characteristic zero). All vector spaces and tensor products are to be understood over this field, as is the functor $\Hom(\cdot, \cdot)$ (which always denotes just the space of linear maps, no matter if its arguments are algebras or other objects endowed with a more subtle structure).

We further identify any vector space $V$ (canonically) with $V \tp \K$ and correspondingly linear maps $f\in \Hom(V,W)$ with $f \tp \id_{\K}\in\Hom(V \tp \K,W \tp \K)$ without saying so explicitly. For example this happens in \eqref{eq:alg-eins} and \eqref{eq:koalg-koeins}.

The linear span of a subset $M\subseteq V$ of a vector space $V$ will be denoted by $\lin M$.
\begin{definition}\label{def:alg}
	An (associative) \emph{algebra} $(\alg,\mul)$ consists of a vector space $\alg$ and a \emph{product} $\mul \in \Hom(\alg \tp \alg, \alg)$ fulfilling the \emph{associativity}
	\begin{equation}\label{eq:alg-asso}
		\mul \circ (\id \tp \mul) = \mul \circ (\mul \tp \id).
	\end{equation}
	Should there exist a function $\unit\in \Hom(\K, \alg)$ such that
	\begin{equation}\label{eq:alg-eins}
		\mul \circ (\unit \tp \id) = \id = \mul \circ (\id \tp \unit),
	\end{equation}
	we call $\unit$ the \emph{unit map} and $(\alg,\mul,\unit)$ a \emph{unital} algebra. A \emph{morphism} of (unital) algebras $(\alg[A], \mul_{\alg[A]})$ and $(\alg[B], \mul_{\alg[B]})$ is a map $\phi \in \Hom(\alg[A],\alg[B])$ such that $\phi \circ \mul_{\alg[A]} = \mul_{\alg[B]} \circ (\phi \tp \phi)$ and (in the unital case) $\phi \circ \unit_{\alg[A]} = \unit_{\alg[B]}$.
\end{definition}
By $\unit(\lambda) = \lambda \unit(1)$ (for $\lambda \in \K$) the unit map can be identified with the \emph{unit} $\1 \defas \unit(1)$, being the neutral element of the multiplication $\mul \circ \tp\!: \alg \times \alg \rightarrow \alg$ through \eqref{eq:alg-eins}:
\begin{equation*}
	\forall a\in \alg\!:
	\quad a \cdot \1 \defas \mul(a\tp \1) = a = \mul(\1 \tp a) \safed \1 \cdot a.
\end{equation*}
The requirement $\phi\, \circ\, \unit_{\alg[A]} = \unit_{\alg[B]}$ for a morphism of unital algebras is equivalent to \mbox{$\phi(\1_{\alg[A]}) = \1_{\alg[B]}$}. In the following all algebras will be associative and unital unless stated otherwise.
We define the \emph{iterated products} $\mul^n\!:\ \alg^{\tp n+1} \rightarrow \alg$ by
\begin{equation}
	\forall n \in \N_0\!: \quad 
	\mul^{n+1} \defas 
			\mul \circ \left( \mul^n \tp \id \right) 
			\urel{\eqref{eq:alg-asso}} \mul \circ \left( \id \tp \mul^n \right),
	\label{eq:mul-iterated}
\end{equation}
which are independent of the order of multiplications (arbitrary placement of brackets).

The properties \eqref{eq:alg-asso} and \eqref{eq:alg-eins} are equivalent to the commutativity of the diagrams
\begin{equation}\label{cd:algebra}
	\vcenter{\xymatrix@C+0.5cm{
		{A \tp A \tp A} \ar[r]^-{m \tp \id} \ar[d]_{\id \tp m} & {A \tp A} \ar[d]^{m} \\
		{A \tp A} \ar[r]^-{m} & {A}
	}}
	\quad \text{and} \quad
	\vcenter{\xymatrix@C+0.5cm{
		{\K \tp A} \ar[r]^{u \tp \id} \ar[dr]_{\isomorph} & {A \tp A} \ar[d]^{m} & {A \tp \K} \ar[l]_{\id \tp u} \ar[dl]^{\isomorph} \\
		& {A}
	}},
\end{equation}
which readily suggest the definition of the dual object by reversal of arrows:

\begin{definition}\label{def:koalg}
	A (coassociative) \emph{coalgebra} $(C,\Delta)$ consists of a vector space $C$ and a \emph{coproduct} $\Delta \in \Hom(C,C\tp C)$ fulfilling the \emph{coassociativity} property
	\begin{equation}\label{eq:koalg-koasso}
		(\id \tp \Delta) \circ \Delta = (\Delta \tp \id) \circ \Delta.
	\end{equation}
	Should there exist a functional $\counit \in \Hom(C,\K) = C'$ such that
	\begin{equation}\label{eq:koalg-koeins}
		(\counit \tp \id) \circ \Delta = \id = (\id \tp \counit) \circ \Delta,
	\end{equation}
	we call $\counit$ the \emph{counit} and $(C,\Delta,\counit)$ a \emph{counital} coalgebra. A \emph{morphism} of (counital) coalgebras $(C, \Delta_C)$ and $(D, \Delta_D)$ is a map $\phi \in \Hom(C,D)$ such that $\Delta_D \circ \phi = (\phi \tp \phi) \circ \Delta_C$ and (in the counital case) also $\counit_D \circ \phi = \counit_C$ hold.
\end{definition}
As announced, \eqref{eq:koalg-koasso} and \eqref{eq:koalg-koeins} are nothing but the commutativity of
\begin{equation}\label{cd:coalgebra}
	\vcenter{\xymatrix@C+0.5cm{
		C \ar[r]^{\Delta} \ar[d]_-{\Delta} & {C \tp C} \ar[d]^{\id \tp \Delta} \\
		{C \tp C} \ar[r]^-{\Delta \tp \id} & {C \tp C \tp C}
		}}
	\quad \text{and} \quad
	\vcenter{\xymatrix@C+0.5cm{
		{\K \tp C} & {C \tp C} \ar[l]_{\counit \tp \id} \ar[r]^{\id \tp \counit} & {C \tp \K} \\
		& C \ar[u]_{\Delta} \ar[ul]^{\isomorph} \ar[ur]_{\isomorph}
		}},
\end{equation}
dual to \eqref{cd:algebra}. As in the case of algebras, the counit is unique if existent by
\begin{align*}
	\counit 
	&= \counit \circ \id
	 = \counit \circ (\id \tp \counit') \circ \Delta
	 = (\counit \tp \counit') \circ \Delta
	 = \counit' \circ (\counit \tp \id) \circ \Delta
	 = \counit' \circ \id
	 = \counit'
\end{align*}
for any two counits $\counit$ and $\counit'$ as a consequence of \eqref{eq:koalg-koeins}. We remark:
\begin{enumerate}
	\item By \eqref{eq:koalg-koeins}, the coproduct of counital coalgebras is injective\footnote{Note the duality to the surjectivity of the product for unital algebras!}.
	\item An element $g \in C\setminus\set{0}$ of a coalgebra is called \emph{grouplike} iff $\Delta (g) = g \tp g$. The set $\Grp(C)$ of grouplike elements is linearly independent and spans a subcoalgebra\footnote{%
A \emph{subcoalgebra} is a subspace $V \subseteq C$ such that $\Delta (V) \subseteq V \tp V$.%
} (see \cite{Sweedler}).
	\item Any grouplike $g \in \Grp(C)$ fulfils $\counit(g) = 1$ by $g = \counit(g) g$ and $g \neq 0$, using \eqref{eq:koalg-koeins}.
	\item Through \eqref{eq:koalg-koasso}, the \emph{iterated coproducts} $\Delta^n\!:\ C \rightarrow C^{\tp (n+1)}$ defined recursively by
		\begin{equation}
			\Delta^0 \defas \id
			\quad \text{and} \quad
			\Delta^{n+1} \defas \left( \Delta \tp {\id}^{\tp n} \right) \circ \Delta^n
			\quad\text{for any $n\in\N_0$,}
			\label{eq:iterated-coproduct}
		\end{equation}
		do not depend on the order in which the coproducts are applied. Hence we have
		\begin{equation*}
			\forall n\in\N_0\!: \quad \forall 0 \leq k \leq n\!: \quad
			\Delta^{n+1} = \left( {\id}^{\tp k} \tp \Delta \tp {\id}^{\tp (n-k)} \right) \circ \Delta^n.
		\end{equation*}
	\item Often we will denote $\Delta (x)$ by the \emph{Sweedler notation} $\sum_x x_1 \tp x_2$, a shorthand for a representation $\Delta (x) = \sum_i x_1^{(i)} \tp x_2^{(i)}$.
\end{enumerate}
Naturally we can define algebra and coalgebra structures on tensor products in
\begin{definition}
	Let $(\alg[A],\mul_{\alg[A]}, \unit_{\alg[A]})$ and $(\alg[B], \mul_{\alg[B]}, \unit_{\alg[B]})$ be (unital) algebras, then $\alg[A] \tp \alg[B]$ is a (unital) algebra with multiplication and unit defined by
	\begin{equation}\label{eq:alg-tensor}
		\mul_{\alg[A] \tp \alg[B]} \defas (\mul_{\alg[A]} \tp \mul_{\alg[B]}) \circ \tau_{(2,3)} \quad \text{and}\quad \unit_{\alg[A] \tp \alg[B]} \defas \unit_{\alg[A]} \tp \unit_{\alg[B]}.
	\end{equation}
	Analogously, for (counital) coalgebras $(C, \Delta_C, \counit_C)$ and $(D, \Delta_D, \counit_D)$ the product $C \tp D$ becomes a (counital) coalgebra via
	\begin{equation}\label{eq:koalg-tensor}
		\Delta_{C \tp D} \defas \tau_{(2,3)} \circ (\Delta_C \tp \Delta_D) \quad \text{and}\quad \counit_{C \tp D} \defas \counit_C \tp \counit_D.
	\end{equation}
	Here we introduced for any permutation $\sigma \in S_n$ and vector space $V$ the induced map
	\begin{equation}\label{eq:permu-auto}
		\tau_{\sigma} \in \Aut\left( V^{\tp n} \right), \quad v_1 \tp \ldots \tp v_n \mapsto v_{\sigma_1} \tp \ldots \tp v_{\sigma_n}.
%		\tilde{\sigma}: V^{\times n} \rightarrow V^{\times n}, (v_1,\ldots,v_{n}) \mapsto (v_{\sigma_1},\ldots,v_{\sigma_n}).
	\end{equation}
\end{definition}
Beware that $\Delta\!:\ C \rightarrow C \tp C$ and $m\!:\ \alg \tp \alg \rightarrow \alg$ are in general \emph{not} morphisms of coalgebras and algebras! This is only guaranteed in the case of cocommutative $C$ and commutative $\alg$, respectively.

These are the structures occuring in points 1. and 2. of the following
\begin{definition}\label{def:bialg}
	A vector space $H$ which is both an algebra $(H,\mul)$ as well as a coalgebra $(H,\Delta)$ is called \emph{Bialgebra} $(H, \mul, \Delta)$ iff any of the equivalent\footnote{%
See proposition 3.1.1 in \cite{Sweedler}. Note that $\counit(\1) = 1$ does not need to be requested separately, as by $\Delta (\1) = \1 \tp \1$ we have either $\1 \in \Grp(H)$ (resulting in $\counit(\1) = 1$) or otherwise $\1=0$ implying $H = \set{0}$, which we exclude.}
	conditions hold:
	\begin{enumerate}
		\item $\mul$ is a morphism of coalgebras: $\Delta \circ \mul = (\mul \tp \mul) \circ \Delta_{H \tp H} = (\mul \tp \mul) \circ \tau_{(2,3)} \circ (\Delta \tp \Delta)$
		\item $\Delta$ is a morphism of algebras: $\Delta \circ \mul = \mul_{H \tp H} \circ (\Delta \tp \Delta) = (\mul \tp \mul) \circ \tau_{(2,3)} \circ (\Delta \tp \Delta)$
		\item The following diagram commutes:
			\begin{equation}\label{cd:bialg}
				\vcenter{\xymatrix{
					{H \tp H \tp H \tp H} \ar[rr]^-{\tau_{(2,3)}} & & {H \tp H \tp H \tp H} \ar[d]^-{m \tp m} \\
					{H \tp H} \ar[u]^-{\Delta \tp \Delta} \ar[r]_-{m} & {H} \ar[r]_-{\Delta} & {H \tp H}
				}}
			\end{equation}
	\end{enumerate}
	If $H$ is unital and counital we additionaly request for both of
	\begin{enumerate}
		\item $\unit$ is a morphism of coalgebras, that is $\Delta \circ u = u \tp u$ or equivalently $\Delta \1 = \1 \tp \1$.
		\item $\counit$ is a morphism of algebras, that is $\counit \tp \counit = m_{\K} \circ (\counit \tp \counit) = \counit \circ m$.
	\end{enumerate}
	These are equivalent to the commutativity of the diagrams
	\begin{equation*}
		\vcenter{\xymatrix{
					{\K} \ar[d]_{\isomorph} \ar[r]^{u} & {H} \ar[d]^{\Delta} \\
					{\K \tp \K} \ar[r]_{u \tp u} & {H \tp H}
		}}
		\quad \text{and} \quad
		\vcenter{\xymatrix{
					{H \tp H} \ar[r]^-{m} \ar[d]_{\counit \tp \counit} & {H} \ar[d]^{\counit} \\
					{\K \tp \K} \ar[r]_-{\isomorph} & {\K}
		}},
	\end{equation*}
	expressing that $\Delta$ and $\mul$ are to be morphisms of unital algebras and counital coalgebras, respectively.
\end{definition}
We will always assume bialgebras $H \neq \set{0}$ to be unital and counital. Then note $\1 \in \Grp(H)$ and $\counit(\1) = 1$, so $H$ decomposes naturally into
\begin{equation}\label{eq:augmentation}
	H = \K \cdot \1 \oplus \ker \counit = \im \unit \oplus \ker \counit.
\end{equation}
We denote the projection induced by \eqref{eq:augmentation} as $P \defas \id - \unit \circ \counit\!:\ H \twoheadrightarrow \ker \counit$ and call $\ker \counit$ the \emph{augmentation ideal}. It is an ideal of algebras and at the same time a coideal of coalgebras, saying $H \cdot \ker \counit + \ker \counit \cdot H \subseteq \ker \counit$ and $\Delta (\ker \counit) \subseteq \ker \counit \tp H + H \tp \ker \counit$.
\begin{definition}
	On a bialgebra $H$ we define the \emph{reduced coproduct} $\cored$ to be
	\begin{equation}
		\cored \defas \Delta - \1 \tp \id - \id \tp \1\!: \quad H \rightarrow H \tp H
		\label{eq:cored}
	\end{equation}
	and the space $\Prim(H)$ of \emph{primitive elements} by
	\begin{equation}
		\Prim(H) 
		\defas \ker \cored 
		= \setexp{p \in H}{\Delta (p) = \1 \tp p + p \tp \1}.
		\label{eq:prim}
	\end{equation}
\end{definition}
Note that $\Prim(H)$ is a Lie algebra with the Lie bracket induced by the commutator of the associative algebra $H$! Similarly, the product of two grouplike elements is again grouplike such that $\lin \Grp(H)$ is a subbialgebra\footnote{A subbialgebra is a subspace $V\subseteq H$ that is a subcoalgebra and a unital subalgebra.}.

The reduced coproduct is itself coassociative and therefore allows for well defined iterated reduced coproducts
\begin{equation*}
	\cored^0 \defas \id
	\quad \text{and} \quad
	\cored^{n+1} \defas \left( {\id}^{\tp k} \tp \cored \tp {\id}^{\tp (n-k)} \right) \circ \cored^{n}
	\quad \text{for any $n\in\N$,}
\end{equation*}
where the choice of $0 \leq k \leq n$ does not matter. Note that on $\ker\counit$, $\cored^n = P^{\tp n+1} \circ \Delta^n$, so in particular $\cored$ maps $\ker \counit$ into $\ker \counit \tp \ker \counit$ and turns $(\ker \counit, \cored)$ into a coalgebra on its own. In Sweedler's notation we indicate the reduced coproduct by $\cored (x) = \sum_x x' \tp x''$.

\subsection{The convolution product}
\begin{definition}\label{def:convolution}
	Let $(C,\Delta)$ be a coalgebra and $(A,m)$ an algebra, then define the \emph{convolution product} $\convolution$ on $\Hom(C,A)$ by
	\begin{align}
		\convolution &\in \Hom\left( \Hom(C, A) \tp \Hom(C,A), \Hom(C, A) \right) \nonumber\\
		\convolution &\defas \Hom(\Delta, m) \circ \iota ,\ f \tp g \mapsto m \circ (f \tp g) \circ \Delta \label{eq:convolution}
	\end{align}
	using the canonical embedding $\iota\!:\ \Hom(C, A) \tp \Hom(C,A) \hookrightarrow \Hom(C \tp C, A \tp A)$. As usual we also use $\convolution$ to denote the multiplication map
	\begin{equation*}
		\convolution \circ \tp\!:\ \Hom(C, A) \times \Hom(C,A) \rightarrow \Hom(C, A).
	\end{equation*}
\end{definition}

\begin{lemma}
	$\convalg{C}{A} \defas \left( \Hom_{\K}(C, A), \convolution \right)$ is an associative algebra. If $C$ is counital with counit $\counit$ and $A$ unital with unit $\unit$, then
	$\convalg{C}{A}$ is unital with unit $e\defas u \circ \counit$.
\end{lemma}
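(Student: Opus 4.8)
The plan is to verify the two assertions—associativity, and the fact that $e = \unit \circ \counit$ is a two-sided unit—directly from the defining formula \eqref{eq:convolution}, reducing each to the corresponding axiom of the algebra $A$ and the coalgebra $C$. No completely new idea is required; everything follows by unwinding definitions and applying associativity/unitality of $\mul$ together with coassociativity/counitality of $\Delta$.

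First I would establish associativity. Given $f,g,h \in \Hom(C,A)$, I would expand $(f \convolution g) \convolution h$ by substituting the definition twice. The essential manipulation is the interchange law $(\alpha \circ \beta) \tp (\gamma \circ \delta) = (\alpha \tp \gamma) \circ (\beta \tp \delta)$ for composable linear maps, which, after writing $h = \id \circ h \circ \id$ so that both tensor factors are three-fold composites, gives $(f \convolution g) \tp h = (\mul \tp \id) \circ (f \tp g \tp h) \circ (\Delta \tp \id)$. Feeding this through the outer $\mul$ and $\Delta$ yields
\begin{equation*}
	(f \convolution g) \convolution h = \mul \circ (\mul \tp \id) \circ (f \tp g \tp h) \circ (\Delta \tp \id) \circ \Delta ,
\end{equation*}
and the symmetric computation gives
\begin{equation*}
	f \convolution (g \convolution h) = \mul \circ (\id \tp \mul) \circ (f \tp g \tp h) \circ (\id \tp \Delta) \circ \Delta .
\end{equation*}
Now the two outer $\mul$-composites agree by associativity \eqref{eq:alg-asso}, and the two inner composites ending in $\Delta$ agree by coassociativity \eqref{eq:koalg-koasso}, so both expressions coincide; in fact both equal $\mul^2 \circ (f \tp g \tp h) \circ \Delta^2$ in the notation of \eqref{eq:mul-iterated} and \eqref{eq:iterated-coproduct}, which proves associativity.

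For the unit I would show $e \convolution f = f = f \convolution e$ with $e = \unit \circ \counit$, and working pointwise in Sweedler notation is the cleanest route. For $c \in C$ with $\Delta c = \sum_c c_1 \tp c_2$,
\begin{equation*}
	(e \convolution f)(c) = \sum_c \mul \bigl( \unit(\counit(c_1)) \tp f(c_2) \bigr) = \sum_c \counit(c_1)\, \mul(\1 \tp f(c_2)) = \sum_c \counit(c_1)\, f(c_2) ,
\end{equation*}
where the second equality uses $\unit(\counit(c_1)) = \counit(c_1)\1$ and linearity, and the third uses the left unit law \eqref{eq:alg-eins}. The counit property \eqref{eq:koalg-koeins} then collapses $\sum_c \counit(c_1)\, c_2 = c$, giving $(e \convolution f)(c) = f(c)$. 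The identity $f \convolution e = f$ follows identically from the right-hand equalities in \eqref{eq:alg-eins} and \eqref{eq:koalg-koeins}, establishing that $\convalg{C}{A}$ is unital with unit $e$.

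I expect no conceptual obstacle; the only point demanding care is the bookkeeping of tensor factors in the point-free derivation of associativity, in particular the correct placement of identity maps when invoking the interchange law. For the unit statement a fully point-free argument would additionally force the silent identifications $V \cong V \tp \K \cong \K \tp V$ that the paper has agreed to make, so carrying out that computation on elements, as above, sidesteps these identifications entirely and is the more transparent route.
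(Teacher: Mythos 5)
Your proposal is correct and follows essentially the same route as the paper's proof: associativity is established point-free via the interchange law, reducing both bracketings to $\mul \circ (\mul \tp \id) \circ (f \tp g \tp h) \circ (\Delta \tp \id) \circ \Delta$ using \eqref{eq:alg-asso} and \eqref{eq:koalg-koasso}, and the unit law reduces to \eqref{eq:alg-eins} and \eqref{eq:koalg-koeins}. The only cosmetic difference is that you verify $e \convolution f = f$ pointwise in Sweedler notation, whereas the paper does it point-free with the silent identification $\K \tp C \cong C$; the content is identical.
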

\begin{proof}
	For arbitrary $f,g,h \in\convalg{C}{A}$ observe
	\begin{align*}
		f \convolution ( g \convolution h ) 
		&= m \circ \left[ f \tp ( m \circ g \tp h \circ \Delta) \right] \circ \Delta 
		 = m \circ (\id \tp m) \circ (f \tp g \tp h) \circ (\id \tp \Delta) \circ \Delta \\
		&= m \circ ( m\tp \id) \circ (f \tp g \tp h) \circ (\Delta \tp \id) \circ \Delta
		 = m \circ \left[ (m\circ f \tp g \circ \Delta) \tp h \right] \circ \Delta \\
		&= ( f \convolution g ) \convolution h,
	\end{align*}
	while the neutrality of $e$ follows by
	\begin{align*}
		e \convolution f 
		&= m \circ \left[ ( u \circ \counit) \tp f \right] \circ \Delta
		 = m \circ (u \tp \id) \circ (\id \tp f) \circ (\counit \tp \id) \circ \Delta \\
		&= m \circ (u \tp \id) \circ (\id \tp f) \circ (1_{\K} \tp \id) 
		 = m \circ (\1 \tp f) = f = \ldots = f \convolution e. \qedhere
	\end{align*}
\end{proof}
Note that inverses in $\convalg{C}{A}$ (denoted by $\phi^{\convolution -1}$) are uniquely determined (if existent). Given a bialgebra $H$ and introducing the group of units
\begin{equation}
		\units{\convend{H}} \defas \setexp{\phi \in \End(H)}{\exists {\psi}\in\End(H):\ \phi \convolution {\psi} = {\psi} \convolution \phi = e = \counit \circ \unit},
\end{equation}
of the algebra $\convend{H} \defas \convalg{H}{H}$, considering the canonical element $\id\in\convend{H}$ leads to

\begin{definition}\label{def:hopf}
	A bialgebra $H$ is called \emph{Hopf algebra} iff 	$\id \in \units{\convend{H}}$. This unique inverse $S\defas \id^{\convolution -1}$ of a Hopf algebra is called \emph{antipode}.
\end{definition}
The antipode of a Hopf algebra enjoys a rich list of properties, a few of which being mentioned here (details of the proofs may be found in \cite{Manchon}):
\begin{enumerate}
	\item $S \circ u = u$ and $\counit \circ S = \counit$, this says $S(\1) = \1$ and implies $S(\ker \counit) \subseteq \ker \counit$.
	\item $S$ is an antimorphism of algebras and an antimorphism of coalgebras, explicitly $S \circ m = m \circ \tau \circ (S \tp S)$ and $\Delta \circ S = \tau \circ (S \tp S) \circ \Delta$ with $\tau \defas \tau_{(1,2)}$ from \eqref{eq:permu-auto}.
	\item If $H$ is commutative or cocommutative, then $S^2 = \id$.
	\item $\Prim(H) \subseteq \ker (S + \id)$, hence $S(p) = -p$ for any $p \in \Prim(H)$.
	\item For any grouplike $g \in \Grp(H)$, note $g\cdot S(g) = S(g) \cdot g = \1 = e(g)$. Hence $S$ multiplicatively inverts the grouplike elements. In particular a bialgebra can admit an antipode only if $\Grp(H) \subseteq \units{H} \defas \setexp{x\in H}{\exists y \in H\!:\ y \cdot x = x \cdot y = \1}$.
\end{enumerate}

\subsection{Filtrations, graduations and connectedness}
Along with their combinatoric nature, the Hopf algebras considered here allow for inductive proofs and constructions in various places. As always, those inductions need two ingredients to work:
\begin{itemize}
	\item A start of the induction; it will be trivial in the case of connected Hopf algebras (see definition \ref{def:connected}).
	\item A guarantee that each element (of the Hopf algebra) is reached after a finite number of induction steps; this is assured by a filtration (or a graduation).
\end{itemize}
\begin{definition}\label{def:filtration}
	A family $(H^{n})_{n\in\N_0}$ of growing subspaces $H^n\subseteq H^{n+1}\ \forall n\in\N_0$ of a Hopf algebra $(H,m,u,\Delta,\counit,S)$ is called a \emph{filtration} iff all of the conditions
	\begin{enumerate}
		\item $H = \sum_{n\in\N_0} H^n$
		\item $\forall n\in\N_0\!:\ \Delta(H^n) \subseteq \sum_{i+j=n} H^i \tp H^j = \sum_{i=0}^n H^i \tp H^{n-i}$
		\item $\forall n,m\in\N_0\!:\ H^n \cdot H^m \defas m\left( H^n \tp H^m \right) \subseteq H^{n+m}$
		\item $\forall n\in \N_0\!:\ S\left( H^n \right) \subseteq H^n$
	\end{enumerate}
	hold. Omitting condition $4$ still yields a \emph{filtration of a bialgebra}, whereas providing only properties $\set{1, 2}$ and $\set{1, 3}$ defines filtrations of coalgebras and algebras, respectively. 
\end{definition}
Considering such a filtration, some remarks are in order:
\begin{enumerate}
	\item $H^0$ is a subalgebra / subcoalgebra / subbialgebra / Hopf subalgebra -- whatever is $H$ (immediate from the definition). 
	\item All grouplike elements are necessarily contained in $H^0$: $\Grp(H)\subseteq H^0$. For a proof suppose $g \in \Grp(H) \cap H^n \setminus H^{n-1}$ for $n\in\N$, write $H^{n} = H^{n-1} \oplus \K \cdot g \oplus V$ for some complement $V$ and consider $g\tp g = \Delta (g) \in H^n \tp H^{n-1} + H^{n-1} \tp H^n$.
	\item In general a Hopf algebra does not necessarily admit a \emph{non-trivial}\footnote{%
The only trivial filtration is given by $H^n = H$ for all $n \in \N_0$.}
				filtration! Though there are Hopf subalgebras like $\K \cdot \1$ or more generally $\lin \Grp(H)$ at hand, these do not necessarily provide the start $H^0$ of a filtration due to proposition \ref{satz:filtration}.
	\item Intuitively, a filtration reduces every element in a finite number of steps to $H^0$ using the coproduct. Hence we will have to start inductions on $H^0$.
	\item In any coalgebra $H$, there is an associative product on the set of its vector subspaces, namely the \emph{wedge product}. For subspaces $V,W\subseteq H$ it is defined as
		\begin{equation*}%\label{eq:wedge}
			V \wedge W \defas \Delta^{-1} \left( V \tp H + H \tp W \right).
		\end{equation*}
		By definition \ref{def:filtration} it follows that given any filtration of a coalgebra $H$, the spaces
		\begin{align*}
			\widetilde{H}^n	
			&\defas {\left( H^0\right)}^{\wedge (n+1)} 
%			 \defas {\wedge}^{(n+1)} H^0 
			 = \underbrace{H^0 \wedge \ldots \wedge H^0}_{\text{$(n+1)$ times $H^0$}}
			 = {\left( {\Delta}^{n} \right) }^{-1} \left( \sum_{i=0}^n H^{\tp i} \tp H^0 \tp H^{\tp (n-i)} \right)
		\end{align*}
		fulfil $H^n \subseteq \widetilde{H}^n$ and define a filtration on their own (see \cite{Manchon}). In particular, $H=\sum_{n\in \N_0} {\widetilde{H}}^n$ is thus the \emph{largest} filtration of $H$ that begins with ${\widetilde{H}}^0 = H^0$.
\end{enumerate}
The last remark generalizes to bi- and also Hopf algebras (details in \cite{Manchon}), resulting in
\begin{proposition}\label{satz:filtration}
	Let $H$ be a co-/bi-/Hopf algebra and $L$ a sub(co/bi/Hopf)algebra, then there exists a filtration of $H$ starting with $H^0=L$ iff
	\begin{equation*}
		H=\sum_{n \in\N_0} L^{\wedge (n+1)}
		\quad\gdw\quad
		\forall x\in H\!:\ \exists n\in\N_0\!:\ {\Delta}^n (x) \in \sum_{i=0}^n H^{\tp i} \tp L \tp H^{\tp (n-i)}.
	\end{equation*}
\end{proposition}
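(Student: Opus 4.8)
The plan is to reduce everything to the identity recorded in the preceding remark,
\[
	L^{\wedge(n+1)} = {\left( \Delta^n \right)}^{-1}\!\left( \sum_{i=0}^n H^{\tp i} \tp L \tp H^{\tp(n-i)} \right),
\]
which is exactly what makes the equivalence on the right immediate: by this identity a vector $x$ lies in $L^{\wedge(n+1)}$ precisely when $\Delta^n(x) \in \sum_{i=0}^n H^{\tp i} \tp L \tp H^{\tp(n-i)}$, so the far-right condition merely says $H = \bigcup_n L^{\wedge(n+1)}$. To see that this coincides with $H = \sum_n L^{\wedge(n+1)}$ I would first check that the wedge powers are nested, $L^{\wedge(n+1)} \subseteq L^{\wedge(n+2)}$: starting from $\Delta^n(x)$ in the stated sum I apply $\Delta$ in the last tensor slot, use $\Delta^{n+1} = (\id^{\tp n} \tp \Delta) \circ \Delta^n$, and observe that $\Delta(H) \subseteq H \tp H$ disposes of every factor different from $L$ while $\Delta(L) \subseteq L \tp L \subseteq L \tp H$ (here $L$ being a subcoalgebra enters) handles the slot already occupied by $L$; the result lands in the target sum for $n+1$. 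With nestedness in hand a finite sum collapses into its largest summand, so $\sum_n = \bigcup_n$ and the two conditions on the right agree.

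For the main equivalence I would argue both directions. ``$\Rightarrow$'': given a filtration $(H^n)$ with $H^0 = L$, the remark supplies $H^n \subseteq \widetilde{H}^n = L^{\wedge(n+1)}$, which itself follows by a short induction -- property~2 of the filtration splits $\Delta(H^n)$ into the piece $H^n \tp L \subseteq H \tp L$ and pieces $H^i \tp H^j$ with $i \leq n-1$, hence $\subseteq L^{\wedge n} \tp H$ by the induction hypothesis, i.e.\ exactly the defining condition of $L^{\wedge(n+1)} = L^{\wedge n} \wedge L$. Summing, $H = \sum_n H^n \subseteq \sum_n L^{\wedge(n+1)} \subseteq H$ forces $H = \sum_n L^{\wedge(n+1)}$. ``$\Leftarrow$'': I simply set $H^n \defas L^{\wedge(n+1)}$ and verify definition~\ref{def:filtration}. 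Then $H^0 = L^{\wedge 1} = L$, property~1 is the hypothesis $H = \sum_n L^{\wedge(n+1)}$, nestedness was established above, and properties~2, 3 and~4 are the intrinsic stability properties of the wedge powers of a sub(co/bi/Hopf)algebra -- properties that hold on their own and do not presuppose any filtration (see \cite{Manchon}).

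The step carrying the actual content, and the one I expect to be the main obstacle, is property~2 for the wedge powers, namely $\Delta\left( L^{\wedge(n+1)} \right) \subseteq \sum_{i+j=n} L^{\wedge(i+1)} \tp L^{\wedge(j+1)}$ -- the assertion that they constitute a coalgebra filtration. I would prove this by induction on $n$ using the wedge identity: writing $\Delta^n = (\Delta^a \tp \Delta^b) \circ \Delta$ with $a+b = n-1$ and applying it to $\Delta(x) = \sum x_1 \tp x_2$, one tracks in which tensor slot the factor guaranteed to lie in $L$ sits and reads off that $x_1$ and $x_2$ fall into the appropriate wedge powers; coassociativity is precisely what keeps this slot-bookkeeping consistent, and the genuine difficulty is in recovering information about $\Delta(x)$ itself from information about its image under $\Delta^a \tp \Delta^b$. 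Properties~3 and~4 follow the same pattern, now invoking that $L$ is respectively a subalgebra, so that $m(L \tp L) \subseteq L$, and stable under the antipode, $S(L) \subseteq L$; with these in place the remaining verifications are routine.
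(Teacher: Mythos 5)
Your proposal is correct and follows essentially the same route as the paper: the paper likewise derives the proposition from its preceding remark --- the identity $L^{\wedge(n+1)} = {\left(\Delta^n\right)}^{-1}\left(\sum_{i=0}^n H^{\tp i}\tp L\tp H^{\tp(n-i)}\right)$, the containment $H^n\subseteq L^{\wedge(n+1)}$ for any filtration starting at $L$, and the fact that the wedge powers themselves form a filtration --- deferring that last stability statement to \cite{Manchon}, exactly as you do. Your verification of the two easy directions and of nestedness is sound, and the one step you flag as carrying the real content (that $\Delta\left(L^{\wedge(n+1)}\right)\subseteq\sum_{i+j=n}L^{\wedge(i+1)}\tp L^{\wedge(j+1)}$, plus the algebra and antipode stability) is precisely the step the paper also leaves to the literature.
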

As mentioned already, our inductions are going to exploit a filtration and need to start on $H^0$. This motivates the
\begin{definition}\label{def:connected}
	A bialgebra $H$ is \emph{connected} iff there exists a filtration $H=\sum_{n\in\N_0} H^n$ with $H^0 = \K \cdot \1$. By theorem \ref{satz:filtration} this is equivalent to
	\begin{equation}\label{eq:connected}
		H = \sum_{n\in\N} {\left( \K \cdot \1 \right) }^{\wedge (n+1)} 
		= \K \cdot \1 \oplus \sum_{n\in \N} \ker\left( {\cored}^n \right).
	\end{equation}
	If $H$ is connected, $H^n \defas {(\K \cdot \1)}^{\wedge n+1}$ is called the \emph{coradical filtration}.
\end{definition}
\hide{
\begin{enumerate}
	\item Given a filtration, for any $x \in H$ define $\abs{x} \defas \min \setexp{n\in N_0}{x \in H^n}$ \todo to be its \emph{degree} with respect to this filtration.
	\item Taking the sets $U_n \defas H \setminus H^n$ as basis for the neighbourhoods of $0$, one defines a topology on $H$ induced by the filtration, turning $H$ into a topological vector space. By \ref{def-filtration} all operations of the Hopf algebra are continuous with respect to this topology, and it is Hausdorff.
	\item The \emph{degree} induces an ultrametric $d(x,y) \defas 2^{-\abs{x-y}}$ ($2^{-\infty}\defas 0$) and its topology is the one just described.
\end{enumerate}
}
Now we can prove\footnote{Note how in \eqref{eq:conv-inv-recursive} we obtain an inductive proof as a special case of the Birkhoff decomposition.} the existence of a huge subgroup of the convolution algebra in
\begin{satz}\label{satz:convolution-group}
	Let $H$ be a connected bialgebra and $\alg$ an algebra. Then the subset
	\begin{equation}
		\convgroup{H}{\alg} \defas \setexp{\phi \in \Hom(H,\alg)}{\phi(\1_H) = \1_{\alg}} \subseteq \convalg{H}{\alg}
		\label{eq:convolution-group}
	\end{equation}
	of linear maps $\phi\!: H \rightarrow \alg$ with $\phi(\1_H)=\1_{\alg}$ forms a group under the convolution product.
\end{satz}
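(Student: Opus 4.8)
The plan is to verify the group axioms inside the associative algebra $\convalg{H}{\alg}$, whose associativity and unit $e=\unit\circ\counit$ are already furnished by the preceding lemma. First I would show $\convgroup{H}{\alg}$ is a submonoid. Closure is immediate from $\Delta\1_H=\1_H\tp\1_H$: for $\phi,\psi\in\convgroup{H}{\alg}$ one has $(\phi\convolution\psi)(\1_H)=m(\phi(\1_H)\tp\psi(\1_H))=\1_{\alg}\cdot\1_{\alg}=\1_{\alg}$. The unit belongs to the set since $e(\1_H)=\unit(\counit(\1_H))=\unit(1)=\1_{\alg}$, using $\counit(\1)=1$. Associativity is inherited. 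Hence the whole content of the theorem is the existence of inverses inside $\convgroup{H}{\alg}$.

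For a fixed $\phi\in\convgroup{H}{\alg}$ I would construct a left inverse $\psi$ solving $\psi\convolution\phi=e$ recursively. Set $\psi(\1_H)\defas\1_{\alg}$, and for $x\in\ker\counit$ expand $\Delta(x)=\1_H\tp x+x\tp\1_H+\cored(x)$ with $\cored(x)=\sum_x x'\tp x''$. Demanding $(\psi\convolution\phi)(x)=e(x)=0$ then forces the defining relation $\psi(x)=-\phi(x)-\sum_x\psi(x')\phi(x'')$, expressing $\psi$ on $x$ through $\phi$ and the values of $\psi$ on the first tensor factors $x'$.

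The step I expect to carry the real weight, and the only place connectedness enters, is showing this recursion is well defined and terminates. By \eqref{eq:connected} we have $\ker\counit=\sum_{n}\ker(\cored^n)$, so every $x\in\ker\counit$ satisfies $\cored^n(x)=0$ for some $n$, which serves as a coradical degree. The decisive fact is that $\cored$ lowers it: if $\cored^n(x)=0$, then writing $\cored(x)=\sum_i a_i\tp b_i$ with the $b_i$ linearly independent, coassociativity in the form $\cored^n=(\cored^{n-1}\tp\id)\circ\cored$ forces $\cored^{n-1}(a_i)=0$. Hence when $\psi(x)$ is computed every $\psi(x')$ is already available; the recursion bottoms out on $\ker\cored$, where $\psi(x)=-\phi(x)$, and terminates after finitely many steps. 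I would organize this as an induction on $n$, at each stage defining $\psi$ on $\ker(\cored^n)$ by the manifestly linear, representation-independent composite $-\phi-m\circ(\psi\tp\phi)\circ\cored$, checking consistency with the earlier stages, and finally extending through $\psi(\1_H)=\1_{\alg}$ and linearity to $H=\K\cdot\1\oplus\ker\counit$.

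A direct substitution into $m\circ(\psi\tp\phi)\circ\Delta$ that telescopes against the defining relation then confirms $\psi\convolution\phi=e$, while $\psi(\1_H)=\1_{\alg}$ shows $\psi\in\convgroup{H}{\alg}$. Thus every element of the monoid $\convgroup{H}{\alg}$ has a left inverse lying in the same monoid, and the standard identity $ab=(cb)(ab)=c(ba)b=cb=e$ (valid whenever $ba=cb=e$) promotes each left inverse to a two-sided one, so $\convgroup{H}{\alg}$ is a group. Equivalently, one constructs the right inverse by the mirror recursion $\chi(x)=-\phi(x)-\sum_x\phi(x')\chi(x'')$ and invokes uniqueness of inverses in a monoid.
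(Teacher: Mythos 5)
Your proof is correct, but it follows a genuinely different route from the paper's. The paper produces the inverse in one stroke as the von Neumann series $\phi^{\convolution -1}=\sum_{n\in\N_0}(e-\phi)^{\convolution n}$, using connectedness only to guarantee that the series is pointwise finite (for each $x$ one has $(\phi-e)^{\tp n}\circ\Delta^{n-1}(x)=0$ for large $n$), and then checks $\phi\convolution\phi^{\convolution -1}=e$ by a telescoping sum. You instead build a left inverse by the recursion $\psi(x)=-\phi(x)-\sum_x\psi(x')\phi(x'')$ on $\ker\counit$; this is precisely the formula the paper only derives later, as \eqref{eq:conv-inv-recursive}, from the Birkhoff decomposition with $\alg_-=\alg$ and $\alg_+=\set{0}$, and your induction on coradical degree mirrors the filtration induction in the proof of theorem \ref{satz:birkhoff-dec}. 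The trade-offs are clear: the series argument sidesteps every well-definedness question about representations of $\cored(x)$ — questions you rightly address via the lemma that $\cored$ lowers coradical degree (your linear-independence argument for the second tensor factors is valid, since $\cored^n=(\cored^{n-1}\tp\id)\circ\cored$) and via the representation-free formula $-\phi-\mul\circ(\psi\tp\phi)\circ\cored$ — and the same local-finiteness mechanism is recycled by the paper for $\exp_{\convolution}$ and $\log_{\convolution}$; your recursion, in turn, yields the practically computable inversion formula (and the antipode recursion \eqref{eq:antipode-recursive}) immediately and exhibits the mechanism that later drives renormalization. Your closing step, upgrading the left inverse to a two-sided one by the monoid identity (or by the mirror recursion plus uniqueness of inverses in an associative unital algebra), correctly fills in what the paper dispatches with ``follows analogously''.
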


\begin{proof}
	$\convgroup{H}{\alg}$ is clearly closed under convolution, as for any grouplike $g$ we have
	\begin{equation*}
		\phi \convolution \psi (g)
		= \phi(g) \cdot \psi(g).
	\end{equation*}
	Hence it only remains to show the existence of an inverse $\phi^{\convolution -1}$ for any given $\phi \in \convgroup{H}{\alg}$. By \eqref{eq:connected} and $(\phi - e)(\1) = 0$, for any fixed $x\in H$ we find some $N_x\in\N$ such that $(\phi -e)^{\tp n}\,\circ\,\Delta^{n-1}(x) = 0$ for all $n \geq N_x$. Hence the formal von Neumann series
	\begin{equation}\label{eq:zsh-conv-inverse}
		\phi^{\convolution -1} 
		= {\left[ e - (e - \phi) \right] }^{\convolution -1} 
		\defas \sum_{n \in \N_0} {(e-\phi)}^{\convolution n}
	\end{equation}
	is locally a finite sum and therefore well defines an element of $\convgroup{H}{\alg}$! So the series \eqref{eq:zsh-conv-inverse} converges pointwise in the discrete topology on $\alg$ (eventually it becomes constant), hence as the coproduct $\Delta (x) = \sum_{i=1}^k x_1^{(i)} \tp x_2^{(i)}$ is a \emph{finite} linear combination we find $N\in \N$ with
	\begin{equation*}
		\forall n \geq N\!: \quad
		\forall y \in \set{x, x_2^{(1)}, \ldots, x_2^{(k)}}\!: \quad
		{(e - \phi)}^{\convolution n} (y) = 0.
	\end{equation*}
	This allows us to work with well defined finite sums and to check
	\begin{align*}
		\left[ \phi \convolution \phi^{\convolution -1} \right] (x)
		&= \sum_{i=1}^k \phi\left(x_1^{(i)}\right) \phi^{\convolution -1} \left(x_2^{(i)}\right)
		= \sum_{i=1}^k \phi\left(x_1^{(i)}\right) \sum_{n=0}^N {(e-\phi)}^{\convolution n} \left(x_2^{(i)}\right) \\
		&= \sum_{n=0}^{N} \left[ \phi \convolution {(e-\phi)}^{\convolution n} \right] (x) 
		= \left\{ \sum_{n=0}^{N} (e-\phi)^{\convolution n} - (e - \phi) \convolution \sum_{n=0}^{N} (e-\phi)^{\convolution n} \right\} (x)\\
		&= \left[ {(e-\phi)}^{\convolution 0} - \right.\underbrace{\left.{(e-\phi)}^{\convolution N+1} \right] (x)}_0
		= e (x),
	\end{align*}
	proving $\phi \convolution \phi^{\convolution -1} = e$ pointwise. Clearly $\phi^{\convolution -1} \convolution \phi = e$ follows analogously.
\end{proof}

\begin{korollar}\label{satz:zsh-bialg-hopf}
	Any connected bialgebra $H$ is a Hopf algebra by $\id\in\convgroup{H}{H}$.
\end{korollar}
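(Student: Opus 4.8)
The plan is to read this corollary directly off Theorem \ref{satz:convolution-group}, since the entire combinatorial substance---the local finiteness of the von Neumann series \eqref{eq:zsh-conv-inverse}---has already been established there. Recall from Definition \ref{def:hopf} that $H$ is a Hopf algebra precisely when $\id \in \units{\convend{H}}$, i.e.\ when $\id$ admits a two-sided convolution inverse whose value is the convolution unit $e = \unit \circ \counit$. So all that remains is to exhibit this inverse as a special instance of the group structure just proved.

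First I would instantiate Theorem \ref{satz:convolution-group} with the target algebra $\alg = H$; this is legitimate because a bialgebra is in particular a unital algebra. The theorem then asserts that $\convgroup{H}{H} = \setexp{\phi \in \End(H)}{\phi(\1_H) = \1_H}$ is a group under $\convolution$. Next I would observe that the canonical element $\id$ lies in this group, which is immediate from $\id(\1_H) = \1_H$. Being a member of a group, $\id$ therefore possesses a convolution inverse $S \defas \id^{\convolution -1}$ inside $\convgroup{H}{H}$.

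The one point requiring a word of care is the identification of the group's neutral element with the unit $e$ of the ambient convolution algebra $\convend{H}$. Since $\1 \in \Grp(H)$ yields $\counit(\1) = 1$ and hence $e(\1) = \unit(\counit(\1)) = \unit(1) = \1$, the map $e$ itself lies in $\convgroup{H}{H}$; being already the identity of $\convend{H}$, it is a fortiori the identity of the subgroup. Consequently the relations furnished by the group structure read $\id \convolution S = S \convolution \id = e$, which is exactly the membership $\id \in \units{\convend{H}}$ demanded by Definition \ref{def:hopf}. Thus $H$ is a Hopf algebra with antipode $S$, given pointwise by the locally finite series $S = \sum_{n \in \N_0} (e - \id)^{\convolution n}$. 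I expect no genuine obstacle beyond this bookkeeping, since all the real work was carried out in the preceding theorem.
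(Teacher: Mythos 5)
Your proposal is correct and is exactly the paper's argument: the corollary is stated as an immediate consequence of Theorem \ref{satz:convolution-group} applied with $\alg = H$, noting $\id(\1)=\1$ so that $\id \in \convgroup{H}{H}$ acquires a convolution inverse (the antipode), which is precisely the condition $\id \in \units{\convend{H}}$ of Definition \ref{def:hopf}. Your extra check that the group's neutral element coincides with $e = \unit \circ \counit$ is sound bookkeeping, already implicit in the proof of that theorem.
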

After these general statements, we want to investigate how the convolution algebra restricts to multiplicative maps like the Feynman rules we will encounter in the next chapter.
\begin{definition}\label{def:character}
	Given a bialgebra $H$ and an algebra $\alg$ we define the set of \emph{characters}
	\begin{equation}
		\chars{H}{\alg} \defas \setexp{\phi \in \convgroup{H}{\alg}}{\phi \circ \mul_H = \mul_{\alg} \circ (\phi \tp \phi)}
		\label{eq:characters}
	\end{equation}
	to consist of the morphisms $\phi\!:\ H \rightarrow \alg$ of unital algebras.
\end{definition}

\begin{lemma}
	If $H$ is a Hopf algebra and $\alg$ a commutative algebra, then $\chars{H}{\alg}$ is a group under convolution. Explicitly, with the antipode $S$ of $H$ we have
	\begin{equation}
		\forall \phi \in \chars{H}{\alg}\!:\quad \phi^{\convolution -1} = \phi \circ S.
		\label{eq:character-group}
	\end{equation}
	\label{satz:character-group}
\end{lemma}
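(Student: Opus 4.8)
The plan is to verify the group axioms directly. Associativity of $\convolution$ and the fact that $e \defas u \circ \counit$ is a two-sided unit are already available from the lemma identifying $\convalg{H}{\alg}$ as a unital associative algebra, so the real work is to check three things: that $e$ is itself a character, that $\chars{H}{\alg}$ is closed under $\convolution$, and that every character admits a convolution inverse which again lies in $\chars{H}{\alg}$ and is given explicitly by $\phi \circ S$.

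First I would dispose of the easy parts. Since $H$ is a bialgebra, $\counit$ is a morphism of algebras, whence $e(ab) = u(\counit(a)\counit(b)) = e(a)e(b)$ and $e(\1_H) = \1_\alg$, so $e \in \chars{H}{\alg}$. For closure I would take characters $\phi,\psi$ and expand $(\phi \convolution \psi)(ab) = \mul_\alg \circ (\phi \tp \psi) \circ \Delta(ab)$, using that $\Delta$ is an algebra morphism so that $\Delta(ab) = \sum a_1 b_1 \tp a_2 b_2$ in Sweedler notation. Multiplicativity of $\phi$ and $\psi$ turns this into $\sum \phi(a_1)\phi(b_1)\psi(a_2)\psi(b_2)$, whereas $(\phi\convolution\psi)(a)\cdot(\phi\convolution\psi)(b) = \sum \phi(a_1)\psi(a_2)\phi(b_1)\psi(b_2)$; these coincide precisely because $\alg$ is commutative, allowing the interchange of $\psi(a_2)$ and $\phi(b_1)$. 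Together with $(\phi\convolution\psi)(\1_H) = \1_\alg$ this yields $\phi \convolution \psi \in \chars{H}{\alg}$.

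The heart of the argument is the inverse, where I would establish $\phi^{\convolution -1} = \phi \circ S$ by a direct convolution computation. Writing $\phi \convolution (\phi \circ S) = \mul_\alg \circ (\phi \tp \phi) \circ (\id \tp S) \circ \Delta$ and applying multiplicativity in the form $\mul_\alg \circ (\phi \tp \phi) = \phi \circ \mul_H$, the map $\phi$ factors out, giving $\phi \convolution (\phi \circ S) = \phi \circ [\mul_H \circ (\id \tp S) \circ \Delta] = \phi \circ (\id \convolution S)$. By the defining property of the antipode $\id \convolution S = u_H \circ \counit$, and since $\phi \circ u_H = u_\alg$, this collapses to $u_\alg \circ \counit = e$; the identity $(\phi \circ S) \convolution \phi = e$ follows symmetrically from $S \convolution \id = u_H \circ \counit$. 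Finally I would confirm that $\phi \circ S$ is a character: because $S$ is an antimorphism of algebras, $(\phi \circ S)(ab) = \phi(S(b)S(a)) = (\phi\circ S)(b)(\phi\circ S)(a)$, which equals $(\phi\circ S)(a)(\phi\circ S)(b)$ once more by commutativity of $\alg$, while $(\phi\circ S)(\1_H) = \phi(\1_H) = \1_\alg$ since $S(\1)=\1$.

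The place where all hypotheses are genuinely used—and thus the main obstacle to watch—is the commutativity of $\alg$, which enters twice: to close $\chars{H}{\alg}$ under $\convolution$ and to make $\phi \circ S$ multiplicative. Without it the set of characters would fail to be a subgroup. I would also stress a structural point: unlike Theorem \ref{satz:convolution-group}, no connectedness of $H$ is required here, because the antipode $S$ supplies the inverse directly; the entire statement rests on the bialgebra axioms (rendering $\counit$ and $\Delta$ algebra morphisms) together with the antimultiplicativity and convolution-inverse property of $S$.
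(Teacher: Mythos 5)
Your proof is correct and takes essentially the same approach as the paper's: you compute the inverse by factoring $\phi$ through the antipode identity $\id \convolution S = S \convolution \id = \unit \circ \counit$ to get $\phi^{\convolution-1} = \phi \circ S$, use the antimorphism property of $S$ together with commutativity of $\alg$ to see that $\phi \circ S$ is again a character, and use the bialgebra compatibility of $\Delta$ with $\mul$ plus commutativity of $\alg$ for closure under $\convolution$. The only differences are cosmetic — you work in Sweedler notation where the paper composes maps, and you additionally verify $e \in \chars{H}{\alg}$, which the paper leaves implicit.
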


\begin{proof}
	Let $\phi \in \chars{H}{\alg}$, then observe
	\begin{align*}
		(\phi \circ S) \convolution \phi
		&= \mul_{\alg} \circ \left[ (\phi \circ S) \tp \phi \right] \circ \Delta
		= \mul_{\alg} \circ (\phi \tp \phi) \circ (S \tp \id) \circ \Delta \\
		&= \phi \circ \mul_H (S \tp \id) \circ \Delta 
		= \phi \circ (S \convolution \id)
		= \phi \circ \unit \circ \counit
		= \unit_{\alg} \circ \counit
		= e
	\end{align*}
	and analogously $\phi \convolution (\phi\circ S) = e$ such that indeed $\phi$ is invertible in $\convalg{H}{\alg}$ and it fulfils \eqref{eq:character-group}. Moreover, as $S$ is an antimorphism we find $\phi^{\convolution -1} \in \chars{H}{\alg}$ by
	\begin{align*}
		\phi^{\convolution -1} \circ \mul
		&= \phi \circ S \circ \mul
		= \phi \circ \mul \circ \tau \circ (S \tp S)
		= \mul_{\alg} \circ (\phi \tp \phi) \circ \tau \circ (S \tp S) \\
		&= \mul_{\alg} \circ \tau \circ \left[ (\phi \circ S) \tp (\phi \circ S) \right]
		= \mul_{\alg} \circ \left(\phi^{\convolution -1} \tp \phi^{\convolution -1} \right),
	\end{align*}
	exploiting the commutativity of $\alg$ and $\phi^{\convolution -1}(\1) = \phi \circ S(\1) = \phi (\1) = \1_{\alg}$. Given any two $\phi, \psi \in \chars{H}{\alg}$ we observe
	\begin{align*}
		(\phi \convolution \psi) \circ \mul
		&= \mul_{\alg} \circ (\phi \tp \psi) \circ \Delta \circ \mul
		= \mul_{\alg} \circ (\phi \tp \psi) \circ (\mul \tp \mul) \circ \tau_{(2,3)} \circ ( \Delta \tp \Delta ) \\
		&= \mul_{\alg} \circ \left[ (\phi \circ \mul) \tp (\psi \circ \mul) \right] \circ \tau_{(2,3)} \circ (\Delta \tp \Delta) \\
		&= \mul_{\alg} \circ (\mul_{\alg} \tp \mul_{\alg}) \circ (\phi \tp \phi \tp \psi \tp \psi) \circ \tau_{(2,3)} \circ (\Delta \tp \Delta) \\
		&= \mul_{\alg} \circ (\mul_{\alg} \tp \mul_{\alg}) \circ \tau_{(2,3)} \circ (\phi \tp \psi \tp \phi \tp \psi) \circ (\Delta \tp \Delta) \\
		&= \mul_{\alg} \circ \left\{ \left[ \mul_{\alg} \circ (\phi \tp \psi) \circ \Delta \right] \tp \left[ \mul_{\alg} \circ (\phi \tp \psi) \circ \Delta \right] \right\} \\
		&= \mul_{\alg} \circ \left[ (\phi \convolution \psi) \tp (\phi \convolution \psi) \right],
	\end{align*}
	again making use of $\alg$'s commutativity. Together with $(\phi \convolution \psi)(\1) = \phi(\1) \psi (\1) = \1_{\alg}$ this shows $\phi \convolution \psi \in \chars{H}{\alg}$ and finishes the proof.
\end{proof}

\subsubsection{Graduations}

By \eqref{eq:H_R-graduation}, the Hopf algebra $H_R$ of rooted trees we will introduce in section \ref{sec:H_R} comes along with a \emph{graduation} as described in
\begin{definition}
	A \emph{graduation} of a Hopf algebra $H$ is a decomposition $H = \bigoplus_{n\in\N_0} H_n$ such that the following conditions hold for any $n,m\in\N_0\!:$
	\begin{enumerate}
		\item $\Delta(H_n) \subseteq \bigoplus_{i+j=n} H_i \tp H_j = \bigoplus_{i=0}^n H_i \tp H_{n-i}$
		\item $H_n \cdot H_m \defas m\left( H_n \tp H_m \right) \subseteq H_{n+m}$
		\item $S\left( H_n \right) \subseteq H_n$
	\end{enumerate}
\end{definition}
Apparently, a graduation is a structure more subtle than a filtration! In fact, any graduation $H=\bigoplus_{n\in\N_0} H_n$ induces a filtration by $H^n \defas \bigoplus_{k=0}^n H_k$. Thus the results derived for connected bialgebras in this section will in particular apply to $H_R$.

\subsubsection{The Lie group of convolution}
By defining the Lie algebra (which is in fact an ideal in the convolution algebra)
\begin{equation}
	\convliealg{H}{\alg} \defas \setexp{\phi \in \Hom(H,\alg)}{\phi(\1) = 0}
	\label{eq:def-conv-liealg}
\end{equation}
with the lie bracket $\convkommu{v}{w} = v\convolution w - w\convolution v$, the at first only formal definitions
\begin{align}
	\exp_{\convolution}\!:\ &\ \convliealg{H}{\alg} \rightarrow \convgroup{H}{\alg}, \quad \phi \mapsto \sum_{n\in\N_0} \frac{{\phi}^{\convolution n}}{n!}
		\label{eq:exp-conv}\\
	\log_{\convolution}\!:\ &\ \convgroup{H}{\alg} \rightarrow \convliealg{H}{\alg}, \quad \phi \mapsto \sum_{n \in \N} \frac{(-1)^{n+1}}{n} {(\phi - e)}^{\convolution n}
		\label{eq:log-conv}
\end{align}
become locally (that is pointwise at each $x\in H$) finite sums if $H$ is connected, just as in the proof of theorem \ref{satz:convolution-group}. After realizing this well-definedness, it is an easy exercise\footnote{Simply expand the series \eqref{eq:log-conv}, \eqref{eq:exp-conv} and use the relations among their coefficients known from the real analogues $\exp$ and $\ln$.} to check that they deliver bijections between $\convgroup{H}{\alg}$ and $\convliealg{H}{\alg}$ through $\log_{\convolution} \circ \exp_{\convolution} = \restrict{\id}{\convliealg{H}{\alg}}$ and $\exp_{\convolution} \circ \log_{\convolution} = \restrict{\id}{\convgroup{H}{\alg}}$. Similarly it is straightforward to derive
\begin{align*}
	\forall \phi, \psi \in \convliealg{H}{\alg}\!:\ \phi \convolution \psi = \psi \convolution \phi &\Rightarrow \exp_{\convolution} (\phi + \psi) = (\exp_{\convolution} \phi) \convolution (\exp_{\convolution} \psi)\quad\text{and}\\
	\forall \phi, \psi \in \convgroup{H}{\alg}\!:\ \phi \convolution \psi = \psi \convolution \phi &\Rightarrow \log_{\convolution} (\phi \convolution \psi) = \log_{\convolution} \phi + \log_{\convolution} \psi.
\end{align*}
This construction provides an infinite\footnote{unless $H$ and $\alg$ are finite dimensional} dimensional Lie group together with its Lie algebra! It is easy to check that $\exp_{\convolution}$ indeed is the exponential map, saying that
\begin{equation}
	\frac{\partial}{\partial t} \exp_{\convolution} (t v) = v \convolution \exp_{\convolution} ( t v )
	\label{eq:exp-diff}
\end{equation}
for any $v \in \convliealg{H}{\alg}$ (the differentiation is to be understood pointwise at fixed $x\in H$). Also we find that the Lie bracket on $\convliealg{H}{A}$ is induced by the convolution product through
\begin{equation*}
	\forall v,w \in \convliealg{H}{\alg}\!: \quad 
	\convkommu{v}{w} = 
	\restrict{\frac{\partial^2}{\partial s\, \partial t}}{s=t=0} \left[
			\exp_{\convolution}(tv) \convolution \exp_{\convolution}(sw) \convolution \exp_{\convolution}(-tv) \convolution \exp_{\convolution}(-sw)
	\right].
\end{equation*}
The bijectivity of the exponential map allows for the definition of fractional product
\begin{equation}\label{eq:conv-power}
	\forall g \in \convgroup{H}{\alg}\!:\ \forall \mu \in \K\!:\ g^{\convolution \mu} \defas \exp_{\convolution} \left( \mu \log_{\convolution} g \right)
\end{equation}
in the group, coinciding with the usual iterated convolution product in the case of integer $\mu \in \Z$! In particular any $g\in\convgroup{H}{\alg}$ defines a one-parameter subgroup $\K \ni \mu \mapsto g^{\convolution \mu}$.

Apparently $\convgroup{H}{A}$ is a very interesting structure to study and it turns out that a subgroup of it (given by the characters) is the natural setting of the physicists \emph{renormalization group}. We will fruitfully employ these ideas in section \ref{sec:higher-orders} and recommend \cite{CK:RH2} for further reading.

\section{Algebraic Birkhoff decomposition}
As was discovered by Dirk Kreimer in \cite{CK:RH1}, the recursive procedure of renormalization\footnote{We refer to chapter 5 of \cite{Collins}, in particular section 3. Equations (5.3.6) and (5.3.7) therein essentially are \eqref{eq:birkhoff-dec} below!} may be formulated in algebraic terms as the \emph{Birkhoff decomposition} from
\begin{definition}\label{def:birkhoff}
	Let $H$ be a bialgebra and $\alg = \alg_- \oplus \alg_+$ an algebra, decomposed into the direct sum of two vector spaces $\alg_{\pm}$. Then a \emph{Birkhoff decomposition} of some $\phi \in \convgroup{H}{\alg}$ is a pair $\phi_{\pm} \in \convgroup{H}{\alg}$ such that
	\begin{equation}
		\phi = \phi_-^{\convolution -1} \convolution \phi_+
		\quad\text{and}\quad
		\phi_{\pm} (\ker\counit) \subseteq \alg_{\pm}.
		\label{eq:birkhoff}
	\end{equation}
%	\begin{enumerate}
%		\item $\phi = \phi_-^{\convolution -1} \convolution \phi_+$
%		\item $\phi_- (\1) = \phi_+(\1) = \1_{\alg}$ (remember $\phi(\1) = \1_{\alg}$)
%		\item $\phi_{\pm}(\ker\counit) \subseteq \alg_{\pm}$
%	\end{enumerate}
\end{definition}
For example, as we will see in section \ref{sec:toymodel-dimreg}, \emph{dimensional regularization} yields characters \mbox{$\phi\!: H \rightarrow \alg$} mapping to meromorphic functions\footnote{without essential singularities at $\reg\rightarrow 0$, hence series $\sum_{n \geq N} a_n \reg^n$ for some $N \in \Z$} in a complex variable $\reg$, identified with their Laurent series around $\reg=0$ in $\alg = \K[z^{-1}, z]]$. We want to take the limit $\reg \rightarrow 0$, which in general is impossible due to the presence of singularities.

The \emph{minimal subtraction scheme} is defined by splitting $\alg$ as
\begin{equation}
	\alg_- \defas z^{-1} \K[z^{-1}]
	\quad \text{and} \quad
	\alg_+ \defas \K[[z]],
	\label{eq:ms-splitting}
\end{equation}
hence a Birkhoff decomposition will provide some $\phi_+$ mapping to functions $\alg_+$ holomorphic at $z=0$. The idea of renormalization is to take $\phi_+$ as the definition of the \emph{renormalized} $\phi$, allowing for the \emph{physical limit} $\restrict{\phi_+}{z=0}$.

Our prior study of connectedness and filtrations now pays off in
\begin{satz}\label{satz:birkhoff-dec}
	Let $H$ be a connected bialgebra and $\alg = \alg_- \oplus \alg_+$ a target algebra splitted into subspaces $\alg_{\pm}$. Then every $\phi \in \convgroup{H}{\alg}$ admits a unique Birkhoff decomposition. For $x\in \ker \counit$ it may be computed recursively by
	\begin{equation}
		\phi_-(x) = -R \left[ \bar{\phi}(x) \right]
		\quad \text{and} \quad
		\phi_+(x) = (\id - R) \left[ \bar{\phi}(x)\right],
		\label{eq:birkhoff-dec}
	\end{equation}
	where $R:\ \alg \twoheadrightarrow \alg_-$ denotes the projection induced by the splitting and
	\begin{equation}
		\bar{\phi} \defas \phi + \mul \circ ( \phi_- \tp \phi) \circ \cored, \quad
		\bar{\phi} (x) = \phi(x) + \sum_x \phi_-(x') \phi(x'')
		\label{eq:rbar}
	\end{equation}
	is the \emph{Bogoliubov map} (also called \emph{$\bar{R}$-map}).
\end{satz}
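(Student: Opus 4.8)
The plan is to construct the pair $\phi_\pm$ directly by the recursion \eqref{eq:birkhoff-dec}, then verify that it satisfies the two defining requirements \eqref{eq:birkhoff}, and finally settle uniqueness by a separate induction. Since $H$ is connected I equip it with the coradical filtration $H=\sum_n H^n$ of Definition \ref{def:connected} and argue that \eqref{eq:birkhoff-dec} is a well-founded definition by induction on the filtration degree. On $H^0=\K\cdot\1$ the group condition forces $\phi_\pm(\1)=\1_\alg$, so both maps will lie in $\convgroup{H}{\alg}$. The decisive observation for the inductive step is that on a connected bialgebra the reduced coproduct strictly lowers the degree: writing $\cored(x)=\sum_x x'\tp x''$ for $x\in\ker\counit\cap H^n$, all components $x',x''$ may be taken in $\ker\counit$ of degree $\leq n-1$ (this is condition~2 of Definition \ref{def:filtration} applied to $\Delta$ after stripping the two primitive terms $\1\tp x$ and $x\tp\1$). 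Hence $\bar{\phi}(x)$ in \eqref{eq:rbar} refers only to values of $\phi_-$ already fixed at lower degree, and I may set $\phi_-(x)\defas -R[\bar{\phi}(x)]$ and $\phi_+(x)\defas(\id-R)[\bar{\phi}(x)]$ and extend linearly over $H=\sum_n H^n$.

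Granting this, $\phi_\pm(\ker\counit)\subseteq\alg_\pm$ is immediate because $R$ projects onto $\alg_-$ and $\id-R$ onto $\alg_+$. For the factorization I avoid manipulating $\phi_-^{\convolution-1}$ and instead use that $\convgroup{H}{\alg}$ is a group (Theorem \ref{satz:convolution-group}), so that $\phi=\phi_-^{\convolution-1}\convolution\phi_+$ is equivalent to $\phi_+=\phi_-\convolution\phi$. The latter I check pointwise: expanding $\Delta(x)=\1\tp x+x\tp\1+\cored(x)$ and using $\phi_-(\1)=\phi(\1)=\1_\alg$ gives
\begin{equation*}
	(\phi_-\convolution\phi)(x)=\phi_-(x)+\phi(x)+\mul\circ(\phi_-\tp\phi)\circ\cored(x)=\phi_-(x)+\bar{\phi}(x),
\end{equation*}
and substituting $\phi_-(x)=-R[\bar{\phi}(x)]$ turns the right-hand side into $(\id-R)[\bar{\phi}(x)]=\phi_+(x)$; on $\1$ both sides equal $\1_\alg$. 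This establishes existence.

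For uniqueness I take a second Birkhoff decomposition $(\psi_-,\psi_+)$ and show $\psi_\pm=\phi_\pm$ by induction on the degree, agreement on $\1$ being clear. For $x\in\ker\counit\cap H^n$, the equivalent relation $\psi_+=\psi_-\convolution\phi$ yields $\psi_-(x)=\psi_+(x)-\bar{\psi}(x)$; since $\cored(x)$ involves only strictly lower degrees, the induction hypothesis $\psi_-=\phi_-$ there gives $\bar{\psi}(x)=\bar{\phi}(x)=\phi_+(x)-\phi_-(x)$. Rearranging produces $\psi_-(x)-\phi_-(x)=\psi_+(x)-\phi_+(x)$, whose left-hand side lies in $\alg_-$ and right-hand side in $\alg_+$; as $\alg=\alg_-\oplus\alg_+$ is direct, both vanish and the induction closes.

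I expect the only genuine obstacle to be the careful set-up of the recursion rather than any single computation: one must confirm that the degree-lowering of $\cored$ on a connected bialgebra makes \eqref{eq:birkhoff-dec} self-referential only through strictly smaller degrees, and that defining $\phi_\pm$ on the nested filtration spaces $H^n$ (rather than on a genuine grading) is consistent. Once this well-foundedness is secured, both the verification of \eqref{eq:birkhoff} and the uniqueness are short linear-algebra arguments driven entirely by the splitting $\alg=\alg_-\oplus\alg_+$.
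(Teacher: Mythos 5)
Your proof is correct and takes essentially the same route as the paper: the recursion \eqref{eq:birkhoff-dec} is made well-founded by induction along a connected filtration (using that $\cored$ strictly lowers the degree on $\ker\counit$), and the factorization is reduced to the pointwise identity $(\phi_-\convolution\phi)(x)=\phi_-(x)+\bar{\phi}(x)$. The only difference is organizational — the paper first shows any Birkhoff decomposition is forced to satisfy \eqref{eq:birkhoff-dec}, which yields uniqueness, and then constructs; you construct first and get uniqueness from the directness of $\alg_-\oplus\alg_+$ — but this is the same linear algebra of the splitting either way.
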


\begin{proof}
	Given some Birkhoff decomposition $\phi_{\pm}$ of $\phi$, \eqref{eq:birkhoff-dec} is an immediate consequence of $\bar{\phi} = \phi + \phi_- \convolution \phi - \phi -\phi_- = \phi_+ - \phi_-$ as $R^2 = R$ and $\phi_{\pm}(\ker \counit) \subseteq \alg_{\pm}$.
	Taking any connected filtration of $H$, starting with $\phi_-(\1)=\1_{\alg}$ we see inductively that $\phi_-$ is uniquely determined on each $H^n$ through \eqref{eq:birkhoff-dec} and $\cored (H^{n+1} )\subseteq \sum_{k=1}^n H^k \tp H^{n+1-k}$.
	
	Having thus proven uniqueness of $\phi_-$ and therefore of $\phi_+ = \phi_- \convolution \phi$ as well, we obtain existence by defining $\phi_-$ recursively on each $H^n$ using \eqref{eq:birkhoff-dec}. This construction ensures $\phi_-(\ker \counit) \subseteq \alg_- = \im R$, but as we must set $\phi_+ \defas \phi_- \convolution \phi$ to obtain a Birkhoff decomposition it remains to check $\phi_+(\ker \counit) \subseteq \alg_+ = \ker R$, which is immediate by
	\begin{equation*}
		\restrict{\phi_+}{\ker\counit}
		\defas \left[\phi_- \convolution \phi \right]_{\ker\counit}
		= \left[\phi_- + \bar{\phi} \right]_{\ker\counit}
		\urel{\eqref{eq:birkhoff-dec}} \left[ (\id - R) \circ \bar{\phi}  \right]_{\ker\counit}. \qedhere
	\end{equation*}
\end{proof}
In our example of minimal subtraction, the projection $\Rms$ just keeps the (finitely many) pole terms $\sum_{n<0} a_n z^n$. For primitive elements $p\in\Prim(H)$, \eqref{eq:birkhoff-dec} simplifies to
\begin{equation*}
	\bar{\phi}(p) = \phi(p)
	,\quad
	\phi_-(p) = -R \left[ \phi(p) \right]
	\quad\text{and}\quad
	\phi_+(p) = (\id-R) \left[ \phi(p) \right].
\end{equation*}
Thus for primitives, the minimal subtraction scheme simply discards all poles from the Laurent series of $\phi(p)$ to obtain $\phi_+(p)$. Suppose $\phi(p)=s^{-\reg} F(\reg)$ for $F(\reg) = \sum_{n=-1}^{\infty} \coeff{n} \reg^n$, then $\Rms$ delivers the \emph{counterterm}\footnote{This is the name for $\phi_-$ common in physics.} $\phi_-(p) = \frac{\coeff{-1}}{\reg}$ and the renormalized value
\begin{equation*}
	\phi_+ (p)
	= \left( \sum_{n=0}^{\infty} \coeff{n} \reg^n \right) s^{-\reg} + \coeff{-1}\sum_{n=1}^{\infty} \frac{{ (-\ln s)}^n}{n!} \reg^{n-1}.
\end{equation*}
In this case, the physical limit $\reg \rightarrow 0$ becomes
\begin{equation}
	\lim_{\reg\rightarrow 0} \phi_+(p)
	= \coeff{0} - \coeff{-1} \ln s.
	\label{toyZ-MS:()}
\end{equation}

\subsubsection{Inverses as Birkhoff decompositions}
Consider a connected bialgebra $H$ and as target algebra $\alg \defas H$ itself, splitted as $H = H \oplus \set{0}$ with $\alg_- \defas H$ and $\alg_+ \defas \set{0}$ (hence $R=\id$). Then for any $\phi\in\convgroup{H}{H}$, its Birkhoff decomposition fulfils $\phi_+ (\1) = \1$ and $\phi_+(\ker \counit) \subseteq \set{0}$. We conclude that $\phi_+ = e = \unit \circ \counit$ by \eqref{eq:augmentation}.

Hence we obtain $\phi = \phi_-^{\convolution-1} \convolution \phi_+ = \phi_-^{\convolution -1}$: The counterterm $\phi_-$ in this scheme is nothing but the convolution inverse of $\phi$! In particular this gives another proof of theorem \ref{satz:convolution-group} delivering the recursive formula
\begin{equation}
	\forall x \in \ker \counit\!: \quad
	\phi^{\convolution-1} (x) = - \phi(x) - \sum_{x} \phi^{\convolution-1} (x') \phi(x'').
	\label{eq:conv-inv-recursive}
\end{equation}
For example, in this setting the antipode $S$ is the counterterm $S=\phi_-$ of $\phi = \id$, thus
\begin{equation}
	\forall x \in \ker \counit\!: \quad
	S(x) = -x - \sum_{x} S(x') x''.
	\label{eq:antipode-recursive}
\end{equation}
We also obtain $\phi^{\convolution-1} (x) = - \phi(x) - \sum_{x} \phi(x')\phi^{\convolution-1} (x'')$ by considering a flipped decomposition $\phi = \phi_+ \convolution {\phi}^{\convolution -1}$.

\subsection{Decomposition of characters}
As we are particularly interested into \emph{characters} $\phi$, we ask whether the Birkhoff decomposition respects this special property in
\begin{proposition}
	Let $H$ be a connected bialgebra, $\phi \in \chars{H}{\alg}$ a morphism of (unital) algebras with commutative $\alg$ and $\alg = \alg_- \oplus \alg_+$ a splitting into \emph{subalgebras}\footnote{Note that $\alg_+$ and $\alg_-$ do not need to be unital!}. Then the Birkhoff decomposition parts $\phi_-$ and $\phi_+$ are algebra morphisms themselves.
\end{proposition}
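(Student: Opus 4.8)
The plan is to reduce everything to the claim that the counterterm $\phi_-$ is itself a character. Once that is established, $\phi_+ = \phi_- \convolution \phi$ (from $\phi = \phi_-^{\convolution -1}\convolution\phi_+$) is a convolution product of two elements of $\chars{H}{\alg}$, hence again a character by Lemma~\ref{satz:character-group}; this lemma applies because a connected bialgebra is a Hopf algebra (Corollary~\ref{satz:zsh-bialg-hopf}) and $\alg$ is commutative. Since $\phi_-(\1)=\1_{\alg}$ and $H=\K\cdot\1\oplus\ker\counit$, bilinearity reduces the character property of $\phi_-$ to the single statement $\phi_-(xy)=\phi_-(x)\phi_-(y)$ for $x,y\in\ker\counit$ (note $xy\in\ker\counit$, as $\ker\counit$ is an ideal).

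The decisive structural input is that the projection $R$ is a \emph{Rota--Baxter operator}. Writing $a=R(a)+(\id-R)(a)$ and using that \emph{both} $\alg_-=\im R$ and $\alg_+=\ker R$ are closed under multiplication, a short expansion gives
\[
	R(a)R(b)=R\bigl(R(a)\,b+a\,R(b)-ab\bigr)\qquad\text{for all }a,b\in\alg.
\]
This is precisely — and the only — place where the hypothesis that the splitting $\alg=\alg_-\oplus\alg_+$ is into \emph{subalgebras} is used.

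Next I would establish, by induction on the filtration degree, a multiplicativity identity for the Bogoliubov map $\bar{\phi}=\phi+\mul\circ(\phi_-\tp\phi)\circ\cored$. Expanding $\cored(xy)$ through $\Delta(xy)=\Delta(x)\Delta(y)$ (valid since $\Delta$ is an algebra morphism) yields, for $x,y\in\ker\counit$, the terms $x\tp y$ and $y\tp x$ together with mixed terms in which at least one tensor leg is a proper reduced-coproduct piece $x',x'',y',y''$, hence of strictly smaller degree. Substituting this into $\bar{\phi}$, invoking that $\phi$ is a character and the inductive hypothesis that $\phi_-$ is multiplicative in lower degrees, and collecting terms using commutativity of $\alg$, I expect to arrive at
\[
	\bar{\phi}(xy)=\bar{\phi}(x)\bar{\phi}(y)+\phi_-(x)\bar{\phi}(y)+\phi_-(y)\bar{\phi}(x).
\]
Applying $-R$ and recalling $\phi_-(x)=-R[\bar{\phi}(x)]$, $\phi_-(y)=-R[\bar{\phi}(y)]$, I set $a\defas\bar{\phi}(x)$, $b\defas\bar{\phi}(y)$; the Rota--Baxter relation then collapses $\phi_-(xy)=-R\bigl(ab-R(a)b-aR(b)\bigr)$ to $R(a)R(b)=\phi_-(x)\phi_-(y)$, closing the induction. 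The base case in which both $x,y$ are primitive is immediate, since then $\cored(x)=\cored(y)=0$ and the displayed identity holds with $\bar{\phi}(x)=\phi(x)$, $\bar{\phi}(y)=\phi(y)$ without any appeal to the hypothesis.

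The main obstacle I anticipate is the bookkeeping in the expansion of $\cored(xy)$: one must verify that every term requiring the inductive hypothesis genuinely involves a factor of strictly smaller degree, so that the induction is well-founded, and then repackage the many resulting summands into the clean identity above. By contrast, the Rota--Baxter step that finishes the argument is a purely formal manipulation.
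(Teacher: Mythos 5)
Your proposal is correct and follows essentially the same route as the paper's own proof: the same induction along a connected filtration, the same regrouping of $\bar{\phi}(xy)$ into $\bar{\phi}(x)\bar{\phi}(y)+\phi_-(x)\bar{\phi}(y)+\phi_-(y)\bar{\phi}(x)$ using $\Delta(xy)=\Delta(x)\Delta(y)$, the character property of $\phi$ and the inductive hypothesis, and the same Rota--Baxter identity for the projection $R$ (which is exactly where the subalgebra hypothesis enters) to collapse $-R\bigl[ab-R(a)b-aR(b)\bigr]$ to $R(a)R(b)=\phi_-(x)\phi_-(y)$. Your explicit treatment of $\phi_+$ via Lemma~\ref{satz:character-group} only spells out what the paper leaves implicit.
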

\begin{proof}
	We prove the multiplicativity of $\phi_-$ inductively: Let \mbox{$\phi_-(xy) = \phi_-(x) \phi_-(y)$} be true for any $x,y \in H^n$ for some $n \in \N_0$, considering some filtration $H = \sum_{n\in\N_0} H^n$ with $H^0 = \K \cdot \1$ providing a trivial start of the induction. Then for any $x,y \in H^{n+1} \cap \ker \counit$,
	\settowidth{\wurelwidth}{\eqref{eq:birkhoff-dec}}
	\begin{align*}
		\phi_- (xy) &\wurel{\eqref{eq:birkhoff-dec}}
				-R\left[ \phi(xy) + \sum_{x\cdot y} \phi_- \left( \{xy\} ' \right) \phi \left( \{xy\}'' \right) \right] \\
		&\wurel{}
				-R \left[ \phi(x) \phi(y) + \sum_{x} \phi_-(x') \phi(x'') \sum_{y} \phi_-(y') \phi(y'') + \phi_-(x) \phi(y) + \phi(x)\phi_-(y) \right. \\
		&\quad\quad \left. + \left\{ \phi(x) + \phi_-(x) \right\} \sum_{y} \phi_-(y') \phi(y'') + \left\{ \phi(y) + \phi_-(y) \right\} \sum_{x} \phi_-(x') \phi(x'') \right] \\
		&\wurel{} -R \left[ \left\{ \phi(x) + \sum_{x} \phi_-(x') \phi(x'') \right\} \cdot \left\{ \phi(y) + \sum_{y} \phi_-(y')\phi(y'') \right\} \right. \\
		&\quad\quad \left. + \phi_-(x) \cdot \left\{ \phi(y) + \sum_{y} \phi_-(y') \phi(y'') \right\} + \left\{ \phi(x) + \sum_{x} \phi_-(x') \phi(x'') \right\} \cdot \phi_-(y) \right] \\
		&\wurel{} R \left[ \left\{ R\bar{\phi}(x) \right\} \bar{\phi}(y) + \bar{\phi}(x) \left\{ R\bar{\phi}(y) \right\} - \bar{\phi}(x) \bar{\phi}(y) \right] 
		\urel{\eqref{eq:Rota-Baxter}} \left[ R\bar{\phi}(x) \right] \cdot \left[ R\bar{\phi}(y) \right] \\
		&\wurel{\eqref{eq:birkhoff-dec}} \left[ -\phi_-(x) \right] \cdot \left[- \phi_-(y) \right]
		= \phi_-(x) \cdot \phi_-(y)
	\end{align*}
	where we decomposed $\cored (xy) = \Delta (xy) - \1 \tp xy - xy\tp \1 = \Delta(x) \cdot \Delta(y) - \1 \tp xy - xy \tp \1 = (\cored x + \1 \tp x + x \tp \1) \cdot (\cored y + \1 \tp y + y \tp \1) - \1 \tp xy - xy \tp \1$ and exploited the so-called \emph{Rota-Baxter equation}
	\begin{equation}
		R \circ \mul_{\alg} + \mul_{\alg} \circ (R \tp R) = R \circ \mul_{\alg} \left[ R \tp \id + \id \tp R  \right].
		\label{eq:Rota-Baxter}
	\end{equation}
	This is equivalent to $R(xy) + R(x) R(y) = R \left[ (Rx)y + x(Ry) \right]$ for all $x,y \in \alg$ and in particular fulfilled for any projection $R$. This comes about as:
	\begin{enumerate}
		\item If $x,y \in \ker R$, then also $xy \in \ker R$ ($\ker R = \alg_+$ is a subalgebra) such that both sides of \eqref{eq:Rota-Baxter} vanish.
		\item If $x,y \in \im R = \alg_-$, so is $xy$, hence by $\restrict{R}{\im R} = \restrict{\id}{\im R}$ both sides of \eqref{eq:Rota-Baxter} give $2 R (xy) = 2xy$.
		\item Let $x\in \ker R$ and $y \in \im R$, then \eqref{eq:Rota-Baxter} reduces to $R(xy) = R[x(Ry)]$ which follows from $Ry = y$. Analogously treat the case when $x\in\im R$ and $y\in\ker R$. \qedhere
	\end{enumerate}
\end{proof}
More generally, we may use \eqref{eq:birkhoff-dec} to define $\phi_{\pm}$ for arbitrary $R \in \End{\alg}$, without restricting to projections $R=R^2$. This \emph{generalized Birkhoff decomposition} clearly fulfils \mbox{$\phi_- (\ker \counit) \subseteq \im R$} and \mbox{$\phi_+ (\ker \counit) \subseteq \ker R$}.

The above proof applies to this case as well, proving $\phi_{\pm}\in\chars{H}{\alg}$ for \mbox{$\phi \in \chars{H}{\alg}$} as long as $R$ fulfils \eqref{eq:Rota-Baxter}. This motivates the investigation of Birkhoff decompositions and renormalization in the context of Rota-Baxter algebras, an active and recent field of research (see \cite{Spitzer} and references therein).

We close this section by considering multiplicative\footnote{We call renormalization schemes $R=R^2$ \emph{multiplicative} iff they are morphisms of algebras.} renormalization schemes in
\begin{proposition}
	Let $H$ be a connected bialgebra and $\alg = \alg_- \oplus \alg_+$ a commutative algebra with renormalization scheme $R=R^2\!: \alg \twoheadrightarrow \alg_-$ that is also a morphism of unital algebras (hence in particular $\1_{\alg}\in\alg_-$). Then for any $\phi \in \convgroup{H}{\alg}$ the Birkhoff decomposition reads
	\begin{equation}
		\phi_- = R \circ \phi^{\convolution -1}
		\quad \text{and} \quad
		\phi_{+} = (R\circ \phi^{\convolution -1}) \convolution \phi.
		\label{eq:birkhoff-multiplicative-scheme}
	\end{equation}
\end{proposition}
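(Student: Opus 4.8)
The plan is to bypass the recursion \eqref{eq:birkhoff-dec} entirely and instead exploit the \emph{uniqueness} of the Birkhoff decomposition established in Theorem~\ref{satz:birkhoff-dec}. I would simply propose the candidate pair $\psi_- \defas R\circ\phi^{\convolution -1}$ and $\psi_+ \defas \psi_-\convolution\phi$ and check that it satisfies every requirement of Definition~\ref{def:birkhoff}. Since the decomposition is unique, this forces $\phi_- = \psi_-$ and $\phi_+ = \psi_+$, which is exactly the claim \eqref{eq:birkhoff-multiplicative-scheme}.

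First I would dispose of the normalization, one placement condition, and the factorization, all of which are routine. Because $\convgroup{H}{\alg}$ is a group (Theorem~\ref{satz:convolution-group}), $\phi^{\convolution -1}$ lies in it, so $\phi^{\convolution -1}(\1) = \1_{\alg}$; applying the unital morphism $R$ gives $\psi_-(\1) = R(\1_{\alg}) = \1_{\alg}$, and then $\psi_+(\1) = \psi_-(\1)\,\phi(\1) = \1_{\alg}$, so $\psi_\pm\in\convgroup{H}{\alg}$. For $x\in\ker\counit$ one has $\psi_-(x) = R\bigl(\phi^{\convolution -1}(x)\bigr)\in\im R = \alg_-$, settling $\psi_-(\ker\counit)\subseteq\alg_-$. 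The factorization is automatic, since $\psi_-$ is invertible in the group and $\psi_-^{\convolution -1}\convolution\psi_+ = \psi_-^{\convolution -1}\convolution\psi_-\convolution\phi = \phi$.

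The real work, and the step I expect to be the crux, is verifying the second placement condition $\psi_+(\ker\counit)\subseteq\alg_+ = \ker R$. I would prove the stronger identity $R\circ\psi_+ = e$, and here both hypotheses on $R$ enter. Because $R$ is an algebra morphism, $R\circ\mul_{\alg} = \mul_{\alg}\circ(R\tp R)$, so
\begin{equation*}
	R\circ\psi_+ = R\circ\mul_{\alg}\circ(\psi_-\tp\phi)\circ\Delta = \mul_{\alg}\circ\bigl((R\circ\psi_-)\tp(R\circ\phi)\bigr)\circ\Delta.
\end{equation*}
Since $R=R^2$ we have $R\circ\psi_- = R\circ R\circ\phi^{\convolution -1} = \psi_-$, so the right-hand side equals $\psi_-\convolution(R\circ\phi) = (R\circ\phi^{\convolution -1})\convolution(R\circ\phi)$. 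A second application of multiplicativity of $R$ collapses this to
\begin{equation*}
	(R\circ\phi^{\convolution -1})\convolution(R\circ\phi) = R\circ\bigl(\phi^{\convolution -1}\convolution\phi\bigr) = R\circ e = e,
\end{equation*}
the last equality because $R\circ\unit = \unit$ forces $R\circ e = R\circ\unit\circ\counit = \unit\circ\counit = e$. Hence $R(\psi_+(x)) = e(x) = 0$ for every $x\in\ker\counit$, giving $\psi_+(\ker\counit)\subseteq\ker R = \alg_+$.

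With all conditions of Definition~\ref{def:birkhoff} verified, uniqueness of the Birkhoff decomposition yields $\phi_- = \psi_-$ and $\phi_+ = \psi_+$, which is precisely \eqref{eq:birkhoff-multiplicative-scheme}. The point worth emphasizing is that \emph{both} properties $R=R^2$ and $R\circ\mul_{\alg} = \mul_{\alg}\circ(R\tp R)$ are indispensable: dropping multiplicativity would already break the first manipulation above, so the simple closed form \eqref{eq:birkhoff-multiplicative-scheme} is genuinely special to multiplicative schemes and not available for a general Rota-Baxter projection.
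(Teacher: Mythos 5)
Your proposal is correct and follows essentially the same route as the paper: propose the candidate pair $\psi_-=R\circ\phi^{\convolution-1}$, $\psi_+=\psi_-\convolution\phi$, invoke uniqueness from Theorem~\ref{satz:birkhoff-dec}, and reduce everything to the computation $R\circ\psi_+=(R^2\circ\phi^{\convolution-1})\convolution(R\circ\phi)=R\circ(\phi^{\convolution-1}\convolution\phi)=e$, which uses multiplicativity of $R$ twice and $R=R^2$ once. You spell out the normalization, placement, and factorization checks that the paper dismisses as trivial, but the substance of the argument is identical.
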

\begin{proof}
	First note that trivially $R \circ \phi^{\convolution-1} (\ker \counit) \subseteq \im R = \alg_-$ and $R \circ \phi^{\convolution-1} (\1) = {\1}_{\alg}$ by $R({\1}_{\alg}) = {\1}_{\alg}$.	
	Therefore uniqueness of the Birkhoff decomposition implies that it suffices to check $\left[ (R \circ \phi^{\convolution -1}) \convolution \phi \right] (\ker \counit) \subseteq \alg_+ = \ker R$, which follows from
	\begin{equation*}
		R \circ \left[ \left( R\circ \phi^{\convolution -1} \right) \convolution \phi \right]
		= \left( R^2 \circ {\phi}^{\convolution -1} \right) \convolution (R \circ \phi)
		= R \circ \left( {\phi}^{\convolution -1} \convolution \phi \right)
		= R \circ u_{\alg} \circ \counit_H
		= e. \qedhere
	\end{equation*}
\end{proof}
As an application of \eqref{eq:character-group} we deduce in particular
\begin{korollar}
	If $H$ is a Hopf algebra and $R=R^2\in\chars{\alg}{\alg}$ a renormalization scheme on the commutative algebra $\alg$, then for any $\phi \in \chars{H}{\alg}$ the Birkhoff decomposition reads
	\begin{equation}
		\phi_- = R \circ \phi \circ S = R \circ {\phi}^{\convolution -1}
		\quad\text{and}\quad
		\phi_+ = \left( R \circ \phi^{\convolution -1} \right) \convolution \phi.
		\label{eq:birkhoff-character}
	\end{equation}
\end{korollar}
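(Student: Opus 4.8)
The plan is to read off \eqref{eq:birkhoff-character} from the two results that immediately precede it, namely the formula \eqref{eq:birkhoff-multiplicative-scheme} for the Birkhoff decomposition under a multiplicative scheme and the character identity \eqref{eq:character-group}. Since every connected bialgebra is a Hopf algebra (corollary \ref{satz:zsh-bialg-hopf}) and admits a unique Birkhoff decomposition (theorem \ref{satz:birkhoff-dec}), there is nothing to construct here; the work lies entirely in matching the hypotheses of the previous proposition and then performing a substitution.

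First I would check that $R$ supplies a splitting of the kind required by that proposition. By assumption $R = R^2 \in \chars{\alg}{\alg}$, so $R$ is an idempotent morphism of unital algebras; setting $\alg_- \defas \im R$ and $\alg_+ \defas \ker R$ yields the direct sum $\alg = \alg_- \oplus \alg_+$ with $R$ the projection onto $\alg_-$. Multiplicativity makes $\im R$ a unital subalgebra (it contains $R(\1_\alg) = \1_\alg$ and is closed under products) and $\ker R$ a not necessarily unital subalgebra, so $R$ is precisely the unital-algebra-morphism renormalization scheme to which the previous proposition, cf. \eqref{eq:birkhoff-multiplicative-scheme}, applies. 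That proposition then gives, for our $\phi \in \chars{H}{\alg} \subseteq \convgroup{H}{\alg}$,
\begin{equation*}
	\phi_- = R \circ \phi^{\convolution -1}
	\quad\text{and}\quad
	\phi_+ = \left( R \circ \phi^{\convolution -1} \right) \convolution \phi,
\end{equation*}
which is already the right-hand equality for $\phi_-$ together with the asserted expression for $\phi_+$.

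It then remains only to rewrite $\phi^{\convolution -1}$. Here I would invoke \eqref{eq:character-group}: as $H$ is a Hopf algebra, $\alg$ is commutative and $\phi$ is a character, lemma \ref{satz:character-group} yields $\phi^{\convolution -1} = \phi \circ S$. Substituting this into $\phi_- = R \circ \phi^{\convolution -1}$ produces the remaining identity $\phi_- = R \circ \phi \circ S$, completing the proof.

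I do not expect a genuine obstacle, as the statement is a straight corollary. The only point demanding care is the hypothesis check in the first step — verifying that an idempotent $R \in \chars{\alg}{\alg}$ truly induces a splitting into subalgebras with $R$ a unital morphism onto $\alg_-$, so that the previous proposition is legitimately applicable — and, relatedly, ensuring that $H$ is connected (the standing assumption throughout this section) so that the Birkhoff decomposition exists and is unique, which is what lets us identify the $\phi_\pm$ computed above with \emph{the} decomposition.
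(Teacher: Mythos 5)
Your proof is correct and follows exactly the paper's route: the corollary is stated there as an immediate application of \eqref{eq:character-group} to the preceding proposition \eqref{eq:birkhoff-multiplicative-scheme}, which is precisely your substitution $\phi^{\convolution -1} = \phi \circ S$ into $\phi_- = R \circ \phi^{\convolution -1}$. Your explicit verification that an idempotent $R \in \chars{\alg}{\alg}$ induces the required splitting into subalgebras, and the remark on connectedness, merely spell out what the paper leaves implicit.
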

Hence for multiplicative schemes, the renormalization happens entirely on the combinatorial side of the Hopf algebra! In contrast, the general case really forces inductive calculation of $\phi_-$ with lots of nested applications of $R$. The much simpler case of \eqref{eq:birkhoff-multiplicative-scheme} is present in physics in the \emph{momentum schemes} we will encounter in the next chapter, leading to superior algebraic properties in comparison to schemes like \emph{minimal subtraction}, where $\Rms \notin \chars{\alg}{\alg}$:
\begin{equation*}
	\Rms \left( z \cdot \frac{1}{z^2} \right)
	= \frac{1}{z}
	\neq 0
	= \Rms \left(z \right) \cdot \Rms \left( \frac{1}{z^2} \right).
\end{equation*}

\section{Rooted Trees}
\label{sec:H_R}
So far we did not give any examples of Hopf algebras! We only mention that the tensor algebra $T(V)$ over a vector space $V$ and the universal enveloping algebra $\mathcal{U}(\mathcal{L})$ of a Lie algebra $\mathcal{L}$ carry Hopf algebra structures in a natural way and refer to \cite{Sweedler} for details. We will have a very brief look at symmetric algebras in section \ref{sec:polynomials}.

However, in this section we introduce the Hopf algebra of rooted trees as it describes the combinatorics of renormalization of nested and disjoint subdivergences\footnote{see section \ref{sec:iterated-insertions}} for a single primitive divergence in {\qft}, which is the content of the \emph{toy model} to be investigated in the following chapter.
\begin{definition}
	A graph theoretic tree $T$ consists of sets $V(T)$ of nodes and $E(T)\subset \setexp{e\subseteq V(T)}{\abs{e} = 2}$ of edges such that $T$ is connected\footnote{%
For any $v,w \in V(T)$ there exists a path $v=v_0 \rightarrow v_1 \rightarrow \ldots \rightarrow v_n = w$ of nodes such that $\set{v_i,v_{i+1}} \in E(T)$ for any $0\leq i<n$.}
	and simply connected.\footnote{%
	$T$ does not contain any \emph{cycles} of edges.}

	We define a \emph{labelled rooted tree} as a pair $(T, r)$ of a graph theoretic tree $T$ and a distinguished node $r \in V(T)$, called the \emph{root} of $(T, r)$.
\end{definition}
An isomorphism $\phi\!:\ (T,r) \rightarrow (T', r')$ of labelled rooted trees is an isomorphism\footnote{%
A bijection $\phi\!:\  V(T) \rightarrow V(T')$ such that $\set{v,w} \in E(T) \gdw \set{\phi(v),\phi(w)} \in E(T')$ for any $v,w \in V(T)$.}
of the graphs $T$ and $T'$ fixing the root $\phi(r) = r'$. We are only interested in isomorphism classes of trees as we do not care about the names of the nodes -- only their connections count. We finally state
\begin{definition}
	A \emph{rooted tree} is an isomorphism class of labelled rooted trees. Let
	\begin{equation}
		\trees = \set{\tree{+-}, \tree{++--}, \tree{+++---}, \tree{++-+--}, \tree{++++----}, \tree{+++-+---}, \tree{++-++---}, \tree{++-+-+--}, \ldots}
		\label{eq:trees}
	\end{equation}
	denote the set of rooted trees. A \emph{rooted forest} is a disjoint union of rooted trees,
	\begin{equation}
		\forests = \set{\1} \cupdot \trees \cupdot \set{\tree{+-}\tree{+-}, \tree{+-}\tree{+-}\tree{+-}, \tree{+-}\tree{++--}, \tree{+-}\tree{+-}\tree{+-}\tree{+-}, \tree{+-}\tree{+-}\tree{++--}, \tree{+-}\tree{++-+--}, \tree{+-}\tree{+++---}, \ldots}
		\label{eq:forests}
	\end{equation}
	shall denote the set of rooted forests. Here, $\1 \defas \emptyset$ denotes the empty rooted forest (that does not contain any nodes). Every rooted forest $f$ is the union of a unique multiset of rooted trees denoted by $\pi_0(f)$.
\end{definition}
In the intuitive pictorial representation of rooted forests, as used in \eqref{eq:trees} and \eqref{eq:forests}, we will always draw the roots at the top. Note that there is no order among the children of a node or the trees of a forest, such that
\begin{equation}\label{eq:unordered}
	\tree{++-+-++---} = \tree{++-++--+--} = \tree{+++--+-+--}
	\quad \text{and} \quad
	\tree{+-}\tree{++--}\tree{++-+--}
	= \tree{+-} \tree{++-+--} \tree{++--}
	= \tree{++--} \tree{+-} \tree{++-+--}
	= \tree{++--} \tree{++-+--} \tree{+-}
	= \tree{++-+--} \tree{+-} \tree{++--}
	= \tree{++-+--} \tree{++--} \tree{+-}.
\end{equation}
These trees and forests are sometimes called \emph{non-planar}, emphasizing that they do not carry a distinguished planar embedding with them. However, to avoid confusion with graph theory\footnote{where every tree is considered to be \emph{planar}} we prefer to call these \emph{unordered} rooted trees and forests.

On the other hand, one can consider forests with a distinguished total order among the children of any node and among the trees of a forest. Thus the drawings in \eqref{eq:unordered} all represent different \emph{ordered (planar)} rooted trees forests.

\begin{definition}
	The algebra $H_R$ of (unordered) rooted trees is the symmetric algebra $H_R \defas S(\lin\trees) = \K[\trees]$ generated by rooted trees. As a vector space it has the natural basis $\forests$, each forest representing a unique monomial in trees.

	The \emph{grafting operator} $B_+ \in \End(H_R)$ is defined by adding a new root (above all existing roots) to a rooted forest, extended linearly. So for example,
	\begin{equation*}
		B_+ \left( \alpha \1 + \beta \tree{+-} + \gamma \tree{+-}\tree{++--} \right)
		= \alpha \tree{+-} + \beta \tree{++--} + \gamma \tree{++-++---}.
	\end{equation*}
\end{definition}
Note that $H_R = \bigoplus_{n \in \N_0} H_{R,n}$ carries a natural grading\footnote{called \emph{weight} or \emph{degree}} by node number through
\begin{equation}\label{eq:H_R-graduation}
	\forall n\in\N_0\!: \quad
	H_{R,n} = \lin \forests_n
	\quad \text{for} \quad
	\forests_n \defas \setexp{f\in\forests}{\abs{f} \defas \abs{V(f)} = n}.
\end{equation}
Clearly, $B_+$ is homogenous of degree one with respect to this grading. Also note $\im B_+ = \lin \trees$, in particular $B_+\!:\ \forests \rightarrow \trees$ delivers a bijection.

\subsection{The coproduct}
\label{sec:coproduct}
To turn $H_R$ into a bialgebra, we define the coproduct $\Delta$ by requiring
\begin{equation}
	\Delta \circ B_+
	= B_+ \tp \1 + (\id \tp B_+) \circ \Delta,
	\label{eq:B_+-cocycle}
\end{equation}
as this determines $\Delta$ uniquely as a morphism of unital algebras. For example,
\begin{align}
	\Delta (\tree{+-})
	&= \Delta \circ B_+ (\1)
	= B_+(\1) \tp \1 + (\id \tp B_+) \circ \Delta (\1)
%	= \tree{+-} \tp \1 + (\id \tp B_+) (\1 \tp \1)
	= \tree{+-} \tp \1 + \1 \tp \tree{+-} 
	\label{coproduct:()}\\
	\Delta \left( \tree{++-+--}\right)
	&= \Delta \circ B_+ \left({\tree{+-}}^2 \right)
	= B_+ \left( {\tree{+-}}^2 \right) \tp \1 + (\id \tp B_+) \circ \Delta \left( {\tree{+-}}^2 \right) \nonumber\\
	&= \tree{++-+--} \tp \1 + (\id \tp B_+) \left( {\left[ \Delta(\tree{+-}) \right]}^2 \right)
	= \tree{++-+--} \tp \1 + (\id \tp B_+) \left( \tree{+-}\tree{+-} \tp \1 + 2 \tree{+-} \tp \tree{+-} + \1 \tp \tree{+-}\tree{+-} \right) \nonumber\\
	&= \tree{++-+--} \tp \1 + \tree{+-}\tree{+-} \tp \tree{+-} + 2\tree{+-} \tp \tree{++--} + \1 \tp \tree{++-+--}.
	\label{coproduct:(()())}
\end{align}
\begin{proposition}
	The coproduct $\Delta\!:\ H_R \rightarrow H_R \tp H_R$ defined by \eqref{eq:B_+-cocycle} is coassociative.
\end{proposition}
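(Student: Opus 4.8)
The plan is to verify the two iterated coproducts $(\id \tp \Delta)\circ\Delta$ and $(\Delta \tp \id)\circ\Delta$ agree, first reducing the claim to generators and then running an induction on the node number. The key observation is that both of these maps are algebra morphisms $H_R \to H_R^{\tp 3}$: the coproduct $\Delta$ is a morphism of unital algebras by its very construction via \eqref{eq:B_+-cocycle}, and since a tensor product of algebra morphisms is again an algebra morphism for the tensor-product structure \eqref{eq:alg-tensor}, the maps $\id \tp \Delta$ and $\Delta \tp \id$ are algebra morphisms $H_R^{\tp 2}\to H_R^{\tp 3}$, hence so are the composites. Because $H_R$ is freely generated as a commutative algebra by $\trees$, it therefore suffices to prove the equality on a single tree $t$; agreement on all trees of node number $\le m$ then automatically propagates to every forest built from such trees, as morphisms respect products.

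With this reduction in place I would induct on $n = \abs t$. The base case is the one-node tree $\tree{+-} = B_+(\1)$ with $\Delta(\tree{+-}) = \tree{+-}\tp\1 + \1\tp\tree{+-}$, where a direct expansion shows both sides equal $\tree{+-}\tp\1\tp\1 + \1\tp\tree{+-}\tp\1 + \1\tp\1\tp\tree{+-}$ (the empty forest $\1$ being trivial, as $\Delta\1 = \1\tp\1$). For the inductive step, write $t = B_+(f)$ with $\abs f = n-1$ — possible since $B_+\colon \forests \to \trees$ is a bijection — and apply the cocycle relation \eqref{eq:B_+-cocycle} to obtain $\Delta(t) = t\tp\1 + (\id\tp B_+)\Delta(f)$.

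The heart of the argument is then the computation. Writing $\Delta(f) = \sum_f f_1 \tp f_2$ in Sweedler notation, I would apply $\id\tp\Delta$ and $\Delta\tp\id$ to $\Delta(t)$, invoking \eqref{eq:B_+-cocycle} once more on the inner $B_+(f_2)$ factors. Both results share the terms $t\tp\1\tp\1 + \sum_f f_1 \tp B_+(f_2)\tp\1$, and the only remaining pieces are $(\id\tp\id\tp B_+)(\id\tp\Delta)\Delta(f)$ on one side and $(\id\tp\id\tp B_+)(\Delta\tp\id)\Delta(f)$ on the other. Since $\abs f = n-1$, the induction hypothesis — coassociativity on the forest $f$, which holds by the first paragraph's multiplicativity reduction applied to its constituent trees of lower node number — gives $(\id\tp\Delta)\Delta(f) = (\Delta\tp\id)\Delta(f)$, and applying the linear map $\id\tp\id\tp B_+$ preserves this equality, closing the induction.

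The main obstacle is purely in the bookkeeping of the tensor legs: one must track the $\1$-terms carefully and recognize the identity $\sum_f f_1 \tp (\id\tp B_+)\Delta(f_2) = (\id\tp\id\tp B_+)(\id\tp\Delta)\Delta(f)$ together with its mirror image on the other side, since it is exactly this rewriting that lets the induction hypothesis be invoked. A secondary subtlety worth flagging is the interface between the tree-level induction and the forest-level statement one actually needs on $f$: coassociativity on the forest $f$ must be deduced from coassociativity on its \emph{trees} of strictly smaller node number via multiplicativity, so the induction is genuinely a strong induction on node number with the forest statement derived from the tree statement at each stage.
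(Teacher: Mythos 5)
Your proof is correct and takes essentially the same route as the paper's: reduce to trees via the multiplicativity of both iterated coproducts, induct on the node number, write $t = B_+(f)$, expand with the cocycle relation, and close the induction by applying the hypothesis to $\Delta(f)$. The only cosmetic difference is that you expand both sides and match terms, while the paper transforms one side into the other in a single chain of equalities.
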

\begin{proof}
	As $\Delta$ is multiplicative by construction, $(\Delta \tp \id) \circ \Delta = (\id \tp \Delta) \circ \Delta$ needs only to be checked on trees. We employ induction over the number of nodes: Let the claim be true for any tree of less than $N$ nodes (hence also on $\forests_{N-1}$). Now consider $t\in\trees_N\defas\setexp{t\in\trees}{\abs{t}=N}$, then $t=B_+(f)$ for $f\in\forests_{N-1}$ such that
	\begin{align*}
		(\Delta \tp \id) \circ \Delta (t)
		&= (\Delta \tp \id) \circ \left[ t \tp \1 + (\id \tp B_+) \circ \Delta (f) \right]\\
		&= \Delta(t) \tp \1 + (\id \tp \id \tp B_+) \circ (\Delta \tp \id) \circ \Delta (f) \\
		&= t \tp \1 \tp \1 + \left[(\id \tp B_+) \circ \Delta (f) \right] \tp \1 + \left\{\id \tp [(\id \tp B_+) \circ \Delta]\right\} \circ \Delta (f) \\
		&= t \tp \1 \tp \1 + \left\{ \id \tp \left[ B_+ \tp \1 + (\id \tp B_+) \circ \Delta  \right] \right\} \circ \Delta (f) \\
		&= (\id \tp \Delta) \circ [ t\tp \1 + (\id \tp B_+) \circ \Delta (f) ]
		= (\id \tp \Delta) \circ \Delta (t). \qedhere
	\end{align*}
\end{proof}
This coproduct can be understood combinatorially as follows: First note how a labelled rooted tree $t$ naturally induces a partial order on the set of its nodes by
	\begin{equation}\label{eq:rel-above}
		\forall v,w \in V(t)\!: \quad
		v \aboveeq w 
		\ \text{iff}\ v\ \text{lies on the path from}\ w\ \text{to the root}\ r(t).
	\end{equation}
	Then set $\indep(t)$ to contain exactly the \emph{independent subsets}\footnote{In the literature one considers instead certain subsets of edges, called \emph{admissible cuts}. However, we prefer the notion of independent sets as it in particular allows to state \eqref{eq:coproduct-cuts} for arbitrary forests. This does not work using the notion of admissible cuts, as long as one does not introduce an artificial \emph{complete cut} for each tree of a forest.
} $W \subseteq V(t)$ of nodes, meaning those $W$ such that for any $v,w\in W$ with $v \neq w$, neither $v\aboveeq w$ nor $v\beloweq w$.

\begin{proposition}
	For any forest $f\in\forests$ we have
	\begin{equation}
		\Delta(f) 
		= \sum_{W \in \indep(t)} P^W (f) \tp R^W (f),
		\label{eq:coproduct-cuts}
	\end{equation}
	where the \emph{pruned part} $P^W(f)$ denotes the forest made out of the subtrees with roots in $W$ and $R^W(f)$ is the forest spanned by the remaining nodes.
\end{proposition}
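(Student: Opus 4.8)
The plan is to prove \eqref{eq:coproduct-cuts} by induction on the number of nodes, mirroring the inductive structure already used to define $\Delta$ via the cocycle identity \eqref{eq:B_+-cocycle} and to verify its coassociativity. First I would check that it suffices to treat the case where $f=t$ is a single tree: since $\Delta$ is multiplicative, $\indep$ of a forest is built from independent sets of its tree components, and both sides of \eqref{eq:coproduct-cuts} are multiplicative over the disjoint union of trees (an independent set of a forest is a disjoint union of independent sets of its components, and $P^W$, $R^W$ distribute accordingly). The base case is the single-node tree $\tree{+-}$, where $\indep(\tree{+-}) = \set{\emptyset, \set{r}}$; the empty set gives $\1 \tp \tree{+-}$ and the root gives $\tree{+-} \tp \1$, matching \eqref{coproduct:()}.

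For the inductive step I would write $t = B_+(f)$ for a forest $f$ with $\abs{f} = \abs{t}-1$, and feed this into the defining relation \eqref{eq:B_+-cocycle}, namely $\Delta(t) = t \tp \1 + (\id \tp B_+) \circ \Delta(f)$. Applying the induction hypothesis to $\Delta(f) = \sum_{W \in \indep(f)} P^W(f) \tp R^W(f)$ gives
\begin{equation*}
	\Delta(t) = t \tp \1 + \sum_{W \in \indep(f)} P^W(f) \tp B_+\!\left( R^W(f) \right).
\end{equation*}
The crux is then to exhibit a bijection between $\indep(t)$ and the indexing set on the right. The new root $r$ of $t$ lies below every node of $f$, so any independent set $W \subseteq V(t)$ either contains $r$ (forcing $W = \set{r}$, since $r$ is comparable to everything) or satisfies $W \subseteq V(f)$ and is independent as a subset of $f$. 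The first alternative contributes $P^{\set r}(t) \tp R^{\set r}(t) = t \tp \1$; the second reproduces exactly the sum over $\indep(f)$, where I must verify that $P^W(t) = P^W(f)$ (the pruned subtrees lie entirely above $r$, so they are unaffected by the new root) while $R^W(t) = B_+(R^W(f))$ (the remaining nodes now include $r$ as a common root grafted onto what was the complement in $f$).

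I expect the main obstacle to be the bookkeeping in this last identification, specifically the claim $R^W(t) = B_+(R^W(f))$ when $W \subseteq V(f)$. One must argue that the complement $V(t) \setminus \{\text{nodes above } W\}$ in $t$ is precisely the complement in $f$ together with the adjoined root $r$, and that the induced tree structure on these nodes is the grafting of the old structure. This is intuitively clear from the picture but deserves a careful statement, since $R^W(f)$ is a forest whose roots were the old roots of $f$, and grafting $r$ on top converts that forest into the single tree $R^W(t)$. Once this correspondence is established term by term, the two expressions for $\Delta(t)$ coincide and the induction closes; the multiplicative reduction at the start then extends the result from trees to arbitrary forests.
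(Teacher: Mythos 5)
Your proposal is correct and follows essentially the same route as the paper's proof: reduction to trees by multiplicativity of both sides, then induction via the cocycle identity \eqref{eq:B_+-cocycle} using the key decomposition $\indep(B_+(f)) = \set{\set{r(t)}} \cup \indep(f)$ together with the identifications $P^W(t)=P^W(f)$ and $R^W(t)=B_+\left(R^W(f)\right)$. The bookkeeping step you flag as the main obstacle is exactly what the paper also relies on (and states without further elaboration), so there is no gap.
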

Graphically, $P^W(f)$ just contains all trees that fall down if one cuts right above each node in $W$. For example consider the tree $t=\tree{++-+--}$, then
\begin{align*}
	& \sum_{W\in I (t)} P^W \left( \treeabc \right) \tp R^W \left( \treeabc \right)
	= P^{\emptyset} \left( \treeabc \right) \tp R^{\emptyset} \left( \treeabc \right)
	+P^{\set{a}} \left( \treeabc \right) \tp R^{\set{a}} \left( \treeabc \right) \\
	& \quad +P^{\set{b}} \left( \treeabc \right) \tp R^{\set{b}} \left( \treeabc \right)
	+P^{\set{c}} \left( \treeabc \right) \tp R^{\set{c}} \left( \treeabc \right)
	+P^{\set{b,c}} \left( \treeabc \right) \tp R^{\set{b,c}} \left( \treeabc \right) \\
	&= \1 \tp \treeabc + \treeabc \tp \1 + \tree{+b-} \tp \tree{+a+c--} + \tree{+c-} \tp \tree{+a+b--} + \tree{+b-}\tree{+c-} \tp \tree{+a-}
	= \Delta \left( \treeabc \right).
\end{align*}
Clearly we have to pick a labelled representative for \eqref{eq:coproduct-cuts} to make sense. But as we afterwards pass to isomorphism classes of labelled trees again, this choice does not matter. The above example then delivers \eqref{coproduct:(()())}.

\begin{proof}
	The right-hand side of \eqref{eq:coproduct-cuts} is clearly multiplicative: For any $f,f'\in\forests$, note
	\begin{equation*}
		\indep(f\cdot f')
		= \indep(f \cupdot f')
		= \setexp{W \cupdot W'}{W\in \indep(f)\ \wedge\ W' \in\indep(f')},
	\end{equation*}
	$P^{W\cupdot W'}(f\cdot f') = P^W(f) \cdot P^{W'}(f')$ and $R^{W\cupdot W'}(f\cdot f') = R^W(f) \cdot R^{W'}(f')$ as $\aboveeq$ does not relate any nodes of $f$ with those of $f'$ in $f\cupdot f'$. Hence we can proceed inductively on trees again. Given \eqref{eq:coproduct-cuts} to hold on $\forests_{N-1}$, consider a tree of $N$ nodes $t = B_+(f)$ with $f\in \forests_{N-1}$:
	\begin{equation*}
		\indep(t)
		= \indep \left( B_+(f) \right)
		= \set{r(t)} \cupdot \indep(f)
	\end{equation*}
	is an immediate consequence of $r(t) \aboveeq v$ for any $v\in V(t)$. Hence observe
	\begin{align*}
		\sum_{W\in \indep(t)} P^W(t) \tp R^W(t)
		&= P^{\set{r(t)}} \tp R^{\set{r(t)}} + \sum_{W \in \indep(f)} P^W(t) \tp R^W(t) \\
		&= t \tp \1 + \sum_{W \in \indep(f)} P^W(f) \tp B_+ \left( R^W(f) \right) \\
		&= \left[ B_+ \tp \1 + (\id \tp B_+)\circ \Delta \right] (f) 
		\urel{\eqref{eq:B_+-cocycle}} \Delta \circ B_+(f)
		= \Delta (t). \qedhere
	\end{align*}
\end{proof}

We close the discussion of the coproduct with a few more examples:
	\begin{align}
%		\Delta \left( \tree{+-} \right) &= \1 \tp \tree{+-} + \tree{+-} \tp \1
%			\label{coproduct:()} \\
		\Delta \left( \tree{++--} \right) &= \1 \tp \tree{++--} + \tree{+-} \tp \tree{+-} + \tree{++--} \tp \1
			\label{coproduct:(())} \\
		\Delta \left( \tree{+++---} \right) &= \1 \tp \tree{+++---} + \tree{+-} \tp \tree{++--} + \tree{++--} \tp \tree{+-} + \tree{+++---} \tp \1
			\label{coproduct:((()))} \\
%		\Delta \left( \tree{++-+--} \right) &= \1 \tp \tree{++-+--} + 2 \tree{+-} \tp \tree{++--} + \tree{+-}\tree{+-} \tp \tree{+-} + \tree{++-+--} \tp \1
%			\label{coproduct:(()())} \\
		\Delta \left( \tree{++-++---} \right) &= \1 \tp \tree{++-++---} + \tree{+-} \tp \left( \tree{+++---} + \tree{++-+--} \right) + \left( \tree{+-}\tree{+-} + \tree{++--} \right) \tp \tree{++--} + \wald{\tree{+-}\tree{++--}} \tp \tree{+-} + \tree{++-++---} \tp \1
			\label{coproduct:(()(()))}
	\end{align}
\subsection{The bialgebra}
\begin{definition}
	On $H_R$ we define the functional $\counit \in H_R' \defas \Hom(H_R, \K)$ by
	\begin{equation}
		\text{$\counit (\1) \defas 1$ and $\counit(f) \defas 0$ for any forest $f\in\forests\setminus\set{\1}$.}
		\label{eq:H_R-counit}
	\end{equation}
\end{definition}

\begin{satz}
	$(H_R, \cupdot, \1, \Delta, \counit)$ is a bialgebra.
\end{satz}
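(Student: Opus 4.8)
The plan is to verify that $(H_R, \cupdot, \1, \Delta, \counit)$ satisfies all the axioms of a bialgebra from Definition \ref{def:bialg}. Since $H_R$ is by construction the symmetric algebra $\K[\trees]$, it is already a commutative, associative, unital algebra, so the algebra structure requires no further work. The coproduct $\Delta$ was shown to be coassociative in the preceding proposition, and it was \emph{defined} as a morphism of unital algebras (the unique such map satisfying \eqref{eq:B_+-cocycle}). Thus the compatibility conditions in points 1.--3. of Definition \ref{def:bialg} — namely that $\Delta$ be an algebra morphism — hold essentially by definition. What remains is to confirm the counit axioms: that $\counit$ is a morphism of algebras and that $(C,\Delta,\counit)$ is genuinely counital, i.e.\ $(\counit \tp \id) \circ \Delta = \id = (\id \tp \counit) \circ \Delta$.

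First I would check that $\counit$ as defined in \eqref{eq:H_R-counit} is an algebra morphism. Since any nonempty forest is a product of trees and $\counit$ vanishes on all nonempty forests while $\counit(\1)=1$, one verifies $\counit(f \cdot f') = \counit(f)\counit(f')$ by cases: if both $f,f'$ equal $\1$ both sides are $1$; if at least one is a nonempty forest then the product $f\cdot f'$ is again nonempty, so the left side vanishes, as does the right side. This establishes that $\counit$ is a morphism of unital algebras. It then suffices to verify the counit property on the algebra generators (the trees), because both $(\counit\tp\id)\circ\Delta$ and $\id$ are algebra morphisms $H_R\to H_R$ — the former since $\Delta$ and $\counit$ are — and two algebra morphisms agreeing on generators agree everywhere.

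The cleanest route for the counit identity is an induction on the number of nodes using the cocycle relation \eqref{eq:B_+-cocycle}, paralleling the coassociativity proof. For the base case, $\Delta(\tree{+-}) = \tree{+-}\tp\1 + \1\tp\tree{+-}$ immediately gives $(\id\tp\counit)\circ\Delta(\tree{+-}) = \tree{+-}$, using $\counit(\1)=1$ and $\counit(\tree{+-})=0$. For the inductive step, write a tree of $N$ nodes as $t=B_+(f)$ with $f\in\forests_{N-1}$ and apply $\id\tp\counit$ to
\begin{equation*}
	\Delta(t) = B_+(f)\tp\1 + (\id\tp B_+)\circ\Delta(f).
\end{equation*}
The first term yields $B_+(f)\cdot\counit(\1) = t$; the second term, after applying $\id\tp\counit$, collapses via the inductive hypothesis applied to $f$, since $\counit\circ B_+ = 0$ on any forest (as $B_+$ always produces a nonempty tree) kills all but the piece where the counit meets the trivial tensor factor. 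Working this out shows the second term contributes nothing beyond what the induction demands, giving $(\id\tp\counit)\circ\Delta(t)=t$. The identity $(\counit\tp\id)\circ\Delta=\id$ follows symmetrically, and in fact can be read off the combinatorial formula \eqref{eq:coproduct-cuts}: the only term surviving $\counit\tp\id$ is the one with $P^W(f)=\1$, i.e.\ the empty cut $W=\emptyset$, which returns $R^{\emptyset}(f)=f$.

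I expect no serious obstacle here — the real content (coassociativity and multiplicativity of $\Delta$) was already dispatched — so the main care is bookkeeping: ensuring the reduction to generators is justified by the morphism property just proven, and handling the counit bookkeeping in the cocycle term cleanly. The subtlest point is recognizing that $\counit\circ B_+ = \counit_{H_R}\circ B_+$ vanishes identically (since $B_+$ never outputs $\1$), which is exactly what makes the inductive term in \eqref{eq:B_+-cocycle} telescope correctly.
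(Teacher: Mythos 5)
Your proposal is correct, but it takes a different route from the paper at the decisive step. The paper dispatches both counit identities in one stroke from the combinatorial cut formula \eqref{eq:coproduct-cuts}: applying $\counit \tp \id$ kills every term except the empty cut $W=\emptyset$, giving $R^{\emptyset}(f)=f$, while $\id \tp \counit$ kills everything except the cut at all roots, giving $P^W(f)=f$; no induction is needed because the work was already done in proving \eqref{eq:coproduct-cuts}. You instead argue inductively from the defining cocycle relation \eqref{eq:B_+-cocycle}, parallel to the coassociativity proof, after reducing to trees via the (correctly justified) observation that both sides are algebra morphisms. This buys self-containedness — your argument never invokes the cut description, only the definition of $\Delta$ and $\counit \circ B_+ = 0$ — at the cost of being longer; the paper's proof buys brevity at the cost of leaning on the earlier proposition. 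One bookkeeping slip worth flagging: you attach the induction hypothesis to the wrong identity. For $(\id \tp \counit)\circ\Delta$ applied to $t = B_+(f)$, the term $(\id \tp B_+)\circ\Delta(f)$ is annihilated \emph{outright} by $\id \tp \counit$ (since $\counit \circ B_+ = 0$), and the term $t \tp \1$ already returns $t$ — no induction hypothesis enters at all. It is the \emph{other} identity, $(\counit \tp \id)\circ\Delta = \id$, that genuinely needs it: there the term $t \tp \1$ dies (as $\counit(t)=0$) and one must rewrite $(\counit \tp \id)\circ(\id \tp B_+)\circ\Delta(f) = B_+\circ(\counit\tp\id)\circ\Delta(f)$, which equals $B_+(f)=t$ only by the hypothesis on $f$; this case does not "follow symmetrically" since \eqref{eq:B_+-cocycle} is asymmetric. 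Your fallback of reading that identity off \eqref{eq:coproduct-cuts} is valid (it is exactly the paper's argument), so the proof stands, but the inductive version of it should be stated as above. Your explicit check that $\counit$ is an algebra morphism is a point the paper leaves implicit, and including it is a genuine (if small) improvement in completeness.
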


\begin{proof}
	We defined $\Delta$ to be a morphism of unital algebras, hence it only remains to check \eqref{eq:koalg-koeins}. For any forest $f\in\forests$, $\counit \left( P^W(f) \right) = 0$ unless $W=\emptyset$ by \eqref{eq:H_R-counit} proves $(\counit \tp \id) \circ \Delta = \id$ as $R^{\emptyset}(f) = f$. Similarly $\counit \left( R^W(f) \right) = 0$ unless $W=\setexp{r(t)}{t \in \pi_0(f)}$ is the set of all roots, pruning the complete forest $P^W(f) = f$ while $R^W(f) = \1$ such that $(\id \tp \counit) \circ \Delta = \id$.
\end{proof}
Clearly $\Delta$ respects the graduation of $H_R$ by $\Delta (H_{R,n}) \subseteq \bigoplus_{k=0}^n H_{R,k} \tp H_{R, n-k}$ as it partitions the nodes into $P^W(f)$ and $R^W(f)$.
\begin{korollar}
	The number of nodes delivers a graduation of the bialgebra $H_R$ of rooted trees. By $H_{R,0} = \K \cdot \1$ it is connected and hence a Hopf algebra through corollary \ref{satz:zsh-bialg-hopf}.
\end{korollar}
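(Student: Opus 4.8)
The plan is to check the conditions defining a graduation for the node-number decomposition $H_R = \bigoplus_{n\in\N_0} H_{R,n}$ of \eqref{eq:H_R-graduation}, and then to read off connectedness and apply corollary \ref{satz:zsh-bialg-hopf}. Two of the three conditions come essentially for free. The product condition $H_{R,n}\cdot H_{R,m}\subseteq H_{R,n+m}$ is immediate, since the multiplication is disjoint union of forests and node numbers are additive, $\abs{f\cupdot f'}=\abs{f}+\abs{f'}$. The coproduct condition $\Delta(H_{R,n})\subseteq\bigoplus_{k=0}^n H_{R,k}\tp H_{R,n-k}$ is precisely the remark preceding this statement: in the combinatorial formula \eqref{eq:coproduct-cuts} every independent set $W$ splits the $n$ nodes of a forest into the nodes of $P^W(f)$ and those of $R^W(f)$, whose numbers sum to $n$. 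Finally $H_{R,0}=\lin\forests_0=\K\cdot\1$, because the empty forest is the unique node-free forest.

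The one subtle point is that the definition of a graduation literally demands the antipode clause $S(H_{R,n})\subseteq H_{R,n}$, yet the antipode is exactly what we are trying to produce. I would dissolve this apparent circularity by arguing first at the bialgebra level only: the two conditions just verified make $H^n\defas\bigoplus_{k=0}^n H_{R,k}$ a filtration of the bialgebra in the sense of definition \ref{def:filtration}, where for a bialgebra the antipode condition~$4$ is dropped. Since this filtration starts at $H^0=H_{R,0}=\K\cdot\1$, the bialgebra $H_R$ is connected by definition \ref{def:connected}, and corollary \ref{satz:zsh-bialg-hopf} then supplies the antipode and promotes $H_R$ to a Hopf algebra.

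Only afterwards would I return to the remaining graduation clause $S(H_{R,n})\subseteq H_{R,n}$, which is now meaningful. This follows by a short induction on $n$ from the recursive antipode formula \eqref{eq:antipode-recursive}: for homogeneous $x\in H_{R,n}\cap\ker\counit$ the reduced coproduct satisfies $\cored(x)\in\bigoplus_{k=1}^{n-1}H_{R,k}\tp H_{R,n-k}$ (the boundary pieces $\1\tp x$ and $x\tp\1$ being removed by \eqref{eq:cored}), so the inductive hypothesis together with the already-established product condition forces every term $S(x')x''$ to be homogeneous of degree $n$, matching $x$. I expect no genuine obstacle; the entire content is bookkeeping of node numbers. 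The only thing that demands care is the \emph{order} of the argument: one must secure connectedness, and hence the Hopf structure, before the antipode-compatibility part of the graduation can even be stated.
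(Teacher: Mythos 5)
Your proof is correct and follows essentially the same route as the paper, which gives no explicit proof at all: it simply reads off the product and coproduct compatibility (the latter from the node-partitioning property of \eqref{eq:coproduct-cuts} noted just before the corollary), observes $H_{R,0}=\K\cdot\1$, and invokes corollary \ref{satz:zsh-bialg-hopf}. Your resolution of the apparent circularity --- first establishing connectedness using only the bialgebra-level conditions, then verifying $S(H_{R,n})\subseteq H_{R,n}$ by induction from \eqref{eq:antipode-recursive} --- is sound and addresses a point the paper silently glosses over, since its own definition of graduation is stated for Hopf algebras and includes the antipode clause.
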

The antipode of $H_R$ can be calculated recursively by $S(f) = -f -\sum_f S(f')f'' = -f-\sum_f f'S(f'')$ for any non-empty forest $f\in\forests$ as a consequence\footnote{alternatively recall \eqref{eq:antipode-recursive}} of $e = S \convolution \id = \id\convolution S$:
\begin{align}
	S \left( \tree{+-} \right)
		&= - \tree{+-}
		\label{antipode:()} \\
	S \left( \tree{++--} \right)
		&= - \tree{++--} + \tree{+-}\tree{+-}
		\label{antipode:(())} \\
	S \left( \tree{+++---} \right)
		&= - \tree{+++---} + 2 \tree{+-}\tree{++--} - \tree{+-}\tree{+-}\tree{+-}
		\label{antipode:((()))} \\
	S \left( \tree{++-+--} \right)
		&= - \tree{++-+--} + 2\tree{+-}\tree{++--}- \tree{+-}\tree{+-}\tree{+-}
		\label{antipode:(()())} \\
	S \left( \tree{++-++---} \right)
		&= - \tree{++-++---} + \tree{+-} \left( \tree{+++---} + \tree{++-+--} \right) + \tree{++--}\tree{++--} - 3 \tree{+-}\tree{+-}\tree{++--} + \tree{+-}\tree{+-}\tree{+-}\tree{+-}
		\label{antipode:(()(()))}
\end{align}

\subsection{Tree factorials}
In the study of Feynman rules on $H_R$ in the next chapter we will encounter the \emph{tree factorial}\footnote{It can be considered as a canonical Feynman rule by itself according to \eqref{eq:int-rules}.} defined in
\begin{definition}\label{def:tree-factorial}
	We define the algebra morphism $(\cdot)!\!:\ H_R \rightarrow \K$ by requesting
	\begin{equation}
		\forall f\in\forests\!:\quad
		\left[ B_+(f) \right]! = f! \cdot \abs{B_+(f)}.
		\label{eq:tree-factorial}
	\end{equation}
\end{definition}
This tree factorial fulfils many interesting combinatorial relations like \eqref{eq:tree-factorial-feet}. Examples are given in \eqref{eq:int-rules-correlation-expansion} and section \ref{sec:DSE}. Denoting the subtree rooted at a node $v$ of a forest $f$ by $f_v$, it is immediate to check
\begin{equation}
	\forall f\in\forests\!:\quad
	f! = \prod_{v\in V(f)} \abs{f_v}.
	\label{eq:tree-factorial-product}
\end{equation}

\section{Hochschild cohomology}
Dualizing the classical Hochschild homology of algebras gives a cohomology of coalgebras. A special case of this cohomology, applied to bialgebras (like $H_R$ or the Hopf algebra of Feynman graphs), turns out to capture the connection from single contributions of individual trees (or graphs) in perturbation theory to full \emph{correlation functions} with the help of \emph{Dyson-Schwinger-equations}. In section \ref{sec:DSE} we will have a very brief look on this formalism.

The one-cocycles also play a fundamental role in the definition of Feynman rules through the universal property \eqref{eq:H_R-universal}. Their powerful algebraic properties are the key to the general proofs of locality and finiteness of renormalization, like in section \ref{sec:finiteness}.
\subsection{Cohomology of coalgebras}
\begin{definition}\label{def:comodule}
	Let $C$ be a coalgebra, then a \emph{left $C$-comodule} $M$ is a vector space with a map $\cmodcp_L \in \Hom(M, C \tp M)$ such that
	\begin{equation}
		(\id_{C} \tp \cmodcp_L) \circ \cmodcp_L
		= (\Delta \tp {\id}_{M}) \circ \cmodcp_L
		\quad\text{and}\quad
		(\counit \tp \id_M) \circ \cmodcp_L = \id_M.
		\label{eq:comodule}
	\end{equation}
	These conditions correspond to the commutativity of the diagrams
	\begin{equation}
		\vcenter{\xymatrix@C+0.5cm{
			{M} \ar[r]^-{\cmodcp_L} \ar[d]_{\cmodcp_L} & {C \tp M} \ar[d]^{\id_C\, \tp\, \cmodcp_L}\\
			{C \tp M} \ar[r]_-{\Delta \tp \id_M} & {C \tp C \tp M}
		}}
		\quad\text{and}\quad
		\vcenter{\xymatrix{
			{M} \ar[r]^-{\cmodcp_L} \ar[dr]_{\isomorph} & {C \tp M} \ar[d]^{\counit\, \tp\, \id_M} \\
			& {\K\tp M}
		}}.
		\label{cd:comodule}
	\end{equation}
	Analogously one defines \emph{right $C$-comodules} carrying a map $\cmodcp_R \in \Hom(M, M \tp C)$. Finally a \emph{$C$-bicomodule} is at the same time both a left- and a right-comodule such that $(\id_C \tp \cmodcp_R) \circ \cmodcp_L = (\cmodcp_L \tp \id_C) \circ \cmodcp_R$.
\end{definition}
\begin{definition}\label{def:hochschild}
	Let $C(\Delta,\counit)$ be a coalgebra and $M$ a $C$-bicomodule with left and right comodule structures $\cmodcp_L$ and $\cmodcp_R$. The \emph{Hochschild cochain complex} $(HC^{\cdot}(M), \partial^{\cdot})$ is defined via the cochains
	\begin{equation}
		HC^k(M) \defas \Hom \left(M, C^{\tp k}\right)
		\quad
		\forall k \in \N_0
		\label{eq:hochschild-cochains}
	\end{equation}
	and the \emph{coboundary maps} $\partial^k\!: HC^k(M) \rightarrow HC^{k+1}(M)$ given by
	\begin{equation}
		\partial^k \defas \sum_{i=0}^{k+1} {(-1)}^i d_i^k
		, \quad
		d_i^k (L) \defas \begin{cases}
			(\id \tp L) \circ \cmodcp_L 
				& \text{if $i=0$,} \\
			\left[ {\id}^{\tp i-1} \tp \Delta \tp {\id}^{\tp (k-i)} \right] \circ L
				& \text{if $1 \leq i \leq k$,} \\
			(L \tp \id) \circ \cmodcp_R
				& \text{if $i=k+1$.}
		\end{cases}
		\label{eq:hochschild-coboundary}
	\end{equation}
\end{definition}
\begin{lemma}\label{satz:hochschild-kettenkomplex}
	$(HC^{\cdot}(M), \partial^{\cdot})$ is a cochain complex, that is $\partial \circ \partial = 0$.
\end{lemma}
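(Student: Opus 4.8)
The plan is to follow the standard strategy for (co)simplicial complexes: first establish the \emph{cosimplicial identities} relating the face maps $d_i^k$, and then deduce $\partial^{k+1}\circ\partial^k=0$ for every $k\in\N_0$ from them by a purely combinatorial sign-cancellation argument. Concretely, I would prove that for all $k\in\N_0$ and all indices $0\le i<j\le k+2$,
\begin{equation*}
	d_j^{k+1}\circ d_i^k = d_i^{k+1}\circ d_{j-1}^k.
\end{equation*}
Both sides are maps $HC^k(M)\to HC^{k+2}(M)$, so this is a finite family of identities to be checked on an arbitrary cochain $L\in\Hom(M,C^{\tp k})$.

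The verification splits into cases according to whether each of the two face maps is the left-boundary map $d_0$ (built from $\cmodcp_L$), an interior map (insertion of $\Delta$), or the right-boundary map $d_{k+1}$ (built from $\cmodcp_R$). All cases are local in the tensor factors and reduce to exactly the four structural axioms at hand: when both maps are interior and adjacent, the identity is coassociativity \eqref{eq:koalg-koasso}; when $i=0$ and $j=1$ it is the left comodule coassociativity in \eqref{eq:comodule}, and the symmetric situation at the right end is its right-handed counterpart; the case $i=0$, $j=k+2$ (one $\cmodcp_L$ at the far left, one $\cmodcp_R$ at the far right) is precisely the bicomodule compatibility $(\id_C\tp\cmodcp_R)\circ\cmodcp_L=(\cmodcp_L\tp\id_C)\circ\cmodcp_R$ from Definition \ref{def:comodule}; and in all remaining cases the two operations act on disjoint tensor slots and hence commute, the index shift $j\mapsto j-1$ merely accounting for the extra factor inserted by $d_i$. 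I would verify one representative of each type explicitly on $L$, tracking the tensor positions carefully.

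Granting the cosimplicial identities, I expand
\begin{equation*}
	\partial^{k+1}\circ\partial^k=\sum_{i=0}^{k+1}\sum_{j=0}^{k+2}(-1)^{i+j}\,d_j^{k+1}\circ d_i^k
\end{equation*}
and split the double sum into the part with $i<j$ and the part with $i\ge j$. In the first part I substitute $d_j^{k+1}\circ d_i^k=d_i^{k+1}\circ d_{j-1}^k$ and relabel $(i,j-1)\mapsto(a,b)$ with $a\le b$; the second part, relabelled $(j,i)\mapsto(a,b)$ with $a\le b$, runs over the same index range $0\le a\le b\le k+1$ and produces the same terms $d_a^{k+1}\circ d_b^k$, but with sign $(-1)^{a+b}$ against the $(-1)^{a+b+1}$ of the first part. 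Hence the two parts cancel termwise and $\partial^{k+1}\circ\partial^k=0$.

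The main obstacle is purely bookkeeping: correctly locating, in each boundary case, the tensor slot on which $\Delta$, $\cmodcp_L$, or $\cmodcp_R$ acts \emph{after} the first face map has already changed the number of factors, and confirming that the index shift in the cosimplicial identity is exactly compensated. Once the positions are tracked faithfully, each identity collapses to one of the four axioms and the sign argument is automatic.
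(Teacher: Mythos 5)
Your proposal is correct and follows essentially the same route as the paper: it too establishes the cosimplicial identities $d_j^{k+1}\circ d_i^k = d_i^{k+1}\circ d_{j-1}^k$ for $0\le i<j\le k+2$ by the same case analysis (coassociativity for adjacent interior maps, the left/right comodule axioms at the boundaries, the bicomodule compatibility for the mixed case, and commutation on disjoint tensor slots otherwise), and then concludes by the identical relabelling and sign-cancellation argument.
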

\begin{proof}
	For fixed $k\in \N_0$, $L \in HC^k(M)$ and any $1\leq i \leq k$, $1\leq j \leq k+1$ we have
	\begin{equation*}
		d^{k+1}_j \circ d^k_i (L) = \begin{cases}
			\big[ \id^{\tp j-1} \tp \Delta \tp \id^{\tp i-1-j} \tp \Delta \tp \id^{\tp k-i} \big] \circ L
				& \text{if $j<i$,} \\
			\big[ \id^{\tp i-1} \tp \Delta^2 \tp \id^{\tp k-i} \big] \circ L
				& \text{if $j \in \set{i, i+1}$,} \\
			\big[\id^{\tp i-1} \tp \Delta \tp \id^{\tp j-i-2} \tp \Delta \tp \id^{\tp k+1-j} \big] \circ L
				& \text{if $j>i+1$,} \\
		\end{cases}
	\end{equation*}
	exploiting coassociativity in the cases $j\in\set{i,i+1}$. Thus $d_j^{k+1} \circ d_i^k = d_i^{k+1} \circ d_{j-1}^k$ whenever $1\leq i < j \leq k+1$. We extend this relation to the case when $j=k+2$ through
	\begin{align*}
		d^{k+1}_{k+2} \circ d^k_i (L)
		&= \left\{ \left[ \left( \id^{\tp i-1} \tp \Delta \tp \id^{\tp k-i} \right) \circ L \right] \tp \id \right\} \circ \psi_R \\
		&= \left[ \id^{\tp i-1} \tp \Delta \tp \id^{\tp k+1-i} \right] \circ (L \tp \id) \circ \psi_R
		 = d^{k+1}_i \circ d^k_{k+1} (L).
	\end{align*}
	Analogously we can derive $d^{k+1}_j \circ d^k_0 = d^{k+1}_0 \circ d^{k}_{j-1}$ for $j>0$. Finally,
	\begin{align*}
		d^{k+1}_0 \circ d^k_{k+1} (L)
		&= \left\{ \id \tp \left[ \left( L \tp \id \right) \circ \cmodcp_R \right] \right\} \circ \cmodcp_L 
		= (\id \tp L \tp \id) \circ (\id \tp \cmodcp_R) \circ \cmodcp_L \\
		&= (\id \tp L \tp \id) \circ (\cmodcp_L \tp \id) \circ \cmodcp_R 
		= \left\{ \left[ \left( \id \tp L \right) \circ \cmodcp_L \right] \tp \id \right\} \circ \cmodcp_R 
		=d^{k+1}_{k+2} \circ d^k_0 (L),
	\end{align*}
	proves $d_j^{k+1} \circ d_i^k = d_i^{k+1} \circ d_{j-1}^k$ for all $0 \leq i < j \leq k+2$ such that
	\begin{align*}
		\partial^{k+1} \circ \partial^k
		&= \sum_{j=0}^{k+2} {(-1)}^j d^{k+1}_j \circ \sum_{i=0}^{k+1} {(-1)}^i d^k_i \\
		&= \sum_{0\leq j\leq i\leq k+1} {(-1)}^{i+j} d^{k+1}_j \circ d^k_i + \sum_{0\leq i\leq j-1\leq k+1} {(-1)}^{i+j} \underbrace{d^{k+1}_j \circ d^k_i}_{d^{k+1}_i \circ d^k_{j-1}}
		 = 0
	\end{align*}
	upon relabelling the indices $j-1 \mapsto i$ and $i \mapsto j$ in the second sum.
\end{proof}
Hence we obtain cohomology groups $HH^k(M) \defas H^k \left( HC^{\cdot}(M),\partial^{\cdot} \right) = \ker \partial^k / \im \partial^{k-1}$. In particular we can consider the natural bicomodule structure $\psi_L=\psi_R=\Delta$ on $M = C$.

\subsection{Cohomology of bialgebras}
On a bialgebra $H$, we may also take $\psi_L = \1 \tp \id$ and/or $\psi_R = \id \tp \1$ as left and right comodule structures\footnote{More generally, we can even consider $\cmodcp_L = g \tp \id$ and/or $\cmodcp_R = \id \tp h$ for any $g,h\in\Grp(H)$.}. All combinations of these indeed yield bicomodule structures on $H$ and define different cohomologies! As we will mainly be interested in Hochschild--$1$--cocycles $L \in HZ^1(H) \defas HC^1(H) \cap \ker \partial$, we list the defining equations for $1$-cocycles in these different cases:
\begin{align}
	\cmodcp_L &= \Delta & \text{and} && \cmodcp_R &= \Delta &\Rightarrow&& \Delta \circ L &= (\id \tp L + L \tp \id) \circ \Delta
	\label{bicomodule:dd}\\
	\cmodcp_L &= \Delta & \text{and} && \cmodcp_R &= \id \tp \1 &\Rightarrow&& \Delta \circ L &= (\id \tp L) \circ \Delta + L \tp \1
%	\label{eq:cocycle}
	\label{bicomodule:d1}\\
	\cmodcp_L &= \1 \tp \id & \text{and} && \cmodcp_R &= \Delta &\Rightarrow&& \Delta \circ L &= \1 \tp L + (L \tp \id) \circ \Delta 
	\label{bicomodule:1d}\\
	\cmodcp_L &= \1 \tp \id & \text{and} && \cmodcp_R &= \id \tp \1 &\Rightarrow&& \Delta \circ L &= \1 \tp L + L \tp \1
	\label{bicomodule:11}
\end{align}
In the first case $1$--cocycles are just coderivations, in the fourth case we get endomorphisms with $\im L \subseteq \Prim(H)$. The asymmetric cases \eqref{bicomodule:d1} and \eqref{bicomodule:1d} are the most interesting to us and in the following we will exclusively consider the bicomodule structure \eqref{bicomodule:d1} ($\cmodcp_L=\Delta$ and $\cmodcp_R=\id \tp \1$). To stress this we denote the Hochschild cochains, cocycles, coboundaries and cohomologies by $HC^k_{\counit}(H)$, $HZ^k_{\counit}(H)$, $HB^k_{\counit}(H)$ and $HH^k_{\counit}(H)$ respectively.

\begin{lemma}\label{satz:cocycle-props}
	For a 1-cocycle $L\in HZ^1_{\counit}(H)$ of a bialgebra $H$, we have $\im L \subseteq \ker \counit$ and $L(\1) \in \Prim(H)$. The evaluation map $\ev_{\1}\!: HZ^1_{\counit} \rightarrow \Prim(H), L \mapsto L(\1)$ factorizes to a well defined map $\widetilde{\ev}_{\1}\!: HH^1_{\counit} \rightarrow \Prim(H), [L] \mapsto L(\1)$.
\end{lemma}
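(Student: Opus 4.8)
The plan is to read all three assertions directly off the defining cocycle identity for the bicomodule structure \eqref{bicomodule:d1}, namely $\Delta \circ L = (\id \tp L)\circ\Delta + L\tp\1$, where the last summand is to be understood as the map $x \mapsto L(x)\tp\1$ (this is exactly the term $d_2^1(L)$ produced by $\cmodcp_R = \id\tp\1$ in \eqref{eq:hochschild-coboundary}). For $\im L \subseteq \ker\counit$ I would apply $\counit \tp \id$ to both sides of this identity. On the left the counit axiom \eqref{eq:koalg-koeins} collapses $(\counit\tp\id)\circ\Delta$ to $\id$, leaving $L$; on the right the first term gives $(\counit\tp L)\circ\Delta = L$ once more, by \eqref{eq:koalg-koeins} and linearity of $L$, while the second term contributes $x \mapsto \counit(L(x))\1$. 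Comparing both sides yields $\counit(L(x))\1 = 0$ for all $x$, hence $\counit\circ L = 0$. For $L(\1)\in\Prim(H)$ I would simply evaluate the cocycle identity at $\1$ and use $\Delta\1 = \1\tp\1$: the right-hand side becomes $(\id\tp L)(\1\tp\1) + L(\1)\tp\1 = \1\tp L(\1) + L(\1)\tp\1$, which is precisely the primitivity condition \eqref{eq:prim}.

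For the factorization, note first that $\ev_\1$ indeed lands in $\Prim(H)$ by the second assertion, so to descend it to $HH^1_{\counit}(H) = HZ^1_{\counit}(H)/HB^1_{\counit}(H)$ it remains only to check that $\ev_\1$ annihilates every coboundary. I would compute $\partial^0$ explicitly from \eqref{eq:hochschild-coboundary} for the present bicomodule ($\cmodcp_L=\Delta$, $\cmodcp_R=\id\tp\1$): for $\alpha \in HC^0_{\counit}(H) = \Hom(H,\K)$ one gets $\partial^0\alpha = (\id\tp\alpha)\circ\Delta - \unit\circ\alpha$, that is the map $x \mapsto \sum_x x_1\,\alpha(x_2) - \alpha(x)\1$. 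Evaluating at $\1$ and again invoking $\Delta\1=\1\tp\1$ produces $\alpha(\1)\1 - \alpha(\1)\1 = 0$. Thus $\ev_\1$ vanishes on $HB^1_{\counit}(H)$ and, being linear in $L$, factors through the quotient to give the well-defined $\widetilde{\ev}_\1([L]) = L(\1)$.

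None of these steps is computationally heavy; the only points that demand genuine care are the correct reading of the ``$L\tp\1$'' summand as $d_2^1(L)\colon x\mapsto L(x)\tp\1$, and the explicit evaluation of $\partial^0$ for this specific asymmetric bicomodule, since a different choice among \eqref{bicomodule:dd}--\eqref{bicomodule:11} would alter both the cocycle condition and the coboundary and could destroy the vanishing at $\1$. The silent identifications $H\tp\K\cong H$ and $\K\tp H\cong H$, together with the two halves of the counit axiom \eqref{eq:koalg-koeins}, are exactly what make the two manipulations (projecting by $\counit$ and evaluating at $\1$) succeed, so I would keep explicit track of which tensor factor the counit or the unit acts on at each stage.
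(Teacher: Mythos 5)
Your proposal is correct and follows essentially the same route as the paper: all three claims are read off the cocycle identity $\Delta\circ L = (\id\tp L)\circ\Delta + L\tp\1$, with primitivity obtained by evaluating at $\1$ and the factorization by checking $\partial^0\alpha(\1)=0$. The only cosmetic difference is in the first claim, where you project with $\counit\tp\id$ to get $\unit\circ\counit\circ L = 0$ directly, while the paper projects with $\counit\tp\counit$ to obtain $\counit\circ L = 2\,\counit\circ L$; both are one-line instances of the same idea.
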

\begin{proof}
	First note
	\begin{align*}
		\counit \circ L 
		&= (\counit \tp \counit) \circ \Delta \circ L 
		= (\counit \tp \counit) \circ \left[ (\id \tp L) \circ \Delta + L \tp \1 \right] \\
		&= \counit \circ \big[ L \circ \underbrace{(\counit \tp \id) \circ \Delta}_{\id} + L \big]
		= 2 \counit \circ L,
	\end{align*}
	delivering the first assertion.	Further,
	\begin{equation*}
		\Delta \circ L(\1) = (\id \tp L) (\1 \tp \1) + L(\1) \tp \1 = L(\1) \tp \1 + \1 \tp L(\1)
	\end{equation*}
	implies the second and it only remains to check that for any $\alpha \in H'$
	\begin{equation*}
		\partial \alpha (\1) = (\id \tp \alpha) \circ \Delta (\1) - \alpha(\1) \tp \1 = \1 \alpha(\1) - \1 \alpha(\1) = 0. \qedhere
	\end{equation*}
\end{proof}

\begin{korollar}\label{satz:B_+-cocycle}
	In $H_R$, the grafting operator $B_+$ is a non-trivial Hochschild-1-cocycle $B_+ \in HZ^1_{\counit}(H_R)$ by \eqref{eq:B_+-cocycle} and $0\neq [B_+] \in HH^1_{\counit}(H_R)$ as $\widetilde{\ev}_{\1}([B_+]) = B_+(\1) = \tree{+-} \neq 0$.
\end{korollar}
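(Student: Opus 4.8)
The plan is to dispatch the two assertions separately, and I expect both to be short because the genuine work has already been carried out in the construction of $\Delta$ and in Lemma \ref{satz:cocycle-props}.

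First I would verify the cocycle property. For the bicomodule structure \eqref{bicomodule:d1} that we have singled out ($\cmodcp_L = \Delta$, $\cmodcp_R = \id \tp \1$), a $1$-cochain $L \in HC^1_{\counit}(H_R)$ is a cocycle precisely when $\Delta \circ L = (\id \tp L) \circ \Delta + L \tp \1$. But this is verbatim the relation \eqref{eq:B_+-cocycle} by which the coproduct on $H_R$ was defined, namely $\Delta \circ B_+ = B_+ \tp \1 + (\id \tp B_+) \circ \Delta$. Hence $B_+ \in \ker \partial$ and therefore $B_+ \in HZ^1_{\counit}(H_R)$ with no computation required; indeed this is exactly why $\Delta$ was chosen the way it was.

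Next I would establish non-triviality by invoking Lemma \ref{satz:cocycle-props}, which provides the well-defined map $\widetilde{\ev}_{\1}\colon HH^1_{\counit}(H_R) \to \Prim(H_R)$, $[L] \mapsto L(\1)$. Because this map factors through cohomology, it annihilates coboundaries, so it suffices to compute $B_+(\1)$: by definition of the grafting operator, attaching a root to the empty forest produces the single-node tree, whence $B_+(\1) = \tree{+-} \neq 0$. Thus $\widetilde{\ev}_{\1}([B_+]) = \tree{+-} \neq 0$, which forces $[B_+] \neq 0$ in $HH^1_{\counit}(H_R)$.

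There is no real obstacle here; the content lives entirely in the earlier results. Should I want a self-contained argument for non-triviality rather than citing the factorization, I would reason directly: any coboundary $\partial \alpha$ with $\alpha \in \Hom(H_R, \K) = HC^0_{\counit}(H_R)$ satisfies $(\partial \alpha)(\1) = (\id \tp \alpha)\circ\Delta(\1) - \alpha(\1)\tp\1 = 0$, so no coboundary can agree with $B_+$ at $\1$, where $B_+$ takes the nonzero value $\tree{+-}$.
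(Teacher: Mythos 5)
Your proposal is correct and follows exactly the paper's own (inline) argument: the cocycle property is immediate because \eqref{eq:B_+-cocycle} is literally the $1$-cocycle condition for the bicomodule structure \eqref{bicomodule:d1}, and non-triviality follows from Lemma \ref{satz:cocycle-props} via $\widetilde{\ev}_{\1}([B_+]) = B_+(\1) = \tree{+-} \neq 0$. Your closing direct check that coboundaries vanish on $\1$ is just the relevant part of that lemma's proof unwound, so nothing differs in substance.
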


\subsection{The universal property of \texorpdfstring{$H_R$}{H\_R}}
The Hopf algebra $H_R$ of rooted trees fulfils a universal property described in
\begin{satz}\label{satz:H_R-universal}
	Let $\alg$ be any commutative unital algebra and $L \in \End(\alg)$. Then there exists a unique morphism $\unimor{L}\!: H_R \rightarrow \alg$ of unital algebras such that
	\begin{equation}
		\unimor{L} \circ B_+ = L \circ \unimor{L},
		\quad \text{equivalently} \quad
		\vcenter{\xymatrix{
		{H_R} \ar[r]^{\unimor{L}} \ar[d]_{B_+} & {\alg} \ar[d]^{L} \\
			{H_R} \ar[r]_{\unimor{L}} & {\alg}
		}}
		\quad \text{commutes.}
		\label{eq:H_R-universal}
	\end{equation}
	Moreover, if $\alg$ is a bialgebra and $L \in HZ^1_{\counit}(\alg)$, then $\unimor{L}$ is a morphism of coalgebras (and thus bialgebras) as well. Should $\alg$ further allow for an antipode, then $\unimor{L}$ is even a morphism of Hopf algebras.
\end{satz}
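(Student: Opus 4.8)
The plan is to construct $\unimor{L}$ explicitly by recursion on the number of nodes and then verify the three compatibilities (with the unit, counit/coproduct, and antipode) one after another, each time reducing the check to the generators of $H_R$.

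First, for existence and uniqueness of the unital algebra morphism: since $H_R = \K[\trees]$ is the free commutative algebra on the set $\trees$ of rooted trees, a unital algebra morphism into the \emph{commutative} algebra $\alg$ is uniquely determined by, and may be freely prescribed on, the trees. Because $B_+\!:\ \forests \to \trees$ is a bijection, every tree is uniquely $t = B_+(f)$ for a forest $f$ with one fewer node, so I set $\unimor{L}(\1) \defas \1_{\alg}$ and recursively $\unimor{L}(B_+(f)) \defas L(\unimor{L}(f))$, where $\unimor{L}(f)$ is already determined as the product over the tree-components of $f$ (here commutativity of $\alg$ is exactly what makes this product well defined). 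Induction on the node number shows the recursion is consistent and extends to a unique unital algebra morphism; the relation $\unimor{L}\circ B_+ = L\circ\unimor{L}$ then holds on the basis $\forests$ and hence everywhere by linearity. Uniqueness is immediate, as any morphism obeying \eqref{eq:H_R-universal} is forced to take precisely these values on trees.

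Next, assuming $\alg$ is a bialgebra and $L \in HZ^1_{\counit}(\alg)$, I claim $\unimor{L}$ is a coalgebra morphism. The crucial simplification is that both $\Delta_{\alg}\circ\unimor{L}$ and $(\unimor{L}\tp\unimor{L})\circ\Delta$ are algebra morphisms $H_R \to \alg\tp\alg$ (because $\Delta_{\alg}$ is multiplicative on the bialgebra $\alg$ and $\Delta$ is multiplicative on $H_R$), so it suffices to check that they agree on the generators, the trees, which I do by induction on $\abs{t}$. For $t = B_+(f)$ I expand the left-hand side using the $1$-cocycle identity \eqref{bicomodule:d1}, $\Delta_{\alg}\circ L = (\id\tp L)\circ\Delta_{\alg} + L\tp\1$, and the right-hand side using the $B_+$-cocycle \eqref{eq:B_+-cocycle}, $\Delta\circ B_+ = B_+\tp\1 + (\id\tp B_+)\circ\Delta$, together with $\unimor{L}\circ B_+ = L\circ\unimor{L}$; the induction hypothesis on the shorter forest $f$ makes the two expansions coincide term by term. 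Compatibility with the counit is separate and easier: by Lemma \ref{satz:cocycle-props} we have $\im L\subseteq\ker\counit_{\alg}$, so $\counit_{\alg}\circ\unimor{L}$ vanishes on every nonempty forest and equals $1$ on $\1$, which matches $\counit_{H_R}$.

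Finally, if $\alg$ is a Hopf algebra, then $\unimor{L}$ intertwines the antipodes for free, by the standard fact that a bialgebra morphism between Hopf algebras commutes with antipodes. Concretely, working in the convolution algebra $\Hom(H_R,\alg)$, the coalgebra-morphism property gives $\unimor{L}\convolution(S_{\alg}\circ\unimor{L}) = \unit_{\alg}\circ\counit_{H_R}$, while the algebra-morphism property gives $(\unimor{L}\circ S_{H_R})\convolution\unimor{L} = \unit_{\alg}\circ\counit_{H_R}$; thus $\unimor{L}\circ S_{H_R}$ and $S_{\alg}\circ\unimor{L}$ are a left and a right convolution inverse of $\unimor{L}$, and by associativity of $\convolution$ they coincide, which is exactly $\unimor{L}\circ S_{H_R} = S_{\alg}\circ\unimor{L}$. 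I expect the main obstacle to be the coalgebra step: getting the bookkeeping of the two cocycle identities to line up, and in particular recognizing that reducing the check to trees via the algebra-morphism observation is what lets the induction run without tracking the full combinatorial coproduct \eqref{eq:coproduct-cuts}.
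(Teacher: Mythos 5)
Your proof is correct, and for the first two parts it follows the paper's route exactly: the same recursive construction of $\unimor{L}$ on trees via the bijection $B_+\colon\ \forests\rightarrow\trees$ (with commutativity of $\alg$ making the product over tree components well defined), and the same induction for the coalgebra property, reducing to trees because both $\Delta_{\alg}\circ\unimor{L}$ and $(\unimor{L}\tp\unimor{L})\circ\Delta$ are algebra morphisms, then matching the cocycle identity \eqref{bicomodule:d1} against \eqref{eq:B_+-cocycle}, with the counit handled by lemma \ref{satz:cocycle-props}. Where you genuinely diverge is the antipode step. The paper runs a \emph{third} induction on trees: it first derives the auxiliary identity \eqref{eq:antipode-cocycle}, $S\circ L = -S\convolution L$, valid for any 1-cocycle, and then pushes $S_{\alg}$ through $\unimor{L}\circ B_+$ term by term; that induction again leans on commutativity of $\alg$ to know that $S_{\alg}\circ\unimor{L}$ and $\unimor{L}\circ S$ are algebra morphisms (antipodes being only antimorphisms in general). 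You instead invoke the standard convolution argument: by the algebra-morphism property $\unimor{L}\circ S_{H_R}$ is a left convolution inverse of $\unimor{L}$ in $\convalg{H_R}{\alg}$, by the coalgebra-morphism property $S_{\alg}\circ\unimor{L}$ is a right one, and associativity of $\convolution$ forces them to coincide. This is shorter, needs no further induction and no commutativity at that stage, and it establishes the general fact that any bialgebra morphism between Hopf algebras intertwines the antipodes. What the paper's version buys in exchange is that it stays entirely within the self-contained tree-by-tree framework and yields the identity \eqref{eq:antipode-cocycle} as a by-product; as a proof of the theorem itself, your argument is the cleaner one.
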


\begin{proof}
	Uniqueness follows inductively: Starting with $\unimor{L}(\1) = \1_{\alg}$, suppose $\unimor{L}$ to be uniquely defined on $\forests_n$ for some $n\in\N_0$. Then \eqref{eq:H_R-universal} fixes $\unimor{L} (t) = L \circ \unimor{L}(f)$ for any tree $t=B_+(f)$ with $n+1$ nodes ($f \in \forests_n$) and thus also for any general forest $f\in\forests_{n+1}$ by
	\begin{equation*}
		\unimor{L}(f)
		= \unimor{L} \left( \prod_{t\in \pi_0(f)} t \right)
		= \prod_{t \in \pi_0(f)} \unimor{L} (t). \tag{$\ast$}
	\end{equation*}
	%, by $\big( \unimor{L} \tp \unimor{L} \big) \circ m_{\alg} = \unimor{L} \circ m_H$ we also have $\unimor{L}$ nailed down for any forest of $n+1$ nodes.
	Now given uniqueness of $\unimor{L}$ on $\forests$ and by linearity also on $H_R$ as a whole, the above procedure at the same time serves as an inductive definition proving existence.\footnote{Note that we need the commutativity of $\alg$ here, otherwise the right-hand side of $(\ast)$ is not well defined!}
	
	If $L$ is a cocycle, by lemma \ref{satz:cocycle-props} we have $\unimor{L} (\lin \trees) = \unimor{L} (\im B_+) \subseteq L\big(\im \unimor{L} \big) \subseteq \ker \counit_{\alg}$. Hence by multiplicativity of $\counit_{\alg}$ we find $\counit_{\alg} \circ \unimor{L} (f) = 0 = \counit_{H_R} (f)$ for any forest $f \in \forests\setminus\set{\1}$ using $(\ast)$. Together with $\counit_{\alg} \circ \unimor{L} (\1) = \counit_{\alg} (\1_{\alg}) = 1 = \counit_{H_R} (\1)$, we obtain
	\begin{equation*}
		\counit_{\alg} \circ \unimor{L} = \counit_{H_R}.
	\end{equation*}
	To prove that $\unimor{L}$ is morphism of (counital) coalgebras it thus remains to show
	\begin{equation*}
		\Delta_{\alg} \circ \unimor{L} = \left( \unimor{L} \tp \unimor{L} \right) \circ \Delta_{H_R},	
		\tag{$\sharp$}
	\end{equation*}
	which can be done inductively on trees again (as both sides of $(\sharp)$ are algebra morphisms)! So suppose $(\sharp)$ holds for all forests $f \in \forests_n$ (start of the induction is trivial at $f=\1$), then it also holds on $\forests_{n+1}$ as for any tree $t=B_+ (f)$ with $f\in\forests_n$ we have
	\settowidth{\wurelwidth}{\eqref{eq:H_R-universal}}
	\begin{align*}
		\Delta_{\alg} \circ \unimor{L} (t)
		&\wurel{} \Delta_{\alg} \circ \unimor{L} \circ B_+ (f)
		\urel{\eqref{eq:H_R-universal}} \Delta_{\alg} \circ L \circ \unimor{L} (f)
		\urel{\eqref{bicomodule:d1}} \left[ (\id_{\alg} \tp L) \circ \Delta_{\alg} + L \tp \1_{\alg} \right] \circ \unimor{L} (f) \\
		&\wurel{} (\id_{\alg} \tp L) \circ \Delta_{\alg} \circ \unimor{L} (f) + \big[ L \circ \unimor{L} (f) \big] \tp \1_{\alg} \\
		&\wurel{$(\sharp)$} \big[\unimor{L} \tp ( L \circ \unimor{L} ) \big] \circ \Delta_{H_R} (f) + \big[ \unimor{L} \circ B_+ (f) \big] \tp \unimor{L} (\1) \\
		&\wurel{\eqref{eq:H_R-universal}} \big( \unimor{L} \tp \unimor{L} \big) \circ \left[ (\id_{H_R} \tp B_+) \circ \Delta_{H_R} + B_+ \tp \1 \right] (f) \\
		&\wurel{\eqref{eq:B_+-cocycle}} \big( \unimor{L} \tp \unimor{L} \big) \circ \Delta_{H_R} \circ B_+ (f)
		= \big( \unimor{L} \tp \unimor{L} \big) \circ \Delta_{H_R} (t).
	\end{align*}
	For the Hopf algebra case we analogously prove \mbox{$S_{\alg} \circ \unimor{L} = \unimor{L} \circ S$} inductively on trees, again exploiting that both sides in this equation are algebra morphisms\footnote{in general only antimorphisms, but $\alg$ is commutative}. Suppose it is true on $\forests_n$ (trivial start at $n=0$), then for any tree $t=B_+(f)$ with $f\in\forests_n$
	\settowidth{\wurelwidth}{\eqref{eq:antipode-cocycle}}
	\begin{align*}
		S_{\alg} \circ \unimor{L} (t)
		&\wurel{} S_{\alg} \circ \unimor{L} \circ B_+ (f)
		\urel{\eqref{eq:H_R-universal}} S_{\alg} \circ L \circ \unimor{L} (f)
		\urel{\eqref{eq:antipode-cocycle}} - (S_{\alg} \convolution L) \circ \unimor{L} (f) \\
		&\wurel{$(\sharp)$} - (S_{\alg} \circ \unimor{L}) \convolution (L \circ \unimor{L} ) (f)
		\urel{\eqref{eq:H_R-universal}} - (\unimor{L} \circ S) \convolution (\unimor{L} \circ B_+) (f)
		\urel{$\ (\natural)\ $} - \unimor{L} \circ (S \convolution B_+) (f) \\
		&\wurel{\eqref{eq:antipode-cocycle}} \unimor{L} \circ S \circ B_+ (f)
		 = \unimor{L} \circ S (t).
	\end{align*}
	Here we employed that $\unimor{L}$ is an algebra morphism in $(\natural)$ and used the induction hypothesis \mbox{$S_{\alg} \circ \unimor{L} = \unimor{L} \circ S$} in the step before, on the left-hand side of the convolution product (note \mbox{$\Delta (f) \in \forests_n \tp \forests_n$}). The helpful equation 
	\begin{equation}\label{eq:antipode-cocycle}
		S\circ L = - S \convolution L,
	\end{equation}
	valid for the antipode $S$ of an arbitrary Hopf algebra and any 1-cocycle $L\in HZ^1_{\counit}(H)$, follows from lemma \ref{satz:cocycle-props} by 
	\begin{align*}
		u \circ \underbrace{\counit \circ L}_0
		&= (S \convolution \id) \circ L
		 = m \circ (S \tp \id) \circ \Delta \circ L 
		 = m \circ (S \tp \id) \circ \left[ ( \id \tp L) \circ \Delta + L \tp \1 \right]\nonumber\\
		&= m \circ (S \tp L) \circ \Delta + (S \circ L) \cdot \1
		 = S \convolution L + S \circ L.
		 \qedhere
	\end{align*}
\end{proof}
This natural algebra morphism $\unimor{L}$ is easily understood as follows: Any element of $H_R$ is a unique linear combination of forests, which are themselves expressible as iterations of $B_+$ and $m$, applied to $\1$, in a unique way. As an example consider
\begin{equation*}
	\tree{++-+--} + 3 \tree{++--} - \tree{+-}
	= B_+ \left( {\left[B_+(\1) \right]}^2 \right) + 3 B_+ \circ B_+ (\1) - B_+ (\1).
\end{equation*}
Now $\unimor{L}\!:\ H_R \rightarrow \alg$ just replaces every $B_+$ by $L$, $m$ by $m_{\alg}$ and $\1$ by $\1_{\alg}$! Hence we find
\begin{equation*}
	\unimor{L} \left( \tree{++-+--} + 3\tree{++--} - \tree{+-} \right)
	= L \left(  {\left[ L(\1_{\alg}) \right]}^2 \right) + 3 L \circ L (\1_{\alg}) - L(\1_{\alg}).
\end{equation*}
In this sense, $H_R$ is the free commutative algebra that is generated by a generic endomorphism (represented through $B_+$)! One should think of $B_+$ just as a placeholder for a specific endomorphism $L$.

More precisely, consider a category whose objects are pairs $(\alg, L_{\alg})$ of commutative unital algebras $\alg$ and endomorphisms $L_{\alg} \in \End(\alg)$, whereas the morphisms from $(\alg[A], L_{\alg[A]})$ to $(\alg[B], L_{\alg[B]})$ are given by morphisms $\rho\!:\ \alg[A]\rightarrow \alg[B]$ of unital algebras such that $\rho \circ L_{\alg[A]} = L_{\alg[B]} \circ \rho$.
Then the only elements of $\alg$ that we can naturally construct from $(\alg, L_{\alg})$ are the linear combinations of
\begin{equation*}
	\1_{\alg},\ L(\1_{\alg}),\ { \left[ L(\1_{\alg}) \right] }^2,\ L \circ L (\1_{\alg}),\ \ldots,
\end{equation*}
the results of iterations of $m_{\alg}$ and $L_{\alg}$ applied to the distinguished element $\1_{\alg}$.
By definition, in the case of the object $(H_R, B_+)$ all those elements (each corresponding to a forest) are linearly independent. In particular, theorem \ref{satz:H_R-universal} just shows that $(H_R, B_+)$ is the \emph{initial object} in the category just described! It further proves that it remains an initial object for the subcategories where we restrict to bialgebras (Hopf algebras) $\alg$, Hochschild cocycles $L_{\alg}$ and bialgebra (Hopf algebra) morphisms $\rho$.

Analogously, in the case of general unital algebras (not necessarily commutative ones) the initial object is the Hopf algebra of \emph{planar} (or \emph{ordered}) rooted trees. The distinguished total order among the children of a node prescribes the order in which the multiplication is to be performed.

These considerations are special instances of the much more general theory of \emph{operads}, for instance see \cite{Chapoton:Operads}.

\subsection{Automorphisms of \texorpdfstring{$H_R$}{H\_R}}
In particular we may apply the universal property to $\alg \defas H_R$ as target algebra itself, but with a cocycle $L \neq B_+$. Naturally we can modify $B_+$ by coboundaries leading to

\begin{definition}\label{def:auto}
	For any $\alpha \in H_R'$, using theorem \ref{satz:H_R-universal} we define
	\begin{equation}
		\auto{\alpha} \defas \unimor{B_+ + \partial\alpha}\!:\ H_R \rightarrow H_R
		\label{eq:auto}
	\end{equation}
	to be the unique algebra endomorphism of $H_R$ such that
		$\auto{\alpha} \circ B_+ = \left[ B_+ + \partial \alpha \right] \circ \auto{\alpha}$.
\end{definition}
	For instance, using the coproducts in section \ref{sec:coproduct} and \eqref{eq:hochschild-coboundary} to evaluate $\partial\alpha$, check
\begin{align}
	\auto{\alpha} \left( \tree{+-} \right) 
		&= \auto{\alpha} \circ B_+ (\1)
		 = B_+ (\1) + (\partial\alpha) (\1) 
		 = B_+ (\1)
		 = \tree{+-}
		 \label{auto:()}\\
%\nonumber\\
	\auto{\alpha} \left( \tree{++--} \right)
		&= \auto{\alpha} \circ B_+ \left( \tree{+-} \right)
		 = \left( B_+ + \partial\alpha \right) \auto{\alpha} \left( \tree{+-} \right)
%		 = \left[ B_+ + \partial\alpha \right] \left( \tree{+-} \right)
		 = \tree{++--} + \partial\alpha \left( \tree{+-} \right)
		 = \tree{++--} + \alpha(\1) \tree{+-}
		 \label{auto:(())}\\
%\nonumber\\
	\auto{\alpha} \left( \tree{+++---} \right)
		&= \auto{\alpha} \circ B_+ \left( \tree{++--} \right)
		 = \left( B_+ + \partial\alpha \right) \left[ \tree{++--} + \alpha(\1)\tree{+-} \right]
		 = \tree{+++---} + \alpha(\1) \tree{++--} + \partial\alpha \left( \tree{++--} \right) + \alpha(\1) \partial\alpha\left( \tree{+-} \right) \nonumber\\
		&= \tree{+++---} + 2 \alpha(\1) \tree{++--} + \left\{ {\left[ \alpha(\1) \right]}^2 + \alpha\left( \tree{+-} \right) \right\} \tree{+-}
		\label{auto:((()))} \\
	\auto{\alpha} \left( \tree{++-+--} \right)
		&= \auto{\alpha} \circ B_+ \left( \tree{+-}\tree{+-} \right)
		 = \left( B_+ + \partial\alpha \right) \left[ \auto{\alpha}(\tree{+-})\auto{\alpha}(\tree{+-}) \right]
		 = \left[ B_+ + \partial\alpha \right] \left( \tree{+-}\tree{+-} \right) \nonumber\\
		&= \tree{++-+--} + \partial\alpha \left( \tree{+-}\tree{+-} \right) 
		 = \tree{++-+--} + 2 \alpha \left( \tree{+-} \right) \tree{+-} + \alpha (\1) \tree{+-}\tree{+-}.
		 \label{auto:(()())}
\end{align}
Now arises the natural question of how the morphisms $\unimor{L}$ induced by \eqref{eq:H_R-universal} change under a variation of the cocycle $L$ by a coboundary. We give the answer in

\begin{satz}\label{satz:change-coboundary-equals-auto}
	Let $H$ be any commutative bialgebra, $L \in HZ^1_{\counit}(H)$ a 1-cocycle and further $\alpha \in H'$ a functional. Then for $\unimor{L},\unimor{L+\partial \alpha}\!:\ H_R \rightarrow H$ given through theorem \ref{satz:H_R-universal} and $\auto{\alpha \circ \unimor{L}}\!:\ H_R \rightarrow H_R$ from definition \ref{def:auto}, we have
	\begin{equation}
		\unimor{L + \partial \alpha} = \unimor{L} \circ \auto{\left[\alpha\, \circ\, \unimor{L}\right]},
		\quad\text{equivalently}\quad
		\vcenter{\xymatrix{
			{H_R} \ar[r]^{\unimor{L + \partial\alpha}} \ar[d]_{\auto{\alpha \circ \unimor{L}}}  & {H} \\
			{H_R} \ar[ur]_{\unimor{L}} &
			}}
		\quad\text{commutes.}
		\label{eq:change-coboundary-equals-auto}
	\end{equation}
\end{satz}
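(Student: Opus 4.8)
The plan is to exploit the uniqueness clause of the universal property (Theorem \ref{satz:H_R-universal}). Both sides of \eqref{eq:change-coboundary-equals-auto} are morphisms of unital algebras $H_R \to H$: the left-hand side by construction, the right-hand side as a composite of the algebra morphisms $\auto{\alpha\circ\unimor{L}}$ and $\unimor{L}$. Since $\unimor{L+\partial\alpha}$ is the \emph{unique} algebra morphism $\rho\colon H_R\to H$ satisfying $\rho\circ B_+ = (L+\partial\alpha)\circ\rho$, it suffices to verify that the composite $\unimor{L}\circ\auto{\alpha\circ\unimor{L}}$ obeys this same intertwining relation. Abbreviating $\beta\defas\alpha\circ\unimor{L}\in H_R'$ and $\chi\defas\auto{\beta}$, I would therefore aim to show $(\unimor{L}\circ\chi)\circ B_+ = (L+\partial\alpha)\circ(\unimor{L}\circ\chi)$.

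First I would unfold the left-hand side using the two defining relations. By definition of $\chi=\auto{\beta}$ we have $\chi\circ B_+ = (B_+ + \partial\beta)\circ\chi$, and by the defining property of $\unimor{L}$ we have $\unimor{L}\circ B_+ = L\circ\unimor{L}$. Hence
\[
\unimor{L}\circ\chi\circ B_+ = \unimor{L}\circ(B_+ + \partial\beta)\circ\chi = \big(L\circ\unimor{L} + \unimor{L}\circ\partial\beta\big)\circ\chi,
\]
whereas the target expression is $(L+\partial\alpha)\circ\unimor{L}\circ\chi = \big(L\circ\unimor{L}+\partial\alpha\circ\unimor{L}\big)\circ\chi$. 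Cancelling the common term $L\circ\unimor{L}\circ\chi$, the whole statement reduces to the single identity $\unimor{L}\circ\partial\beta = \partial\alpha\circ\unimor{L}$ with $\beta=\alpha\circ\unimor{L}$ (it would in fact suffice to have it on $\im\chi$, but I would prove it on all of $H_R$).

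This identity is the heart of the proof. I would begin by writing the Hochschild coboundary on functionals explicitly: specialising \eqref{eq:hochschild-coboundary} to $k=0$ with the bicomodule structure \eqref{bicomodule:d1} gives, for any functional $\gamma$, the rule $\partial\gamma = (\id\tp\gamma)\circ\Delta - \unit\circ\gamma$ (precisely the formula used to compute the examples \eqref{auto:()}--\eqref{auto:(()())}). Applying this to $\partial\beta$ and pushing $\unimor{L}$ through, I would use that $\unimor{L}$ is an algebra morphism — so that $\unimor{L}\circ(\id_{H_R}\tp\beta) = \unimor{L}\tp\beta$ and $\unimor{L}\circ\unit_{H_R}=\unit_H$ under the canonical identifications $V\tp\K\cong V$ — to rewrite $\unimor{L}\circ\partial\beta = (\unimor{L}\tp\beta)\circ\Delta_{H_R} - \unit_H\circ\beta$. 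Substituting $\beta=\alpha\circ\unimor{L}$ turns the first summand into $(\id_H\tp\alpha)\circ(\unimor{L}\tp\unimor{L})\circ\Delta_{H_R}$.

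The decisive step is now to invoke that, because $L$ is a $1$-cocycle, Theorem \ref{satz:H_R-universal} guarantees $\unimor{L}$ is a morphism of coalgebras, i.e.\ $(\unimor{L}\tp\unimor{L})\circ\Delta_{H_R}=\Delta_H\circ\unimor{L}$. This collapses the first summand to $(\id_H\tp\alpha)\circ\Delta_H\circ\unimor{L}$ and leaves the second as $\unit_H\circ\alpha\circ\unimor{L}$, whose difference is exactly $\big[(\id_H\tp\alpha)\circ\Delta_H - \unit_H\circ\alpha\big]\circ\unimor{L} = \partial\alpha\circ\unimor{L}$, as required. I expect this coalgebra-morphism step to be the main obstacle: it is the only place where the hypothesis that $L$ is a cocycle (rather than an arbitrary endomorphism) is genuinely used, and it is what forces the two distinct coboundaries — $\partial\beta$ on $H_R$ and $\partial\alpha$ on $H$ — to intertwine through $\unimor{L}$. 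Everything else is bookkeeping with the tensor-factor identifications. With the key identity in hand, the reduction of the first two paragraphs together with the uniqueness clause of Theorem \ref{satz:H_R-universal} finishes the proof.
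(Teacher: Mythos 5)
Your proof is correct and takes essentially the same route as the paper's: both arguments hinge on the identity $\unimor{L} \circ \partial\left( \alpha \circ \unimor{L} \right) = (\partial\alpha) \circ \unimor{L}$, established exactly as you do it, via the fact that $\unimor{L}$ is a coalgebra morphism because $L$ is a cocycle (theorem \ref{satz:H_R-universal}). The only cosmetic difference is that you package the conclusion through the uniqueness clause of theorem \ref{satz:H_R-universal}, whereas the paper re-runs the corresponding induction on trees explicitly; the mathematical content is identical.
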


\begin{proof}
	As both sides of \eqref{eq:change-coboundary-equals-auto} are algebra morphisms, it is sufficient to prove it inductively on trees: Let it be true for a forest $f\in\forests$ (start the induction at $f=\1$), then it holds as well for the tree $ B_+(f)$ by
	\settowidth{\wurelwidth}{\eqref{eq:H_R-universal}}
	\begin{align*}
		\unimor{L} \circ \auto{\left[\alpha \circ \unimor{L}\right]} \circ B_+ (f)
		&\wurel{\eqref{eq:H_R-universal}} \unimor{L} \circ \left[ B_+ + \partial \left( \alpha \circ \unimor{L} \right) \right] \circ \auto{\left[\alpha \circ \unimor{L}\right]} (f) \\
		&\wurel{} \bigg\{ \underbrace{\unimor{L} \circ B_+}_{L \circ \unimor{L}}
		 					+\underbrace{\unimor{L} \circ \left[
								                         \partial \left( \alpha \circ \unimor{L} \right)
																				 \right]}_{(\partial \alpha) \circ \unimor{L}}
			 \bigg\} \circ \auto{\left[\alpha \circ \unimor{L}\right]} (f) \\
		&\wurel{} \left\{ L + \partial \alpha \right\} \circ 
		 			\underbrace{
							\unimor{L} \circ \auto{\left[\alpha \circ \unimor{L}\right]} (f)
					}_{
							\unimor{L + \partial\alpha} (f)
					}
		\urel{\eqref{eq:H_R-universal}} \unimor{L + \partial\alpha} \circ B_+ (f).
	\end{align*}
	Here we used the identity
	\begin{align*}
		(\partial\alpha) \circ \phi
		&= (\id_B \tp \alpha) \circ \Delta_B \circ \phi - \1_B \cdot \alpha \circ \phi 
		 = (\id_B \tp \alpha) \circ (\phi \tp \phi) \circ \Delta_A - \phi(\1_A) \cdot \alpha \circ \phi \\
		&= \phi \circ \Big\{ [\id_A \tp (\alpha \circ \phi) ] \circ \Delta_A - \1_A \cdot \alpha \circ \phi \Big\}
		 = \phi \circ \partial \left( \alpha \circ \phi \right),
	\end{align*}
	valid for arbitrary bialgebra morphisms $\phi\!:\ A \rightarrow B$ and $\alpha \in B'$, applied to $\phi \defas \unimor{L}$.
\end{proof}

Examples for this result are worked out in section \ref{sec:more-feynman-rules}. Generally, theorem \ref{satz:change-coboundary-equals-auto} says that the effect of twisting $L$ by $\partial\alpha$ on the resulting morphism from theorem \ref{satz:H_R-universal} can completely be restored and understood on the side of $H_R$ alone, by the endomorphism $\auto{\alpha \circ \unimor{L}}$.  This is in fact an automorphism of $H_R$ as shown in

\begin{satz}
	The map $\auto{\cdot}\!:\ H_R' \rightarrow \End_{\text{Hopf}}(H_R)$, taking values in the space of Hopf algebra endomorphisms of $H_R$, fulfils the following properties:
	\begin{enumerate}
		\item For $\alpha\in H_R'$ and any forest $f\in\forests$, $\auto{\alpha}(f)$ differs from $f$ only by lower order forests:
			\begin{equation}
				\forall f\in\forests\!: \quad \auto{\alpha} (f) \in f + H_R^{\abs{f}-1}
				= f + \bigoplus_{n=0}^{\abs{f}-1} H_{R,n}.
				\label{eq:auto-leading-term}
			\end{equation}
		\item $\auto{\cdot}$ maps $H_R'$ into $\Aut_{\text{Hopf}} (H_R)$, the group of Hopf algebra automorphisms.
		\item The automorphisms of this form are closed under composition, saying that for any \mbox{$\alpha,\beta \in H_R'$} there exists a $\gamma \in H_R'$ with $\auto{\alpha} \circ \auto{\beta} = \auto{\gamma}$. Concretely we can take
			\begin{equation}
				\gamma = \alpha + \beta \circ {\auto{\alpha}}^{-1}.
				\label{eq:auto-composition}
			\end{equation}
		\item The maps $\partial^0\!:\ H_R' \rightarrow HZ^1_{\counit}(H_R)$ and $\auto{\cdot}\!:\ H_R' \rightarrow \Aut_{\text{Hopf}}(H_R)$ are injective.
		\item Therefore the subgroup $\im \auto{\cdot} = \setexp{\auto{\alpha}}{\alpha \in H_R'} \subset \Aut_{\text{Hopf}}(H_R)$ induces a group structure on $H_R'$ with neutral element $0$ and group law
			\begin{equation}
				\alpha \autoconc \beta 
				\defas \auto{\cdot}^{-1} \left( \auto{\alpha} \circ \auto{\beta} \right)
				= \alpha + \beta \circ \auto{\alpha}^{-1} ,\quad \alpha^{\autoconc -1} = -\alpha \circ \auto{\alpha}.
				\label{eq:auto-group}
			\end{equation}
	\end{enumerate}
\end{satz}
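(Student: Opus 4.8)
The map $\auto{\alpha} \defas \unimor{B_+ + \partial\alpha}$ is, by construction, the image under the universal property (theorem \ref{satz:H_R-universal}) of the cochain $B_+ + \partial\alpha$, which is a $1$-cocycle since $B_+\in HZ^1_{\counit}(H_R)$ by corollary \ref{satz:B_+-cocycle} and $\partial\alpha$ is a coboundary. Because $H_R$ carries an antipode, theorem \ref{satz:H_R-universal} already guarantees that $\auto{\alpha}$ is a morphism of Hopf algebras, so only bijectivity and the three structural identities require genuine work. The plan is to prove the five claims in the order $1,2,3,4,5$, since each relies on the preceding ones.

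For the first claim I would induct on the number of nodes using the defining relation $\auto{\alpha}\circ B_+ = (B_+ + \partial\alpha)\circ\auto{\alpha}$. Writing $\auto{\alpha}(f)=f+r$ with $r\in H_R^{\abs{f}-1}$ (the induction hypothesis) and applying $B_+ + \partial\alpha$, one uses that $B_+$ raises the degree by one while $\partial\alpha$ sends a homogeneous $g$ of degree $d$ into $\alpha(\1)\,g + H_R^{d-1}$ — this is read off directly from $\partial\alpha=(\id\tp\alpha)\circ\Delta-\alpha(\cdot)\1$ together with $\Delta(g)=g\tp\1+\1\tp g+\cored(g)$. Hence $\auto{\alpha}(B_+f)-B_+f\in H_R^{\abs{B_+f}-1}$, and multiplicativity of $\auto{\alpha}$ extends the estimate from trees to arbitrary forests. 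The second claim is then formal: by the first, $\auto{\alpha}$ preserves the finite-dimensional filtration $H_R^n$ and induces the identity on each quotient $H_R^n/H_R^{n-1}$, so its restriction to every $H_R^n$ is unipotent, hence invertible; passing to $H_R=\bigcup_n H_R^n$ shows $\auto{\alpha}$ is bijective, and the inverse of a bijective Hopf morphism is again one, so $\auto{\alpha}\in\Aut_{\text{Hopf}}(H_R)$.

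For the third claim I would set $\phi\defas\auto{\alpha}\circ\auto{\beta}$, a Hopf endomorphism, and compute $\phi\circ B_+$ by substituting the defining relations for $\auto{\beta}$ and then $\auto{\alpha}$. The term $\auto{\alpha}\circ\partial\beta$ is rewritten using the bialgebra-morphism identity $(\partial\nu)\circ\psi=\psi\circ\partial(\nu\circ\psi)$ established inside the proof of theorem \ref{satz:change-coboundary-equals-auto}, applied to the bialgebra automorphism $\psi\defas\auto{\alpha}^{-1}$ (available thanks to the second claim); this yields $\auto{\alpha}\circ\partial\beta=\partial\!\left(\beta\circ\auto{\alpha}^{-1}\right)\circ\auto{\alpha}$. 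Collecting terms gives $\phi\circ B_+=\left(B_+ + \partial\gamma\right)\circ\phi$ with $\gamma=\alpha+\beta\circ\auto{\alpha}^{-1}$, and since $\phi$ is an algebra morphism the uniqueness clause of theorem \ref{satz:H_R-universal} forces $\phi=\unimor{B_+ + \partial\gamma}=\auto{\gamma}$.

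The fourth claim is the crux. I would first reduce injectivity of $\auto{\cdot}$ to injectivity of $\partial^0$: if $\auto{\alpha}=\auto{\beta}=\colon\phi$, subtracting the two defining relations gives $(\partial\alpha-\partial\beta)\circ\phi=0$, and bijectivity of $\phi$ yields $\partial\alpha=\partial\beta$, so by linearity everything reduces to $\ker\partial^0=0$. To see $\partial\alpha=0\Rightarrow\alpha=0$, a degree count in $(\id\tp\alpha)\circ\Delta=\alpha(\cdot)\1$ first forces $\alpha(\1)=0$, whereupon the condition becomes $(\id\tp\alpha)\circ\cored=0$. Inducting on degree and extracting, for each test forest $x$ of degree $n+1$, the coefficient of the single node $\tree{+-}$, all contributions where $\alpha$ is evaluated below degree $n$ vanish by hypothesis, leaving a linear system on $\restrict{\alpha}{H_{R,n}}$ governed by the ``attach a leaf'' operator $N\!:\ H_{R,n}\rightarrow H_{R,n+1}$. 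The decisive point — and the main obstacle — is the injectivity of $N$, equivalently the surjectivity of its transpose (leaf removal) $D=N^{\ast}$; I would establish this by a leading-term argument, ordering forests by their largest tree-component so that $N$ becomes triangular. Granting this, the fourth claim is complete. The fifth claim is then formal: by the third claim $\im\auto{\cdot}$ is closed under composition, and putting $\gamma=0$ there shows each $\auto{\alpha}$ has inverse $\auto{-\alpha\circ\auto{\alpha}}$, so $\im\auto{\cdot}\subseteq\Aut_{\text{Hopf}}(H_R)$ is a subgroup; transporting its group law along the bijection $\auto{\cdot}$ (injective by the fourth claim) gives $\alpha\autoconc\beta=\alpha+\beta\circ\auto{\alpha}^{-1}$ with neutral element $0$ (as $\auto{0}=\unimor{B_+}=\id$) and inverse $\alpha^{\autoconc-1}=-\alpha\circ\auto{\alpha}$.
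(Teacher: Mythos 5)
Your treatment of claims 1, 2, 3 and 5 is correct and essentially follows the paper's route. Claim 1 is the same induction; claim 2 via unipotence on the finite-dimensional filtration steps $H_R^n$ is a cosmetic variant of the paper's separate leading-term arguments for injectivity and surjectivity; claim 3 re-derives, inside your proof, the identity $(\partial\nu)\circ\psi=\psi\circ\partial(\nu\circ\psi)$ and invokes the uniqueness clause of theorem \ref{satz:H_R-universal}, whereas the paper simply applies theorem \ref{satz:change-coboundary-equals-auto} with the functional $\beta\circ\auto{\alpha}^{-1}$ -- the same computation, packaged differently; claim 5 is formal in both treatments.

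The genuine divergence, and the gap, lies in claim 4. Your reduction of injectivity of $\auto{\cdot}$ to $\ker\partial^0=\set{0}$ is the paper's, and your further reduction is sound: from $\partial\alpha=0$ one gets $\alpha(\1)=0$, hence $(\id\tp\alpha)\circ\cored=0$, and inducting on degree leaves, for every $x\in\forests_{n+1}$, exactly the equation $\sum_{v\in\leaves(x)}\alpha\left(R^{\set{v}}(x)\right)=0$. So everything hinges on leaf removal $D\!:\ H_{R,n+1}\rightarrow H_{R,n}$ being surjective, equivalently on injectivity of its transpose (which, in the forest basis, is a multiplicity-weighted leaf-attachment operator rather than the naive ``attach a leaf everywhere'' map $N$ -- a minor imprecision in your $D=N^{\ast}$). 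This surjectivity is precisely the crux, and you leave it at ``a leading-term argument, ordering forests by their largest tree-component''. That ordering does not obviously work: distinct forests can yield the same forest under leaf attachment -- for instance both $\tree{+++---}$ and $\tree{++-+--}$ yield $\tree{++-++---}$, as is visible in \eqref{coproduct:(()(()))} -- so without a carefully specified refinement two distinct $y$'s may share the same maximal term, and the injectivity of the leading-term map (which your triangularity argument needs) fails. The lemma is true and the repair is easy: grade forests of degree $n+1$ by the number $k$ of isolated single-node components. Attaching a leaf to $y$ produces forests with at most $k(y)$ isolated nodes, except for $y\cdot\tree{+-}$, which has $k(y)+1$ and occurs with nonzero coefficient; since $y\mapsto y\cdot\tree{+-}$ is injective on forests, the operator is triangular with nonzero diagonal in any order refining this grading, and injectivity follows.

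It is worth seeing how the paper sidesteps this combinatorial lemma entirely: from $\partial\alpha\left({\tree{+-}}^{n+1}\right)=0$ it first gets $\alpha\left({\tree{+-}}^{k}\right)=0$ for all $k$, and then from $\partial\alpha\left({\tree{+-}}^{n}f\right)=0$, projected onto $\K\tree{+-}$, the explicit recursion $\alpha\left(f{\tree{+-}}^{n-1}\right)=-\frac{1}{n}\sum\alpha\left({\tree{+-}}^{n}f''\right)$, the sum running over coproduct terms of $f$ with $f'=\tree{+-}$. Iterating trades one node of $f$ for powers of $\tree{+-}$ at each step, so $\alpha(f)$ is expressed through the vanishing values $\alpha\left({\tree{+-}}^{k}\right)$: the linear system arrives already in solved form, and no rank or surjectivity statement is needed. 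Once your lemma is fixed as above, your route is a valid alternative; as proposed, it rests on an unproven and incorrectly sketched step.
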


\begin{proof}
	For examples to \eqref{eq:auto-leading-term} see \eqref{auto:()}, \eqref{auto:(())} and \eqref{auto:(()())}. The general proof is done inductively starting with $\auto{\alpha}\left( \tree{+-} \right) = \tree{+-}$. So suppose \eqref{eq:auto-leading-term} holds for some forests $f, f'\in \forests$, then it does so for $f \cdot f'$ too by
	\begin{equation*}
		\auto{\alpha} (f\cdot f')
		= \auto{\alpha}(f) \cdot \auto{\alpha} (f')
		\in \left( f + H_R^{\abs{f}-1} \right)\cdot \left( f' + H_R^{\abs{f'}-1} \right)
		\subseteq f\cdot f' + H_R^{\abs{f\cdot f'}-1}.
	\end{equation*}
	Using $\partial\alpha (H_R^n) \subseteq H_R^n$, we further achieve \eqref{eq:auto-leading-term} for the tree $B_+(f)$ by
	\begin{equation*}
		\auto{\alpha} \circ B_+(f)
		=\left[ B_+ + \partial\alpha \right] \circ \auto{\alpha} (f)
		\subseteq \left[ B_+ + \partial\alpha \right] \left( f + H_R^{\abs{f}-1} \right)
		\subseteq B_+(f) + H_R^{\abs{f}}.
	\end{equation*}
	This proves inductively \eqref{eq:auto-leading-term} and also the surjectivity of $\auto{\alpha}$: Starting with $H_R^0 = \K \cdot \1 \subseteq \im \auto{\alpha}$, suppose that $H_R^n \subseteq \im \auto{\alpha}$ for some $n\in\N_0$. Then taking any forest $f \in \forests_{n+1}$ we just proved
	\begin{equation*}
		\auto{\alpha} (f)
		\in f + H_R^n,
		\quad \text{therefore} \quad
		f \in \auto{\alpha}(f) + H_R^n \subseteq \im \auto{\alpha}.
	\end{equation*}
	For the injectivity of $\auto{\alpha}$ suppose $0 \neq x \in \ker \auto{\alpha}$ with $x = \bigoplus_{n\in\N_0} x_n$ using homogeneous components $x_n\in H_{R,n}$. Take $N\in\N_0$ such that $x_n=0\ \forall n>N$ and $x_N \neq 0$, then
	\begin{equation*}
		0 = \auto{\alpha} (x) = \sum_{n=0}^{N} \auto{\alpha} (x_n)
		\urel[\in]{\eqref{eq:auto-leading-term}} \underbrace{\sum_{n=0}^{N-1} \auto{\alpha}(x_n)}_{\subseteq H_R^{N-1}} + x_{N} + H_R^{N-1}
	\end{equation*}
	implies the contradiction $x_N \subseteq H_R^{N-1}$, thus we must have $\ker \auto{\alpha}=\set{0}$. The bijectivity of $\auto{\alpha}$ and thus $\auto{\alpha}\in\Aut_{\text{Hopf}}(H_R)$ being proven, the inverse $\auto{\alpha}^{-1}$ in \eqref{eq:auto-composition} is well defined and we can apply \eqref{eq:change-coboundary-equals-auto} to get \eqref{eq:auto-composition} as
	\begin{align*}
		\auto{\left[\alpha+\beta \,\circ\, \auto{\alpha}^{-1}\right]}
		&= \unimor{\left[B_+ + \partial\alpha \right] + \partial\left( \beta \,\circ\, \auto{\alpha}^{-1} \right)}
		\urel{\eqref{eq:change-coboundary-equals-auto}} \unimor{\left[B_+ + \partial\alpha\right]} \circ \auto{\big[\beta \ \circ\ \auto{\alpha}^{-1} \ \circ\ \unimor{\left(B_+ + \partial\alpha\right)}\big]} \\
		&= \auto{\alpha} \circ \auto{\left[\beta \ \circ\ \auto{\alpha}^{-1} \ \circ\ \auto{\alpha}\right]}
		= \auto{\alpha} \circ \auto{\beta}.
	\end{align*}
	Now consider $\alpha,\beta \in H_R'$ with $\auto{\alpha} = \auto{\beta}$, then
	
%	To see injectivity of $\partial^0\!:\ H_R' \rightarrow HZ^1_{\counit}(H_R)$, suppose $\auto{\alpha} = \id$ and observe $\partial\alpha = 0$ by
	\begin{equation*}
		0
		= (\auto{\alpha} - \auto{\beta}) \circ B_+
		= (B_+ + \partial\alpha) \circ \auto{\alpha} - (B_+ + \partial\beta) \circ \auto{\beta}
		= (\partial\alpha - \partial\beta) \circ \auto{\alpha}
	\end{equation*}
	implies $\partial\alpha = \partial\beta$ by the surjectivity of $\auto{\alpha}$. Hence the injectivity of $\auto{\cdot}$ reduces to that of $\partial^0\!:\ H_R' \rightarrow HZ^1_{\counit}(H_R)$. In contrast to $\auto{\cdot}$, this map is linear and we only need to consider $\partial\alpha = 0$. First check how for any $n\in\N_0$, $\alpha\left( {\tree{+-}}^n \right) = 0$ follows from
	\begin{equation*}
		\forall n\in\N_0\!: \quad
		0
		= \partial\alpha \left( {\tree{+-}}^{n+1} \right)
		= \sum_{i=0}^n \binom{n+1}{i} \alpha \left( {\tree{+-}}^i \right) {\tree{+-}}^{n+1-i}.
	\end{equation*}
	Given an arbitrary forest $f\in\forests$ and $n\in\N$, the expression
	\begin{align*}
		0 
		&= \partial\alpha \left( {\tree{+-}}^n f \right)
		= f \underbrace{\alpha\left( {\tree{+-}}^n \right)}_0 + \sum_f \sum_{i=0}^n \binom{n}{i}{\tree{+-}}^i f' \alpha\left( {\tree{+-}}^{n-i} f'' \right) 
		  + \sum_{i=1}^n \binom{n}{i} \bigg[ {\tree{+-}}^i f \underbrace{\alpha\left( {\tree{+-}}^{n-i} \right)}_0 + {\tree{+-}}^i \alpha\left( f {\tree{+-}}^{n-i} \right) \bigg]
	\end{align*}
	simplifies upon projection onto $\K \tree{+-}$ to
	\begin{equation*}
		\alpha\left( f {\tree{+-}}^{n-1} \right)
		= - \frac{1}{n}\sum_{\substack{f \\ f'=\tree{+-}}} \alpha\left( {\tree{+-}}^n f'' \right).
	\end{equation*}
	Iteration of this formula allows us to express $\alpha(f)$ as a linear combination of the values $\setexp{\alpha\big( {\tree{+-}}^k \big)}{ 0 \leq k \leq \abs{f}}$. These all vanish, hence we proved $\alpha = 0$ and therefore the injectivity of $\auto{\cdot}\!: \ H_R' \rightarrow \Aut_{\text{Hopf}}(H_R)$.

	Therefore we can pull the group structure from $\Aut_{\text{Hopf}}(H_R)$ back onto $H_R'$, resulting in \eqref{eq:auto-group} as a consequence of \eqref{eq:auto-composition}.
\end{proof}

\section{Decorated rooted trees}
Though we will not need the Hopf algebra $H_R(\decor)$ of decorated\footnote{Simply speaking, we just add a label drawn from the set $\decor$ to every node of a tree (or forest). In particular, the grafting operator $B_+^d$ now carries an index $d\in\decor$ specifying this label for the new root it attaches to a forest.} rooted trees (with decorations drawn from a set $\decor$) in the sequel, we still want to remark briefly that the above results generalize (along with their proofs) immediately to the decorated setup:
\begin{itemize}
	\item Let $\alg$ be a commutative algebra and $L_{\cdot}\!:\ \decor \rightarrow \End(\alg)$ a $\decor$-indexed set of endomorphisms. Then there exists a unique algebra morphism $\unimor{L_{\cdot}}: H_R(\decor) \rightarrow \alg$ such that
		\begin{equation}
			\forall d \in \decor\!: \quad \unimor{L_{\cdot}} \circ B_+^d = L_d \circ \unimor{L_{\cdot}},
			\label{eq:H_R-dekoriert-universal}
		\end{equation}
		which turns out to be a morphism of bialgebras if $\alg$ is a bialgebra and $\im L_{\cdot} \subseteq HZ^1_{\counit}(\alg)$ are 1-cocycles. Finally, if $\alg$ is even a Hopf algebra, then $\unimor{L_{\cdot}}$ is a morphism of Hopf algebras.
	\item Given a family $\alpha_{\cdot}\!:\ \decor \rightarrow H_R'(\decor)$ of functionals on $H_R(\decor)$, one can consider the family $\decor\ni d\mapsto L_d \defas B_+^d + \partial \left( \alpha_d \right)$ (shorthand notation $L_{\cdot} \defas B_+^{\cdot} + \partial \alpha_{\cdot}$) of cocycles and obtains an endomorphism $\auto{\alpha_{\cdot}} \defas \unimor{B_+^{\cdot} + \partial\alpha_{\cdot}}$ of $H_R(\decor)$ via \eqref{eq:H_R-dekoriert-universal}. It is an automorphism of Hopf algebras and it does not change the terms of leading weight.
	\item The automorphisms of this kind are closed under composition and induce a group structure on $\bigoplus_{d \in \decor} H_R'(\decor)$ by
		\begin{equation*}
			{\left[ \alpha_{\cdot} \autoconc \beta_{\cdot} \right]}_d
			= \alpha_d + \beta_d \circ \unimor{\alpha_{\cdot}}^{-1} 
			= \alpha_d + \beta_d \circ \unimor{\left[- \alpha_{\cdot} \circ \unimor{\alpha_{\cdot}}\right]} \quad \forall d \in \decor.
		\end{equation*}
	\item If $H$ is a bialgebra, $L_{\cdot}\!:\ \decor \rightarrow HZ^1_{\counit}(H)$ a $\decor$-indexed set of cocycles and $\alpha_{\cdot}\!: \decor \rightarrow H'$ a set of functionals, the effect of twisting the $L_{\cdot}$ by the coboundaries $\partial\alpha_{\cdot}$ is captured by an automorphism of $H_R(\decor)$ via
		\begin{equation*}
			\unimor{L_{\cdot} + \partial\alpha_{\cdot}}
			= \unimor{L_{\cdot}} \circ \auto{\left[\alpha_{\cdot} \circ \unimor{L_{\cdot}}\right]},
		\end{equation*}
		just as in the case \eqref{eq:change-coboundary-equals-auto} without decorations.
\end{itemize}

\section{The Hopf algebra of polynomials}
\label{sec:polynomials}
In the next chapter, we will encounter polynomials as the target algebra of renormalized Feynman rules. Hence it is worth to study
\begin{definition}
	The symmetric algebra $S(V)=\bigoplus_{n=0}^{\infty} S^n(V)$ of a vector space $V$ is a graded Hopf algebra, defined by setting $\Delta (v) = v\tp \1 + \1 \tp v$ for any $v\in V$.
\end{definition}
\begin{korollar}
	$S(V)$ is connected, commutative and cocommutative. For any $v\in V$ and $n\in\N_0$ we have $\Delta \left( v^n \right) = \sum_{i=0}^n \binom{n}{i} v^i \tp v^{n-i}$. Further note $\Prim \left( S(V) \right) = V$.
\end{korollar}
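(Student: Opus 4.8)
The plan is to dispatch the three claims separately, using throughout that $\Delta$ is by construction the unique morphism of unital algebras extending $v\mapsto v\tp\1+\1\tp v$ from the generating space $V$ to all of $S(V)$. Commutativity of $S(V)$ is built into the symmetric algebra. For cocommutativity I would note that the flip $\tau\defas\tau_{(1,2)}$ from \eqref{eq:permu-auto} is an algebra automorphism of $S(V)\tp S(V)$, so $\tau\circ\Delta$ is again an algebra morphism; it agrees with $\Delta$ on $V$ because $v\tp\1+\1\tp v$ is $\tau$-invariant, and since $V$ generates $S(V)$ the two morphisms coincide, i.e.\ $\tau\circ\Delta=\Delta$. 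For connectedness I would verify that the canonical decomposition $S(V)=\bigoplus_n S^n(V)$ is a graduation: on a degree-one generator $\Delta(v)$ has components only in $S^1\tp S^0$ and $S^0\tp S^1$, and multiplicativity of $\Delta$ propagates this to $\Delta(S^n(V))\subseteq\bigoplus_{i+j=n}S^i(V)\tp S^j(V)$. The induced filtration starts with $S^0(V)=\K\cdot\1$, so $S(V)$ is connected by definition \ref{def:connected}.

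The binomial identity follows directly from $\Delta$ being multiplicative: $\Delta(v^n)=\Delta(v)^n=(v\tp\1+\1\tp v)^n$. The two summands commute in $S(V)\tp S(V)$, since $(v\tp\1)(\1\tp v)=v\tp v=(\1\tp v)(v\tp\1)$, so the ordinary binomial theorem applies and gives $\sum_{i=0}^n\binom{n}{i}(v^i\tp\1)(\1\tp v^{n-i})=\sum_{i=0}^n\binom{n}{i}v^i\tp v^{n-i}$.

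For $\Prim(S(V))=V$, the inclusion $V\subseteq\Prim(S(V))$ is exactly the defining relation \eqref{eq:prim}. For the reverse inclusion I would first show $\Prim(S(V))$ is a graded subspace: writing a primitive $p=\sum_n p_n$ in homogeneous parts and comparing total degrees in $\Delta(p)=\1\tp p+p\tp\1$ (the grading being respected by $\Delta$) forces $\Delta(p_n)=\1\tp p_n+p_n\tp\1$ for each $n$. It then suffices to exclude homogeneous primitives in degrees $0$ and $\geq2$. Degree $0$ drops out because applying $\counit\tp\id$ to $\Delta(p)=\1\tp p+p\tp\1$ yields $\counit(p)=0$, so a primitive has no component in $S^0(V)=\K\cdot\1$. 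For degree $n\geq2$ I would fix a basis $(e_i)$ of $V$, expand the homogeneous $p=\sum_{\mathbf{a}}c_{\mathbf{a}}e^{\mathbf{a}}$ in the monomial basis $e^{\mathbf{a}}=\prod_i e_i^{a_i}$, and use the multi-index form of the binomial formula just established to read off $\cored(p)$ in terms of the linearly independent tensors $e^{\mathbf{b}}\tp e^{\mathbf{c}}$ with $\mathbf{b},\mathbf{c}\neq\mathbf{0}$.

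The crux — and the only point where the hypothesis that $\K$ has characteristic zero enters — is the conclusion of this last computation. Linear independence of the $e^{\mathbf{b}}\tp e^{\mathbf{c}}$ forces $c_{\mathbf{a}}\binom{\mathbf{a}}{\mathbf{b}}=0$ for every splitting $\mathbf{a}=\mathbf{b}+\mathbf{c}$ into nonzero multi-indices, where $\binom{\mathbf{a}}{\mathbf{b}}=\prod_i\binom{a_i}{b_i}$. Every multi-index of weight $|\mathbf{a}|\geq2$ admits such a splitting, and in characteristic zero the positive integer $\binom{\mathbf{a}}{\mathbf{b}}$ is invertible, so all coefficients $c_{\mathbf{a}}$ with $|\mathbf{a}|\geq2$ vanish and $p=0$. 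Hence $\Prim(S(V))$ collapses onto its degree-one part $S^1(V)=V$, completing the proof.
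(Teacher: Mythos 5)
Your proof is correct. The paper states this corollary without proof — it is presented as an immediate consequence of the definition of $S(V)$ — so there is no argument of the paper's to compare against; your write-up is the standard verification and supplies exactly the omitted details (cocommutativity by comparing the algebra morphisms $\Delta$ and $\tau\circ\Delta$ on generators, connectedness via the graduation, the binomial formula from multiplicativity of $\Delta$, and the monomial-basis computation for $\Prim(S(V))=V$). One point worth keeping explicit is your identification of where characteristic zero enters: the invertibility of the binomial coefficients $\binom{\mathbf{a}}{\mathbf{b}}$ is precisely what collapses $\Prim(S(V))$ onto $V$, and indeed over a field of characteristic $p$ the statement fails, since $v^p$ is then primitive. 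This matches the paper's standing assumption that $\K$ has characteristic zero.
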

By the Milnor-Moore-Theorem (see \cite{MilnorMoore}) it turns out, that in fact every connected, commutative and cocommutative Hopf algebra is isomorphic to the symmetric algebra over its primitive elements.
\begin{lemma}
	In one variable ($\dim V = 1$), $S(V) \isomorph {\K}[x]$ comes along with a natural Hochschild 1-cocycle
\begin{equation}
	\int_0 \in HZ^1_{\counit} (\K[x]),\quad
	f \mapsto \int_0 f \defas \left[ x \mapsto \int_0^x f(y)\ \dd y \right].
	\label{eq:int-cocycle}
\end{equation}
\end{lemma}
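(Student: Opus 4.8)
The plan is to verify directly that $\int_0$ lies in the kernel of the relevant coboundary map, i.e.\ that it satisfies the $1$-cocycle equation for the bicomodule structure \eqref{bicomodule:d1} we have fixed ($\cmodcp_L = \Delta$, $\cmodcp_R = \id \tp \1$), namely
\begin{equation*}
	\Delta \circ \int_0 = (\id \tp \int_0) \circ \Delta + \left( \int_0 \right) \tp \1.
\end{equation*}
Since all maps in sight are linear, it suffices to check this identity on the monomial basis $\set{x^n}_{n\in\N_0}$ of $\K[x]$, where $\int_0 x^n = \frac{x^{n+1}}{n+1}$ and, by the preceding corollary, $\Delta(x^m) = \sum_{i=0}^m \binom{m}{i} x^i \tp x^{m-i}$. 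So the whole statement reduces to an elementary verification; there is no serious obstacle, and the only thing one must be careful about is correctly accounting for the boundary term $\left(\int_0\right) \tp \1$.

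The route I would favour is the conceptual one, which also explains \emph{why} the identity holds. Under the identification $x\tp \1 \leftrightarrow x_1$ and $\1 \tp x \leftrightarrow x_2$, the primitivity of $x$ means $\Delta$ acts as the Taylor-type shift $f \mapsto f(x_1 + x_2)$ (the two commuting variables reproduce exactly the binomial coproduct). Then applying $(\id \tp \int_0)$ integrates the \emph{second} tensor factor, giving $\int_0^{x_2} f(x_1 + s)\,\dd s$, while the term $\left(\int_0\right) \tp \1$ integrates the \emph{first} factor, contributing $\int_0^{x_1} f(s)\,\dd s$. Substituting $u = x_1 + s$ in the former turns it into $\int_{x_1}^{x_1 + x_2} f(u)\,\dd u$, so that the two pieces combine by additivity of the integral over adjacent intervals into $\int_0^{x_1 + x_2} f(u)\,\dd u = \left(\int_0 f\right)(x_1 + x_2) = \Delta\left(\int_0 f\right)$. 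Thus the cocycle equation is nothing but the relation $\int_0^{x_1} + \int_{x_1}^{x_1+x_2} = \int_0^{x_1+x_2}$.

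As a completely explicit alternative (and to make the above rigorous on monomials) I would record the computation on $x^n$: the left-hand side is $\frac{1}{n+1}\sum_{i=0}^{n+1}\binom{n+1}{i} x^i \tp x^{n+1-i}$, whose top term $i = n+1$ is exactly $\frac{x^{n+1}}{n+1}\tp \1 = \left(\int_0 x^n\right)\tp \1$, matching the boundary contribution. The remaining terms $0\le i \le n$ agree with $(\id \tp \int_0)\Delta(x^n) = \sum_{i=0}^n \binom{n}{i}\frac{1}{n-i+1}\, x^i \tp x^{n+1-i}$ precisely because of the binomial identity $\frac{1}{n+1}\binom{n+1}{i} = \frac{1}{n-i+1}\binom{n}{i}$, both sides equalling $\frac{n!}{i!\,(n+1-i)!}$. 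This confirms $\int_0 \in HZ^1_{\counit}(\K[x])$. The only mild bookkeeping point — the part one could get wrong — is keeping the index ranges straight so that the isolated $i=n+1$ term is correctly identified with the $\left(\int_0\right)\tp \1$ summand rather than being double-counted.
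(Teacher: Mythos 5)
Your proof is correct. Its explicit part — checking the cocycle identity $\Delta \circ \int_0 = (\id \tp \int_0)\circ\Delta + \left(\int_0\right)\tp\1$ on monomials — is exactly the paper's argument, except that the paper works with the divided powers $\frac{x^n}{n!}$ instead of $x^n$: with that normalization $\Delta\big(\frac{x^{n+1}}{(n+1)!}\big) = \sum_{k=0}^{n+1}\frac{x^k}{k!}\tp\frac{x^{n+1-k}}{(n+1-k)!}$, the $k=n+1$ term is visibly $\int_0\big(\frac{x^n}{n!}\big)\tp\1$ and the rest is visibly $(\id\tp\int_0)\circ\Delta$ applied to $\frac{x^n}{n!}$, so the binomial identity $\frac{1}{n+1}\binom{n+1}{i}=\frac{1}{n-i+1}\binom{n}{i}$ that you invoke never has to be stated — a purely cosmetic difference. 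What you add beyond the paper is the conceptual reading: identifying $\K[x]\tp\K[x]$ with $\K[x_1,x_2]$, primitivity of $x$ makes $\Delta$ the shift $f\mapsto f(x_1+x_2)$, and the cocycle equation becomes the additivity $\int_0^{x_1}+\int_{x_1}^{x_1+x_2}=\int_0^{x_1+x_2}$, a formal polynomial identity (telescoping of the antiderivative), valid over any field of characteristic zero. This explains \emph{why} the identity holds and makes it clear that the isolated term $\left(\int_0\right)\tp\1$ is forced to be the piece $\int_0^{x_1}$; the paper's computation buys brevity, your reformulation buys insight, and the two are entirely compatible.
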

\begin{proof}
	For any $n\in\N_0$ consider the monomial $\frac{x^n}{n!} \in \K[x]$ and observe
	\begin{align*}
		\Delta \int_0 \left( \frac{x^n}{n!} \right)
		&= \Delta \left( \frac{x^{n+1}}{(n+1)!} \right)
		 = \frac{1}{(n+1)!} \sum_{k=0}^{n+1} \binom{n+1}{k} x^k \tp x^{n+1-k} \\
		&= \sum_{k=0}^{n+1} \frac{x^k}{k!} \tp \frac{x^{n+1-k}}{(n+1-k)!} = \frac{x^{n+1}}{(n+1)!} \tp \1 + \sum_{k=0}^n \frac{x^k}{k!} \tp \int_0 \left(\frac{x^{n-k}}{(n-k)!} \right) \\
		&= \int_0 \left( \frac{x^n}{n!} \right) \tp \1 + \left( \id \tp \int_0 \right) \bigg[ \frac{1}{n!} \underbrace{\sum_{k=0}^n \binom{n}{k} x^k \tp x^{n-k}}_{\Delta (x^n)} \bigg] \\
		&= \left[ \int_0 \tp \1 + \left(\id \tp \int_0 \right) \circ \Delta \right] \left( \frac{x^n}{n!} \right). \qedhere
	\end{align*}
\end{proof}
As for $B_+$ and $H_R$, we easily check that $\int_0$ is not a coboundary by $\int_0 1 = x \neq 0$. In fact, $\int_0$ is essentially the only nontrivial cocycle through
\begin{satz}
	The space of Hochschild 1-cocycles in $\K[x]$ decomposes into
	\begin{equation}
		HZ^1_{\counit}(\K[x]) = \K \cdot \int_0 \ \oplus\ \underbrace{\partial \left( \K[x]' \right)}_{HB^1_{\counit}(\K[x])}.
		\label{eq-polys-cycles}
	\end{equation}
	In particular this implies $HH^1_{\counit} (\K[x]) = \K \cdot [ \int_0 ]$ being one-dimensional.
\end{satz}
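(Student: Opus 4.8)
The plan is to establish the two inclusions making up the direct-sum decomposition separately, after isolating one convenient invariant of a cocycle. Since $\int_0$ is a cocycle by \eqref{eq:int-cocycle} and every $\partial\alpha$ is a coboundary and hence a cocycle, the inclusion $\K\cdot\int_0 + HB^1_{\counit}(\K[x]) \subseteq HZ^1_{\counit}(\K[x])$ is automatic, so only the reverse inclusion and the directness of the sum need work. Directness is quickest: by lemma \ref{satz:cocycle-props} every coboundary kills $\1$, namely $\partial\alpha(\1)=0$, whereas $\int_0(\1)=\int_0 1 = x\neq 0$. Hence if $c\int_0=\partial\alpha$, then evaluating at $\1$ forces $cx=0$, i.e. $c=0$, so $\K\cdot\int_0 \cap HB^1_{\counit}(\K[x]) = \set{0}$.

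The engine for the reverse inclusion is a recursion extracted from the cocycle condition \eqref{bicomodule:d1}. Writing $\lambda_n \defas [x^1]L(x^n)$ for the coefficient of $x$ in $L(x^n)$, I would apply the projection $\pi_1\tp\id$, where $\pi_1\!:\K[x]\twoheadrightarrow\K\cdot x$ picks out the $x^1$-component, to the identity $\Delta\circ L = (\id\tp L)\circ\Delta + L\tp\1$ evaluated on $x^n$. Using the binomial coproduct $\Delta(x^n)=\sum_{i=0}^n\binom{n}{i}x^i\tp x^{n-i}$ one checks that $\pi_1\tp\id$ sends $\Delta q$ to $x\tp\partial_x q$, so the three terms collapse, after cancelling the common left factor $x$, to
\begin{equation*}
	\partial_x L(x^n) = n\, L(x^{n-1}) + \lambda_n.
\end{equation*}
Because $\im L\subseteq\ker\counit$ by lemma \ref{satz:cocycle-props}, each $L(x^n)$ has vanishing constant term and is therefore recovered from its derivative as $L(x^n)=\int_0\!\big(\partial_x L(x^n)\big)$. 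Thus $L$ is \emph{uniquely determined} by the single primitive value $L(\1)$ together with the scalar sequence $(\lambda_n)_{n\geq 1}$. This uniqueness is the real content of the theorem and the step I expect to be the main obstacle, since everything else is then forced; the trick that makes it clean is that the $x\tp(\cdot)$-projection of $\Delta$ is differentiation.

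With the recursion in hand the spanning inclusion is bookkeeping. Given an arbitrary $L\in HZ^1_{\counit}(\K[x])$, lemma \ref{satz:cocycle-props} together with $\Prim(\K[x])=\K\cdot x$ gives $L(\1)=c\,x$ for a unique $c\in\K$; set $\tilde L\defas L - c\int_0$, again a cocycle, now with $\tilde L(\1)=0$ and the same scalars $\lambda_n$ for $n\geq 1$ (as $\int_0$ contributes $[x^1]\int_0(x^n)=0$ there). I then define $\alpha\in\K[x]'$ on the monomial basis by $\alpha(x^m)\defas \tfrac{1}{m+1}\lambda_{m+1}$ and read off from $\partial\alpha(x^n)=\sum_{k=1}^n\binom{n}{k}x^k\alpha(x^{n-k})$ that $[x^1]\partial\alpha(x^n)=n\,\alpha(x^{n-1})=\lambda_n$, while $\partial\alpha(\1)=0$. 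Hence $\partial\alpha$ and $\tilde L$ are cocycles sharing the invariants $\big(L(\1),(\lambda_n)_{n\geq1}\big)=\big(0,(\lambda_n)_{n\geq1}\big)$, so the uniqueness above yields $\tilde L=\partial\alpha$ and therefore $L = c\int_0 + \partial\alpha$. Combining the two inclusions gives the direct sum, and quotienting by $HB^1_{\counit}(\K[x])$ leaves $HH^1_{\counit}(\K[x]) \cong \K\cdot[\int_0]$, which is one-dimensional precisely because the directness argument ($c=0$) shows $\int_0\notin HB^1_{\counit}(\K[x])$.
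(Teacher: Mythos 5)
Your proof is correct, and it takes a genuinely different route from the paper's. The paper argues by induction on the degree: assuming $L(x^n)=\partial\alpha(x^n)$ for $n\leq N$, it compares reduced coproducts to conclude $(L-\partial\alpha)(x^{N+1})\in\Prim(\K[x])=\K\cdot x$, and then \emph{adjusts} the value $\alpha(x^N)$ by $\lambda/(N+1)$ to absorb this primitive discrepancy, so the functional $\alpha$ is built recursively, one coefficient per induction step. You instead exploit that the projection $\pi_1\tp\id$ turns the coproduct into differentiation, $(\pi_1\tp\id)\circ\Delta = x\tp\partial_x$, which converts the cocycle identity into the first-order recursion $\partial_x L(x^n)=nL(x^{n-1})+\lambda_n$; combined with $\im L\subseteq\ker\counit$ from lemma \ref{satz:cocycle-props} this classifies cocycles by the invariants $\bigl(L(\1),(\lambda_n)_{n\geq 1}\bigr)$ and lets you write the required coboundary in closed form, $\alpha(x^m)=\lambda_{m+1}/(m+1)$, rather than recursively. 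What each buys: the paper's argument is the more intrinsically Hopf-algebraic one (reduced coproduct plus knowledge of the primitives), and its mechanism is the same one used elsewhere in the text for connected graded bialgebras; yours isolates a clean classification statement for $HZ^1_{\counit}(\K[x])$ that makes both the directness of the sum and the one-dimensionality of $HH^1_{\counit}(\K[x])$ immediate, and it produces the functional $\alpha$ explicitly from the coefficients of $L$ instead of by successive corrections. Your treatment of directness (coboundaries kill $\1$ while $\int_0\1=x\neq 0$) matches the paper's remark preceding the theorem.
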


\begin{proof}
	First consider a cocycle $L \in HZ^1_{\counit} (\K[x])$ with $L(1) = 0$. We prove inductively the existence of a functional $\alpha \in \K[x]'$ with $L(x^n) = (\partial \alpha)(x^n)$ for all $n\leq N$. The start at $N=0$ is trivial as $L(1) = 0 = \left( \partial \alpha \right) (1)$ for any $\alpha$, which also implies
	\begin{equation*}
%		\forall n \leq N: \quad L \left( x^n \right) = \partial \alpha \left( x^n \right)
		\cored \circ L = (\id \tp L)\circ \cored
		\quad\text{and}\quad
		\cored (\partial\alpha) = \left[ \id \tp (\partial\alpha) \right] \circ \cored
	\end{equation*}
	by \eqref{bicomodule:d1}. Assuming the induction hypothesis for $N\in \N_0$,
	\begin{align*}
		   \tilde{\Delta} \circ L \left( x^{N+1} \right) 
		&= \sum_{i=1}^N \binom{N+1}{i} x^{N+1-i} \tp L \left( x^i \right) \\
		&= \sum_{i=1}^N \binom{N+1}{i} x^{N+1-i} \tp \partial\alpha \left( x^{i} \right) 
		 = \tilde{\Delta} \circ \partial \alpha \left( x^{N+1} \right)
	\end{align*}
	implies $(L - \partial \alpha) \left( x^{N+1} \right) \in \Prim (\K[x]) = \K \cdot x$. Thus let $\lambda \in \K$ be the scalar such that $(L-\partial\alpha) \left( x^{N+1} \right) = \lambda \cdot x$ and adjust the functional to
	\begin{equation*}
		\forall n \in \N_0: \quad \alpha' \left( x^n \right) \defas 
		\begin{cases}
			\alpha \left( x^n \right) & \text{if $n \neq N$,}\\
			\alpha\left( x^n \right) + \frac{\lambda}{N+1} & \text{if $n=N$,}\\
		\end{cases}
	\end{equation*}
	resulting in $L\left( x^{N+1} \right) = \partial \alpha' \left( x^{N+1} \right)$ by construction. For all $n\leq N$ we further maintain $L(x^n) = (\partial\alpha)(x^n) = (\partial \alpha') (x^n) $ as $(\partial\alpha')(x^n)$ only depends on the values $\alpha' (x^r)$ for $r \leq n < N$, where $\alpha'(x^r)=\alpha(x^r)$. Inductively we see how each value $\alpha(x^n)$ is determined uniquely.

	This finishes the proof that any $L\in HZ^1_{\counit}(\K[x])$ with $L(1) = 0$ lies in $HB^1_{\counit}(\K[x])$. Considering an arbitrary cocycle $L$, lemma \ref{satz:cocycle-props} ensures $L(1) \in \Prim(\K[x]) = \K \cdot x$. So $L(1) = \lambda \cdot x = \lambda \int_0 1$ for some $\lambda \in \K$ and $L= \lambda \int_0 \oplus (L-\lambda \int_0)$ with $(L - \lambda \int_0)(1) = 0$.
\end{proof}

\subsection{Characters}
As a character $\phi \in \chars{{\K}[x]}{\K}$ is uniquely determined by its value $\phi (x)$, we immediately see
\begin{lemma}
	The characters of the polynomial algebra $\K[x]$ are the evaluations
	\begin{equation}
		\chars{{\K}[x]}{\K} = \setexp{\ev_{\lambda}}{\lambda \in \K},
		\label{eq:poly-characters}
	\end{equation}
	sending a polynomial $p(x) \in \K[x]$ to $\ev_{\lambda} (p) \defas p(\lambda)$.
\end{lemma}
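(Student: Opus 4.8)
The plan is to establish both inclusions of the claimed equality separately, exploiting that $\K[x]$ is freely generated as a commutative unital algebra by the single element $x$, so that any morphism of unital algebras out of it is pinned down by its value on $x$.

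First I would verify the easy inclusion $\setexp{\ev_{\lambda}}{\lambda \in \K} \subseteq \chars{\K[x]}{\K}$. For fixed $\lambda \in \K$, the map $\ev_{\lambda}$ is manifestly linear, satisfies $\ev_{\lambda}(\1) = \ev_{\lambda}(1) = 1 = \1_{\K}$, and respects products since $\ev_{\lambda}(p \cdot q) = (pq)(\lambda) = p(\lambda)\, q(\lambda) = \ev_{\lambda}(p)\, \ev_{\lambda}(q)$ for all polynomials $p,q$. Hence $\ev_{\lambda}$ is a morphism of unital algebras and lies in $\chars{\K[x]}{\K}$ by definition \ref{def:character}.

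For the reverse inclusion, take any character $\phi \in \chars{\K[x]}{\K}$ and set $\lambda \defas \phi(x)$. The defining property $\phi \circ \mul = \mul_{\K} \circ (\phi \tp \phi)$ together with $\phi(\1) = 1$ yields $\phi(x^n) = \phi(x)^n = \lambda^n$ for every $n \in \N_0$, as one sees by a trivial induction on $n$ (the step being $\phi(x^{n+1}) = \phi(x^n)\,\phi(x)$). By linearity it then follows that $\phi\!\left( \sum_n c_n x^n \right) = \sum_n c_n \lambda^n = \ev_{\lambda}\!\left( \sum_n c_n x^n \right)$ on every polynomial, so $\phi = \ev_{\lambda}$.

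Both steps are entirely routine and there is no genuine obstacle; the only point worth stressing is that a character is determined by its value on the generator $x$, which is precisely the freeness of $\K[x]$ as a commutative unital algebra on one generator. This is also what upgrades the parametrisation $\lambda \mapsto \ev_{\lambda}$ from a mere surjection to a bijection, since distinct values $\lambda \neq \lambda'$ already disagree on $x$.
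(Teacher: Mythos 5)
Your proof is correct and follows exactly the paper's reasoning: the paper dispenses with the lemma by the single remark that a character of $\K[x]$ is uniquely determined by its value $\phi(x)$, which is precisely the freeness argument you spell out, and your verification that each $\ev_{\lambda}$ is indeed a unital algebra morphism is the routine half the paper leaves implicit. No gaps; you have simply written out in full what the paper treats as immediate.
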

Given that $\K[x]$ is a connected Hopf algebra, by \eqref{satz:character-group} these characters actually form a group under the convolution product. Concretely we have
\begin{lemma}
	The group structure on $\chars{\K[x]}{\K}$ induced by convolution is
	\begin{equation}
		\forall a,b \in \K\!: \quad \ev_a \convolution \ev_b = \ev_{a+b}.
		\label{eq:poly-character-group}
	\end{equation}
\end{lemma}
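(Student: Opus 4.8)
The plan is to exploit that both sides of \eqref{eq:poly-character-group} are characters of $\K[x]$, and that any character is completely pinned down by its value on the single algebra generator $x$, so that the whole identity collapses to a one-line check on $x$.

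First I would invoke lemma \ref{satz:character-group}: since $\K[x]$ is a connected Hopf algebra and $\K$ is (trivially) commutative, $\chars{\K[x]}{\K}$ is a group under $\convolution$; in particular $\ev_a \convolution \ev_b \in \chars{\K[x]}{\K}$. By the classification \eqref{eq:poly-characters} this convolution product must therefore equal $\ev_{\lambda}$ for a unique $\lambda \in \K$, namely $\lambda = (\ev_a \convolution \ev_b)(x)$, because $\ev_{\lambda}$ is recovered from $\ev_{\lambda}(x) = \lambda$. Hence it suffices to show $(\ev_a \convolution \ev_b)(x) = a+b$.

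Next, unfolding the convolution via its definition \eqref{eq:convolution}, using the coproduct $\Delta(x) = x \tp \1 + \1 \tp x$ and the fact that the unital algebra morphism $\ev_{\lambda}$ sends $\1$ to $1$, I would compute
\begin{equation*}
	(\ev_a \convolution \ev_b)(x) = \ev_a(x)\,\ev_b(\1) + \ev_a(\1)\,\ev_b(x) = a\cdot 1 + 1 \cdot b = a+b,
\end{equation*}
so that $\ev_a \convolution \ev_b = \ev_{a+b}$ as claimed.

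There is essentially no genuine obstacle here; the only point that needs care is the legitimacy of the reduction to the generator $x$, which rests on the closure of characters under convolution (lemma \ref{satz:character-group}) together with the fact that characters are algebra morphisms and hence are determined by their values on generators. Should one prefer a self-contained argument that avoids this reduction, the alternative is to evaluate directly on every basis monomial: with $\Delta(x^n) = \sum_{i=0}^n \binom{n}{i} x^i \tp x^{n-i}$ one finds $(\ev_a \convolution \ev_b)(x^n) = \sum_{i=0}^n \binom{n}{i} a^i b^{n-i} = (a+b)^n = \ev_{a+b}(x^n)$ by the binomial theorem, establishing the identity on a basis and hence on all of $\K[x]$.
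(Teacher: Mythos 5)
Your proof is correct, and its core coincides with the paper's: everything hinges on the one-line evaluation $(\ev_a \convolution \ev_b)(x) = \ev_a(x)\,\ev_b(\1) + \ev_a(\1)\,\ev_b(x) = a+b$ coming from the primitivity of $x$. The only difference is how this is propagated from the generator to all of $\K[x]$: the paper does it computationally, writing $\left[\ev_a \convolution \ev_b\right](x^n) = {\left[(\ev_a \tp \ev_b)\circ\Delta(x)\right]}^n$ for every $n$ (implicitly using that $\Delta$ and $\ev_a\tp\ev_b$ are algebra morphisms, i.e. the same character-closure fact you cite), whereas you do it structurally, invoking lemma \ref{satz:character-group} for closure of $\chars{\K[x]}{\K}$ under convolution and the classification \eqref{eq:poly-characters} to identify $\ev_a\convolution\ev_b$ with some $\ev_\lambda$, which then only needs to be pinned down on $x$. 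Your structural route makes the logical dependencies explicit and would generalize to any situation where the characters have been classified; the paper's computation is self-contained and never needs the group lemma. Your fallback argument via $\Delta(x^n)=\sum_{i=0}^n\binom{n}{i}x^i\tp x^{n-i}$ and the binomial theorem is essentially the paper's computation with the multiplicativity unwound, and is equally valid.
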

\begin{proof}
	Simply calculate for any $a,b \in \K$ and $n\in\N_0$
	\begin{align*}
		\left[ \ev_a \convolution \ev_b \right] \left( x^n \right) 
		&= { \left[ \left( \ev_a \tp \ev_b \right) \circ \Delta (x) \right] }^n
		= { \left[ \ev_a(\1) \cdot \ev_b(x) + \ev_a(x) \cdot \ev_b(\1) \right] }^n  \\
		&= { ( b + a ) }^n
		= {\left[ \ev_{a+b} (x) \right] }^n 
		= \ev_{a+b} \left( x^n \right). \qedhere
	\end{align*}
\end{proof}

\chapter{A detailed example: Kreimer's toy model}
\label{chap:toymodel}

The Hopf algebra $H_R$ of rooted trees, introduced in section \ref{sec:H_R}, turns out to be sufficient to formulate and understand renormalization problems in any quantum field theory. It precisely models the structure of both nested and disjoint subdivergences occurring in multidimensional integrals, while the famous remaining problem of overlapping divergences is resolved into a linear combination of rooted forests (see \cite{Kreimer:Overlapping}).

In his works, Dirk Kreimer employed several setups of Feynman rules defined on this Hopf algebra to serve as illuminating examples. We investigate one of those in this chapter to familiarize the reader with the concept of renormalization and its algebraic properties.

First of all, we see how Feynman rules may be defined on $H_R$ in a natural way utilizing the universal property \eqref{eq:H_R-universal} of $H_R$.
As a special case of this construction, we define Kreimer's toy model in section \ref{sec:toymodel}. Note that we will not consider Feynman rules originating from \emph{iterated integrals}, another setup occurring in some of his papers. The physical origin of the toy model is lined out in section \ref{sec:toymodel-origin}: a brief look at {\qft} exhibits it as the sub sector of iterated propagator insertions!\footnote{The techniques presented here can be generalized to the full renormalization problem of {\qft}, see \cite{Yeats}.}

As it is typical for {\qft}, the naive toy model is ill-defined as such and needs a \emph{regularization}. We use \emph{analytic regularization}, although in theorem \ref{satz:finiteness} we show how this choice of regulator is irrelevant for the (physical limit of the) renormalized results in the employed scheme.

We renormalize the regularized toy model in section \ref{sec:renormalization}, as prescribed by the Birkhoff decomposition using the momentum scheme. Our results from the previous chapter allow for a complete combinatoric description of the full renormalized Feynman rules in section \ref{sec:toymodel-rules-by-cocycles}, after taking the physical limit. Amazingly, they turn out to be of the simplest kind we studied in section \ref{sec:more-feynman-rules}!

After exploiting this special structure to obtain the reduction of higher order to first order contributions as in \eqref{eq:toymodel-higher-orders}, we have a short look on the behaviour of the renormalized \emph{correlation function} of the toy model, arising from combinatoric \emph{Dyson-Schwinger equations}. These considerations culminate in the \emph{renormalization group} in section \ref{sec:rge} and non-perturbative approaches.

A final remark shall be made on a problem of the generalization of the steps performed to {\qft}, originating from \emph{higher degrees of divergence}.

\section{Construction of Feynman rules}
We want to define \emph{Feynman rules}, a synonym for morphisms $\phi\!:\ H_R \rightarrow \alg$ of unital algebras to some commutative target algebra $\alg$. As $H_R$ is free commutative as an algebra, the space of such Feynman rules is isomorphic to the space of maps $\trees \rightarrow \alg$ via restriction. Thus the most general Feynman rules can take arbitrary values on trees.

However, physical Feynman rules are much less arbitrary and in fact of a very special algebraic flavour. All examples we will study arise from \eqref{eq:H_R-universal}, that is they obey
\begin{equation*}
	\phi \circ B_+ = L \circ \phi
\end{equation*}
for some $L \in \End(\alg)$. It is precisely this special form of Feynman rules that ensures the physically relevant properties like finiteness and locality of renormalization, as will be discussed in section \ref{sec:finiteness}.

The simplest rules of this kind originate from $\alg \defas \K$, then $L \in \End(\K) = \K \cdot \id$ multiplies by some constant $a \in \K$ and it is easy to check that for any forest
\begin{equation}
	\forall f\in \forests\!: \quad \unimor{a \cdot \id}(f) = a^{\abs{f}},
	\label{eq:scalar-rules}
\end{equation}
essentially counting the nodes. Here, $\unimor{a\cdot\id}$ is the morphism from theorem \ref{satz:H_R-universal}.

\subsection{External parameters}
In quantum field theory, the functions assigned to the combinatoric objects (trees or graphs) typically depend on a finite number of \emph{external parameters} like momenta of the external particles of a Feynman graph. Therefore, $\alg$ is in general some algebra of functions of those parameters. As the simplest example for a single parameter consider
\begin{lemma}
	Let $\alg \defas {\K}[x]$ and $\int_0 \in \End(\alg)$ from \eqref{eq:int-cocycle} induce Feynman rules
	\begin{equation*}
		\intrules \defas \unimor{\int_0}\!:\ H_R \rightarrow {\K}[x],
		\quad \intrules \circ B_+ = \int_0 \ \circ\ \intrules
	\end{equation*}
	through the universal property \eqref{satz:H_R-universal}. Then for any forest $f\in \forests$ we have 
	\begin{equation}\label{eq:int-rules}
		\intrules (f) = \frac{x^{\abs{f}}}{f!}.
	\end{equation}
\end{lemma}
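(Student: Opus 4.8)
The plan is to prove \eqref{eq:int-rules} by induction on the node number $\abs{f}$, using three ingredients: that $\intrules$ is a morphism of unital algebras (hence multiplicative on the forest basis), the intertwining relation $\intrules \circ B_+ = \int_0 \circ \intrules$ coming from the universal property \eqref{eq:H_R-universal}, and the recursive definition \eqref{eq:tree-factorial} of the tree factorial, $[B_+(g)]! = g!\cdot\abs{B_+(g)}$.

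First I would settle the base case. For the empty forest $\1$ we have $\intrules(\1)=\1_{\K[x]}=1$, which matches $x^{\abs{\1}}/\1! = x^0/1 = 1$ since $\abs{\1}=0$ and $\1!=1$.

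For the inductive step, assume \eqref{eq:int-rules} holds for all forests of at most $n$ nodes and take $f$ with $\abs{f}=n+1$. If $f$ is not a single tree it decomposes as a product $f=g\cdot h$ of two nonempty forests, each having fewer than $n+1$ nodes; since both $\intrules$ and the tree factorial are algebra morphisms and $\abs{f}=\abs{g}+\abs{h}$, the induction hypothesis yields
\begin{equation*}
	\intrules(f)=\intrules(g)\,\intrules(h)=\frac{x^{\abs{g}}}{g!}\cdot\frac{x^{\abs{h}}}{h!}=\frac{x^{\abs{f}}}{f!}.
\end{equation*}
If instead $f=t$ is a single tree, write $t=B_+(g)$ with $\abs{g}=n$; then the intertwining relation together with the explicit form of $\int_0$ from \eqref{eq:int-cocycle} gives
\begin{equation*}
	\intrules(t)=\int_0\intrules(g)=\frac{1}{g!}\int_0^x y^{\abs{g}}\,\dd y=\frac{1}{g!}\,\frac{x^{\abs{g}+1}}{\abs{g}+1}=\frac{x^{\abs{t}}}{g!\cdot\abs{t}},
\end{equation*}
and invoking \eqref{eq:tree-factorial} in the form $t!=[B_+(g)]!=g!\cdot\abs{B_+(g)}=g!\cdot\abs{t}$ rewrites the right-hand side as $x^{\abs{t}}/t!$, closing the induction.

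I expect no genuine obstacle here; the argument is a routine two-case induction. The only points demanding care are remembering that the tree factorial is itself a morphism of algebras (so that the product case closes without extra work) and matching the denominator $\abs{t}=\abs{g}+1$ produced by the integration against the factor $\abs{B_+(g)}$ that appears in the defining recursion \eqref{eq:tree-factorial}. Alternatively, one could read off the same conclusion directly from the product formula \eqref{eq:tree-factorial-product}, but the recursive form is the more economical match for the inductive structure of the proof.
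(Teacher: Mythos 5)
Your proof is correct and follows essentially the same route as the paper's: induction on the number of nodes, with multiplicativity of $\intrules$ and of the tree factorial handling proper forests, and the intertwining relation $\intrules \circ B_+ = \int_0 \circ \intrules$ together with the recursion \eqref{eq:tree-factorial} handling trees. The only cosmetic difference is that you split a proper forest into two nonempty subforests where the paper factors it into its full multiset of constituent trees $\pi_0(f)$; both close the induction identically.
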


\begin{proof}
	The inductive proof (start at $f=\1$) supposes \eqref{eq:int-rules} to be true for all $f\in \forests_{\leq n}$. Then \eqref{eq:int-rules} also holds for any true forest $f\in\forests_{n+1}\setminus\trees$ (so $\abs{\pi_0 (f)} > 1$) as
	\begin{equation*}
		  \intrules (f) 
		= \prod_{t \in \pi_0 (f)} \intrules (t) 
		= \prod_{t \in \pi_0 (f)} \frac{x^{\abs{t}}}{t!} 
		= \frac{x^{\sum_{t\in \pi_0(f)} \abs{t}}}{\prod_{t \in \pi_0(f)} t!} 
		= \frac{x^{\abs{f}}}{f!},
	\end{equation*}
	exploiting $\abs{t} \leq n$ for any $t\in\pi_0(f)$ to use the induction hypothesis. It remains to consider a tree $t = B_+ (f)$ for some $f\in \forests_n$ in
	\begin{equation*}
		\intrules (t)
		= \intrules \circ B_+ (f) 
		= \int_0 \circ \:\intrules (f) 
		= \int_0^x \frac{y^{\abs{f}}}{f!} \ \dd y
		= \frac{x^{\abs{f}+1}}{(\abs{f}+1) \cdot f!} 
		= \frac{x^{\abs{B_+ (f)}}}{\left( B_+ f \right)!}
		= \frac{x^{\abs{t}}}{t!}. \qedhere
	\end{equation*}
\end{proof}
Note that these Feynman rules are very special as they provide not only a morphism of algebras, but rather a morphism of Hopf algebras (we took $L=\int_0$ to be a cocycle). Defining the evaluated characters
\begin{equation}
	\forall a \in \K\!:\quad
	\intrules_a \defas \ev_a \circ \intrules\!:\ H_R \rightarrow \K, f \mapsto \restrict{\intrules(f)}{a},
	\label{eq:int-rules-char}
\end{equation}
which are \emph{not} of the basic form \eqref{eq:scalar-rules} as for the additional factors $\frac{1}{f!}$, we obtain the remarkable
\begin{proposition}\label{satz:int-rules-rge}
	\begin{equation}
		\forall a,b \in \K:\quad \intrules_a \convolution \intrules_b = \intrules_{a + b}.
		\label{eq:int-rules-rge}
	\end{equation}
\end{proposition}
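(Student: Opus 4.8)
The plan is to reduce the claim to the already-established group law \eqref{eq:poly-character-group} on the characters of $\K[x]$, by exploiting that $\intrules$ is a morphism of coalgebras. The crucial structural observation is that precomposition with a coalgebra morphism transports the convolution product: if $\psi\!:\ C \to D$ is a morphism of coalgebras and $A$ is any algebra, then for all $f,g \in \Hom(D,A)$ one has
\[
	(f\circ\psi) \convolution (g\circ\psi) = (f\convolution g)\circ\psi .
\]
Note that the two convolutions live in \emph{different} convolution algebras: the left-hand product is computed in $\convalg{C}{A}$ using $\Delta_C$, while $f\convolution g$ is computed in $\convalg{D}{A}$ using $\Delta_D$, and the identity says that $(\cdot)\circ\psi$ is an algebra homomorphism between them.

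First I would verify this identity directly from definition \ref{def:convolution}. Unfolding \eqref{eq:convolution} on the left gives $m_A \circ \bigl[(f\circ\psi)\tp(g\circ\psi)\bigr]\circ\Delta_C = m_A \circ (f\tp g)\circ(\psi\tp\psi)\circ\Delta_C$, and the coalgebra-morphism property $(\psi\tp\psi)\circ\Delta_C = \Delta_D\circ\psi$ rewrites this as $m_A\circ(f\tp g)\circ\Delta_D\circ\psi = (f\convolution g)\circ\psi$, as claimed. This is the only genuine computation involved, and it is a two-line manipulation using nothing beyond coassociativity-free naturality.

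Then I would apply this with $\psi \defas \intrules$, $C = H_R$, $D = \K[x]$, $A = \K$, and $f = \ev_a$, $g = \ev_b$. The hypothesis that $\intrules$ is a coalgebra morphism is exactly what the lemma preceding \eqref{eq:int-rules} provides: there we chose $L = \int_0$ to be a cocycle in $HZ^1_{\counit}(\K[x])$, so that $\intrules = \unimor{\int_0}$ is a morphism of Hopf algebras by theorem \ref{satz:H_R-universal}, in particular of coalgebras. Hence
\[
	\intrules_a \convolution \intrules_b = (\ev_a\circ\intrules)\convolution(\ev_b\circ\intrules) = (\ev_a\convolution\ev_b)\circ\intrules = \ev_{a+b}\circ\intrules = \intrules_{a+b},
\]
where the penultimate equality is precisely \eqref{eq:poly-character-group} and the definition \eqref{eq:int-rules-char} is used at both ends.

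I do not expect a serious obstacle: the entire content is that convolution is natural in the source coalgebra together with the fact that $\intrules$ respects the coalgebra structure, both of which are at hand. The only point demanding care is the bookkeeping of which coproduct ($\Delta_{H_R}$ versus $\Delta_{\K[x]}$) enters each convolution, since the identification of the two convolution algebras happens entirely through the morphism $\intrules$; once this is tracked cleanly the result is immediate. (One could alternatively prove \eqref{eq:int-rules-rge} by a direct computation from the closed form $\intrules(f)=x^{\abs{f}}/f!$ and the coproduct, but this is strictly longer and obscures the conceptual reason, so I would avoid it.)
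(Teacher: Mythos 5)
Your proposal is correct and is essentially the paper's own argument: the identity $(f\circ\psi)\convolution(g\circ\psi)=(f\convolution g)\circ\psi$ for a coalgebra morphism $\psi$ is exactly lemma \ref{satz:convolution-rge} (proved there by the same two-line computation), and the paper likewise combines it with the fact that $\intrules=\unimor{\int_0}$ is a coalgebra morphism and with the group law \eqref{eq:poly-character-group} for the evaluations $\ev_a$.
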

For example consider
\settowidth{\wurelwidth}{\eqref{coproduct:(()())}} 
\begin{align*}
	\intrules_a \convolution \intrules_b \left( \tree{++-+--} \right) 
	&\wurel{\eqref{coproduct:(()())}} \intrules_a \left( \tree{++-+--} \right) + 2 \intrules_a \left( \tree{+-} \right) \intrules_b \left( \tree{++--} \right) + \intrules_a \left( \tree{+-}\tree{+-} \right) \intrules_b \left( \tree{+-} \right) + \intrules_b \left( \tree{++-+--} \right) \\
	&\wurel{\eqref{eq:int-rules}} \frac{a^3}{3} + 2 a \frac{b^2}{2} + a^2 b + \frac{b^3}{3} 
	 = \frac{(a+b)^3}{3}
	 \urel{\eqref{eq:int-rules}} \intrules_{a+b} \left( \tree{++-+--} \right),
\end{align*}
demonstrating an amazing compatibility of the Feynman rules with the combinatoric structure of trees through the convolution product.
Essentially, \eqref{eq:int-rules-rge} is the \emph{renormalization group equation} in the momentum scheme (see sections \ref{sec:higher-orders} and \ref{sec:rge}).

Using \eqref{eq:int-rules} and \eqref{eq:poly-character-group}, proposition \ref{satz:int-rules-rge} is an immediate corollary of
\begin{lemma}\label{satz:convolution-rge}
	Let $\phi: C \rightarrow D$ be a morphism of coalgebras and $\alg$ an algebra. Then
	\begin{equation}
		\tilde{\phi}\!:
		\quad \convalg{D}{\alg} \rightarrow \convalg{C}{\alg},
		\quad \alpha \mapsto \alpha \circ \phi
		\label{eq:convolution-morphism}
	\end{equation}
	is a morphism of associative unital algebras:
	\begin{equation*}
		\forall \alpha,\beta \in \convalg{D}{\alg}\!: \quad
		\tilde{\phi} (\alpha) \convolution \tilde{\phi} (\beta) = \tilde{\phi} (\alpha \convolution \beta).
	\end{equation*}
	If $C$ and $D$ are bialgebras and $\phi$ a morphism of such, then $\tilde{\phi}$ maps characters $\alpha \in \chars{D}{\alg}$ to characters $\alpha \circ \phi \in \chars{C}{\alg}$. In particular we obtain a homomorphism $\chars{D}{\alg} \rightarrow \chars{C}{\alg}$ of groups for connected $C$, $D$ and commutative $\alg$ by lemma \ref{satz:convolution-group}.
\end{lemma}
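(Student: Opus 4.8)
The plan is to verify the asserted properties of $\tilde{\phi}$ in turn, each reducing to a single application of one structural map that $\phi$ transports. The entire argument is elementary diagram chasing from the definition \eqref{eq:convolution} of the convolution product; there is no genuine obstacle, only the need to keep track of which of the maps $\Delta$, $\counit$, $\mul$, $\unit$ is being used and to remember that the symbol $\convolution$ denotes two different products, on $\convalg{D}{\alg}$ and on $\convalg{C}{\alg}$.

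First I would prove multiplicativity with respect to convolution. Unwinding the definitions gives $\tilde{\phi}(\alpha \convolution \beta) = (\alpha \convolution \beta) \circ \phi = \mul_{\alg} \circ (\alpha \tp \beta) \circ \Delta_D \circ \phi$. The one decisive step is to invoke that $\phi$ is a morphism of coalgebras, so $\Delta_D \circ \phi = (\phi \tp \phi) \circ \Delta_C$; substituting and regrouping the tensor factors yields $\mul_{\alg} \circ \left[ (\alpha\circ\phi) \tp (\beta\circ\phi) \right] \circ \Delta_C = \tilde{\phi}(\alpha) \convolution \tilde{\phi}(\beta)$. For the unit I would use the counital half of the coalgebra-morphism property, $\counit_D \circ \phi = \counit_C$, giving at once $\tilde{\phi}(e_D) = \unit_{\alg} \circ \counit_D \circ \phi = \unit_{\alg} \circ \counit_C = e_C$. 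Together these show $\tilde{\phi}$ is a morphism of associative unital algebras.

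Next I would treat the statement about characters, now assuming $C$ and $D$ are bialgebras and $\phi$ a bialgebra morphism. For $\alpha \in \chars{D}{\alg}$ I would check the two defining conditions of definition \ref{def:character} for $\alpha \circ \phi$. Multiplicativity comes from composing the algebra-morphism property $\phi \circ \mul_C = \mul_D \circ (\phi \tp \phi)$ with that of $\alpha$, which gives $(\alpha\circ\phi)\circ \mul_C = \mul_{\alg} \circ \left[ (\alpha\circ\phi) \tp (\alpha\circ\phi) \right]$; unitality is the single evaluation $(\alpha\circ\phi)(\1_C) = \alpha(\1_D) = \1_{\alg}$, using $\phi(\1_C) = \1_D$. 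Hence $\tilde{\phi}$ restricts to a map $\chars{D}{\alg} \rightarrow \chars{C}{\alg}$.

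Finally, for connected $C$ and $D$ with commutative $\alg$, corollary \ref{satz:zsh-bialg-hopf} promotes both bialgebras to Hopf algebras, so lemma \ref{satz:character-group} makes $\chars{D}{\alg}$ and $\chars{C}{\alg}$ into groups under convolution. Since $\tilde{\phi}$ already preserves the convolution product and maps characters to characters, its restriction is automatically a group homomorphism, requiring no further computation. As noted, the only point demanding care is not to conflate the two convolution products and to apply the correct half of each morphism condition at each step.
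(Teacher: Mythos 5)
Your proof is correct and follows essentially the same route as the paper: the decisive step in both is the single application of the coalgebra-morphism identity $\Delta_D \circ \phi = (\phi \tp \phi) \circ \Delta_C$ inside the definition of $\convolution$. The paper's proof records only this computation and treats the unit, the character statement, and the group-homomorphism conclusion as immediate; your write-up simply makes those routine verifications explicit.
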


\begin{proof}
	We only have to check that for any $\alpha,\beta \in \convalg{D}{\alg}$,
	\begin{align*}
		  (\alpha \circ \phi) \convolution (\beta \circ \phi) 
		&= m_{\alg} \circ \left[ (\alpha \circ \phi) \tp (\beta \circ \phi) \right] \circ \Delta_C 
		 = m_{\alg} \circ (\alpha \tp \beta) \circ (\phi \tp \phi) \circ \Delta_C \\
		&= m_{\alg} \circ (\alpha \tp \beta) \circ \Delta_D \circ \phi 
		 = (\alpha \convolution \beta) \circ \phi. \qedhere
	\end{align*}
\end{proof}

\subsection{On symmetry factors}
It is interesting to note that among the vast pool of possible Feynman rules defined by 
\begin{equation*}
	\forall t\in\trees\!: \quad
	\rho(t) = a_t x^{\abs{t}},
\end{equation*}
{\qft} chooses those very special ones arising through \eqref{eq:H_R-universal}. In the above example, this special choice of $a_t = \frac{1}{t!}$ leads to the strong result \eqref{eq:int-rules-rge} -- clearly this group law fails unless the coefficients $a_t$ fulfill a plethora of combinatoric relations!

The occurrence of purely combinatoric factors like $\frac{1}{t!}$ in \eqref{eq:int-rules} is a typical feature of Feynman rules in {\qft}, where every graph comes along with a peculiar \emph{symmetry factor}. Those are determined combinatorially for each Feynman graph of perturbation theory. Though they can cause confusion to the beginner and might appear as a nuisance in calculations, their correct treatment is of outmost importance!

On one hand we just observed how the precise combinatorics of coefficients is crucial to achieve algebraic relations like \eqref{eq:int-rules-rge}.
On the other hand, naive renormalization calculations\footnote{An example of this cancellation of non-local divergences is worked out in section 5.2 of \cite{Collins}.} in {\qfts} produce counterterms (at intermediate steps) that depend non-locally on external parameters. Amazingly all those non-local contributions cancel in the overall counterterms obtained through the renormalization recursion to be described in section \ref{sec:renormalization}. Clearly, this property of \emph{locality} depends crucially on the precise combinatorics and relations among the counterterms.

Phrased differently, the precise form of symmetry factors allows to identify the counterterm Lagrangian based method of renormalization and the graph-by-graph procedure\footnote{Section 5.6 of \cite{Collins} gives an outline of the proof.}, which is described by the Birkhoff decomposition (see \cite{CK:RH1}) and can be proved to provide locality as in \cite{Factorization}.

As it will turn out in the sequel, it is precisely the fact that the Feynman rules $\phi$ fulfil the universal property $\phi \circ B_+ = \psi \circ \phi$ for some linear operator $\psi \in \End(A)$ that allows for inductive proves of finiteness and locality.
Luckily, the Feynman rules of physical {\qfts} are precisely of this form, with $\psi$ denoting a loop integral over insertions of subdivergences into a primitive skeleton graph.

Finally note that in the Hopf algebraic approach to renormalization using the Birkhoff decomposition, the symmetry factors are not included into the Feynman rules as explained in \cite{CK:RH1}. Their share of combinatorics is introduced afterwards through combinatorial Dyson-Schwinger equations (see section \ref{sec:DSE}).

\subsection{Variation of lower order terms by coboundaries}
\label{sec:more-feynman-rules}
Instead of considering the cocycle $\int_0$ from above, one might instead twist it by some coboundary as in
\begin{definition}
	For any functional $\alpha \in \K[x]'$ consider the cocycle
	\begin{equation}
		\int_0 + \partial \alpha\!:\ x^n \mapsto \frac{x^{n+1}}{n+1} + \sum_{k < n} \binom{n}{k} \alpha \left( x^k \right) x^{n-k}
		\label{eq:int-rules-cocycle}
	\end{equation}
	and call $\intrules[\alpha] \defas \unimor{\int_0 + \partial\alpha}\!:\ H_R \rightarrow \K[x]$ the Feynman rules induced by \eqref{eq:H_R-universal}.
\end{definition}
These Feynman rules differ from $\intrules$, but they still arise from a cocycle and hence enjoy \eqref{eq:int-rules-rge} upon evaluation at different parameter values as well!

To understand the effect of $\partial\alpha$, set $\alpha_n \defas \alpha(x^n)$ and consider the examples
\begin{align}
	\intrules[\alpha] \left( \tree{+-} \right)
		&= \left[ \int_0 + \partial\alpha \right] (1)
		 = \int_0 1
		 = x
		 = \intrules \left( \tree{+-} \right) \\
	\intrules[\alpha] \left( \tree{++--} \right)
		&= \left[ \int_0 + \partial\alpha \right] (x)
		 = \frac{x^2}{2} + \alpha_0 \cdot x
		 = \intrules \left\{ \tree{++--} + \alpha(1) \tree{+-} \right\} \\
	\intrules[\alpha] \left( \tree{+++---} \right)
		&= \left[ \int_0 + \partial\alpha \right] \left( \frac{x^2}{2} + \alpha_0 \cdot x \right)
		 = \frac{x^3}{6} + \alpha_0 \cdot \frac{x^2}{2} + \alpha_0 \cdot \frac{x^2}{2} + \alpha_1 \cdot x + \alpha_0^2 \cdot x \nonumber\\
		&= \intrules \left\{ \tree{+++---} + 2\alpha_0 \tree{++--} + \left[ \alpha_1 + \alpha_0^2 \right] \tree{+-} \right\}
		\label{int-boundary:((()))}\\
	\intrules[\alpha] \left( \tree{++-+--} \right)
		&= \left[ \int_0 + \partial\alpha \right] \left( x^2 \right)
		 = \frac{x^3}{3} + \alpha_0 \cdot x^2 + 2\alpha_1 \cdot x
		 =\intrules \left\{ \tree{++-+--} + \alpha_0 \tree{+-}\tree{+-} + 2\alpha_1 \tree{+-} \right\}
		\label{int-boundary:(()())}
\end{align}
Apparently the \emph{leading terms} of $\intrules[\alpha]$ and $\intrules$ match, which follows from $\intrules[\alpha] = \intrules \circ \auto{\left(\alpha\, \circ\, \intrules\right)}$ by \eqref{eq:change-coboundary-equals-auto} together with \eqref{eq:auto-leading-term} in full generality!
%This is of particular relevance towards the distinction between \emph{angle variables} and the \emph{energy scale} in \qft, see \cite{}.
In the examples \eqref{int-boundary:((()))} and \eqref{int-boundary:(()())}, using \eqref{auto:((()))} and \eqref{auto:(()())} we check
\begin{align*}
	\auto{\alpha\, \circ\, \intrules} \left( \tree{++-+--} \right)
		&= \tree{++-+--} + 2 \left[ \alpha \circ \intrules \left( \tree{+-} \right) \right] \tree{+-} + \left[ \alpha \circ \intrules (\1) \right] \tree{+-}\tree{+-}
		 = \tree{++-+--} + 2 \alpha_1 \tree{+-} + \alpha_0 \tree{+-}\tree{+-} \\
	\auto{\alpha\, \circ\, \intrules} \left( \tree{+++---} \right)
		&= \tree{+++---} + 2 \left[ \alpha \circ \intrules(\1) \right] \tree{++--} + \left\{ {\left[ \alpha \circ \intrules (\1) \right] }^2 +  \left[ \alpha \circ \intrules \left( \tree{+-} \right) \right] \right\} \tree{+-} \\
		&= \tree{+++---} + 2\alpha_0 \tree{++--} + \left[ \alpha_0^2 + \alpha_1 \right] \tree{+-}
\end{align*}
and thus indeed verify $\intrules[\alpha] \left( \tree{++-+--} \right) = \intrules \circ \auto{\left(\alpha\, \circ\, \intrules\right)} \left( \tree{++-+--} \right)$ and $\intrules[\alpha] \left( \tree{+++---} \right) = \intrules \circ \auto{\left(\alpha\, \circ\, \intrules\right)} \left( \tree{+++---} \right)$ explicitly.

Amazingly enough, it will turn out in section \ref{sec:toymodel-rules-by-cocycles} that the physical limit of the renormalized Feynman rules of Kreimer's toy model are of this very simple kind if one uses the momentum scheme!

\hide{
\subsection{Iterated integrals}

Though we will not study them further, we mention the broad class of Feynman rules constructed as \emph{iterated integrals} (see \cite{Chen}) as an example of a more general class of Feynman rules. In fact, the physical Feynman rules appearing in {\qfts} may be considered as a generalization of Chen's iterated integrals, see \cite{}!

\begin{definition}\label{def:iterated-integral-rules}
	Let $f_{\cdot}:\ I \rightarrow C\left( [0, \infty) \right)$ be a family of functions indexed by a set $I$ and consider the Hopf algebra $H_R(I)$ of decorated rooted trees with decorations from $I$. Then one obtains a unique algebra morphism $\phi:\ H_R(I) \rightarrow C\left([0, \infty] \right)$ object to
	\begin{equation}
		\forall i\in I: \phi \circ B_+^i = x \mapsto \int_0^x f_i(y) \phi (y) \ dy
		\label{eq:iterated-integral-rules}
	\end{equation}
	by \eqref{eq:H_R-universal}.
\end{definition}
Note that in particular the target algebra $C\left( [0,\infty) \right)$ is much more general now than before. It is not a combinatoric Hopf algebra as was the case with $\K[x]$ before, hence there is no sense in talking about $1$-cocycles in this algebra.

More details on this model can for instance be found in \cite{}, we will not focus on it here. It is in fact a special case of \eqref{eq:physical-rules} if one allows for Heaviside step functions in the integrands.

If one allows for singularities in the $f_i$, for example at zero, \eqref{eq:iterated-integral-rules} is in general not well defined and needs renormalization. One way of performing this is again renormalization by subtraction (shifting the lower limit of the integral from zero to $\mu > 0$). We recommend studying examples of this procedure as in \cite{}.

For ladder graphs, \eqref{eq:iterated-integral-rules} is the iterated integral of one forms as defined and studied thoroughly by Chen in \cite{Chen:II}.

Finally note how \ref{eq:iterated-integral-rules} reproduces \ref{eq:int-rules} through the choice of a single decoration and the constant function $f = 1$. In particular, the group law \eqref{eq:int-rules-rge} generalizes to Feynman rules as defined in \ref{def:iterated-integral-rules}, resulting in the \emph{shuffle product formula} (\cite{}).
}

\section{Kreimer's toy model of \qft}
\label{sec:toymodel}
So far we only focused on Feynman rules that suggest themselves naturally on a purely algebraic level. However, those occurring in {\qft} are in general of a different kind: They map into a target algebra $\alg = C^{\infty}\big( (0,\infty) \big)$ of rather general functions (in our case depending on a single external parameter, the \emph{scale} $s$) and arise through the universal property \eqref{eq:H_R-universal} by means of integrations like
	\begin{equation}
		\phi_s \circ B_+
		= \int_0^{\infty} \frac{\phi_{\zeta}}{s + \zeta}\: \dd\zeta,
		\quad\text{explicitly}\quad
		\phi \circ B_+
		= \left[ s \mapsto \int_0^{\infty} \frac{\phi_{\zeta}}{s+\zeta} \dd\zeta \right].
%		= \int_0^{\infty} \frac{\phi_{s \zeta}}{\zeta + 1}\: d\zeta
		\label{eq:physical-rules}
	\end{equation}
	The physicist will recognize the striking resemblance of $\int_0^{\infty} \dd\zeta$ to an integration over a \emph{loop momentum} $\zeta$ and the similarity of the integral kernel $\frac{1}{\zeta+s}$ to a \emph{propagator} like\footnote{As will become clear in section \ref{sec:toymodel-origin}, we should rather compare to massless propagators and consider $\frac{1}{(p+k)^2}$ or $\frac{1}{\slashed{p} + \slashed{k}}$ instead, where the external and internal momenta $p$ and $k$ correspond to $s$ and $\zeta$.}
	$\frac{1}{p^2 + m^2}$ or $\frac{1}{\slashed{p} + m}$.
	Indeed, as we show in section \ref{sec:toymodel-origin}, rules similar to \eqref{eq:physical-rules} do actually arise in {\qft}! This motivates their study in the sequel.

However, as it stands \eqref{eq:physical-rules} is not well defined as already the integral for $\phi_s (\tree{+-})$ is logarithmically divergent! This is a typical feature of {\qfts} and in fact the reason why renormalization is necessary at all: The Feynman rules are naturally given as divergent integrals over well-defined integrands. Making sense out of these is precisely the subject of renormalization!

\subsection{Analytic regularization}
\label{sec:regularization}
Hence we have to first find a way of quantifying the divergences and give \eqref{eq:physical-rules} a precise meaning. This process is called \emph{regularization} and can be performed in many different ways.

We focus on divergences of the integral originating solely from the unboundedness of the integration domain\footnote{%
This is ensured in {\qft}: Employing \emph{Wick rotation}, the theory is transformed into Euclidean space, eliminating all propagator poles. We will not discuss infrared divergences here.%
}, that is, we exclude singularities in the integrand itself. Then we can achieve a finite value for the integral in

\begin{definition}
	The Feynman rule of the toy model is defined as the \emph{analytic regularization} of \eqref{eq:physical-rules} through theorem \ref{satz:H_R-universal} and
	\begin{equation}
		\toy_s \circ B_+ 
		= \int_0^{\infty} \frac{\zeta^{-\reg}}{\zeta + s}\ \toy_{\zeta} \ \dd\zeta 
		= \int_0^{\infty} \frac{(s\zeta)^{-\reg}}{\zeta+1}\ \toy_{s\zeta} \ \dd\zeta.
		\label{eq:toymodel}
	\end{equation}
	More generally, we allow to replace $\frac{1}{\zeta+1}$ in the integrand by another suitably regular function $f(\zeta) \in \bigo{\zeta^{-1}}$ for $\zeta \rightarrow \infty$.\footnote{Here we restrict ourselves to the case of logarithmically divergent integrands.}
\end{definition}
So $\toy[]$ maps into an algebra of functions $\toy_s$ of both $s$ and $\reg$. Note that this is a general phenomenon: any regularization procedure introduces a new (artificial) parameter\footnote{This is not to be confused with the mass scale $\mu$ needed in dimensional regularization. In fact, we need the same in analytic regularization if we treat $s$ and $\zeta$ to be dimensional: $\left( \frac{\zeta}{\mu} \right)^{-\reg}$ only makes sense for a dimensionless base.} (here $\reg$).

Clearly these rules give finite values for $\Re\reg>0$ and if $f$ is analytic, they will even be meromorphic in $\reg$ around zero (however, check remark 3. below). In fact, by defining the \emph{Mellin transform}\footnote{In standard notation the Mellin transform is $\{\mathcal{M}f\}(\reg) \defas \int_0^{\infty} \zeta^{\reg-1} f(\zeta)\ \dd\zeta$, thus $F(\reg) \defas \{\mathcal{M}f\}(1-\reg)$.}
\begin{equation}
	F(\reg) \defas
	\int_0^{\infty} f(\zeta) \zeta^{-\reg} \ \dd\zeta
	= \sum_{n=-1}^{\infty} \coeff{n} {\reg}^n,
	\label{eq:mellin-trafo}
\end{equation}
we obtain a meromorphic function $F \in \reg^{-1}\K[[\reg]]$ with a pole of first order at $\reg\rightarrow 0$ that captures all analytic information in a combinatorial manner by
\begin{proposition}
	For any forest $t \in \mathcal{F}$ we have\footnote{Compare with \eqref{eq:tree-factorial-product} to see how this generalizes the tree factorial!}
	\begin{equation}
		\toy_{s} (t) = s^{-\reg\abs{t}} \prod_{v \in V(t)} F \left( \reg \abs{t_v} \right).
		\label{eq:toymodel-mellin}
	\end{equation}
\end{proposition}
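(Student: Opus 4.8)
The plan is to prove \eqref{eq:toymodel-mellin} by induction on the number of nodes $\abs{t}$, after first reducing from forests to trees by a multiplicativity argument. Since $\toy[]$ is by construction a morphism of unital algebras (theorem \ref{satz:H_R-universal}), it satisfies $\toy_s(f\cdot g)=\toy_s(f)\,\toy_s(g)$; meanwhile the right-hand side of \eqref{eq:toymodel-mellin} is manifestly multiplicative over a disjoint union of forests, because $\nodes{f\cupdot g}=\nodes{f}\cupdot\nodes{g}$, the cardinality is additive, and the subtree $f_v$ rooted at a node $v$ is unaffected by adjoining further trees. Hence the identity for a general forest follows from the identity for its tree factors, and it suffices to verify the claim on trees $t\in\trees$, every such $t$ being written uniquely as $t=B_+(f)$ for some $f\in\forests$.

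For the base case $t=\tree{+-}=B_+(\1)$ I would use $\toy_{s\zeta}(\1)=1$ together with the scaled form of \eqref{eq:toymodel} and the definition \eqref{eq:mellin-trafo} to get $\toy_s(\tree{+-})=\int_0^{\infty}(s\zeta)^{-\reg}f(\zeta)\,\dd\zeta=s^{-\reg}F(\reg)$, which matches \eqref{eq:toymodel-mellin} since $\tree{+-}$ has a single node $v$ with $t_v=t$ and $\abs{t_v}=1$. For the inductive step, suppose \eqref{eq:toymodel-mellin} holds on $\forests_{\abs{t}-1}$ and write $t=B_+(f)$ with $f\in\forests_{\abs{t}-1}$. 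Applying \eqref{eq:toymodel} in the scaled form and inserting the induction hypothesis for $\toy_{s\zeta}(f)$ gives
\[
\toy_s(t)=\left(\prod_{v\in\nodes{f}}F\!\left(\reg\abs{f_v}\right)\right)\int_0^{\infty}(s\zeta)^{-\reg}f(\zeta)\,(s\zeta)^{-\reg\abs{f}}\,\dd\zeta.
\]
The remaining integral is where the essential computation sits.

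The key observation is the self-similar scaling of the kernel: collecting the powers, $(s\zeta)^{-\reg}(s\zeta)^{-\reg\abs{f}}=(s\zeta)^{-\reg\abs{t}}$ because $\abs{t}=\abs{f}+1$, so that
\[
\int_0^{\infty}(s\zeta)^{-\reg\abs{t}}f(\zeta)\,\dd\zeta=s^{-\reg\abs{t}}\int_0^{\infty}\zeta^{-\reg\abs{t}}f(\zeta)\,\dd\zeta=s^{-\reg\abs{t}}F\!\left(\reg\abs{t}\right)
\]
by the very definition \eqref{eq:mellin-trafo} of the Mellin transform evaluated at argument $\reg\abs{t}$. It remains to reassemble the product: the new root $r=r(t)$ contributes the factor $F(\reg\abs{t})$ (indeed $t_r=t$ and $\abs{t_r}=\abs{t}$), while for every old node $v\in\nodes{f}$ the subtree is unchanged, $t_v=f_v$, so $\prod_{v\in\nodes{f}}F(\reg\abs{f_v})=\prod_{v\in\nodes{t}\setminus\set{r}}F(\reg\abs{t_v})$. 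Multiplying the two pieces yields exactly $s^{-\reg\abs{t}}\prod_{v\in\nodes{t}}F(\reg\abs{t_v})$, completing the induction.

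I expect the main obstacle to be bookkeeping rather than conceptual: one must keep careful track of how grafting a new root via $B_+$ rescales the integration variable to $s\zeta$ and thereby raises the total power of $s$, while the Mellin factor lands precisely at $\reg$ times the cardinality of the subtree below (and including) each node. A secondary point worth a remark is analytic: the manipulations above are justified for $\Re\reg>0$ where all integrals converge absolutely, and the identity then propagates to a punctured neighbourhood of $\reg=0$ by meromorphic continuation, consistent with $F\in\reg^{-1}\K[[\reg]]$ having only a simple pole.
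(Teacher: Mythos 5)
Your proposal is correct and follows essentially the same route as the paper: reduce to trees by multiplicativity of both sides, then induct via the scaled form of \eqref{eq:toymodel}, insert the induction hypothesis, and recognize the remaining integral as $F(\reg\abs{t})$ by \eqref{eq:mellin-trafo}. The only additions beyond the paper's proof are the explicit base case (which is just the inductive step at $f=\1$) and the closing remark on convergence for $\Re\reg>0$ and meromorphic continuation, which the paper leaves implicit.
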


\begin{proof}
	As both sides of \eqref{eq:toymodel-mellin} are clearly multiplicative, it is enough to prove the claim inductively for trees. Let it be valid for some forest $x\in\forests$, then for $t = B_+ (x)$ observe
	\begin{align*}
		\toy_{s} \circ B_+ (x)
		&= \int_0^{\infty} (s\zeta)^{-\reg} f(\zeta)\ \toy_{s\zeta} (x) \ \dd\zeta
		 = \int_0^{\infty} (s\zeta)^{-\reg} f(\zeta) (s\zeta)^{-\reg\abs{x}} \prod_{v \in V(x)} F \left( z \abs{x_v} \right) \ \dd\zeta \\
		&= s^{-\reg \left( 1+ \abs{x} \right)} \prod_{v \in V(x)} F \left( \reg \abs{x_v} \right) \int_0^{\infty} f(\zeta) \zeta^{-\reg \left( 1 + \abs{x} \right)} \ \dd\zeta \\
		&= s^{-\reg \abs{B_+ (x)}} \left[ \prod_{v \in V(x)} F \left( \reg \abs{x_v} \right) \right] F \left( \reg \abs{B_+ (x)} \right)
		= s^{-\reg \abs{t}} \prod_{v \in V(t)} F \left( \reg \abs{ t_v } \right) \qedhere.
	\end{align*}
\end{proof}
This result allows us to calculate the Feynman rules algebraically (without having to perform any integrations). As examples consider
\begin{align*}
	\toy_s \left( \tree{+-} \right)
	&= s^{-\reg} F(\reg) &
	\toy_s \left( \tree{++--} \right)
	&= s^{-2\reg} F(\reg) F(2\reg) \\
	\toy_s \left( \tree{+++---} \right)
	&= s^{-3\reg} F(\reg) F(2\reg) F(3\reg) &
	\toy_s \left( \tree{++-+--} \right)
	&= s^{-3\reg} {\left[ F(\reg) \right]}^2 F(3\reg).
\end{align*}
For the original toy model $f(\zeta) = \frac{1}{1+\zeta}$ we obtain
\begin{align}
	F(\reg)
	&= \int_0^{\infty} \frac{\zeta^{-\reg}}{\zeta+1} \ \dd\zeta
	= B(\reg, 1-\reg)
	= \Gamma(\reg) \Gamma(1-z)
	= \frac{\pi}{\sin (\pi \reg)}\\
	&= \frac{1}{\reg} \Gamma(1+\reg)\Gamma(1-\reg)
	= \frac{1}{\reg} \exp \left\{ \sum_{n \in \N} \frac{\zeta(2n)}{n} z^{2n} \right\},
	\label{eq:mellin-prop}
\end{align}
with Euler's beta function
\begin{equation*}
	B(x,y)
	\defas \int_0^1 t^{x-1} {(1-t)}^{y-1}\ \dd t
	= \int_0^{\infty} \dd\alpha\ \alpha^{x-1} \int_0^{\infty} \dd\beta\ \beta^{y-1}\ \delta(1-\alpha-\beta)
	= \frac{\Gamma(x)\Gamma(y)}{\Gamma(x+y)}
\end{equation*}
and his gamma function $\Gamma(\reg) = \int_0^{\infty} x^{\reg-1} e^{-x}\ \dd x$.
\hide{
\begin{align*}
	\toy_s \left( \tree{+-} \right)
	&= s^{-\reg} B(\reg, 1-\reg) \\
	\toy_s \left( \tree{++--} \right)
	&= s^{-2\reg} B(\reg, 1-\reg) B(2\reg, 1-2\reg) \\
	\toy_s \left( \tree{+++---} \right)
	&= s^{-3\reg} B(\reg, 1-\reg) B(2\reg, 1-2\reg) B(3\reg, 1-3\reg) \\
	\toy_s \left( \tree{++-+--} \right)
	&= s^{-3\reg} {B(\reg, 1-\reg)}^2 B(3\reg, 1-3\reg)
\end{align*}
}
A couple of remarks are in order:
\begin{enumerate}
	\item Whereas the original integral \eqref{eq:toymodel} converges only for $\Re \reg > 0$, the analyticity of $F$ allows for an analytic continuation in $\reg$ into some region where $\Re \reg < 0$. The same happens in the popular dimensional regularization, see section \ref{sec:toymodel-dimreg}.
	\item Notice how due to the presence of a single scale only, the dependency on the external parameter $s$ is very simple and given through the plain power $s^{-\reg\abs{f}}$.
	\item Denoting the radius of convergence of the Laurent series \eqref{eq:mellin-trafo} by $r$, we obtain $\frac{r}{\abs{t}}$ as the radius of convergence of the Laurent series of $\toy_s(t)$ by considering the contribution $F(\reg\cdot\abs{t})$ in \eqref{eq:toymodel-mellin} for a tree $t\in\trees$. In particular we can not find a $\reg \neq 0$ such that $\toy_s(f)$ converges as Laurent series for every $f \in H_R$!
	\item The highest order pole of $\toy_s (f)$ comes from multiplying all the poles of $F$'s, so
		\begin{align}
			\toy_s (f)
			&\in s^{-\reg\abs{f}} \prod_{v \in V(f)} \left\{ \frac{\coeff{-1}}{\reg\abs{f_v}} + \K[[\reg]] \right\} 
			\subset \Big( 1 + \K[[\reg\ln s]] \Big) \left\{ \prod_{v \in V(f)} \frac{\coeff{-1}}{\reg\abs{f_v}} + \reg^{1-\abs{f}}\K[[\reg]] \right\} \nonumber\\
			&= \frac{1}{f!} {\left( \frac{\coeff{-1}}{\reg} \right)}^{\abs{f}} + \reg^{1-\abs{f}} \K[[\reg,\reg \ln s]].
			\label{eq:toymodel-leading-pole}
		\end{align}
		Hence the leading divergence of $\toy_s(f)$ is independent of $s$ and given by the tree factorial \eqref{eq:toymodel-leading-pole}.
\end{enumerate}

\section{Renormalization of the toy model}
\label{sec:renormalization}
\subsection{General concept of renormalization}
As seen in the previous chapter, the mathematical concept of Birkhoff decomposition allows for a decomposition of Feynman rules $\phi$ into \emph{renormalized} rules $\phi_R \defas \phi_+$ and the so-called \emph{counterterms} $Z \defas \phi_-$. Recall the \emph{Bogoliubov character} (also \emph{$\bar{R}$-operation})
\begin{equation*}
	\bar{\phi} (x) \defas
	\phi(x) + \sum_x \phi_- (x') \phi(x'') 
	= \phi(x) + [ \phi_- \convolution \phi - \phi_- - \phi ](x) 
	= \phi_+(x) - \phi_-(x)
\end{equation*}
we already encountered in \eqref{eq:rbar}. It defines the Birkhoff decomposition recursively by \eqref{eq:birkhoff-dec} and essentially renormalizes the \emph{subdivergences} as we shall see in theorem \ref{satz:subdivergences}.

Obviously the result of renormalization depends crucially on how \mbox{$\alg = \alg_+ \oplus \alg_-$} is splitted (specifying the projection $R\!: \alg \twoheadrightarrow \alg_-$). This is called the choice of a \emph{renormalization scheme}.

We already mentioned the \emph{minimal subtraction scheme} \eqref{eq:ms-splitting}, which is applicable to the toy model as we regularized it to deliver meromorphic functions in $\reg$. However, we will study the \emph{momentum scheme} instead as it is far better behaved algebraically (see chapter \ref{chap:conclusion} for details).

\subsection{Momentum scheme}
For massive\footnote{%
In massless theories, naive application of the momentum scheme typically introduces infrared divergences (see section 3.6.4 in \cite{Collins}).}
theories, the momentum scheme is a very convenient and (as it will turn out) algebraically distinguished renormalization scheme. It is based upon the following observation: Consider the rational function
\begin{equation*}
	f(\zeta, s) \defas \frac{1}{\zeta+s}
\end{equation*}
occurring in the integrand of \eqref{eq:toymodel}. For fixed $s$ we have $f(\zeta,s) \in \landautheta{\zeta^{-1}}$ and hence the integral $\toy_s(\tree{+-})$ is $\log$-divergent. However, for an arbitrary \emph{subtraction point} $\rp$,
\begin{equation}
	f(\zeta,s) - f(\zeta,\rp)
	= \frac{1}{\zeta+s} - \frac{1}{\zeta+\rp}
	= \frac{\rp - s}{(\zeta+s)(\zeta+\rp)}
	\label{eq:momsch-example}
\end{equation}
lies in $\bigo{\zeta^{-2}}$ whence \mbox{$\int_0^{\infty} \left[ f(\zeta,s) - f(\zeta, \rp) \right] \ \dd\zeta$} is convergent! This generalizes to higher degrees of divergence (section \ref{sec:higher-sdds}), though we will for now only be considering the $\log$-divergent case.

\begin{definition}
	On the target algebra $\alg$ of regularized Feynman rules depending on a single external variable $s$, define the \emph{momentum scheme} by evaluation at $s=\rp$:
	\begin{equation}
		\End(\alg) \ni \momsch{\rp} \defas \ev_{\rp} = \left( \alg \ni f \mapsto {\left. f \right|}_{s=\rp} \right).
		\label{eq:momentum-scheme}
	\end{equation}
\end{definition}
In particular, $\momsch{\rp}$ is a character of $\alg$ and we thus may use \eqref{eq:birkhoff-character}! We define the counterterm \mbox{$\toyZ \defas {\left( \toy \right)}_- = \momsch{\rp} \circ \toy \circ S = \toy_{\rp} \circ S$} and the renormalized Feynman rules $\toyR \defas {\left( \toy \right)}_+$ via the Birkhoff decomposition induced by $\momsch{\rp}$. Note that the counterterms $\toyZ$ do not depend on $s$ and we suppress the dependency on $\reg$ in the notation.

This results in the following values for the first trees:
\begin{align}
	\toyZ \left( \tree{+-} \right)
		&= \momsch{\rp} \circ \toy \circ S \left( \tree{+-} \right) 
		 = \ev_{\rp} \circ \toy \left( -\tree{+-} \right)
		 = - \toy_{\rp} \left( \tree{+-} \right)
		 = - \rp^{-\reg} F(\reg)
		 \nonumber\\
	 \toyR[s] \left( \tree{+-} \right)
		&= \toy_s \left( \tree{+-} \right) + \toyZ \left( \tree{+-} \right)
		 = \left( s^{-\reg} - \rp^{-\reg} \right) F(\reg)
		 \label{toyR:()}\\
	\toyZ \left( \tree{++--} \right)
		&= \toy_{\rp} \circ S \left( \tree{++--} \right)
		 = \toy_{\rp} \left( -\tree{++--} + \tree{+-}\tree{+-} \right)
		 = - \rp^{-2\reg}F(\reg)F(2\reg) + \rp^{-2\reg}F^2(\reg) \nonumber\\
		&= \rp^{-2\reg} F(\reg) \left[ F(\reg) - F(2\reg) \right] 
		\nonumber\\
	\toyR[s] \left( \tree{++--} \right)
		&= \toy_s \left( \tree{++--} \right) + \toyZ \left( \tree{+-} \right) \toy_s \left( \tree{+-} \right) + \toyZ \left( \tree{++--} \right) \nonumber\\
	&= \left( s^{-2\reg} - \rp^{-2\reg} \right) F(\reg)F(2\reg) - \left( s^{-\reg} - \rp^{-\reg} \right) \rp^{-\reg} F^2(\reg)
	\label{toyR:(())}
\end{align}

\section{The physical limit}
In the step of regularization, we introduced the artificial (non-physical) parameter $\reg$ into the Feynman rules $\toy_s$. The goal of renormalization is to take the \emph{physical limit}
\begin{equation}
	\toyphy \defas \lim_{\reg \rightarrow 0} \toyR
	\label{eq:toymodel-physical}
\end{equation}
of the renormalized Feynman rules $\toyR$, corresponding to the situation without a regulator (the original theory). The notation $\toyphy$ is unambiguous as $\reg=0$ only makes sense for the renormalized Feynman rules.
Expanding \eqref{toyR:()} and \eqref{toyR:(())} in $\reg$ we obtain
\begin{align}
	\toyphy[s] \left( \tree{+-} \right)
		&= \lim_{\reg \rightarrow 0} \left[ \left( s^{-\reg} - \rp^{-\reg} \right) F(\reg) \right]
		 = \lim_{\reg \rightarrow 0} \left[ \left( -\reg \ln \tfrac{s}{\rp} + \bigo{\reg^2} \right) \cdot \left( \tfrac{\coeff{-1}}{\reg} + \bigo{\reg^0} \right) \right] \nonumber\\
		&= - \coeff{-1} \ln \tfrac{s}{\rp}
		\quad\text{and}
		\label{toyR-physical:()}\displaybreak[0]\\
	\toyphy[s] \left( \tree{++--} \right)
		&= \lim_{\reg \rightarrow 0} \left\{ \left[ -2\reg \ln \tfrac{s}{\rp} + 2\reg^2\left( \ln^2 s - \ln^2 \rp \right) + \bigo{z^3} \right]\cdot \left[ \tfrac{\coeff[2]{-1}}{2\reg^2} + \tfrac{3\coeff{0} \coeff{-1}}{2\reg} + \bigo{\reg^0} \right] \right. \nonumber\\
		&\quad \left. - \left[ -\reg \ln \tfrac{s}{\rp} + \tfrac{\reg^2}{2} \left( \ln^2 s + 2\ln s \ln\rp - 3\ln^2 \rp \right) + \bigo{\reg^3} \right] \cdot \left[ \tfrac{\coeff[2]{-1}}{\reg^2} + 2\tfrac{\coeff{-1} \coeff{0}}{\reg} + \bigo{\reg^0} \right] \right\} \nonumber\displaybreak[0]\\
		&= \frac{\coeff[2]{-1}}{2} \ln^2 \tfrac{s}{\rp} - \coeff{-1} \coeff{0} \ln \tfrac{s}{\rp}.
		\label{toyR-physical:(())}
\end{align}
These calculations obviously become increasingly lengthy, we just state
\begin{align}
	\toyphy[s] \left( \tree{+++---} \right)
		&= - \frac{\coeff[3]{-1}}{6} \ln^3 \tfrac{s}{\rp} + \coeff[2]{-1} c_0 \ln^2 \tfrac{s}{\rp} - \coeff{-1} \left( \coeff[2]{0} + \coeff{-1} \coeff{1} \right) \ln \tfrac{s}{\rp} 
		\label{toyR-physical:((()))}\\
	\toyphy[s] \left( \tree{++-+--} \right)
		&= -\frac{\coeff[3]{-1}}{3} \ln^3 \tfrac{s}{\rp} + \coeff[2]{-1} \coeff{0} \ln^2 \tfrac{s}{\rp} - 2\coeff[2]{-1}\coeff{1} \ln \tfrac{s}{\rp}
		\label{toyR-physical:(()())}
\end{align}
and observe that $\toyphy[s]$ takes values in $\K[\ln \tfrac{s}{\rp}]$, mapping any forest $f$ to a polynomial in $\ln \tfrac{s}{\rp}$ of degree $\abs{f}$. Note that we have no constant parts, except for $\toyphy(\1) = 1$. So far we did not prove the existence of the limit $\toyphy$ at all (the reader should check how in \eqref{toyR-physical:(())} the contributions $\propto \reg^{-1}$ cancel inside the limit)! We will remedy this in the following two sections resulting in proposition \ref{satz:finiteness}.

\subsection{Renormalization of subdivergences}
\label{sec:subdivergences}
\begin{satz}\label{satz:subdivergences}
	Let $H$ be a connected bialgebra, $\alg$ a commutative algebra with an endomorphism $L \in \End(\alg)$ and consider the Feynman rules $\phi \defas \unimor{L}$ induced by \eqref{eq:H_R-universal}. Given a renormalization scheme $R \in \End(\alg)$ such that
	\begin{equation}
		L \circ m_{\alg} \circ ( \phi_- \tp \id )
		= m_{\alg} \circ (\phi_- \tp L),
		\label{eq:counterterm-scalars}
	\end{equation}
	that is to say, $L$ is linear over the counterterms, we have
	\begin{equation}
		\bar{\phi}_R \circ B_+ = L \circ \phi_+.
		\label{eq:rbar-cocycle}
	\end{equation}
\end{satz}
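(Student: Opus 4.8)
The plan is to prove the identity directly at the level of maps $H_R \to \alg$, with no induction required, by unfolding the definition of the Bogoliubov map $\bar{\phi}$ from \eqref{eq:rbar} (this is the map written $\bar{\phi}_R$ in the statement) and exploiting the cocycle property \eqref{eq:B_+-cocycle} of $B_+$ together with the hypothesis \eqref{eq:counterterm-scalars}. First I would write $\bar{\phi} = \phi + m_{\alg} \circ (\phi_- \tp \phi) \circ \cored$ and precompose with $B_+$. The first summand becomes $\phi \circ B_+ = L \circ \phi$ by the universal property \eqref{eq:H_R-universal}. For the second summand I need $\cored \circ B_+$; substituting \eqref{eq:B_+-cocycle} into $\cored = \Delta - \1 \tp \id - \id \tp \1$ and cancelling the $B_+ \tp \1$ contributions yields the clean identity
\begin{equation*}
	\cored \circ B_+ = (\id \tp B_+) \circ \Delta - \1 \tp B_+ .
\end{equation*}

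Next I substitute this and again invoke $\phi \circ B_+ = L \circ \phi$ to rewrite $(\phi_- \tp \phi) \circ (\id \tp B_+) = \phi_- \tp (L \circ \phi)$, while the correction term $m_{\alg} \circ (\phi_- \tp \phi) \circ (\1 \tp B_+)$ collapses to $L \circ \phi$ using $\phi_-(\1) = \1_{\alg}$. This leaves
\begin{equation*}
	m_{\alg} \circ (\phi_- \tp \phi) \circ \cored \circ B_+ = m_{\alg} \circ \big(\phi_- \tp (L \circ \phi)\big) \circ \Delta - L \circ \phi .
\end{equation*}

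The key step -- and the only place the hypothesis enters -- is to pull $L$ out of the remaining multiplication. Writing $m_{\alg} \circ (\phi_- \tp (L \circ \phi)) \circ \Delta = m_{\alg} \circ (\phi_- \tp L) \circ (\id \tp \phi) \circ \Delta$ and applying \eqref{eq:counterterm-scalars} in the form $m_{\alg} \circ (\phi_- \tp L) = L \circ m_{\alg} \circ (\phi_- \tp \id)$ converts this into $L \circ m_{\alg} \circ (\phi_- \tp \phi) \circ \Delta = L \circ (\phi_- \convolution \phi) = L \circ \phi_+$, recognizing $\phi_+ = \phi_- \convolution \phi$ from the Birkhoff decomposition (theorem \ref{satz:birkhoff-dec}). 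Assembling the pieces gives
\begin{equation*}
	\bar{\phi} \circ B_+ = L \circ \phi + L \circ \phi_+ - L \circ \phi = L \circ \phi_+ ,
\end{equation*}
as claimed.

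The main obstacle is purely one of bookkeeping: I must make sure \eqref{eq:counterterm-scalars} is applied to exactly the tensor factor carrying $\phi_-$ (the reason $\alg$ need not be handled element-wise and commutativity is not even invoked here), and that the two spurious $\pm L \circ \phi$ contributions -- one arising from $\phi \circ B_+$ and one from the $\1 \tp B_+$ term in $\cored \circ B_+$ -- cancel against each other. A secondary point worth checking is that all of these are genuine identities of maps on $H_R$, so that $\im B_+ \subseteq \ker \counit$ guarantees $\bar{\phi}$ is evaluated only where its defining formula \eqref{eq:rbar} is meaningful.
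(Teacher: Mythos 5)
Your proof is correct and is essentially the paper's own argument: both proceed by a direct, induction-free computation that unfolds $\bar{\phi}$, applies the cocycle property \eqref{eq:B_+-cocycle} together with $\phi \circ B_+ = L \circ \phi$, and uses hypothesis \eqref{eq:counterterm-scalars} to pull $L$ out of the convolution, recognizing $\phi_- \convolution \phi = \phi_+$. The only difference is bookkeeping: the paper starts from $\bar{\phi} = \phi_- \convolution \phi - \phi_-$ and lets the $B_+ \tp \1$ term cancel against $-\phi_- \circ B_+$, while you start from $\bar{\phi} = \phi + m_{\alg} \circ (\phi_- \tp \phi) \circ \cored$ and cancel the two spurious $\pm L \circ \phi$ contributions.
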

\begin{proof}
	This is a straightforward consequence of the cocycle property:
	\begin{align*}
		\bar{\phi}_R \circ B_+
		&= \left( \phi_+ - \phi_- \right) \circ B_+
		 = \left( \phi_- \convolution \phi - \phi_- \right) \circ B_+
		 = m_{\alg} \circ (\phi_- \tp \phi) \circ \Delta \circ B_+ - \phi_- \circ B_+ \\
		&= m_{\alg} \circ (\phi_- \tp \phi) \circ \left[ (\id \tp B_+) \circ \Delta + B_+ \tp \1 \right] - \phi_- \circ B_+ \\
		&= m_{\alg} \circ \left[ \phi_- \tp (\phi \circ B_+) \right] \circ \Delta + \left( \phi_- \circ B_+ \right) \cdot \phi(\1) - \phi_- \circ B_+ \\
		&= \phi_- \convolution \left( \phi \circ B_+ \right) 
		 = \phi_- \convolution \left( L \circ \phi \right)
		 = m_{\alg} \circ (\id \tp L) \circ (\phi_- \tp \phi) \circ \Delta \\
		&= L \circ m_{\alg} \circ (\phi_- \tp \phi) \circ \Delta
		 = L \circ \left( \phi_- \convolution \phi \right)
		 = L \circ \phi_+ \qedhere.
	\end{align*}
\end{proof}
First of all note that the condition \eqref{eq:counterterm-scalars} is fulfilled in our case: The counterterms $\toyZ$ are independent of the parameter $s$ such that they can be moved out of the integral in \eqref{eq:toymodel}! This actually applies in general to renormalization of {\qfts}: It is the nature of the counterterms to not depend on any of the external variables.

Even if the divergence of a Feynman graph does depend on external momenta (this happens for the quadratically divergent scalar propagator in renormalizable \qfts), the Hopf algebra $H$ is defined in such a way that the counterterms are evaluations on certain \emph{external structures}, given by distributions. So in any case, $\phi_-$ maps to scalars independent of the external momenta.
%In terms of Feynman graphs, this is also evident, as the external legs of subdivergence do not naturally correspond to the external parameters (external legs) of the original graph!

In fact it is the whole point of the Hopf algebra approach to renormalization to put the momentum dependence into indices on contracted vertices living in the right-hand side of the coproduct. For details of this concept we refer to \cite{CK:RH1}.

The result \eqref{eq:rbar-cocycle} is most powerful and shows that the renormalized value of a tree $B_+(f)$ can be gained out of the knowledge of the renormalized value $\toyR(f)$ only! In particular it allows for inductive proofs of properties of $\toyR$ and also $\toyphy$, without having to consider the unrenormalized Feynman rules or their counterterms at all!

Though we will restrict ourselves to the toy model here, the method employed in proposition \ref{satz:finiteness} can be easily extended to prove finiteness and also locality of renormalization in a very general setting, see \cite{Factorization}.

\subsection{Finiteness and BPHZ}
\label{sec:finiteness}
\begin{proposition}\label{satz:finiteness}
	The physical limit $\toyphy[s]$ of the toy model exists and maps $H_R$ into the polynomials $\K[\ln \tfrac{s}{\rp}]$.
\end{proposition}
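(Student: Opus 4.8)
The plan is to reduce the statement to single trees and then to induct on the number of nodes, using the recursion \eqref{eq:rbar-cocycle} as the engine. First I would observe that, since the momentum-scheme projection $\momsch{\rp}=\ev_{\rp}$ is itself a character of $\alg$, corollary \eqref{eq:birkhoff-character} makes the renormalized rules $\toyR=\toy[]_+$ a character. Hence $\toyR(f f')=\toyR(f)\toyR(f')$, and if the limits $\toyphy(f)$ and $\toyphy(f')$ exist in $\K[\ln\tfrac{s}{\rp}]$ then so does $\toyphy(ff')=\toyphy(f)\toyphy(f')$, because $\K[\ln\tfrac{s}{\rp}]$ is closed under multiplication. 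Multiplicativity therefore reduces everything to the case of trees $t=B_+(f)$, and I would run an induction on $\abs{t}$, with base case $\toyphy(\tree{+-})=-\coeff{-1}\ln\tfrac{s}{\rp}$ supplied by \eqref{toyR-physical:()}.

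For the inductive step I would invoke theorem \ref{satz:subdivergences}: since the counterterms $\toyZ$ are independent of the scale $s$, condition \eqref{eq:counterterm-scalars} holds and \eqref{eq:rbar-cocycle} gives $\bar{\toy}\circ B_+=L\circ\toyR$, where $L$ is the integral operator of \eqref{eq:toymodel}. Combining this with \eqref{eq:birkhoff-dec}, so that $\toyR=(\id-\ev_{\rp})\bar{\toy}$, produces the closed recursion
\begin{equation*}
	\toyR(B_+ f)(s)=\int_0^{\infty}\zeta^{-\reg}\left(\frac{1}{\zeta+s}-\frac{1}{\zeta+\rp}\right)\toyR(f)(\zeta)\ \dd\zeta .
\end{equation*}
The decisive feature is that the subtracted kernel lies in $\bigo{\zeta^{-2}}$, exactly as in \eqref{eq:momsch-example}, so the integral converges at $\reg=0$ against any integrand growing only polynomially in $\ln\zeta$. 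Granting by the induction hypothesis that $\toyR(f)(\zeta)\to P_f(\ln\tfrac{\zeta}{\rp})$ with $\deg P_f=\abs{f}$, I would then evaluate the limiting integral monomial by monomial: after the substitution $\zeta=\rp u$ one is led to $G_k(x)\defas\int_0^{\infty}\bigl(\tfrac{1}{u+x}-\tfrac{1}{u+1}\bigr)(\ln u)^k\ \dd u$ with $x=\tfrac{s}{\rp}$. Differentiating the elementary identity $\int_0^{\infty}u^{-\reg}\bigl(\tfrac{1}{u+x}-\tfrac{1}{u+1}\bigr)\dd u=(x^{-\reg}-1)F(\reg)$ $k$ times in $\reg$ at $\reg=0$ — the prefactor $x^{-\reg}-1=\bigo{\reg}$ cancelling the simple pole of $F$ — shows that each $G_k$ is a polynomial in $\ln x$ of degree $k+1$ with vanishing constant term. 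Hence $\toyphy(B_+ f)$ is a polynomial in $\ln\tfrac{s}{\rp}$ of degree $\abs{f}+1=\abs{B_+ f}$, again without constant term, closing the induction.

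The step that really needs care — and which I expect to be the main obstacle — is the interchange of the limit $\reg\to 0$ with the $\zeta$-integration, i.e.\ justifying $\lim_{\reg\to 0}\int \zeta^{-\reg}K(\zeta,s)\toyR(f)(\zeta)\,\dd\zeta=\int K(\zeta,s)P_f(\ln\tfrac{\zeta}{\rp})\,\dd\zeta$ with $K=\tfrac{1}{\zeta+s}-\tfrac{1}{\zeta+\rp}$. Pointwise convergence of $\zeta^{-\reg}\toyR(f)(\zeta)$ to $P_f$ on $(0,\infty)$ follows from the induction hypothesis, but the coefficients in the finite sum $\toyR(f)(\zeta)=\sum_{j}a_j(\reg)\,\zeta^{-j\reg}$ individually carry poles in $\reg$, so the convergence has to be controlled through the cancelling combination rather than termwise. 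I would secure a dominated-convergence bound by restricting $\reg$ to a small real punctured neighbourhood of $0$: there $\zeta^{-\reg}K(\zeta,s)$ behaves like $\zeta^{-2-\reg}$ at infinity and stays bounded near the origin, while $\toyR(f)(\zeta)$ is majorised by $C\,(1+\abs{\ln\zeta})^{\abs{f}}(\zeta^{\epsilon}+\zeta^{-\epsilon})$ uniformly in $\reg$, which is integrable against $K$. This legitimises the interchange and completes the argument; the same mechanism — improved power counting from the subtraction together with the polylogarithmic growth of renormalized integrands — is precisely what \cite{Factorization} abstracts to prove finiteness and locality in general.
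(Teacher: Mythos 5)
Your proposal is correct, and its skeleton coincides with the paper's proof: reduce to trees via the character property of $\toyR$ (hence of $\toyphy$, if it exists), then induct on the number of nodes using theorem \ref{satz:subdivergences} together with \eqref{eq:birkhoff-dec} — your displayed recursion is exactly the paper's equation \eqref{eq:BPHZ} — and pass to the limit inside the $\bigo{\zeta^{-2}}$-subtracted integral. You differ in two places, both worth noting. First, for the polynomial structure the paper argues by inspecting the Laurent structure \eqref{eq:toymodel-mellin} of the \emph{unrenormalized} rules: every power $\ln^k s$ carries a factor $\reg^k$, so at order $\reg^0$ at most $\abs{t}$ logarithms survive, each paired with a pole of some $F(\reg\abs{t_v})$. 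You instead evaluate the limiting integral monomial by monomial through the $k$-fold $\reg$-derivative of the Mellin identity; this is in substance the computation $(\ast)$ that the paper defers to the proof of theorem \ref{satz:toymodel-universal}, so your route yields for free the vanishing constant term and the dependence on $s/\rp$ alone (for which the paper needs a separate substitution $\zeta\mapsto\rp\zeta$), at the price of doing that later work already here. (Do state your identity for the general kernel $\tfrac{f(u/x)}{x}-f(u)$ rather than only for $\tfrac{1}{u+x}-\tfrac{1}{u+1}$, since the proposition covers any $f\in\bigo{\zeta^{-1}}$; it holds there verbatim.) Second, you are more careful than the paper about interchanging $\lim_{\reg\to 0}$ with $\int_0^{\infty}\dd\zeta$: the paper only checks integrability of the limiting integrand, whereas a dominating function uniform in $\reg$ is indeed what the dominated convergence theorem requires, precisely because the Laurent coefficients of $\toyR[\zeta](x)$ are individually singular at $\reg=0$. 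Your proposed majorant is legitimate and provable: by the induction hypothesis the pole parts of $\zeta\mapsto\toyR[\zeta](x)$ cancel, so this function is jointly analytic in $(\reg,\ln\zeta)$ near $\reg=0$, and a Cauchy estimate on a small circle $\abs{\reg}=\delta$ bounds it by $C\left(\zeta^{\epsilon}+\zeta^{-\epsilon}\right)$ uniformly for $\abs{\reg}\leq\delta/2$, which is integrable against the subtracted kernel. With that, your induction closes and the argument is complete.
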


\begin{proof}
	We prove the claim inductively, starting with the trivial case of the empty forest $\toyphy[s](\1) = 1$. As $\toyR$ is a character, so will be $\toyphy$ (if existent) and we may restrict to trees $t=B_+(x)$. Assuming the claim to hold for $x\in\forests$, we can take the limit
	\settowidth{\wurelwidth}{\eqref{eq:rbar-cocycle}}
	\begin{align}
		\toyphy[s] (t)
		&\wurel{\eqref{eq:rbar-cocycle}} \lim_{\reg \rightarrow 0} (\id - \momsch{\rp}) \left[ s \mapsto \int_0^{\infty} \frac{f(\zeta/s)}{s} \zeta^{-\reg}\ \toyR[\zeta](x)\ \dd\zeta \right] \nonumber\\
		&\wurel{} \lim_{\reg \rightarrow 0} \int_0^{\infty} \left[ \frac{f(\zeta/s)}{s} - \frac{f(\zeta/\rp)}{\rp} \right] {\zeta}^{-\reg}\ \toyR[\zeta] (x)\ \dd\zeta \nonumber\\
		&\wurel{} \int_0^{\infty} \left[ \frac{f(\zeta/s)}{s} - \frac{f(\zeta/\rp)}{\rp} \right]\ \toyphy[\zeta](x)\ \dd\zeta
		\label{eq:BPHZ}
	\end{align}
	using dominated convergence: the term in square brackets lies in $\bigo{\zeta^{-2}}$ like \eqref{eq:momsch-example} and by assumption $\toyphy[\zeta](x) \in \bigo{\ln^N \zeta}$ for some $N\in\N$, wherefore this integral is convergent! Thus knowing the limit $\reg \rightarrow 0$ of $\toyR[s](t)$ to exist, we proved that all pole terms in the Laurent series of $\toyR[s](t)$ must cancel and identify $\toyphy[s](t)$ with the $\propto \reg^0$ term.
	
	Inspection of \eqref{eq:toymodel-mellin} and the scheme $R_{\rp}$ reveals that this coefficient is a polynomial in $\ln s$ and $\ln\rp$ of order $\abs{t}$, as these logarithms come with a factor $\reg$ (expanding $s^{-\reg}$) which needs to cancel with a pole term $\tfrac{c_{-1}}{\reg\abs{t_v}}$ from some $F(\reg\abs{t_v})$ in order to contribute to the constant ($\propto \reg^0$) term -- compare with \eqref{toyR-physical:()} and \eqref{toyR-physical:(())}!

%	Finally, by $\ev_{\rp} \circ (\id - \momsch{\rp}) = 0$ and \eqref{eq:birkhoff-dec} we observe $\toy_{\rp} = 0$ on $\ker \counit$. Hence $\toyphy[s](t)$ must be a polynomial in $\ln s - \ln \rp = \ln \tfrac{s}{\rp}$.
	Finally, we observe that $\toyphy[s]$ is only a function of $\frac{s}{\rp}$. Starting with $\toyphy[s](\1)=1$ this follows inductively from \eqref{eq:BPHZ}, substituting $\zeta\mapsto\rp\zeta$ such that
	\begin{equation*}
		\int_0^{\infty} \left[ \frac{f(\zeta/s)}{s} - \frac{f(\zeta/\rp)}{\rp} \right]\ \toyphy[\zeta](x)\ \dd\zeta
		= \int_0^{\infty} \left[ \frac{f(\zeta\tfrac{\rp}{s})}{\tfrac{s}{\rp}} - f(\zeta) \right]\ \toyphy[\rp\zeta](x)\ \dd\zeta.
	\end{equation*}
	Note that $\toyphy[\rp\zeta](f)$ is independent of $\rp$ by the induction hypothesis.
\end{proof}
Using \eqref{eq:BPHZ}, the physical limit of the renormalized Feynman rules can be obtained inductively by convergent integrations after performing the subtraction at $s=\rp$ on the level of the integrand. This method is known under the name \emph{BPHZ{\footnotemark} scheme}. In particular note that it does not need any regulator at all!
\footnotetext{named by Nikolay Nikolaevich Bogoliubov, Ostap Stepanovych Parasiuk, Klaus Hepp and Wolfhart Zimmermann}

As an important consequence, the physical limit of the toy model does not depend on the chosen regulator, as long as one employs the momentum scheme (which leads to the BPHZ method)! So should we instead of analytic regularization employ a cutoff regulator $\Lambda$
\begin{equation*}
	\phi^{(\Lambda)} \circ B_+ \defas \left[ s \mapsto \int_0^{\Lambda} \frac{1}{s + \zeta}\ \phi^{(\Lambda)}_{\zeta}\ \dd\zeta \right]
\end{equation*}
instead, the renormalized Feynman rules using the scheme $\momsch{\rp}$ again lead to \eqref{eq:BPHZ} and thus the same physical limit $\toyphy = \lim_{\Lambda \rightarrow \infty} \phi^{(\Lambda)}$!

\subsection{Feynman rules induced by cocycles}
\label{sec:toymodel-rules-by-cocycles}
Now we find ourselves in the familiar setup of section \ref{sec:more-feynman-rules}: the renormalized Feynman rules $\toyphy$ map $H_R$ into the Hopf algebra $\K[x]$ of polynomials, such that evaluation at $x=\ln\frac{s}{\rp}$ delivers the value $\toyphy[s]$. Thus it is natural to ask whether $\toyphy$ arises through the universal property of $H_R$. This is actually the case as shown in

\begin{satz}\label{satz:toymodel-universal}
	Defining the functional $\toyform \in \K[x]'$ and the cocycle $\toycc \in HZ^1_{\counit}(\K[x])$ by
	\begin{equation}
		\toycc \defas -\coeff{-1} \int_0 + \partial \toyform
		\quad\text{and}\quad
		\toyform \left( x^n \right) \defas n!\, (-1)^n \coeff{n}
		\quad\text{for any $n\in \N_0$,}
		\label{eq:cocycle-toymodel}
	\end{equation}
	where the $c_n$ are the coefficients from \eqref{eq:mellin-trafo}, we have
	\begin{equation}
		\toyphy = \unimor{\toycc}
		\label{eq:toymodel-universal}
	\end{equation}
	for the Hopf algebra morphism $\unimor{\toycc}$ from theorem \ref{satz:H_R-universal}.
\end{satz}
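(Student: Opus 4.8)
The plan is to recognize both $\toyphy$ and $\unimor{\toycc}$ as characters $H_R \to \K[x]$ (identifying the polynomial target $\K[\ln\tfrac{s}{\rp}]$ of proposition \ref{satz:finiteness} with $\K[x]$ via $x=\ln\tfrac{s}{\rp}$) and to pin $\toyphy$ down by the universal property. First I would check that $\toyphy$ is genuinely an algebra morphism: each $\toyR$ is a character by \eqref{eq:birkhoff-character} (the momentum scheme $\momsch{\rp}$ is a character of $\alg$), and a pointwise limit of characters is again a character, so $\toyphy(\1)=1$ and $\toyphy$ is multiplicative. Since theorem \ref{satz:H_R-universal} guarantees that $\unimor{\toycc}$ is the \emph{unique} unital algebra morphism satisfying $\unimor{\toycc}\circ B_+ = \toycc\circ\unimor{\toycc}$, it suffices to prove that $\toyphy$ obeys the same intertwining relation
\begin{equation*}
	\toyphy\circ B_+ = \toycc\circ\toyphy ,
\end{equation*}
for uniqueness then forces $\toyphy = \unimor{\toycc}$.

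To establish this relation I would feed the BPHZ formula \eqref{eq:BPHZ} into it. For a tree $t = B_+(x_0)$ with $x_0\in\forests$, equation \eqref{eq:BPHZ} writes $\toyphy[s](B_+(x_0))$ as the convergent integral of $\bigl[\tfrac{f(\zeta/s)}{s}-\tfrac{f(\zeta/\rp)}{\rp}\bigr]\,\toyphy[\zeta](x_0)$ against $\dd\zeta$. Substituting the polynomial $\toyphy(x_0)\in\K[x]$ and using linearity in the integrand, the whole claim reduces to the single analytic identity
\begin{equation*}
	\int_0^{\infty}\left[\frac{f(\zeta/s)}{s}-\frac{f(\zeta/\rp)}{\rp}\right]\left(\ln\tfrac{\zeta}{\rp}\right)^n\dd\zeta
	= \left(\toycc\, x^n\right)\big|_{x=\ln(s/\rp)}
	\qquad(n\in\N_0),
\end{equation*}
i.e.\ that the operator built from BPHZ subtraction acts on monomials in $\ln\tfrac{\zeta}{\rp}$ exactly as $\toycc$ acts on monomials in $x$.

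For the left-hand side I would reinstate a regulator. Since $\bigl(\ln\tfrac{\zeta}{\rp}\bigr)^n = (-1)^n \tfrac{\dd^n}{\dd\reg^n}\big|_{\reg=0}(\zeta/\rp)^{-\reg}$ and the bracket is $\bigo{\zeta^{-2}}$ (as in \eqref{eq:momsch-example}), dominated convergence lets me differentiate under the integral and reduce everything to
\begin{equation*}
	G(\reg)\defas\int_0^{\infty}\left[\frac{f(\zeta/s)}{s}-\frac{f(\zeta/\rp)}{\rp}\right](\zeta/\rp)^{-\reg}\dd\zeta
	= \left[(s/\rp)^{-\reg}-1\right]F(\reg),
\end{equation*}
where the substitutions $\zeta\mapsto s\zeta$ and $\zeta\mapsto\rp\zeta$ turn the two terms into $(s/\rp)^{-\reg}F(\reg)$ and $F(\reg)$ via the Mellin transform \eqref{eq:mellin-trafo}. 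The integral then equals $(-1)^n G^{(n)}(0) = (-1)^n n!$ times the coefficient of $\reg^n$ in $G$, and since the simple zero of $(s/\rp)^{-\reg}-1$ cancels the simple pole of $F$, the function $G$ is holomorphic at $\reg=0$ (re-deriving finiteness).

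The crux is the ensuing coefficient comparison, and it is precisely here that the definition of $\toyform$ becomes forced. Splitting $F(\reg)=\tfrac{\coeff{-1}}{\reg}+\sum_{b\geq 0}\coeff{b}\reg^b$ into principal and holomorphic parts and expanding $(s/\rp)^{-\reg}-1 = \sum_{a\geq 1}\tfrac{(-x)^a}{a!}\reg^a$ with $x=\ln(s/\rp)$, the principal part contributes coefficient $\coeff{-1}\tfrac{(-x)^{n+1}}{(n+1)!}$, which after the factor $(-1)^n n!$ is exactly $-\coeff{-1}\int_0 x^n = -\coeff{-1}\tfrac{x^{n+1}}{n+1}$; the holomorphic part contributes $\sum_{a=1}^{n}\tfrac{(-x)^a}{a!}\coeff{n-a}$, which after the same factor and the reindexing $k=n-a$ becomes $\sum_{k=0}^{n-1}\binom{n}{k}k!\,(-1)^k\coeff{k}\,x^{n-k} = (\partial\toyform)(x^n)$ with precisely $\toyform(x^k)=k!(-1)^k\coeff{k}$ of \eqref{eq:cocycle-toymodel}. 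Adding the two reproduces $\toycc\, x^n$, so the analytic identity holds and the theorem follows. I expect this final bookkeeping — matching the two expansions term by term and observing that the holomorphic part of the Mellin transform dictates the functional $\toyform$ — to be the only real work; the reduction steps are purely structural.
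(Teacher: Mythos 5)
Your proposal is correct and takes essentially the same route as the paper: the paper also reduces the claim via the BPHZ formula \eqref{eq:BPHZ} to the analytic identity on monomials (its equation $(\ast)$, stated at $\rp=1$), reinstates the regulator $\zeta^{-\reg}$, identifies the subtracted integral with $(s^{-\reg}-1)F(\reg)$, and performs the same coefficient bookkeeping — there via the Leibniz rule on the product $\{(s^{-\reg}-1)/\reg\}\cdot\{\reg F(\reg)\}$ rather than your principal-plus-holomorphic splitting of $F$. The only presentational difference is that the paper runs the induction on trees explicitly, whereas you verify the intertwining relation $\toyphy\circ B_+=\toycc\circ\toyphy$ on all forests and then invoke the uniqueness clause of theorem \ref{satz:H_R-universal}; these are equivalent, since that uniqueness is itself proved by the same induction.
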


\begin{proof}
	First investigate how logarithms contributing to subdivergences evolve for $\rp=1$:
	\begin{align*}
		&\lim_{\reg \rightarrow 0} (\id - \momsch{1}) \left[ s \mapsto \int_0^{\infty} f(\zeta) {\left( s\zeta \right)}^{-\reg} \ln^n \left( s\zeta \right)\;\dd\zeta \right]
		= {\left( -\frac{\partial}{\partial \reg} \right) }_{\reg=0}^n
	%	\restrict{\left( -\partial_z \right)^n}{0}
		(\id - \momsch{1}) \int_0^{\infty} f(\zeta) {\left( s\zeta \right)}^{-\reg}\; \dd\zeta \\
		&= {\left( -\frac{\partial}{\partial \reg} \right)}_{\reg=0}^n \left( s^{-\reg} - 1 \right) \int_0^{\infty} f(\zeta) \zeta^{-\reg}\; \dd\zeta
		 = {\left( -\frac{\partial}{\partial \reg} \right)}_{\reg=0}^n 
		 \left[
		 		\left\{ \frac{s^{-\reg}-1}{\reg} \right\} 
				\Big\{ \reg F(\reg) \Big\}
		 \right]\\
		&= {(-1)}^n \sum_{k=0}^n \binom{n}{k} 
		 		\left\{ {\left( \frac{\partial}{\partial \reg} \right)}_{\reg=0}^k \frac{s^{-\reg}-1}{\reg} \right\} 
				\cdot 
				\left\{ {\left( \frac{\partial}{\partial \reg} \right)}_{\reg=0}^{n-k} \big[ \reg F(\reg) \big] \right\} \\
		&= {(-1)}^n \sum_{k=0}^n \binom{n}{k} k! \frac{{\left( - \ln s \right)}^{k+1}}{(k+1)!} (n-k)!\, c_{n-k-1} 
		 = \ev_{\ln s} \left[ \sum_{k=0}^n \frac{n!\, x^{k+1}}{(k+1)!} {(-1)}^{n-k-1} c_{n-k-1}  \right] \\
%		&= - \frac{c_{-1}}{n+1} \ln^{n+1} s + \sum_{k=0}^{n-1} \frac{n!}{(k+1)!} (-1)^{n-k-1} c_{n-k-1} \ln^{k+1} s \\
%		&= - c_{-1} \int_0 \ln^n s + \sum_{l=1}^n \frac{n!}{l!} (-1)^{n-l} c_{n-l} \ln^l s
		&= \ev_{\ln s} \left[-c_{-1} \frac{x^{n+1}}{n+1} + \sum_{i=1}^n \binom{n}{i} x^i {(-1)}^{n-i} c_{n-i} (n-i)! \right]
%		= -c_{-1} \int_0 \ln^n s + \partial \toyform \left( \ln^n s \right).
		 = \ev_{\ln s} \circ \toycc \left( x^n \right).\tag{$\ast$}
	\end{align*}
	Here we expanded the holomorphic functions
	\begin{equation*}
		\frac{s^{-\reg}-1}{\reg} = \sum_{n=0}^{\infty} \frac{ {(-\ln s)}^{n+1} }{(n+1)!} \reg^n
		\quad \text{and} \quad
		\reg F(\reg) = \sum_{n=0}^{\infty} c_{n-1} \reg^n
	\end{equation*}
	and exploited the renormalization scheme $\momsch{\rp}$ to only evaluate $s$ and not to act on $\reg$, in particular it commutes with $\frac{\partial}{\partial \reg}$.

	By linearity the above holds also if we replace $\ln^n (s\zeta)$ in the integrand by any polynomial in $\K[\ln (s\zeta)]$! This allows us to prove \eqref{eq:toymodel-universal} inductively on trees as usual (both sides of \eqref{eq:toymodel-universal} are algebra morphisms): let it hold for a forest $x \in \forests$, then observe
	\settowidth{\wurelwidth}{\eqref{eq:BPHZ}}
	\begin{align*}
		\toyphy[s] \circ B_+ (x)
%		&= \lim_{\reg\rightarrow 0} (\id - \momsch{1}) \left[ s \mapsto \int_0^{\infty} \frac{f\left( \zeta / s \right)}{s} \zeta^{-\reg} \ \toyR[\zeta] (f)\ \dd\zeta \right] \\
%		&= \lim_{\reg \rightarrow 0} \int_0^{\infty} \left[ \frac{f\left( \zeta / s \right)}{s} - \frac{f(\zeta)}{1} \right] \zeta^{-\reg} \ \toyR[\zeta] (f)\ \dd\zeta \\
		&\wurel{\eqref{eq:BPHZ}} \int_0^{\infty} \left[ \frac{f\left( \zeta / s \right)}{s} - \frac{f(\zeta)}{1} \right]  \ \toyphy[\zeta] (x)\ \dd\zeta 
		= \lim_{\reg \rightarrow 0} \int_0^{\infty} \left[ \frac{f\left( \zeta / s \right)}{s} - \frac{f(\zeta)}{1} \right] {\zeta}^{-\reg}  \ \toyphy[\zeta] (x)\ \dd\zeta \\
		&\wurel{} \lim_{\reg \rightarrow 0} (\id - \momsch{1}) \left[ s \mapsto \int_0^{\infty} f(\zeta) {\left( s\zeta \right)}^{-\reg}\ \toyphy[s\zeta] (x)\ \dd\zeta \right] \\
		&\wurel{} \ev_s \circ \toycc \left[ \toyphy (x) \right]
		 = \ev_s \circ \toycc \circ \unimor{\toycc} (x)
		 = \ev_s \circ \unimor{\toycc} \circ B_+ (x).
	\end{align*}
	We exploited the convergence of \eqref{eq:BPHZ} and reintroduced $\zeta^{-\reg}$ into the integrand in order to apply $(\ast)$.
\end{proof}

The above result translates the task of renormalization of the toy model into a very simple combinatoric recursion, without any need for series expansions. To illustrate the benefit we rederive the earlier examples \eqref{toyR-physical:()} to \eqref{toyR-physical:(()())}, now using theorem \ref{satz:toymodel-universal}:
\begin{align*}
	\toyphy \left( \tree{+-} \right)
	&= \unimor{\toycc} \left( \tree{+-} \right)
	 = \unimor{\toycc} \circ B_+ (\1)
	 = \left[ -\coeff{-1} \int_0 + \partial \toyform \right] (1)
	 = - \coeff{-1} \int_0 1
	 = - \coeff{-1}\, x \\
	\toyphy \left( \tree{++--} \right)
	&= \unimor{\toycc} \circ B_+ \left( \tree{+-} \right)
	 = \toycc \circ \unimor{\toycc} \left( \tree{+-} \right)
	 = \left[ -\coeff{-1}\int_0 + \partial\toyform \right] \left( -\coeff{-1} x \right)
	 = \coeff[2]{-1} \frac{x^2}{2} - \coeff{-1} \coeff{0}\, x \\
	\toyphy \left( \tree{+++---} \right)
	&= \left[ -\coeff{-1}\int_0 + \partial\toyform \right] \left( \coeff[2]{-1} \frac{x^2}{2} - \coeff{-1} \coeff{0}\, x \right)
	= - \coeff[3]{-1} \frac{x^3}{6} + \coeff[2]{-1} \coeff{0} \frac{x^2}{2}+ \coeff[2]{-1} \toyform(x)\,x \\
	 &\quad + \toyform(1) \left( \coeff[2]{-1} \frac{x^2}{2} - \coeff{-1} \coeff{0}\, x \right) 
	 = - \coeff[3]{-1} \frac{x^3}{6} + \coeff[2]{-1} \coeff{0}\, x^2 - \left( \coeff{-1} \coeff[2]{0} + \coeff[2]{-1} \coeff{1} \right) x \\
	\toyphy \left( \tree{++-+--} \right)
	&= \unimor{\toycc} \circ B_+ \left( \tree{+-}\tree{+-} \right)
	 = \toycc \circ \unimor{\toycc} \left( \tree{+-}\tree{+-} \right) 
	 = \left[ -\coeff{-1}\int_0 + \partial\toyform \right] \left\{ {\left( -\coeff{-1}\,x \right) }^2 \right\} \\
	&= -\coeff[3]{-1} \frac{x^3}{3} + \coeff[2]{-1} \left[ \toyform(1)\,x^2 + 2\toyform(x)\,x \right]
	 = -\coeff[3]{-1} \frac{x^3}{3} + \coeff[2]{-1} \coeff{0}\,x^2 - 2\coeff[2]{-1}\coeff{1}\,x.
\end{align*}
As $\toycc$ is a cocycle, by theorem \ref{satz:H_R-universal} we note the
\begin{korollar}\label{satz:toyphy-hopfmor}
	The physical limit $\toyphy\!: H_R \rightarrow \K[x]$ of the renormalized Feynman rules \eqref{eq:toymodel} of the toy model is a morphism of Hopf algebras.
\end{korollar}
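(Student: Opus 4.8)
The plan is to read the corollary off directly from the two universal-property theorems already in hand, so that essentially no new work is required. First I would invoke theorem \ref{satz:toymodel-universal}, which factorizes the physical limit as $\toyphy = \unimor{\toycc}$ with $\toycc \defas -\coeff{-1}\int_0 + \partial\toyform$. The key structural feature recorded there is that $\toycc$ is not merely an endomorphism of $\K[x]$ but a genuine Hochschild 1-cocycle, $\toycc \in HZ^1_{\counit}(\K[x])$: this is immediate because $HZ^1_{\counit}(\K[x]) = \ker\partial$ is a linear subspace containing both the cocycle $\int_0$ from \eqref{eq:int-cocycle} and every coboundary $\partial\toyform$ (since $\partial\circ\partial = 0$), so the combination in \eqref{eq:cocycle-toymodel} lies in $HZ^1_{\counit}(\K[x])$.

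Next I would note that the target algebra $\K[x] = S(V)$ with $\dim V = 1$ is a connected, commutative and cocommutative Hopf algebra; in particular it is connected, hence by corollary \ref{satz:zsh-bialg-hopf} it admits an antipode. With these two facts assembled, the statement is an immediate instance of the Hopf-algebra case of theorem \ref{satz:H_R-universal}: that theorem guarantees that whenever $\alg$ is a commutative bialgebra and $L \in HZ^1_{\counit}(\alg)$ is a 1-cocycle, the induced algebra morphism $\unimor{L}$ is automatically a morphism of coalgebras and thus of bialgebras, and moreover a morphism of Hopf algebras as soon as $\alg$ possesses an antipode. Taking $\alg \defas \K[x]$ and $L \defas \toycc$ then yields at once that $\toyphy = \unimor{\toycc}$ is a morphism of Hopf algebras.

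The honest assessment is that there is no real obstacle remaining at this stage. All of the analytic and combinatorial difficulty—showing that the physical limit is not just some algebra morphism induced by a generic $L \in \End(\K[x])$, but in fact arises from a \emph{cocycle}—was already discharged in theorem \ref{satz:toymodel-universal}; it is precisely that identification which upgrades $\toyphy$ from a bare algebra morphism to a full Hopf-algebra morphism, and the present corollary merely packages the observation. The only routine point worth flagging explicitly is the existence of the antipode on $\K[x]$, which is secured by its connectedness, so that the Hopf-algebra clause of theorem \ref{satz:H_R-universal} genuinely applies rather than only its bialgebra clause.
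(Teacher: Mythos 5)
Your proposal is correct and coincides with the paper's own argument: the text preceding the corollary reads exactly ``As $\toycc$ is a cocycle, by theorem \ref{satz:H_R-universal} we note the [corollary]'', i.e.\ theorem \ref{satz:toymodel-universal} supplies $\toyphy = \unimor{\toycc}$ with $\toycc \in HZ^1_{\counit}(\K[x])$, and the Hopf-algebra clause of theorem \ref{satz:H_R-universal} finishes the job. Your two added remarks (that $\toycc$ is a cocycle because $HZ^1_{\counit}$ contains $\int_0$ and all coboundaries, and that $\K[x]$ has an antipode by connectedness) are exactly the routine verifications the paper leaves implicit.
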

This result implies a range of important consequences -- in particular the renormalization group \eqref{eq:rge-physical} -- which we will briefly discuss in the upcoming sections. For now let us remark:
\begin{enumerate}
%	\item As $\toycc$ is a cocycle, $\toyphy$ is actually a morphism of Hopf algebras and therefore it fulfils \eqref{eq:int-rules-rge}:
%		\begin{equation*}
%			(\ev_a \circ \toyphy) \convolution (\ev_b \circ \toyphy) = \ev_{a+b} \circ \toyphy.
%		\end{equation*}
	\item Up to the lower order modifications $\partial \toyform$, $\toycc$ is just $-\coeff{-1}\int_0$. Hence the highest order term of $\toyphy[s]$ -- the \emph{leading $\log$} -- is the same as for $\unimor{\left[-\coeff{-1}\int_0\right]}$:
		\begin{equation}
			\forall f \in \mathcal{F}\!: \quad
			\toyphy[s] (f) \in \frac{ {\left( - \coeff{-1} \ln \tfrac{s}{\rp} \right)}^{\abs{f}}}{f!} + \bigo{\ln^{\abs{f}-1} \tfrac{s}{\rp}}.
			\label{eq:toymodel-leading-log}
		\end{equation}
		Note how this leading $\log$ corresponds to the leading divergence \eqref{eq:toymodel-leading-pole}!
	\item By theorem \ref{satz:change-coboundary-equals-auto}, the deviation of $\toyphy$ from $\unimor{-\coeff{-1}\int_0}$ is given by an automorphism of $H_R$ that adds only lower order corrections:
		\begin{equation*}
			\toyphy 
			= \unimor{\left[-\coeff{-1}\int_0\right]} \circ \auto{\left[\toyform\, \circ\, \unimor{\left(-\coeff{-1}\int_0 \right)}\right]}.
		\end{equation*}
	\item Consider a change of $f$ and thus its Mellin transform $F$ to different functions $f'$ and $F'$, keeping $c_{-1}$ fixed but probably altering the other coefficients $\coeff{n}$ of $F$. Then again the difference in the resulting Feynman rules is captured by an automorphism of $H_R$ as
		\begin{equation*}
			{\toyphy}' 
			= \unimor{\toycc'}
			= \unimor{\toycc + \partial (\delta \toyform)}
			= \unimor{\toycc} \circ \auto{\left[ \delta\toyform\, \circ\, \unimor{\toycc} \right]}
			= \toyphy \circ \auto{\left[ \delta\toyform\, \circ\, \toyphy \right]},
		\end{equation*}
		where $\delta\toyform \defas \toyform' - \toyform$ denotes the change in the $\coeff{n}$ for $n \in \N_0$. In other words, altering $F$ corresponds to addition of a coboundary!
\end{enumerate}

\section{The structure of higher orders}
\label{sec:higher-orders}
In \eqref{eq:int-rules-rge} we discovered a one-parameter subgroup
\begin{equation*}
	\K \ni a \mapsto \intrules_a \in \chars{H_R}{\K},\quad
	\intrules_a \convolution \intrules_b = \intrules_{a+b}
\end{equation*}
of the convolution group of characters of $H_R$, which imposes strong combinatorial constraints on $\intrules$. The \emph{(infinitesimal) generator} of this subgroup is determined by the equation \mbox{$\intrules_a = \exp_{\convolution} (a \log_{\convolution} \intrules_1) \urel{\eqref{eq:conv-power}} \intrules_1^{\convolution a}$} and evaluates with \eqref{eq:exp-diff} to
\begin{equation}
	\log_{\convolution} \intrules_1
	= \restrict{\frac{\partial}{\partial a}}{0} \intrules_a
	= \restrict{\frac{\partial}{\partial a}}{0} \ev_a \circ \intrules
	= \partial_0 \circ \intrules
	= Z_{\tree{+-}},
	\label{eq:intrules-generator}
\end{equation} 
such that $\intrules_a = \intrules_1^{\convolution a} = \exp_{\convolution} (a Z_{\tree{+-}})$. Here we introduced the map
\begin{equation}
	\partial_0 \defas \restrict{\frac{\partial}{\partial x}}{0}\!: \quad
	\K[x] \rightarrow \K, \quad
	x^n \mapsto \delta_{1,n} =
		\begin{cases}
			1 & \text{if $n=1$,} \\
			0 & \text{else},
		\end{cases}
	\label{eq:diff0-poly}
\end{equation}
extracting the coefficient of $x=x^1$ in a polynomial as well as the functional
\begin{equation}
	Z_{\tree{+-}} \in H_R' \quad\text{by}\quad
	Z_{\tree{+-}} (f) = \delta_{f,\tree{+-}}
\end{equation}
for any forest $f\in\forests$, using Kronecker's $\delta$ again. The last equality in \eqref{eq:intrules-generator} is immediate as $\intrules$ only produces a contribution proportional to $x$ when applied to a forest of a single node. Observation \eqref{eq:intrules-generator} generalizes to
\begin{proposition}
	Let $H$ be any connected bialgebra and $\phi\!: H \rightarrow \K[x]$ a morphism of bialgebras.\footnote{It is an easy exercise to prove inductively, using \eqref{eq:antipode-recursive}, that this already implies $\phi$ to be a morphism of Hopf algebras.} Then $\log_\convolution \phi$ is simply given by the term proportional to $x$:
	\begin{equation}
		\log_{\convolution} \phi = x \cdot \partial_0 \circ \phi.
	\end{equation}
\end{proposition}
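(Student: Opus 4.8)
The plan is to push the entire computation from the target $\K[x]$ down to scalars by evaluating at points, reducing everything to the one-parameter subgroups already understood in \eqref{eq:poly-character-group}. For each $a \in \K$ take the evaluation character $\ev_a \in \chars{\K[x]}{\K}$ and form the composite $\phi_a \defas \ev_a \circ \phi \colon H \to \K$. Since $\phi$ is in particular a morphism of coalgebras, lemma \ref{satz:convolution-rge} shows that precomposition with $\phi$ is a homomorphism of convolution algebras, so from $\ev_a \convolution \ev_b = \ev_{a+b}$ in \eqref{eq:poly-character-group} I obtain $\phi_a \convolution \phi_b = \phi_{a+b}$. Together with $\phi_a(\1_H) = \ev_a(1) = 1$ this exhibits $a \mapsto \phi_a$ as a one-parameter subgroup of $\convgroup{H}{\K}$.

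First I would observe that postcomposition with the \emph{algebra} morphism $\ev_a$ is itself a homomorphism $\convalg{H}{\K[x]} \to \convalg{H}{\K}$ of convolution algebras: this is the one-line check $\ev_a \circ \mul_{\K[x]} = \mul_{\K} \circ (\ev_a \tp \ev_a)$ coming from the multiplicativity of $\ev_a$, dual to lemma \ref{satz:convolution-rge}. It sends the unit $e$ to the unit $e$, because $\ev_a \circ \unit_{\K[x]} = \id_{\K}$. As $\log_{\convolution}$ from \eqref{eq:log-conv} is a pointwise-finite sum of convolution powers of $\phi - e$, this homomorphism commutes with it term by term, yielding the key identity $\ev_a \circ \log_{\convolution} \phi = \log_{\convolution} \phi_a$ for every $a \in \K$.

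Next I would compute the right-hand side. The generator of the subgroup is $\restrict{\frac{\partial}{\partial a}}{0} \phi_a = \partial_0 \circ \phi$, with $\partial_0$ from \eqref{eq:diff0-poly}, since differentiating $\ev_a$ at $a=0$ extracts the coefficient of $x$. Differentiating the subgroup relation $\phi_{a+b} = \phi_a \convolution \phi_b$ in $b$ at $b=0$ shows that $\phi_a$ solves the same pointwise initial value problem \eqref{eq:exp-diff} as $\exp_{\convolution}(a\,(\partial_0 \circ \phi))$ (with value $e$ at $a=0$); since the values are polynomials in $a$, uniqueness forces $\phi_a = \exp_{\convolution}(a\,(\partial_0 \circ \phi))$, hence $\log_{\convolution} \phi_a = a\,(\partial_0 \circ \phi)$ by \eqref{eq:conv-power}. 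Combining with the previous paragraph gives, for all $a \in \K$,
\[
  \ev_a \circ \log_{\convolution} \phi = a\,(\partial_0 \circ \phi) = \ev_a \circ ( x \cdot \partial_0 \circ \phi ).
\]

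Finally, fixing $h \in H$, both $(\log_{\convolution} \phi)(h)$ and $(\partial_0 \phi(h))\,x$ are elements of $\K[x]$ that agree under every evaluation $\ev_a$; since $\K$ has characteristic zero and is therefore infinite, a polynomial with infinitely many roots vanishes, so the two polynomials coincide. As $h$ was arbitrary this establishes $\log_{\convolution} \phi = x \cdot \partial_0 \circ \phi$. I expect the only genuinely delicate step to be the third one --- identifying the one-parameter subgroup with the exponential of its generator, i.e. invoking uniqueness for the pointwise differential equation \eqref{eq:exp-diff}; the commutation of postcomposition with $\log_{\convolution}$ and the concluding polynomial-identity argument are routine and mirror the bookkeeping already used in theorem \ref{satz:convolution-group} and lemma \ref{satz:convolution-rge}.
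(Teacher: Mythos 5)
Your proof is correct and follows essentially the same route as the paper's: both exhibit the one-parameter subgroup $a \mapsto \ev_a \circ \phi$ via lemma \ref{satz:convolution-rge} and \eqref{eq:poly-character-group}, commute $\ev_a$ past $\log_{\convolution}$ using the character property of $\ev_a$ (the paper's step $(\ast)$), identify its generator with $\partial_0 \circ \phi$ via \eqref{eq:exp-diff}, and conclude by evaluating at all $a \in \K$. Your explicit ODE-uniqueness argument and the final polynomial-identity step merely spell out details the paper leaves implicit in its reference to \eqref{eq:intrules-generator}.
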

\begin{proof}
	As $\phi$ is a morphism of coalgebras, by proposition \ref{satz:convolution-rge} and \eqref{eq:poly-characters} we obtain a one-parameter group $\K\ni a \mapsto \ev_a \circ \phi \in \chars{H}{\K}$ in
	\begin{equation*}
		(\ev_a \circ \phi) \convolution (\ev_b \circ \phi)
		= \ev_{a+b} \circ \phi.
	\end{equation*}
	Hence as in \eqref{eq:intrules-generator} we immediately conclude\footnote{%
	More generally note that $(\log_{\convolution} \psi) \circ \phi = \log_{\convolution} (\psi \circ \phi) = \psi \circ \log_{\convolution} \phi$ for any algebra morphism $\psi$ and a coalgebra morphism $\phi$ with $\phi(\1) = \1$. We in fact showed $\log_{\convolution} \ev_a = a \log_{\convolution} \ev_1 = a \partial_0$ in \eqref{eq:intrules-generator}.}
	\begin{equation*}
		a \partial_0 \circ \phi
		= a \log_{\convolution} \left( \ev_1 \circ \phi \right)
		= \log_{\convolution} \left( \ev_a \circ \phi \right)
		\urel{($\ast$)} \ev_a \circ \log_{\convolution} \phi,
	\end{equation*}
	where $(\ast)$ is a consequence of the character property of $\ev_a$ and \eqref{eq:log-conv} by
	\begin{align*}
		{(\ev_a \circ \phi - e)}^{\convolution n}
		&= m_{\K}^{n-1} \circ \ev_a^{\tp n} \circ (\phi-e)^{\tp n} \circ \Delta_H^{n-1} \\
		&= \ev_a \circ m_{\K[x]}^{n-1} \circ (\phi-e)^{\tp n} \circ \Delta_H^{n-1}
		= \ev_a \circ (\phi-e)^{\convolution n}.
		\qedhere
	\end{align*}
\end{proof}
In particular, such bialgebra morphisms are completely determined by the functional in $H'$ that extracts the coefficient of $x$! In the above example, \mbox{$\log_{\convolution} \intrules = x \cdot Z_{\tree{+-}}$} gives
\begin{equation}\label{eq:int-rules-exp}
	\intrules
	= \exp_{\convolution} \left( x \cdot Z_{\tree{+-}} \right)
	= \sum_{n=0}^{\infty} \frac{x^n}{n!} Z_{\tree{+-}}^{\convolution n}.
\end{equation}
This entails a direct proof of a combinatoric relation\footnote{This is equation $(124)$ of \cite{Kreimer:ChenII}, where leaves are called \emph{feet}.} among tree factorials in
\begin{korollar}
	For any forest $f\in\forests$ let $\leaves(f)\subseteq V(f)$ denote the set of \emph{leaves} of $f$, being those nodes without children. Then we have the identity
	\begin{equation}\label{eq:tree-factorial-feet}
		\frac{\abs{f}}{f!} = \sum_{v\in\leaves(f)} \frac{1}{\left[R^{\set{v}}(f)\right]!}.
	\end{equation}
\end{korollar}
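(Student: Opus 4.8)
The plan is to read the identity directly off the exponential formula \eqref{eq:int-rules-exp}, namely $\intrules = \exp_{\convolution}\left( x \cdot Z_{\tree{+-}} \right)$, by computing the derivative $\frac{\partial}{\partial x}\intrules$ in two ways and comparing. One computation uses the explicit closed form of the Feynman rules and produces the left-hand side of \eqref{eq:tree-factorial-feet}; the other uses the convolution structure of the generator $Z_{\tree{+-}}$ and produces the sum over leaves.

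First I would treat the left-hand side. Since $\intrules(f) = x^{\abs{f}}/f!$ by \eqref{eq:int-rules}, term-by-term differentiation gives $\frac{\partial}{\partial x}\intrules(f) = \abs{f}\,x^{\abs{f}-1}/f!$, so the factor $\abs{f}/f!$ appears immediately, carried by the monomial $x^{\abs{f}-1}$. Next I would compute the same derivative via the group structure: differentiating the series \eqref{eq:int-rules-exp} termwise (equivalently, applying \eqref{eq:exp-diff} with parameter $x$ and generator $Z_{\tree{+-}}$, which lies in the convolution Lie algebra of $\convalg{H_R}{\K[x]}$ because $Z_{\tree{+-}}(\1)=0$) yields $\frac{\partial}{\partial x}\intrules = Z_{\tree{+-}} \convolution \intrules$.

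Then I would expand this convolution using the cut formula \eqref{eq:coproduct-cuts}, obtaining $\left[ Z_{\tree{+-}} \convolution \intrules \right](f) = \sum_{W \in \indep(f)} Z_{\tree{+-}}\!\left( P^W(f) \right)\, \intrules\!\left( R^W(f) \right)$. The key combinatorial observation is that $Z_{\tree{+-}}\!\left( P^W(f) \right) = \delta_{P^W(f),\,\tree{+-}}$ is nonzero precisely when the pruned forest is a single node; since $P^W(f)$ is the union of the subtrees rooted at the nodes of $W$, this forces $W = \set{v}$ with the subtree $f_v$ a single node, i.e. $v \in \leaves(f)$. Each such leaf contributes $\intrules\!\left( R^{\set{v}}(f) \right) = x^{\abs{f}-1}/\left[ R^{\set{v}}(f) \right]!$, so $\frac{\partial}{\partial x}\intrules(f) = x^{\abs{f}-1}\sum_{v \in \leaves(f)} 1/\left[ R^{\set{v}}(f) \right]!$. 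Equating the two expressions for $\frac{\partial}{\partial x}\intrules(f)$ and cancelling the common factor $x^{\abs{f}-1}$ delivers \eqref{eq:tree-factorial-feet}.

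The step I expect to be the main obstacle is the combinatorial bookkeeping of that observation: one must verify carefully that requiring the \emph{left} tensor slot $P^W(f)$ to be a single node is exactly the operation of cutting off one leaf, that the empty cut and multi-element cuts are correctly excluded, and that the complementary forest $R^{\set{v}}(f)$ is indeed the $\left( \abs{f}-1 \right)$-node forest named in the claim. Everything else is a formal manipulation of \eqref{eq:int-rules-exp}, \eqref{eq:int-rules} and \eqref{eq:exp-diff}, with no further estimates required.
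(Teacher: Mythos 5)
Your proof is correct and is essentially the paper's own argument in a slightly different packaging: the identity $\frac{\partial}{\partial x}\intrules = Z_{\tree{+-}} \convolution \intrules$ is exactly the paper's step of peeling one factor of $Z_{\tree{+-}}$ off the convolution power, i.e.\ $\frac{\abs{f}}{\abs{f}!}Z_{\tree{+-}}^{\convolution\abs{f}}(f) = \frac{1}{(\abs{f}-1)!}\bigl(Z_{\tree{+-}}\convolution Z_{\tree{+-}}^{\convolution\abs{f}-1}\bigr)(f)$, written at the level of polynomials rather than coefficients. From there both proofs coincide: the cut formula \eqref{eq:coproduct-cuts} together with $Z_{\tree{+-}}\bigl(P^W(f)\bigr)=\delta_{P^W(f),\tree{+-}}$ restricts the sum to single-leaf cuts, and the remaining factor is recognized as $1/\bigl[R^{\set{v}}(f)\bigr]!$.
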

\begin{proof}
	By \eqref{eq:int-rules} and \eqref{eq:int-rules-exp} we note $\frac{1}{f!} = \frac{1}{\abs{f}!} Z_{\tree{+-}}^{\convolution \abs{f}}(f)$, hence
	\begin{equation*}
		\frac{\abs{f}}{f!}
		= \frac{1}{\left( \abs{f} -1 \right)!} \sum_{f} Z_{\tree{+-}}(f_1)Z_{\tree{+-}}^{\convolution \abs{f}-1}(f_2)
		= \sum_{\substack{f\\ f_1=\tree{+-}}} \frac{1}{\abs{f_2}!}Z_{\tree{+-}}^{\convolution \abs{f_2}} (f_2)
		\urel{\eqref{eq:coproduct-cuts}} \sum_{v\in\leaves(f)} \frac{1}{\left[ R^{v}(f) \right]!}. \qedhere
	\end{equation*}
\end{proof}
These considerations may seem trivial for $\intrules$, but they also apply to the physical limit $\toyphy$ of the toy model through corollary \ref{satz:toyphy-hopfmor}! We introduce the functional
\begin{equation}
	H_R' \ni
	\toylog \defas \log_{\convolution} \ev_1\, \circ\, \toyphy
	= \restrict{\frac{\partial}{\partial a}}{0} \ev_a\:\circ\:\toyphy
	= \partial_0\: \circ\: \toyphy
	\label{eq:beta-toymodel}
\end{equation}
and obtain $\toyphy = \exp_{\convolution} (x\cdot \toylog)$. Note{\footnotemark} that $\toylog$ is an \emph{infinitesimal character}, that is to say \mbox{$\toylog \circ m = \toylog \tp \counit + \counit\tp\toylog$} wherefore $\toylog$ vanishes on any forest that is not a tree. From \eqref{toyR-physical:()} to \eqref{toyR-physical:(()())} we read off
\footnotetext{A simple proof can be found as proposition II.4.2 in \cite{Manchon}.}%
\begin{align*}
	\toylog \left( \tree{+-} \right)
	&= -\coeff{-1}
	&
	\toylog \left( \tree{++--} \right)
	&= -\coeff{-1}\coeff{0}
	&
	\toylog \left( \tree{+++---} \right)
	&= -\coeff{-1}\coeff[2]{0} - \coeff[2]{-1} \coeff{1}
	&
	\toylog \left( \tree{++-+--} \right)
	&= - 2\coeff[2]{-1}\coeff{1}
\end{align*}
and show how $\toylog$ determines the higher powers of $x$ in two examples:
\begin{align*}
	\toyphy \left( \tree{++--} \right)
	&= \exp_{\convolution} (x\cdot\toylog) \left( \tree{++--} \right)
	 \urel{\eqref{eq:exp-conv}} \left[ e + x\toylog + x^2\frac{\toylog\convolution\toylog}{2} \right] \left( \tree{++--} \right) \\
	&= 0 + x\toylog\left( \tree{++--} \right) + x^2 \frac{\toylog^2\left( \tree{+-} \right)}{2} 
	 = - \coeff{-1}\coeff{0}\,x + \coeff[2]{-1} \frac{x^2}{2}, \\
	\toyphy \left( \tree{++-+--} \right)
	&= \exp_{\convolution} (x\cdot\toylog) \left( \tree{++-+--} \right)
	 \urel{\eqref{eq:exp-conv}} \left[ e + x\toylog + x^2\frac{\toylog \convolution \toylog}{2} + x^3\frac{\toylog \convolution \toylog \convolution \toylog}{6} \right] \left( \tree{++-+--} \right) \\
	&= 0 + x \toylog \left( \tree{++-+--} \right) 
		+ x^2\frac{\toylog \tp \toylog}{2} \left( 2 \tree{+-} \tp \tree{++--} + \tree{+-}\tree{+-} \tp \tree{+-} \right)
		+ x^3\frac{\toylog \tp \toylog \tp \toylog}{6} \left( 2\tree{+-} \tp \tree{+-} \tp \tree{+-} \right) \\
		&= \toylog^3\left(\tree{+-}\right) \frac{x^3}{3} + x^2 \toylog\left(\tree{+-}\right) \toylog\left(\tree{++--}\right)  - 2\coeff[2]{-1}\coeff{1}\,x
	 = -\coeff[3]{-1} \frac{x^3}{3} + \coeff[2]{-1} \coeff{0}\,x^2 - 2\coeff[2]{-1}\coeff{1}\,x.
\end{align*}
A very similar phenomenon happens with the counterterms in a minimal subtraction scheme: It turns out that the single poles $\propto \reg^{-1}$ determine the full counterterm already. This is the content of the \emph{scattering formula} proved in \cite{CK:RH2}.

Our case is much simpler as the higher power contributions to $\toyphy$ are explicitly given by merely taking convolution powers of $\toylog$, that is
\begin{equation}\label{eq:toymodel-higher-orders}
	\toyphy = \sum_{n=1}^{\infty} x^n \toylog_n
	\quad\text{for the functionals}\quad
	\toylog_n = \frac{\toylog^{\convolution n}}{n!} \in H_R'.
\end{equation}
Note how in the above example, the fragment $\tree{+-}\tree{+-} \tp \tree{+-}$ of $\Delta \left( \tree{++-+--} \right)$ does not contribute to the quadratic terms $\frac{x^2}{2} \toylog \convolution \toylog$, as $\toylog$ vanishes on proper products. This is an important result we shall exploit in \eqref{eq:rge}, leading to the \emph{renormalization group equation} \eqref{eq:rge-physical}!

\section{Correlation functions and Dyson-Schwinger equations}
\label{sec:DSE}
So far we considered the renormalized Feynman rules as a whole. Now {\qft} teaches that instead of individual contributions (of single trees or more generally graphs), the quantities that bear physical meaning are the \emph{correlation functions}. These are determined as asymptotic expansions through the formal power series in the coupling constant by summation of the renormalized contributions of all trees (Feynman graphs in \qft), like in \eqref{eq:correlation}.

This concept fits beautifully into the Hopf algebraic setting by the aid of fixed point equations in Hochschild cohomology. For details about these \emph{combinatorial Dyson-Schwinger equations} in general we refer to \cite{BergbauerKreimer}. Here we will only discuss the special case of our toy model, where the Dyson-Schwinger equation reads\footnote{At the end of section \ref{sec:iterated-insertions} we show why \eqref{eq:DSE-propagator} suits the toy model.}
\begin{equation}
	X(\alpha) = \alpha B_+ \left( \frac{\1}{\1 - X(\alpha)} \right) \defas \alpha B_+ \left( \sum_{n \in \N_0} {\left[ X(\alpha) \right]}^n \right)
	\label{eq:DSE-propagator}
\end{equation}
for a formal power series $X(\alpha) \in H_R [[\alpha]]$ in a parameter $\alpha$. More concretely, $X(\alpha) = \sum_{n\in\N} a_n \alpha^n$ for homogeneous $a_n \in H_{R,n}$ determined in
\begin{proposition}\label{satz:DSE-solution}
	The unique solution of \eqref{eq:DSE-propagator} is given by
	\begin{equation}
		\forall n\in\N\!: \quad a_n = \sum_{t\in\trees_n} \dsecoeff(t) \cdot t.
		\label{eq:DSE-solution}
	\end{equation}
	Here, $a_n$ sums all trees of $n$ nodes and weights them with a factor $\dsecoeff(t)$ that counts the number of ordered rooted trees that yield $t$ upon forgetting the ordering. It fulfils the recursion
	\begin{equation*}
		\dsecoeff(t) = \binom{n_1 + \ldots + n_r}{n_1\ \cdots\ n_r} \dsecoeff^{n_1}(t_1) \ldots \dsecoeff^{n_r}(t_r),
	\end{equation*}
	if one writes $t=B_+\left( t_1^{n_1} \ldots t_r^{n_r} \right)$ such that $t_i \neq t_j$ whenever $i \neq j$.
\end{proposition}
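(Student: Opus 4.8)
The equation \eqref{eq:DSE-propagator} is a fixed point equation that I would solve by exploiting the grading of $H_R$ by node number. First I note that any solution $X(\alpha)$ must have vanishing constant term in $\alpha$, since the right-hand side carries an explicit factor $\alpha$; hence $\frac{\1}{\1-X(\alpha)} = \sum_{k\geq 0} X(\alpha)^k$ is a well-defined element of $H_R[[\alpha]]$ (each $\alpha$-coefficient being a finite sum). Writing $X(\alpha) = \sum_{n\geq 1} a_n\alpha^n$ and $\frac{\1}{\1-X(\alpha)} = \sum_{m\geq 0} b_m\alpha^m$, expansion of the geometric series gives $b_0 = \1$ and, for $m\geq 1$,
\begin{equation*}
	b_m = \sum_{k\geq 1}\ \sum_{\substack{i_1+\cdots+i_k=m\\ i_j\geq 1}} a_{i_1}\cdots a_{i_k}.
\end{equation*}
Comparing $\alpha$-coefficients in \eqref{eq:DSE-propagator} yields the recursion $a_n = B_+(b_{n-1})$, which determines $a_n$ uniquely in terms of $a_1,\ldots,a_{n-1}$ (with $a_1 = B_+(\1)$). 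Since $b_{n-1}$ is homogeneous of degree $n-1$ — each summand $a_{i_1}\cdots a_{i_k}$ lands in $H_{R,i_1+\cdots+i_k} = H_{R,n-1}$ — and $B_+$ raises the degree by one, we obtain $a_n \in H_{R,n}$. This settles existence and uniqueness.

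Next I establish the closed form by induction on $n$, taking as working definition of $\dsecoeff$ the multinomial recursion of the proposition together with the base value $\dsecoeff(\tree{+-}) = 1$. Assume $a_i = \sum_{t\in\trees_i}\dsecoeff(t)\,t$ for all $i<n$. Substituting into $b_{n-1}$, each product $a_{i_1}\cdots a_{i_k}$ expands as a sum over sequences of trees $(s_1,\ldots,s_k)$ with $s_j\in\trees_{i_j}$, weighted by $\prod_j\dsecoeff(s_j)$ and producing the forest $s_1\cdots s_k$. Summing over all $k$ and all compositions of $n-1$, the double sum becomes a single sum over all finite sequences $(s_1,\ldots,s_k)$ of trees with $\sum_j\abs{s_j} = n-1$. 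The crucial bookkeeping step is to regroup this sum by the underlying (unordered) forest: writing a forest as $f = t_1^{n_1}\cdots t_r^{n_r}$ with the $t_i$ pairwise distinct, the number of sequences collapsing to $f$ is exactly the number of linear orderings of that multiset, namely $\binom{n_1+\cdots+n_r}{n_1\,\cdots\,n_r}$, and each contributes the same weight $\prod_i\dsecoeff(t_i)^{n_i}$. Hence the coefficient of $f$ in $b_{n-1}$ equals $\binom{n_1+\cdots+n_r}{n_1\,\cdots\,n_r}\prod_i\dsecoeff(t_i)^{n_i}$. Applying $B_+$ and using that $B_+\colon\forests\to\trees$ is a bijection (so coefficients transfer intact), the coefficient of $t = B_+(f)$ in $a_n$ is precisely $\dsecoeff(t)$ by the defining recursion, which proves $a_n = \sum_{t\in\trees_n}\dsecoeff(t)\,t$.

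It remains to identify $\dsecoeff(t)$ with the number of ordered (planar) rooted trees mapping to $t$ under the forgetful map, which I would do via the very same recursion. An ordered tree over $t = B_+(t_1^{n_1}\cdots t_r^{n_r})$ is specified by a linear ordering of the root's children together with an ordered structure on each child subtree: choosing which of the $n_1+\cdots+n_r$ positions carry $t_i$-type subtrees accounts for the factor $\binom{n_1+\cdots+n_r}{n_1\,\cdots\,n_r}$, while independently selecting an ordered version of each of the $n_i$ copies of $t_i$ accounts for $\prod_i\dsecoeff(t_i)^{n_i}$; the base case $\dsecoeff(\tree{+-})=1$ is immediate. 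Thus the ordered-tree count obeys the same recursion and base value and coincides with $\dsecoeff$ by induction. I expect the main obstacle to lie in the second paragraph: one must argue carefully that summing the products $a_{i_1}\cdots a_{i_k}$ over all ordered compositions reconstructs each forest with multiplicity exactly the multinomial coefficient — with no spurious symmetry correction — and that commutativity of $H_R$ together with the bijection $B_+\colon\forests\to\trees$ lets one read off tree coefficients cleanly. The combinatorial interpretation in the last paragraph is then essentially a repackaging of this same count.
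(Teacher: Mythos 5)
Your proposal is correct and follows essentially the same route as the paper: you extract the identical recursion $a_{n+1} = B_+\big( \sum_{k}\sum_{i_1+\ldots+i_k=n} a_{i_1}\cdots a_{i_k}\big)$ by comparing powers of $\alpha$, and then relate its terms to ordered rooted trees. The paper states this correspondence in one line (``each term of this sum corresponds to a unique ordered rooted tree''), whereas you make the bookkeeping explicit -- regrouping sequences of trees by their underlying unordered forest to produce the multinomial coefficients, and separately verifying that the multinomial recursion counts ordered representatives -- which is a faithful filling-in of the details the paper delegates to the cited literature.
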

\begin{proof}
	The existence and uniqueness of $a_n$ solving \eqref{eq:DSE-propagator} follows immediately by counting powers of $\alpha$. We can directly read off the recursion
	\begin{equation}\label{eq:a_n-recursion}
		a_{n+1} = B_+ \left( \sum_{k=0}^n\ \sum_{i_1+\ldots+i_k = n} a_{i_1} \ldots a_{i_k} \right).
	\end{equation}
	This proves \eqref{eq:DSE-solution} and the claim about $\sigma$, as each term of this sum corresponds to a unique ordered rooted tree! For a much more general exposition see \cite{Foissy:DSE}, especially theorem 11 therein.
\end{proof}
So $\dsecoeff\left( \tree{+-} \right)
	= \dsecoeff \left( \tree{++--} \right)
	= \dsecoeff \left( \tree{+++---} \right)
	= \dsecoeff \left( \tree{++-+--} \right)
	= \dsecoeff\left(\tree{+++-+-+---} \right)
	= \dsecoeff\left( \tree{+++--++---} \right)
	= \dsecoeff\left( \tree{++-+-+-+--} \right)
	=1$, 
whereas
\begin{equation*}
	\dsecoeff \left( \tree{++-++-+---} \right)
	= \abs{\set{\tree{++-++-+---}, \tree{+++-+--+--}}}
	=2
	\quad \text{and} \quad
	\dsecoeff \left( \tree{+++--+-+--} \right)
	= \abs{\set{ \tree{+++--+-+--},\tree{++-++--+--},\tree{++-+-++---} }}
	= 3.
\end{equation*}
Here we wrote the different \emph{ordered} (\emph{planar}) rooted trees corresponding to $\dsecoeff$'s argument inside the sets. The first terms of $X$ are thus
\begin{equation}\label{eq:DSE-examples}\begin{split}
	a_1	&= \tree{+-} \quad
	a_2 = \tree{++--} \quad
	a_3 = \tree{+++---} + \tree{++-+--} \quad
	a_4 = \tree{++++----} + \tree{+++-+---} + 2 \tree{++-++---} + \tree{++-+-+--} \\
	a_5 &= \tree{+++++-----} + \tree{++++-+----} + 2\tree{+++-++----} + \tree{+++-+-+---}+ \tree{+++--++---} + 2\tree{++-+++----} + 2\tree{++-++-+---} + 3\tree{+++--+-+--} + \tree{++-+-+-+--}.
\end{split}\end{equation}
\begin{definition}
	The \emph{correlation function} $G(\alpha)$ is the application of the Feynman rules $\phi\!: H_R\rightarrow\alg$ to the solution $X(\alpha)$ of \eqref{eq:DSE-propagator}, producing the formal power series
	\begin{equation}
		G (\alpha) \defas \phi \left( X(\alpha) \right)
		\defas \sum_{n\in\N} \phi (a_n) \alpha^n
		\in {\alg} [[\alpha]].
		\label{eq:correlation}
	\end{equation}
\end{definition}
Considering the Feynman rules $\intrules$ from \eqref{eq:int-rules}, using \eqref{eq:DSE-examples} check
\begin{align*}
	\frac{\intrules(a_1)}{x}
		&= \frac{1}{\tree{+-}\,!} = 1
	\qquad
	\frac{\intrules(a_2)}{x^2}
		 = \frac{1}{\tree{++--}\,!}
		 = \frac{1}{2}
	\qquad
	\frac{\intrules(a_3)}{x^3}
		 = \frac{1}{\tree{+++---}\,!} + \frac{1}{\tree{++-+--}\,!}
		 = \frac{1}{6} + \frac{1}{3} 
		 = \frac{1}{2} \\
		 \frac{\intrules(a_4)}{x^4}
		&= \frac{1}{\tree{++++----}\,!} + \frac{1}{\tree{+++-+---}\,!} + \frac{2}{\tree{++-++---}\,!} + \frac{1}{\tree{++-+-+--}\,!}
		 = \frac{1}{24} +\frac{1}{12} + \frac{2}{8} + \frac{1}{4} 
		 = \frac{5}{8} \\
	\frac{\intrules(a_5)}{x^5} 
		&= \frac{1}{120} + \frac{1}{60} + \frac{2}{40} + \frac{1}{20} + \frac{1}{20} + \frac{2}{30} + \frac{2}{15} + \frac{3}{10} +\frac{1}{5}
		 = \frac{7}{8}
\end{align*}
such that we obtain the first terms of the series
\begin{equation}
	G (\alpha, x) = \alpha x + \frac{1}{2} (\alpha x)^2 + \frac{1}{2} (\alpha x)^3 + \frac{5}{8} (\alpha x)^4 + \frac{7}{8} (\alpha x)^5 + \bigo{{(\alpha x)}^6}.
	\label{eq:int-rules-correlation-expansion}
\end{equation}

\begin{proposition}
	For the Feynman rules $\intrules$ form \eqref{eq:int-rules}, the correlation function \eqref{eq:correlation} is absolutely convergent for $\abs{\alpha x} < \frac{1}{2}$ and evaluates to
	\begin{equation}
		G(\alpha, x) 
		= \frac{2\alpha x}{1+\sqrt{1-2\alpha x}}
		= 1-\sqrt{1-2\alpha x}.
		\label{eq:correlation-int-rules}
	\end{equation}
\end{proposition}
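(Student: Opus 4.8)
The plan is to apply the Feynman rules $\intrules$ directly to the Dyson-Schwinger equation \eqref{eq:DSE-propagator} and exploit the two defining features of $\intrules$: it is a morphism of unital algebras, and it satisfies the cocycle relation $\intrules \circ B_+ = \int_0 \circ \intrules$. Since $X(\alpha) \in H_R[[\alpha]]$ has vanishing constant term (its lowest contribution $a_1 = \tree{+-}$ appears at order $\alpha$), the correlation function $G(\alpha) = \intrules(X(\alpha))$ likewise has no constant term as a formal power series in $\alpha$, so that $\sum_{n\in\N_0} G(\alpha)^n = \frac{1}{1-G(\alpha)}$ is a well-defined element of $\K[x][[\alpha]]$. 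Applying $\intrules$ to both sides of \eqref{eq:DSE-propagator}, using multiplicativity to pass $\intrules$ through the geometric series and then the cocycle property to turn $B_+$ into $\int_0$, I obtain the integral equation
\begin{equation*}
  G(\alpha, x) = \alpha \int_0^x \frac{\dd y}{1 - G(\alpha, y)},
\end{equation*}
an identity of formal power series in $\alpha$ with coefficients in $\K[x]$, where $\int_0$ is applied coefficient-wise.

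Next I would differentiate this relation with respect to $x$ to obtain the separable ordinary differential equation $\frac{\partial}{\partial x} G = \frac{\alpha}{1 - G}$ subject to the initial condition $G(\alpha, 0) = 0$. Separating variables $(1-G)\,\dd G = \alpha\,\dd x$ and integrating yields the algebraic relation $G - \tfrac{1}{2} G^2 = \alpha x$, equivalently $G^2 - 2G + 2\alpha x = 0$. Solving this quadratic and selecting the branch compatible with $G(\alpha, 0) = 0$ singles out $G(\alpha, x) = 1 - \sqrt{1 - 2\alpha x}$, from which the equivalent form $\tfrac{2\alpha x}{1 + \sqrt{1 - 2\alpha x}}$ follows upon rationalizing the numerator. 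One then checks that the first Taylor coefficients reproduce \eqref{eq:int-rules-correlation-expansion}.

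For the convergence claim, I would note that $G$ depends on $\alpha$ and $x$ only through the product $\alpha x$ (evident from the algebraic relation $G - \tfrac12 G^2 = \alpha x$), and that $u \mapsto 1 - \sqrt{1 - 2u}$ is holomorphic on the disc $\abs{u} < \tfrac12$, its nearest singularity being the branch point at $u = \tfrac12$. Hence the binomial expansion of $\sqrt{1 - 2\alpha x}$ converges absolutely precisely for $\abs{\alpha x} < \tfrac12$, giving the stated domain.

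The main obstacle I expect is not any single calculation but the careful bookkeeping between the \emph{formal} power series in which $X(\alpha)$, $G(\alpha)$ and the Dyson-Schwinger equation live and the \emph{analytic} closed form. Concretely, I must argue that the algebraic relation $G - \tfrac12 G^2 = \alpha x$ together with $G|_{\alpha = 0} = 0$ determines $G$ uniquely order by order in $\alpha$ (by rewriting it as the fixed-point recursion $G = \alpha x + \tfrac12 G^2$), so that the formal solution $\sum_n \intrules(a_n)\alpha^n$ is forced to coincide with the Taylor expansion of $1 - \sqrt{1 - 2\alpha x}$; and that the coefficient-wise application of $\int_0$ is legitimate at each order. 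Once this identification of the formal solution with a convergent analytic function is secured, absolute convergence on $\abs{\alpha x} < \tfrac12$ is immediate.
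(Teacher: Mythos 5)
Your proof is correct, but it takes a genuinely different route from the paper's own proof of this proposition. The paper computes the coefficients explicitly: by induction on the loop order, writing each ordered tree as $B_+(t'\cdot f)$ and invoking the Catalan recursion $C_{N} = \sum_{k} C_{k-1}C_{N-k}$, it establishes $\intrules(a_{n+1}) = 2^{-n}C_n\,x^{n+1}$ and then sums the series using the classical Catalan generating function, from which the radius of convergence is also read off. You instead push $\intrules$ through the Dyson--Schwinger equation \eqref{eq:DSE-propagator}, using multiplicativity and the cocycle property $\intrules \circ B_+ = \int_0 \circ\, \intrules$, to arrive at the integral equation and then the quadratic relation $G - \tfrac12 G^2 = \alpha x$. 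This is precisely the manipulation the paper performs later in \eqref{eq:DSE-intrules}, but there it is explicitly flagged as a ``cheat'' whose validity is deferred to the absolute convergence already established by this very proposition --- so taken naively it would be circular as a proof. Your version breaks that circle correctly: by keeping every step coefficient-wise in $\K[x][[\alpha]]$ (the geometric series, the coefficient-wise application of $\int_0$, the differentiation, and the branch selection via the fixed-point recursion $G = \alpha x + \tfrac12 G^2$ are all legitimate formal operations because $X(\alpha)$ has vanishing constant term), you identify the unique formal solution with the binomial expansion of $1-\sqrt{1-2\alpha x}$, and only afterwards deduce absolute convergence for $\abs{\alpha x} < \tfrac12$ from the branch point at $\alpha x = \tfrac12$. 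As for what each approach buys: the paper's computation yields the explicit coefficients $2^{-n}C_n$ and, as a byproduct, a combinatorial identity over ordered rooted trees, and it stays entirely within finite combinatorics; yours is shorter, avoids coefficient combinatorics altogether, and serves as the rigorous template for the non-perturbative formulation sketched in section \ref{sec:non-perturbative}.
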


\begin{proof}
	We will first prove that using the \emph{Catalan numbers} $C_n$, we have for any $n\in\N_0$
	\begin{equation}
		\frac{\intrules(a_{n+1})}{x^{n+1}}
			= 2^{-n} C_n
			= 2^{-n}\binom{2n}{n} \frac{1}{n+1}
			= 2^{-n}\frac{(2n)!}{n!(n+1)!}.
			\tag{$\ast$}
	\end{equation}
	The well-known recursion $C_{n} = \sum_{i=1}^n C_{i-1} C_{n-i}$ allows for an inductive proof: suppose $(\ast)$ holds for $n\leq N$. Denoting the sets of \emph{ordered} rooted trees and forests by $\orderedtrees$ and $\orderedforests$, note that any $t\in\orderedtrees_{N+1}$ contributing to $a_{N+1}$ is of the form $t = B_+(t' f)$ for unique $t'\in\orderedtrees$ and $f\in\orderedforests$. Therefore
	\begin{align*}
		\frac{\intrules \left(a_{N+2} \right)}{x^{N+2}}
		&= \sum_{t\in\trees_{N+2}} \frac{\dsecoeff(t)}{t!}
		 = \sum_{t\in\orderedtrees_{N+2}} \frac{1}{t!}
		 = 	\sum_{t' \in \orderedtrees_{\leq N+1}}\ 
				\sum_{f \in \orderedforests_{N+1 - \abs{t'}}}
				\frac{1}{\left[ B_+\left( t'\cdot f \right) \right]!} \\
		&= \frac{1}{N+2}
			\sum_{k=1}^{N+1}
			\left( \sum_{t' \in \orderedtrees_k}
						\frac{1}{t'!}
			\right)
			\cdot
			\left( \sum_{f \in \orderedforests_{N+1-k}}
							\frac{N+2-k}{[B_+(f)]!}
			\right) \\
		&= \frac{1}{N+2}
					\sum_{k=1}^{N+1}
					2^{-(k-1)} C_{k-1}
					(N+2-k)
					2^{-(N+1-k)} C_{N+1-k} \\
		&= \frac{2^{-N}}{N+2}
				\sum_{k=1}^{N+1}(N+2-k) C_{k-1} C_{N+1-k} 
		 = \frac{2^{-(N+1)}}{N+2} \sum_{k=1}^{N+1}(N+2) C_{k-1} C_{N+1-k}  \\
		&= 2^{-(N+1)} C_{N+1},
	\end{align*}
	where we used $\sum_{k=1}^{N+1}(N+2-k)  C_{k-1} C_{N+1-k} = \sum_{k=1}^{N+1} k C_{k-1} C_{N+1-k}$ by the substitution $k \mapsto N+2-k$. Now recall the classical generating function
	\begin{equation*}
		\sum_{n\in\N_0} C_n x^n
		= \frac{1-\sqrt{1-4x}}{2x}
		= \frac{2}{1+\sqrt{1-4x}}.\qedhere
	\end{equation*}
\end{proof}
\begin{korollar}
	By \eqref{eq:toymodel-leading-log}, the leading-$\log$ contribution\footnote{Recall that to obtain the actual values, evaluate at $x=\ln\tfrac{s}{\rp}$.} to the toy model \eqref{eq:toymodel} is
	\begin{equation*}
		G(\alpha, x)_{\text{leading-$\log$}}
		= -\frac{2\coeff{-1}\alpha x}{1+\sqrt{1+2\coeff{-1}\alpha x}}.
	\end{equation*}
\end{korollar}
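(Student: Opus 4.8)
The key observation is that the \emph{leading-log} part of $\toyphy$ is, after a rescaling of the polynomial variable, nothing but the elementary Feynman rule $\intrules$ from \eqref{eq:int-rules}. Writing $\ell \defas \ln\tfrac{s}{\rp}$ for the variable in which $\toyphy[s]$ is a polynomial, equation \eqref{eq:toymodel-leading-log} says that for any forest $f\in\forests$ the coefficient of the top power $\ell^{\abs{f}}$ in $\toyphy[s](f)$ is $\tfrac{(-\coeff{-1})^{\abs{f}}}{f!}$. Comparing with $\intrules(f) = \tfrac{x^{\abs{f}}}{f!}$ from \eqref{eq:int-rules}, this is precisely $\intrules(f)$ after the substitution $x \mapsto -\coeff{-1}\,\ell$.

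First I would push this identity through the correlation function \eqref{eq:correlation}. Since each $a_n$ from proposition \ref{satz:DSE-solution} is homogeneous of weight $n$, every tree occurring in $a_n = \sum_{t\in\trees_n}\dsecoeff(t)\,t$ has $\abs{t}=n$, so the highest power of $\ell$ in $\toyphy[s](a_n)$ is exactly $\ell^n$ with coefficient obtained termwise:
\begin{equation*}
	\big[\toyphy[s](a_n)\big]_{\text{leading-log}}
	= (-\coeff{-1}\,\ell)^n \sum_{t\in\trees_n} \frac{\dsecoeff(t)}{t!}
	= \intrules(a_n)\big|_{x=-\coeff{-1}\ell}.
\end{equation*}
Summing against $\alpha^n$ and invoking the definition \eqref{eq:correlation}, the leading-log part of the toy-model correlation function is thus obtained from the correlation function of $\intrules$ in \eqref{eq:correlation-int-rules} by the substitution $x \mapsto -\coeff{-1}\,\ell$ in its second argument.

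Finally I would insert the closed form \eqref{eq:correlation-int-rules}, namely $\tfrac{2\alpha x}{1+\sqrt{1-2\alpha x}}$, at $x=-\coeff{-1}\,\ell$ and rename $\ell$ back to $x$ as in the statement; this immediately yields $-\tfrac{2\coeff{-1}\alpha x}{1+\sqrt{1+2\coeff{-1}\alpha x}}$. There is essentially no obstacle, since all the analytic work was already carried out by the closed-form summation \eqref{eq:correlation-int-rules} and the leading-log estimate \eqref{eq:toymodel-leading-log}. The only point requiring care is the bookkeeping of the two roles of the symbol $x$ -- the formal polynomial variable of $\intrules$ versus its physical value $\ln\tfrac{s}{\rp}$ -- together with the remark that homogeneity of the $a_n$ is what makes the substitution $x\mapsto-\coeff{-1}\,\ell$ compatible with the termwise summation over trees.
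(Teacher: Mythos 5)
Your proposal is correct and is exactly the argument the paper intends: the corollary is stated without a written-out proof, as an immediate consequence of the leading-$\log$ estimate \eqref{eq:toymodel-leading-log} (which identifies the top coefficient $\tfrac{(-\coeff{-1})^{\abs{f}}}{f!}$ with $\intrules(f)$ under $x\mapsto-\coeff{-1}x$) combined with the closed form \eqref{eq:correlation-int-rules}. Your added remark that homogeneity of the $a_n$ is what justifies extracting the leading term termwise is precisely the detail the paper leaves implicit, so nothing is missing.
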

Note how in these cases, $G(\alpha,x)$ does only depend on the product $\alpha\cdot x$. This is a consequence of the fact that $\intrules$ respects the graduations, mapping homogeneous elements to homogeneous polynomials of the same degree. For the toy model $\toyphy$ this is not the case anymore (except for $\toyform\neq 0$).

\subsection{Differential equations and the renormalization group}
\label{sec:rge}
We will now exploit the special structure of $X(\alpha)$. Consider
\begin{definition}
	We denote by \mbox{$\Plin\!: H_R \rightarrow H_R$} the projection onto the linear span of trees. Thus \mbox{$\Plin(f)=0$} for any forest \mbox{$f\in\forests\setminus\trees$} and \mbox{$\Plin(t) = t$} for trees \mbox{$t\in\trees$}.
\end{definition}

\begin{proposition}
	For the solution $X(\alpha)$ of \eqref{eq:DSE-propagator}, we find that
	\begin{equation}\label{eq:delta-lin-DSE}
		(\Plin \tp \id) \circ \Delta \left( X(\alpha) \right)
%		= X(\alpha) \tp \sum_{n=1}^{\infty} (2n-1) a_n \alpha^n
		= X(\alpha) \tp \1 + X(\alpha) \tp (2\alpha\partial_{\alpha}-1) X(\alpha).
	\end{equation}
\end{proposition}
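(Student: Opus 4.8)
The plan is to convert the combinatorial identity into an algebraic fixed-point equation in $(H_R \tp H_R)[[\alpha]]$. Write $\Phi \defas (\Plin \tp \id) \circ \Delta$, extended $\alpha$-linearly, and abbreviate $Y \defas \frac{\1}{\1-X}$, so that \eqref{eq:DSE-propagator} reads $X = \alpha B_+(Y)$. Applying the cocycle property \eqref{eq:B_+-cocycle} gives $\Delta(X) = X\tp\1 + \alpha(\id\tp B_+)\Delta(Y)$; since $\Plin$ acts on the left tensor factor it commutes with $\id\tp B_+$ and fixes $X$ (a sum of trees), whence
\begin{equation*}
  \Phi(X) = X\tp\1 + \alpha(\id\tp B_+)\Phi(Y).
\end{equation*}

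The key step is a derivation-type product rule for $\Phi$. As $\Delta$ is multiplicative, in $\Delta(fg)=\sum f_1g_1\tp f_2g_2$ the left leg $f_1g_1$ is a disjoint union of two forests, which lies in $\lin\trees$ exactly when one of $f_1,g_1$ equals $\1$ and the other is a single tree. Collecting the two resulting families (recalling that the $f_1=\1$ part of $\Delta(f)$ is precisely $\1\tp f$) and using commutativity of $H_R$ yields
\begin{equation*}
  \Phi(fg) = \Phi(f)\cdot(\1\tp g) + \Phi(g)\cdot(\1\tp f).
\end{equation*}
I expect this bookkeeping to be the main obstacle: one must check that the two families are disjoint (the case $f_1=g_1=\1$ contributes $\Plin(\1)=0$) and that no single-tree left leg is missed. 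Granting it, an easy induction gives $\Phi(X^k)=k\,\Phi(X)\cdot(\1\tp X^{k-1})$ with $\Phi(\1)=0$, and summing over $k$ together with $\sum_{k\ge1}kX^{k-1}=Y^2$ produces $\Phi(Y)=\Phi(X)\cdot(\1\tp Y^2)$.

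Substituting this into the first display gives the recursion
\begin{equation*}
  \Phi(X) = X\tp\1 + \alpha(\id\tp B_+)\bigl[\Phi(X)\cdot(\1\tp Y^2)\bigr],
\end{equation*}
whose right-hand side raises the $\alpha$-order by at least one, so it has a unique solution in $(H_R\tp H_R)[[\alpha]]$. It therefore suffices to verify that the claimed answer solves it. Writing $Z\defas\1+(2\alpha\partial_\alpha-1)X$, the proposed value is $\Phi(X)=X\tp Z$, and plugging it in gives a right-hand side $X\tp\bigl(\1+\alpha B_+(ZY^2)\bigr)$; hence it suffices that the right-hand tensor factor satisfy $Z=\1+\alpha B_+(ZY^2)$.

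Finally I would verify $Z=\1+\alpha B_+(ZY^2)$ directly from the Dyson--Schwinger equation. Differentiating $X=\alpha B_+(Y)$ in $\alpha$ and using $\partial_\alpha Y=Y^2\partial_\alpha X$ together with $B_+(Y)=X/\alpha$ gives $\alpha\partial_\alpha X - X = \alpha B_+\bigl((\alpha\partial_\alpha X)\,Y^2\bigr)$. Expanding $\alpha B_+(ZY^2)=\alpha B_+(Y^2)+2\alpha B_+\bigl((\alpha\partial_\alpha X)Y^2\bigr)-\alpha B_+(XY^2)$, substituting the previous relation, and using $(\1-X)Y^2=Y$ so that $\alpha B_+(Y^2)-\alpha B_+(XY^2)=\alpha B_+(Y)=X$, collapses the right-hand side to $2\alpha\partial_\alpha X - X = Z-\1$. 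This confirms $Z=\1+\alpha B_+(ZY^2)$, hence $\Phi(X)=X\tp Z=X\tp\1+X\tp(2\alpha\partial_\alpha-1)X$, which is \eqref{eq:delta-lin-DSE}.
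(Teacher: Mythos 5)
Your proof is correct, but it follows a genuinely different route from the paper. The paper obtains \eqref{eq:delta-lin-DSE} as a projection of a much stronger, cited result: the closed formula $\Delta(X(\alpha)) = X(\alpha)\tp\1 + \sum_{n\geq 1}[\1-X(\alpha)]^{1-2n}\tp a_n\alpha^n$ for the \emph{full} coproduct (the Hopf-subalgebra property of the $a_n$, quoted from Foissy and Bergbauer--Kreimer, with the inductive verification left to the reader), after which $(\Plin\tp\id)$ and counting powers of $\alpha$ give the claim. You never invoke the full coproduct: instead you derive, from the cocycle property \eqref{eq:B_+-cocycle} and the derivation-type rule $\Phi(fg)=\Phi(f)\cdot(\1\tp g)+\Phi(g)\cdot(\1\tp f)$ for $\Phi=(\Plin\tp\id)\circ\Delta$, a fixed-point equation $\Phi(X)=X\tp\1+\alpha(\id\tp B_+)\bigl[\Phi(X)\cdot(\1\tp Y^2)\bigr]$ that the projected coproduct must satisfy; uniqueness of its solution follows by $\alpha$-adic order, and the candidate $X\tp Z$ with $Z=\1+(2\alpha\partial_\alpha-1)X$ is checked to solve it via the differential identity $\alpha\partial_\alpha X - X=\alpha B_+\bigl((\alpha\partial_\alpha X)Y^2\bigr)$, which you correctly extract from the Dyson--Schwinger equation. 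All steps check out, including the bookkeeping you flag: the two families of single-tree left legs are disjoint, the overlap $f_1=g_1=\1$ is killed by $\Plin(\1)=0$, and $\Plin$ fixes $X$ because each $a_n$ is a combination of trees by \eqref{eq:DSE-solution}. What each approach buys: the paper's route delivers the stronger structural statement (the full coproduct formula, hence the Hopf subalgebra) at the price of relying on external results or a nontrivial induction; yours is self-contained and elementary, proving exactly the projected identity needed for the renormalization group equations \eqref{eq:rge}, and it would adapt readily to other Dyson--Schwinger equations where the full coproduct is harder to write down.
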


\begin{proof}
	It is a remarkable fact that the coefficients $a_n$ of $X(\alpha)=\sum_{n=1}^{\infty} a_n \alpha^n$ do generate a Hopf subalgebra! This was first observed in \cite{BergbauerKreimer} (theorem 3) studying a different Dyson-Schwinger equation, for our case we refer to the general discussion in \cite{Foissy:DSE}. Explicitly, proposition (15) therein gives
	\begin{equation*}
		\Delta (X(\alpha))
		= X(\alpha) \tp \1 + \sum_{n=1}^{\infty} {\left[\1-X(\alpha)\right]}^{1-2n} \tp a_n \alpha^n,
	\end{equation*}
	which reduces by counting powers of $\alpha$ to
	\begin{equation*}
		\Delta (a_n) = a_n \tp \1 + \sum_{k=1}^n \left[ \sum_{r=0}^n \binom{1-2k}{r}(-1)^r \sum_{i_1+\ldots+i_r = n-k} a_{i_1} \ldots a_{i_r} \right] \tp a_k.
	\end{equation*}
	The reader is invited to prove this result inductively using the Hochschild-closedness of $B_+$ and \eqref{eq:a_n-recursion}. Now simply conclude
	\begin{equation*}
		(\Plin \tp \id)\circ \Delta (a_n \alpha^n) 
		=  a_n\alpha^n \tp \1 + \sum_{k=1}^{n-1} a_{n-k} \alpha^{n-k} \tp (2k-1)  a_k \alpha^k. \qedhere
	\end{equation*}
\end{proof}
\begin{korollar}
	As $Z_{\tree{+-}}$ and $\toylog$ vanish on products, we obtain for any $n\in\N$
	\begin{align}
		Z_{\tree{+-}}^{\convolution n+1} \left( X(\alpha) \right)
%		= \left( Z_{\tree{+-}} \tp Z_{\tree{+-}}^{\convolution n} \right) \circ (\Plin \tp \id) \circ \Delta \left( X(\alpha) \right)
		&= Z_{\tree{+-}}\left(X(\alpha)\right) (2\alpha\partial_{\alpha} -1) Z_{\tree{+-}}^{\convolution n} \left( X(\alpha) \right)
			\quad\text{and} \\
		\toylog^{\convolution n+1} \left( X(\alpha) \right)
		&= \toylog\left( X(\alpha) \right) (2\alpha\partial_{\alpha}-1)\toylog^{\convolution n} \left( X(\alpha) \right).
		\label{eq:rge}
	\end{align}
\end{korollar}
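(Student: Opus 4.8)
The plan is to prove both identities simultaneously, since $Z_{\tree{+-}}$ and $\toylog$ enter only through the single property that they are infinitesimal characters vanishing on products. Write $\xi$ for either of them, so that $\xi(\1)=0$ and $\xi(f)=0$ for every forest $f\in\forests\setminus\trees$; equivalently $\xi=\xi\circ\Plin$, where $\Plin$ is the projection onto $\lin\trees$ from the preceding definition. The whole argument consists in feeding the structural identity \eqref{eq:delta-lin-DSE} through the convolution power $\xi^{\convolution n+1}$.

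First I would unfold the convolution power using associativity of $\convolution$,
\begin{equation*}
	\xi^{\convolution n+1}(X(\alpha)) = \left[\xi\convolution\xi^{\convolution n}\right](X(\alpha)) = \mul\circ(\xi\tp\xi^{\convolution n})\circ\Delta\,(X(\alpha)),
\end{equation*}
where $\mul$ is the (commutative) multiplication of the target $\K[[\alpha]]$. The key move is that, since $\xi=\xi\circ\Plin$ annihilates the image of $\id-\Plin$, I may insert $\Plin$ into the left tensor slot without changing the value, i.e.\ $(\xi\tp\xi^{\convolution n})\circ\Delta=(\xi\tp\xi^{\convolution n})\circ(\Plin\tp\id)\circ\Delta$. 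This is exactly where the hypothesis that $\xi$ vanishes on products is used, and it is the only genuine subtlety of the proof.

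Next I would substitute \eqref{eq:delta-lin-DSE}, which computes $(\Plin\tp\id)\circ\Delta(X(\alpha))$ as $X(\alpha)\tp\1 + X(\alpha)\tp(2\alpha\partial_{\alpha}-1)X(\alpha)$, and apply $\mul\circ(\xi\tp\xi^{\convolution n})$ termwise. The first summand produces the factor $\xi^{\convolution n}(\1)=(\xi(\1))^n=0$ (recall $n\geq 1$), so it drops out. The second summand yields the product $\xi(X(\alpha))\cdot\xi^{\convolution n}\big((2\alpha\partial_{\alpha}-1)X(\alpha)\big)$.

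It then remains to pull the differential operator outside $\xi^{\convolution n}$. Writing $X(\alpha)=\sum_{m} a_m\alpha^m$ with $a_m\in H_{R,m}$, the operator $2\alpha\partial_{\alpha}-1$ simply multiplies the coefficient of $\alpha^m$ by $2m-1$, and since $\xi^{\convolution n}$ is $\K$-linear on $H_R$ and acts trivially on the formal variable $\alpha$, it commutes with this scalar action: $\xi^{\convolution n}\big((2\alpha\partial_{\alpha}-1)X(\alpha)\big)=(2\alpha\partial_{\alpha}-1)\xi^{\convolution n}(X(\alpha))$. Combining the two steps gives $\xi^{\convolution n+1}(X(\alpha))=\xi(X(\alpha))\,(2\alpha\partial_{\alpha}-1)\xi^{\convolution n}(X(\alpha))$, which is precisely \eqref{eq:rge} together with its $Z_{\tree{+-}}$-analogue. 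I expect no real obstacle beyond the proposition already established; the remaining work is purely the bookkeeping of the insertion of $\Plin$ and the harmless commutation with $\partial_{\alpha}$.
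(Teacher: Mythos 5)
Your proof is correct and is essentially the argument the paper intends (the paper leaves the corollary's proof implicit as an immediate consequence of the preceding proposition): writing $\xi^{\convolution n+1}=\mul\circ(\xi\tp\xi^{\convolution n})\circ\Delta$, using $\xi=\xi\circ\Plin$ to insert $\Plin$ into the left tensor slot, substituting \eqref{eq:delta-lin-DSE}, discarding the term carrying $\xi^{\convolution n}(\1)=0$ for $n\geq 1$, and commuting $2\alpha\partial_{\alpha}-1$ with the coefficient-wise action of $\xi^{\convolution n}$. All the individual steps, including the two points you single out as the only subtleties, are exactly right and fully justified.
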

As $Z_{\tree{+-}}\left( X(\alpha) \right) = \alpha$, we deduce $Z_{\tree{+-}}^{\convolution 2} \left(X(\alpha)\right) = \alpha(2\alpha\partial_{\alpha} -1)\alpha = \alpha^2$ and recursively
\begin{equation*}
	Z_{\tree{+-}}^{\convolution n+1} \left( X(\alpha) \right)
	= \alpha^{n+1} (2n-1)(2n-3)\cdots(1)
	= \alpha^{n+1} \frac{(2n)!}{2^n n!},
\end{equation*}
proving $\intrules(a_{n+1}) = 2^{-n} C_n x^{n+1}$ again.
The equations \eqref{eq:rge} are the physicist's \emph{renormalization group equations}, relating the dependence of $G(\alpha,x)$ on $\alpha$ with that on $x$:
\begin{align}\label{eq:rge2}
	\frac{\partial}{\partial x} G(\alpha,x)
	&\urel{\eqref{eq:correlation}} \frac{\partial}{\partial x} \left[ \toyphy\left( X(\alpha) \right) \right]
	\urel{\eqref{eq:toymodel-higher-orders}} \frac{\partial}{\partial x} \left[ \exp_{\convolution} \left( x\cdot \toylog \right) \right] \left( X(\alpha) \right)
	\urel{\eqref{eq:exp-diff}} \left(\toylog \convolution \toyphy\right) \left[ X(\alpha) \right]
	\nonumber\\
	&\urel{\eqref{eq:delta-lin-DSE}} \toylog \left( X(\alpha) \right)+\toylog \left( X(\alpha) \right) (2\alpha\partial_{\alpha}-1)G(\alpha,x).
\end{align}
To relate this to the notation common in physics\footnote{Compare \eqref{eq:rge-physical} and \eqref{eq:rge-solved} with (7.3.15) and (7.3.21) in \cite{Collins}!}, define \mbox{$\widetilde{G}(\alpha,\ln\frac{s}{\rp}) \defas G(\alpha,\ln\frac{s}{\rp})-1$} as well as $\widetilde{\toylog}(\alpha)\defas \toylog\big( X(\alpha) \big)$ and introduce the \emph{running coupling} $\alpha(\rp)$ as the solution of
\begin{equation}\label{eq:running-coupling}
	\rp\frac{\dd}{\dd\rp} \alpha(\rp)
	= 2\alpha(\rp)\widetilde{\toylog}\big(\alpha(\rp)\big)
	= \beta \left( \alpha(\rp) \right)
\end{equation}
for the \emph{$\beta$-function} $\beta(\alpha) \defas 2\alpha\widetilde{\toylog}\big(\alpha\big)$. In these terms, \eqref{eq:rge2} boils down to
\begin{equation}\label{eq:rge-physical}
	\rp\frac{\dd}{\dd\rp} \widetilde{G} \left( \alpha(\rp),\ln\tfrac{s}{\rp} \right)
	= \widetilde{\toylog}\big( \alpha(\rp) \big) \widetilde{G} \left( \alpha(\rp),\ln\tfrac{s}{\rp} \right).
\end{equation}
After integration, this tells us explicitly that the correlation functions for different renormalization points $\rp$ differ merely by an overall factor, as long as one chooses the values of $\alpha$ as determined by the running coupling:
\begin{equation}\label{eq:rge-solved}
	\widetilde{G}\left( \alpha(\rp_2),\ln\tfrac{s}{\rp_2} \right)
	= \widetilde{G}\left( \alpha(\rp_1),\ln\tfrac{s}{\rp_1} \right)
	\cdot \exp \left[ \int_{\rp_1}^{\rp_2} \widetilde{\toylog}\big( \alpha(\rp) \big) \tfrac{\dd\rp}{\rp} \right]
	\urel{\eqref{eq:running-coupling}} \widetilde{G}\left( \alpha(\rp_1),\ln\tfrac{s}{\rp_1} \right) \cdot \sqrt{\tfrac{\alpha(\rp_2)}{\alpha(\rp_1)}}.
\end{equation}
In particular, $\widetilde{G}\left(\alpha(\rp),\ln\frac{s}{\rp}\right) / \sqrt{\alpha(\rp)}$ is independent of $\rp$. Setting $\rp=s$ yields
\begin{equation}
	\widetilde{G}\left( \alpha(\rp),\ln\tfrac{s}{\rp} \right)
	= -\sqrt{\frac{\alpha(\rp)}{\alpha(s)}}.
\end{equation}
So in full generality, we obtain $\widetilde{G}\left(\alpha,\ln\frac{s}{\rp}\right) = -\sqrt{\frac{\alpha}{\alpha(s)}}$ where $\alpha(s)$ is determined through
\begin{equation}
	\ln\frac{s}{\rp} = \int_{\alpha}^{\alpha(s)} \frac{\dd\alpha'}{\beta(\alpha')}.
\end{equation}
Hence apparently, $\beta(\alpha)$ determines the asymptotic behaviour of $\widetilde{G}$ and even more profoundly, whether it is defined for all $s>0$ at all!

\subsection{Non-perturbative formulations}
\label{sec:non-perturbative}
We finally exploit that the Feynman rules under consideration are defined through the universal property of $H_R$ and as such obey a specific behaviour on $B_+$. In the case of $\intrules$ from \eqref{eq:int-rules},
\begin{equation}\label{eq:DSE-intrules}
	G(\alpha,x)
	\urel{\eqref{eq:correlation}}
		\intrules \left( X(\alpha) \right)
	\urel{\eqref{eq:DSE-propagator}}
		\alpha \intrules\circ B_+ \left(\frac{\1}{\1-X(\alpha)}\right)
	\urel{\eqref{eq:int-rules}}
		\alpha \int_0^{x} \frac{\dd x'}{1-G(\alpha,x')}
\end{equation}
yields the differential equation $\partial_{z} G(z) = \frac{1}{1-G(z)}$ for $z\defas\alpha x$ which readily integrates to \eqref{eq:correlation-int-rules}, using the boundary conditions $\restrict{G}{\alpha=0} = 0$. This example shows how we can obtain the full correlation function as the solution to an integral equation \eqref{eq:DSE-intrules}, without having to consider the perturbation series at all!

In fact, we cheated in the above derivation: $\intrules\left( X(\alpha) \right) = \sum_{n=1}^{\infty}\intrules(a_n)\alpha^n$ is only a formal power series. Let $\frac{\1}{\1-X(\alpha)} = \sum_{n=0}^{\infty} f_n \alpha^n$ for forests $f_n\in\forests$, that is $a_n = B_+(f_{n-1})$, then
\settowidth{\wurelwidth}{\eqref{eq:int-rules}}
\begin{align*}
%	\alpha\intrules\circ B_+ \left(\frac{\1}{\1-X(\alpha)}\right)
	\intrules\left( X(\alpha)\right)
%	&=\sum_{n=0}^{\infty} \alpha^{n+1} \intrules \circ B_+(f_n)
	&\wurel{\eqref{eq:int-rules}} \sum_{n=0}^{\infty} \alpha^{n+1} \int_0 \intrules(f_n) 
	 \urel{\eqref{eq:a_n-recursion}} \alpha \int_0 \sum_{k=0}^{\infty} \sum_{i_1,\ldots, i_k = 1}^{\infty} \alpha^{i_1+\ldots+i_k}\intrules\left(a_{i_1}\right)\ldots\intrules\left(a_{i_k}\right) \\
	&\wurel{} \alpha \int_0 \sum_{k=0}^{\infty} {\left[ \intrules\big( X(\alpha)\big)\right]}^k
	 = \alpha \int_0 \frac{1}{1-\intrules\left( X(\alpha) \right)}
\end{align*}
is only valid if the series in $\alpha$ is actually absolutely convergent! In \eqref{eq:correlation-int-rules} we saw this to be the case, however this assumption is going to fail in {\qft} and the toy model -- the perturbation series is really only an asymptotic one. Further note that in the above, we extended the algebraic integral operator $\int_0\in\End(\K[x])$ to the analytic integration of general functions: $\frac{1}{1-\intrules\big(X(\alpha)\big)}$ is not a polynomial in $x$ anymore.

However, apparently the naive calculation \eqref{eq:DSE-intrules} suggests itself as a \emph{natural} way of formulating a non-perturbative equation! That is, we take it as the \emph{definition} of the non-perturbative theory. In the case of the toy model, it leads to
\begin{equation*}
	G\left(\alpha,x\right)
	= \alpha \unimor{\left[ -\coeff{-1}\int_0 + \toycc\right]} \circ B_+ \left(\frac{1}{1-X(\alpha)}\right)
	= \alpha \left[ -\coeff{-1}\int_0 + (\partial\toyform)\right] \left(\frac{1}{1-G(\alpha, \cdot)}\right).
\end{equation*}
Here we need to define the action of $\partial\toyform$ on general functions (non-polynomials). By
\begin{align*}
	(\partial\toyform) \left( x^n \right)
	&= \sum_{k=0}^{n-1} \binom{n}{k} \toyform \left( x^{k} \right) x^{n-k}
	= \sum_{k=0}^{n-1} (-1)^{k} c_{k} \frac{n!}{(n-k)!} x^{n-k} \\
	&= \sum_{k=0}^{n-1} (-1)^{k} c_{k} \partial_x^{k} x^n
	= P \circ \left[ \sum_{k=0}^{\infty} (-1)^k c_k \partial_x^k \right] \left( x^n \right)
\end{align*}
we can identify $\partial\toyform$ with a differential operator, while $P$ subtracts the constant terms $\propto x^0$ (coming from $k=n$) as $P(f)\defas f - \restrict{f}{x=0}$. By a differentiation we turn this integro-differential equation into the differential equation
\begin{align*}
	\partial_x G\left(\alpha,x \right)
	&\wurel{} - \alpha \left[ \coeff{-1} + \sum_{k=0}^{\infty} (-1)^{k+1}c_k \partial_x^{k+1} \right] \frac{1}{1-G(\alpha,x)} \\
	&\wurel{\eqref{eq:mellin-trafo}} - \alpha \left[\reg F(\reg)\right]_{\reg = - \partial_x} \left( \frac{1}{1-G(\alpha, x)} \right).
\end{align*}
At this point we stop and remark that we can combine this equation together with \eqref{eq:rge} to obtain a single equation for the scalar function $\widetilde{\toylog}(\alpha)$, that allows to compute the coefficients $\widetilde{\toylog}(\alpha) = \sum_{n=1}^{\infty} d_n \alpha^n$ order by order as polynomials in the Mellin transform coefficients $\coeff{k}$ of $F(\reg)$. For details and in particular the application to \emph{quantum electrodynamics}, we refer to \cite{Yeats}.

\section{Higher degrees of divergence}
\label{sec:higher-sdds}
So far we restricted ourselves to logarithmically divergent integrals only, whereas in {\qft} the divergences can acquire higher power\footnote{For example, a boson propagator is quadratically divergent in any renormalizable \qft.} behaviour.
In the realm of our toy model this situation is exemplified by
\begin{equation*}
	\int_0^{\infty} f(\zeta,s)\ \dd\zeta
	\quad\text{for}\quad
	f(\zeta,s) \defas \frac{\zeta}{\zeta + s},
\end{equation*}
which is linearly divergent as the integrand is in $\bigo{\zeta^0}$ for $\zeta\rightarrow\infty$. The subtraction at $s=\rp$ like in \eqref{eq:momsch-example} yields the integrand
\begin{equation*}
	f(\zeta,s) - f(\zeta,\rp)
	= \frac{(\rp-s) \zeta}{(\zeta+s)(\zeta+\rp)},
\end{equation*}
which lies in $\bigo{\zeta^{-1}}$ and thus still results in a divergent integral! Therefore the simple subtraction scheme $\momsch{\rp}$ from \eqref{eq:momentum-scheme} is not sufficient anymore. However, if we further subtract the linear term of the Taylor expansion of $f$ at $\rp$,
\begin{equation*}
	f(\zeta,s) - f(\zeta, \rp) - (s-\rp) \restrict{\frac{\partial}{\partial s}}{\rp} f(\zeta, s)
	= \frac{(s - \rp)^2 \zeta}{(\zeta+s)(\zeta+\rp)^2}
\end{equation*}
is in $\bigo{\zeta^{-2}}$ and hence delivers a finite integral under $\int_0^{\infty} \dd\zeta$. This method works to arbitrary orders and is the foundation of the general momentum scheme\footnote{If one subtracts already on the level of integrands themselves and omits a regulator, one calls this again the BPHZ scheme.}.
\begin{definition}
	On $\alg \defas C^{\infty} (\K^n)$, for $s\in \N_0$ the operator of Taylor expansion\footnote{For simplicity we expand around zero, however the whole argument to come clearly stays valid for arbitrary subtraction points $\rp \neq 0$!} is
	\begin{equation}
		T_{s} \in \End(\alg),\quad
		T_{s} f \defas \left( \K^n \ni x \mapsto \sum_{\abs{\beta} \leq s} \frac{x^{\beta}}{\beta!}\partial^{\beta}_0 f \right),
		\label{eq:taylor}
	\end{equation}
	using multiindices $\beta=(\beta_1,\ldots,\beta_n) \in \N_0^n$ with the common notations $\beta\leq\alpha$ iff $\beta_i \leq \alpha_i$ for all $i$, $\abs{\beta} \defas \beta_1+\ldots+\beta_n$ as well as
	\begin{equation*}
		x^{\beta} \defas \prod_{1 \leq k \leq n} x_k^{\beta_k}
		, \qquad
		\beta! \defas \prod_{1 \leq k \leq n} \beta_k!
		\qquad\text{and}\qquad
		\partial^{\beta}_0 \defas \prod_{1 \leq k \leq n} \left. \frac{\partial^{\beta_k}}{\partial x_k^{\beta_k}} \right|_{x_k = 0}.
	\end{equation*}
\end{definition}
We can now implement the general momentum scheme using these projections $T_n$, but as seen above we have to pick the correct $n$ to obtain a finite result. In {\qft}, we are given a grading $\sdd$ (called \emph{superficial degree of divergence}, see section \ref{sec:toymodel-origin}) that precisely gives the power behaviour of the divergence and prescribes the order $n$ of Taylor polynomial (in the external parameters) to subtract.

Therefore, given some graduation $H=\bigoplus_n H_n$ as an algebra, we define a Birkhoff decomposition recursively through $\phi_-(\1) \defas \1_{\alg}$ and
\begin{equation}
	\phi_- (x) \defas - T_{\abs{x}} \left[ \phi(x) + \sum_{x} \phi_-(x') \phi(x'')  \right]
	\label{eq:birkhoff-indexed}
\end{equation}
for homogeneous $x \in \ker \counit$ of degree $\abs{x}$. Note the analogue of the Rota-Baxter relation \eqref{eq:Rota-Baxter} in
\begin{satz}
	The Taylor expansion operators fulfil for any $s,t\in \N_0$ and $f,g\in\alg$
	\begin{equation}
		(T_s f) (T_t g) = T_{s+t} \left[ (T_s f) g + f (T_t g) - f g \right].
%		m_{\alg} \circ (T_s \tp T_t) = T_{s+t} \circ m_{\alg} \circ \left( T_s \tp \id + \id \tp T_t - \id \tp \id \right).
		\label{eq:taylor-baxter}
	\end{equation}
\end{satz}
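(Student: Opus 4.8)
The plan is to reduce the identity \eqref{eq:taylor-baxter} to a statement about Taylor data at the origin and then verify it by a direct decomposition. Every term in \eqref{eq:taylor-baxter} is built only from the operators $T_s$, $T_t$, $T_{s+t}$ and pointwise products, and each $T_r$ reads off merely the derivatives $\partial^{\beta}_0$ with $\abs{\beta}\le r$; hence both sides depend on $f$ and $g$ only through finitely many derivatives at $0$. I would first record the two elementary properties of the Taylor operators that drive everything. Property (i): $T_r$ is linear and fixes every polynomial of degree $\le r$, since the order-$r$ Taylor polynomial of such a polynomial is itself. Property (ii): writing $R \defas f - T_s f$, one has $\partial^{\alpha}_0 R = 0$ for all $\abs{\alpha}\le s$; this is the matching-of-derivatives relation $\partial^{\alpha}_0 (T_s f) = \partial^{\alpha}_0 f$, which follows from $\partial^{\alpha}_0 x^{\beta} = \alpha!\,\delta_{\alpha\beta}$ applied to the definition \eqref{eq:taylor}.

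Next I would introduce the remainders $R \defas f - T_s f$ and $S \defas g - T_t g$ and expand the bracket on the right-hand side of \eqref{eq:taylor-baxter}. Substituting $f = T_s f + R$ and $g = T_t g + S$ and cancelling the repeated $(T_s f)(T_t g)$ terms, a short computation collapses the bracket to
\[
	(T_s f)\,g + f\,(T_t g) - f\,g = (T_s f)(T_t g) - R\,S,
\]
so that the right-hand side of \eqref{eq:taylor-baxter} becomes $T_{s+t}\big[(T_s f)(T_t g) - R\,S\big]$. The two summands are then handled separately.

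Finally, I would observe that $(T_s f)(T_t g)$ is a polynomial of degree $\le s+t$, being a product of polynomials of degrees $\le s$ and $\le t$; by property (i) it is therefore fixed by $T_{s+t}$, which reproduces the left-hand side of \eqref{eq:taylor-baxter}. It remains to see that $T_{s+t}(R\,S) = 0$, i.e.\ that $\partial^{\gamma}_0 (R\,S) = 0$ for every $\abs{\gamma} \le s+t$. By the Leibniz rule $\partial^{\gamma}_0 (R\,S) = \sum_{\alpha+\beta=\gamma} \binom{\gamma}{\alpha}\,(\partial^{\alpha}_0 R)(\partial^{\beta}_0 S)$, and in each summand $\abs{\alpha} + \abs{\beta} = \abs{\gamma} \le s+t$ forces $\abs{\alpha} \le s$ or $\abs{\beta} \le t$; by property (ii) the corresponding factor $\partial^{\alpha}_0 R$ or $\partial^{\beta}_0 S$ then vanishes, killing the whole term. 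Hence $T_{s+t}(R\,S) = 0$ and the identity follows. I expect no genuine obstacle: the content is entirely the bracket simplification together with the degree count showing that $R\,S$ vanishes to order $s+t+2$ at the origin. The only point to state carefully is the matching-of-derivatives property (ii), on which both the cancellation and the final vanishing rest.
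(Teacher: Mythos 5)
Your proof is correct, and it follows a genuinely different organization than the paper's, even though both rest on the same two elementary facts (the derivative--matching relation $\partial^{\alpha}_0 T_s = \partial^{\alpha}_0$ for $\abs{\alpha}\le s$ and $=0$ otherwise, and the pigeonhole observation that $\abs{\alpha}+\abs{\beta}\le s+t$ forces $\abs{\alpha}\le s$ or $\abs{\beta}\le t$). The paper works at the level of derivatives throughout: it notes that both sides of \eqref{eq:taylor-baxter} are determined by their derivatives $\partial^{\alpha}_0$ with $\abs{\alpha}\le s+t$, expands the bracket with the Leibniz rule, and runs a term-by-term case analysis inside the resulting sum, showing that each summand with $\abs{\beta}>s$ or $\abs{\alpha-\beta}>t$ collapses to zero while the remaining ones reproduce the Leibniz expansion of $(T_s f)(T_t g)$. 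You instead pre-process the bracket \emph{before} differentiating: with the remainders $R\defas f-T_s f$ and $S\defas g-T_t g$ you establish the exact algebraic identity $(T_s f)\,g + f\,(T_t g) - fg = (T_s f)(T_t g) - RS$, after which the claim splits into two clean, reusable lemmas --- $T_{s+t}$ fixes the polynomial $(T_s f)(T_t g)$ of degree $\le s+t$, and $T_{s+t}(RS)=0$ because every Leibniz term of $\partial^{\gamma}_0(RS)$ with $\abs{\gamma}\le s+t$ contains a vanishing factor. What your route buys is conceptual transparency and modularity: it exhibits \eqref{eq:taylor-baxter} as the statement that a remainder flat to order $s$ times a remainder flat to order $t$ is flat to order $s+t$ (the structural core of this Rota--Baxter-type property), and it adapts verbatim to expansion around a subtraction point $\rp\neq 0$, as the paper's footnote anticipates. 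What the paper's version buys is a single self-contained computation with no auxiliary objects, which is why it transplants directly into the subsequent theorem on the indexed scheme \eqref{eq:birkhoff-indexed}.
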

\begin{proof}
	Using the Leibniz rule $\partial \circ m_{\alg} = m_{\alg} \circ \left( \partial \tp \id + \id \tp \partial \right)$ and
	\begin{equation*}
		\partial_0^{\alpha} T_s 
		= \partial_0^{\alpha} \sum_{\abs{\beta} \leq s} \frac{x \mapsto x^{\beta}}{\beta!} \partial_0^{\beta} 
		= \sum_{\abs{\beta} \leq s} \frac{\partial_0^{\alpha} (x \mapsto x^{\beta})}{\beta!} \partial_0^{\beta} 
		= \begin{cases}
				\partial_0^{\alpha} & \text{if $\abs{\alpha} \leq s$,} \\
				0 & \text{else,}
			\end{cases}
		\tag{$\ast$}
	\end{equation*}
	by \eqref{eq:taylor} it suffices to check for any multiindex $\abs{\alpha} \leq s+t$ that
	\begin{align*}
		& \partial_0^{\alpha} \left[ (T_s\, f) g + f (T_t\, g) - f g \right]
			= \sum_{\beta \leq \alpha} \binom{\alpha}{\beta} m_{\alg} \circ \left( \partial_0^{\beta} \tp \partial_0^{\alpha-\beta} \right) \left[ (T_s\, f) g + f (T_t\, g) - f g \right] \\
		& = \sum_{\beta \leq \alpha} \binom{\alpha}{\beta} \left[ \left\{ \partial_0^{\beta} T_s\, f \right\} \left\{\partial_0^{\alpha-\beta}\, g \right\} + \left\{ \partial_0^{\beta}\, f \right\} \left\{ \partial_0^{\alpha-\beta} T_t\, g \right\} - \left\{ \partial_0^{\beta}\, f \right\} \left\{ \partial_0^{\alpha-\beta}\, g \right\} \right] \\
& = \sum_{\beta \leq \alpha} \binom{\alpha}{\beta}  \left\{\partial_0^{\beta}T_s\, f\right\} \left\{\partial_0^{\alpha-\beta}T_t\, g\right\}
			= \partial_0^{\alpha} \left[ (T_s\, f) \cdot (T_t\, g) \right].
	\end{align*}
	Here we used that in the middle line, by $(\ast)$ the contributions with $\abs{\beta} > s$ or $\abs{\alpha-\beta} > t$ give zero. For example, if $\abs{\beta} > s$ note $\abs{\alpha-\beta} = \abs{\alpha} - \abs{\beta} < t$ such that
	\begin{equation*}
		\Big(\underbrace{\partial_0^{\beta} T_s}_0\, f \Big) \Big(\partial_0^{\alpha-\beta}\, g \Big) + \Big( \partial_0^{\beta}\, f \Big) \Big(\underbrace{\partial_0^{\alpha-\beta} T_t}_{\partial_0^{\alpha-\beta}}\, g\Big) - \Big( \partial_0^{\beta}\, f \Big) \Big( \partial_0^{\alpha-\beta}\, g \Big)
		= 0
		=  \Big( \underbrace{\partial_0^{\beta} T_s}_0 f\Big) \Big(\partial_0^{\alpha-\beta} T_t g \Big).
	\end{equation*}
	Hence only terms with $\abs{\beta}\leq s$ and $\abs{\alpha-\beta}\leq t$ remain, but then we get 
	\begin{equation*}
		\Big(\underbrace{\partial_0^{\beta} T_s}_{\partial_0^{\beta}}\, f \Big) \Big(\partial_0^{\alpha-\beta}\, g \Big) + \Big( \partial_0^{\beta}\, f \Big) \Big(\underbrace{\partial_0^{\alpha-\beta} T_t}_{\partial_0^{\alpha-\beta}}\, g\Big) - \Big( \partial_0^{\beta}\, f \Big) \Big( \partial_0^{\alpha-\beta}\, g \Big)
		= \underbrace{\Big( \partial_0^{\beta}\, f \Big)}_{\partial_0^{\beta} T_s\,f} \underbrace{\Big( \partial_0^{\alpha-\beta}\, g \Big)}_{\partial_0^{\alpha-\beta} T_t\, g}.\qedhere
	\end{equation*}
\end{proof}
The above relations imply that also the generalized momentum scheme defined by \eqref{eq:birkhoff-indexed} respects characters in
\begin{satz}
	Let $H$ be a connected bialgebra, graded as an algebra and $\phi \in \chars{H}{\alg}$ an algebra morphism to some commutative algebra $\alg$.
	Further let \mbox{$T_{\cdot}\!: \N_0 \rightarrow \End(\alg)$} be an \emph{indexed renormalization scheme}, that is a family of endomorphisms such that
	\begin{equation}
%		\forall n,m \in \N_0:\ 
		m_{\alg} \circ ( T_n \tp T_m ) = T_{n+m} \circ m_{\alg} \circ \left[ T_n \tp \id + \id \tp T_m - \id \tp \id \right]
		\label{eq:indexed-scheme}
	\end{equation}
	for all $n,m\in\N_0$. Then the counterterms $\phi_-$ defined by \eqref{eq:birkhoff-indexed} (and thus $\phi_+ \defas \phi_- \convolution \phi$ as well) are algebra morphisms.
\end{satz}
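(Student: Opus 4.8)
The plan is to reproduce, essentially verbatim, the inductive argument that proved the ordinary momentum (Rota--Baxter) scheme to respect characters --- the proposition built on the Rota--Baxter relation \eqref{eq:Rota-Baxter} --- while replacing the single projection $R$ by the graded family $T_{\cdot}$ and carefully tracking the degree indices. Because $H$ is graded \emph{as an algebra}, any \emph{homogeneous} $x,y \in \ker\counit$ of degrees $\abs{x}=n$ and $\abs{y}=m$ satisfy $\abs{xy}=n+m$, so \eqref{eq:birkhoff-indexed} applies precisely the operator $T_{n+m}$ to the Bogoliubov value $\bar{\phi}(xy)$. I would prove $\phi_-(xy)=\phi_-(x)\,\phi_-(y)$ by induction on the total degree, the start being $\phi_-(\1)=\1_{\alg}$.

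For the inductive step I would expand $\cored(xy) = (\cored x + \1\tp x + x\tp\1)(\cored y + \1\tp y + y\tp\1) - \1\tp xy - xy\tp\1$ exactly as before and feed it into the $\bar R$--map \eqref{eq:rbar}. Using that $\phi$ is a character, and that by the induction hypothesis $\phi_-$ is already multiplicative on all the factors arising here (each of total degree strictly below $n+m$), the same manipulation as in the unindexed case yields the purely combinatorial identity
\[
 \bar{\phi}(xy) = \bar{\phi}(x)\bar{\phi}(y) + \phi_-(x)\,\bar{\phi}(y) + \bar{\phi}(x)\,\phi_-(y),
\]
which contains no renormalization operator whatsoever and therefore transfers without change from that earlier proof.

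With this identity in hand the role previously played by \eqref{eq:Rota-Baxter} is taken over by the indexed relation \eqref{eq:indexed-scheme} (equivalently \eqref{eq:taylor-baxter}). Substituting $\phi_-(x)=-T_n\bar{\phi}(x)$ and $\phi_-(y)=-T_m\bar{\phi}(y)$ and then applying $-T_{n+m}=-T_{\abs{xy}}$ gives
\[
 \phi_-(xy) = T_{n+m}\bigl[(T_n\bar{\phi}(x))\,\bar{\phi}(y) + \bar{\phi}(x)\,(T_m\bar{\phi}(y)) - \bar{\phi}(x)\bar{\phi}(y)\bigr],
\]
whereupon \eqref{eq:indexed-scheme} with $s=n$, $t=m$ collapses the right-hand side to $(T_n\bar{\phi}(x))(T_m\bar{\phi}(y))=\phi_-(x)\,\phi_-(y)$, closing the induction. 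Together with $\phi_-(\1)=\1_{\alg}$ this establishes $\phi_-\in\chars{H}{\alg}$; and since $H$ is a Hopf algebra by corollary \ref{satz:zsh-bialg-hopf} and $\alg$ is commutative, $\phi_+=\phi_-\convolution\phi$ is then a convolution of two characters, hence again a character by lemma \ref{satz:character-group}.

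The main obstacle --- and the only genuine novelty beyond the unindexed proof --- is the degree bookkeeping. When all the $R$'s coincide this step is invisible, but here one must verify that the Taylor operators entering the final collapse carry \emph{exactly} the indices $n$, $m$, $n+m$ demanded by \eqref{eq:indexed-scheme}. This hinges on two uses of the hypotheses: first, that $H$ is graded as an algebra and $x,y$ are homogeneous, so that $\abs{xy}=\abs{x}+\abs{y}$ and the outer operator is genuinely $T_{n+m}$; and second, that every tensor factor occurring in $\cored(xy)$ has strictly smaller total degree, so the induction hypothesis does apply to make $\phi_-$ multiplicative on it. Making these degree matchings explicit is what must be checked with care, whereas the algebraic skeleton of the argument is identical to the $R=R^2$ case.
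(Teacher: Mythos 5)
Your proposal is correct and follows essentially the same route as the paper, which itself states that the proof is that of the Rota--Baxter case with \eqref{eq:Rota-Baxter} replaced by \eqref{eq:taylor-baxter}: for homogeneous $x,y$ one expands $\bar{\phi}(xy)$ via the reduced coproduct, invokes the character property of $\phi$ and the induction hypothesis, and collapses $T_{\abs{x}+\abs{y}}\left[ \left( T_{\abs{x}}\bar{\phi}(x)\right)\bar{\phi}(y) + \bar{\phi}(x)\left( T_{\abs{y}}\bar{\phi}(y)\right) - \bar{\phi}(x)\bar{\phi}(y)\right]$ to $\phi_-(x)\phi_-(y)$ using \eqref{eq:indexed-scheme}. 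Your explicit attention to the degree bookkeeping ($\abs{xy}=\abs{x}+\abs{y}$ from the algebra grading, and the induction hypothesis applying to all lower-degree products occurring in $\cored(xy)$) makes precise what the paper leaves implicit.
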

\begin{proof}
	The proof is the same as for \eqref{eq:birkhoff-dec}, we only replace \eqref{eq:Rota-Baxter} by \eqref{eq:taylor-baxter}: For homogeneous $x,y \in \ker \counit$,
	\settowidth{\wurelwidth}{\eqref{eq:birkhoff-indexed}}
	\begin{align*}
		\phi_- (x\cdot y)
		&\wurel{\eqref{eq:birkhoff-indexed}} -T_{\abs{x\cdot y}} \left[ \phi(x\cdot y) + \sum_{x\cdot y} \phi_- \left( \{xy\} ' \right) \phi \left( \{xy\}'' \right) \right] \\
		&\wurel{} T_{\abs{x}+\abs{y}} \left[ \left\{ T_{\abs{x}} \bar{\phi}(x) \right\} \bar{\phi}(y) + \bar{\phi}(x) \left\{ T_{\abs{y}} \bar{\phi}(y) \right\} - \bar{\phi}(x) \bar{\phi}(y) \right] \\
		&\wurel{\eqref{eq:indexed-scheme}} \left[ T_{\abs{x}} \bar{\phi}(x) \right] \cdot \left[ T_{\abs{y}} \bar{\phi}(y) \right] 
		\urel{\eqref{eq:birkhoff-indexed}} \phi_-(x) \cdot \phi_-(y). \qedhere
	\end{align*}
\end{proof}
Note how this embodies a vast generalization of our original definition \ref{def:birkhoff} of renormalization!

We conclude remarking that these results extend to the full algebra of all Feynman graphs (as opposed to only the superficially divergent ones), when $\sdd$ takes values in $\Z$. For that purpose, as $\sdd(\Gamma)<0$ indicates no overall divergence set $T_n = 0$ whenever $n<0$. This still ensures \eqref{eq:indexed-scheme} for all $n,m \in \Z$ (both sides vanish if $n$ or $m$ are negative) and we will thus still arrive at counterterms and renormalized Feynman rules that are algebra morphisms.

\section{Massless Yukawa theory and the toy model}
\label{sec:toymodel-origin}
So far we looked at Feynman rules from a purely algebraic point of view. It is the goal of this section to relate our results to physics, in particular through demonstrating how Kreimer's toy model actually arises out of {\qft}.
We assume basic knowledge of {\qft} and refer to the lecture notes \cite{Tong}, covering all the material we are going to need.

For simplicity\footnote{In case of ordinary Yukawa theory in four dimensions, the fermion $\psi$ is a spinor and we have to take care of form factors. These are technicalities not influencing the basic structure of the argument to come.} consider massless scalar Yukawa theory, a renormalizable {\qft} in six spacetime dimensions given by the Lagrangian density
\begin{equation*}
	\mathcal{L} = (\partial^{\mu} \conj{\psi})(\partial_{\mu} \psi) + \frac{1}{2} ( \partial_{\mu} \phi) (\partial^{\mu} \phi) - g \phi \conj{\psi}\psi.
\end{equation*}
It describes a real scalar boson $\phi$ and a complex scalar boson $\psi$, interacting through a cubic vertex of coupling $g$. In the Feynman graphs of perturbation theory, we will denote the free propagators of $\phi$ and $\psi$ particles by dashed and solid lines, respectively. The fermion analogue $\psi$ also carries a charge flow arrow along its edges. For an example of these simple Feynman rules $\Phi$, consider (working in Euclidean space, after \emph{Wick rotation})
\begin{equation}
	\Phi \left( \Graph{oneloop} \right) 
	= \int_{\R^6} \frac{\dd[6]k}{(2\pi)^6} \frac{1}{k^2} \frac{1}{(k+p)^2}.
	\label{eq:oneloop}
\end{equation}
We do not include the powers of $g$ into $\Phi$, as those will be restored in a later step using a Dyson-Schwinger equation. Note that in Yukawa theory, all symmetry factors are one.\footnote{All incidences at a vertex are of distinct type: $\phi$ edge, incoming $\psi$ and outgoing $\psi$ edge -- thus there are no non-trivial automorphisms of the Feynman graphs.}

We focus on the correlation function $G^{\psi}$ for the $\psi$ propagator, given as asymptotic series in $g$ summing all \emph{amputated}\footnote{This means the Feynman rules do not include a propagator term for the external edges.} 1PI Feynman graphs with two external $\psi$ legs.

\hide{
However, there are of course different possibilities to \emph{regularize} divergent integrals like \eqref{eq:physical-rules}. In any case, the regularization procedure will necessarily introduce a new variable into the target algebra $\alg$ of the Feynman rules.

\subsubsection{\texorpdfstring{$\Lambda$}{Momentum} cutoff}

A nearby way to obtain finite integrals arises through restriction of the integration to a compact domain. Here one has to provide a family of compact domains, indexed by a new parameter, such that the domains approach the original domain in order to capture the full information of the integrand. Choosing a sphere of radius $\Lambda$ results in the so called \emph{$\Lambda$-cutoff} regularization popular in {\qft}. Applied to \eqref{eq:physical-rules}, this leads to
\begin{definition}
	The cutoff-regularized Feynman rule is the algebra morphism ${_\cdot}\phi_{\cdot}\!:\ H_R \rightarrow C^{\infty} \left( {[0, \infty]}^2 \right)$ defined through \eqref{eq:H_R-universal} by
	\begin{equation}
		{_{\Lambda}}\phi_s \circ B_+ = \int_0^{\Lambda} \frac{{_{\Lambda}}\phi_{\zeta}}{\zeta + s}\: d\zeta.
		\label{}
	\end{equation}
\end{definition}
As examples consider
\begin{align}
	{_{\Lambda}}\phi_s \left( \tree{+-} \right)
	&= \int_0^{\Lambda} \frac{d\zeta}{\zeta + s}
	= \log \frac{\Lambda + s}{s}
	= \log \left( 1 + \frac{\Lambda}{s} \right) \\
	{_{\Lambda}}\phi_s \left( \tree{++--} \right)
	&= \int_0^{\Lambda} \log \left( \frac{\Lambda+\zeta}{\zeta} \right) \frac{d\zeta}{\zeta+s} \nonumber\\
	&= \frac{\pi^2}{4} + \frac{1}{2} \log^2 \frac{s}{\Lambda - s} + (\log 2) \cdot \log \frac{\Lambda + s}{\Lambda - s} + \dilog \frac{\Lambda + s}{\Lambda - s},
	\label{}
\end{align}
where $\dilog z =\int_1^z \frac{\log t}{1-t} dt$. Apparently, calculations for higher order forests become very complicated and involve increasingly complicated functions. We will therefore not employ the cutoff regulator. Calculations in the other regulators presented will turn out to be much easier to perform and to allow for a general treatment.
\todo: Anpassen auf {\qft}!
}

\subsection{Analytic regularization and the one-loop master function}
\label{sec:toymodel-dimreg}
The naive Feynman rules deliver divergent integrals like \eqref{eq:oneloop} -- hence as in section \ref{sec:regularization}, we have to regularize these to obtain well defined functions to work with.

We again employ \emph{analytic regularization}, which in this setting is defined by furnishing each loop integration $\int \dd[6]k$ with an additional factor $(k^2)^{-\reg}$. To evaluate these Feynman rules, generalize \eqref{eq:oneloop} by raising the propagators to arbitrary powers $n,m$:
%dimensional regularization, a popular regulator depending on a complex spacetime-dimension $D=6-2z \in \C$ in the vicinity of the physical dimension $D=6$ (corresponding to $z=0$). It defines an integration-like functional $f \mapsto \int_{\R^D} f(x)\ \dd[D]x$ on integrands $f$ that depend meromorphically on a finite number of Lorentz scalars (built from the external parameters). For a precise definition and examples we refer the reader to chapter 4 of \cite{Collins}.
%As a generalization of \eqref{eq:oneloop} consider arbitrary exponents $n,m$ in
\begin{equation}
	\int \frac{\dd[D]k}{(2\pi)^D}\frac{1}{{\left[ k^2 \right]}^n} \frac{1}{ {\left[ (k+p)^2 \right]}^m} 
		\safed { \left( p^2 \right)}^{D/2-n-m} \master{n}{m}.
	\label{eq:master-def}
\end{equation}
This defines the \emph{one-loop master function} $\master{n}{m}$ which evaluates to\footnote{For a derivation of \eqref{eq:master} we refer to section 2.2 in \cite{Grozin}. See also chapter 5 therein for the two-loop analogue.}
\begin{equation}
	\master{n}{m} 
\hide{
	\int \frac{d^D k}{(2\pi)^D}\frac{1}{{\left[ k^2 \right]}^n} \frac{1}{ {\left[ (k+p)^2 \right]}^m} \nonumber\\
	=&\ %{\left(p^2\right)}^{D/2 - n - m} 
		\int_0^1 dx \frac{\Gamma(n+m)}{\Gamma(n)\Gamma(m)} x^{m-1} {(1-x)}^{n-1}
		\int \frac{d^D k}{{(2\pi)}^D} {\left[ (k+xp)^2 + x(1-x)p^2 \right]}^{-n-m} \nonumber\\
		=&\ \frac{\Gamma(n+m)}{\Gamma(n)\Gamma(m)} {\left( p^2 \right)}^{D/2-n-m} \int_0^1 dx\ x^{m-1}{(1-x)}^{n-1} \nonumber\\
		& \quad \times \quad 2\frac{ {\pi}^{D/2}}{\Gamma(D/2)} \frac{1}{ {(2\pi)}^D} \int_0^{\infty} r^{D-1}\ dr {\left[ r^2 + x(1-x) \right]}^{-n-m} \nonumber\\
		=&\ \frac{\Gamma(n+m)}{\Gamma(n)\Gamma(m)}\frac{\pi^{-D/2}}{2^D \Gamma(D/2)} {\left(p^2\right)}^{D/2-n-m}
		\underbrace{\int_0^1 x^{D/2-1-n} {(1-x)}^{D/2-1-m}\ dx}_{B(D/2-n,D/2-m)} \nonumber\\
		&\quad \times \quad \underbrace{\int_0^{\infty} y^{D/2-1} { (y+1) }^{-n-m} \ dy}_{B(D/2, n+m-D/2)} \nonumber\\
}
		= { \left( 4\pi \right)}^{-D/2} \frac{\Gamma(n+m-D/2)\Gamma(D/2-n)\Gamma(D/2-m)}{\Gamma(D-n-m)\Gamma(n)\Gamma(m)}.
	\label{eq:master}
\end{equation}\hide{
where we employed the definition of dimensional regularization (see \cite{Collins}) and the \emph{Feynman parameter identity}
\begin{equation*}
	\prod_{i=1}^n A_i^{-\alpha_i}
	= \frac{\Gamma\left( \sum_{i=1}^n \alpha_i \right)}{\prod_{i=1}^n \Gamma(\alpha_i)}
	\int_{0}^{\infty} dx_1\ x_1^{\alpha_1-1} \cdots \int_{0}^{\infty} dx_n\ x_n^{\alpha_n-1} \frac{\delta\left( 1-\sum_{i=1}^n x_i \right)}{ {\left[ \sum_{i=1}^n x_i A_i \right]}^{\sum_{i=1}^n \alpha_i}}.
\end{equation*}}
and is well known also in the context of \emph{dimensional regularization}, where $D$ may take arbitrary values in $\C$ (see chapter 4 of \cite{Collins} for a definition of dimensional regularization). However, in our case we fix $D=6$. For instance, the regulated value for \eqref{eq:oneloop} becomes
%For example, the case $n=1+\reg$ and $m=1$ delivers the regulated value for \eqref{eq:oneloop} in
\begin{align*}
	\int_{\R^6} \frac{\dd[6]k}{(2\pi)^6} \frac{(k^2)^{-\reg}}{k^2} \frac{1}{(k+p)^2}
	&= \master{1+\reg}{1} {\left(p^2\right)}^{1-\reg}
	={\left(p^2\right)}^{1-\reg} { \left( 4\pi \right)}^{-3} \frac{\Gamma(\reg-1)\Gamma(2-\reg)}{\Gamma(4-\reg)\Gamma(1+z)} \\
	%\in - \frac{p^2}{6(4\pi)^3\reg} + \K[[\reg,\reg \ln p^2]],
	&= \frac{(p^2)^{1-\reg}}{( 4\pi)^3 \reg (\reg-1)(\reg-2)(\reg-3)},
\end{align*}
where the pole at $\reg\rightarrow 0$ indicates the divergence. Note that as $D=6$, \eqref{eq:master-def} converges only for $\Re(n+m)>3$. Nevertheless we utilize the analytic continuation \eqref{eq:master} as the actual definition of the regularized integrals (dimensional regularization can be defined analogously)! We also evaluate
\begin{align}
	&\Phi \left( \Graph{twoloop} \right)
	= \int \frac{\dd[6]q}{(2\pi)^6} \frac{(q^2)^{-\reg}}{(p+q)^2} \frac{1}{ {\left[ q^2 \right]}^2}  \int \frac{\dd[6]k}{(2\pi)^6} \frac{( k^2)^{-\reg}}{k^2}\frac{1}{(k+q)^2} \nonumber\\
	&= \master{1+\reg}{1} \int \frac{\dd[6]q}{(2\pi)^6} \frac{( q^2)^{-\reg}}{(p+q)^2} \frac{1}{ {\left[ q^2 \right]}^{4+\reg-3 }} 
	= \master{1+\reg}{1}\master{1+2\reg}{1} {\left( p^2 \right)}^{1-2\reg}
	\label{eq:twoloop}
\end{align}
and more generally any graph that arises by iterated insertions of the one-loop propagator graphs into each other: simply replace one-loop subdivergences by the appropriate $\master{n}{m}$ and keep track of the overall exponent of the external momentum.

Note that the procedure given in \eqref{eq:twoloop} is ambiguous, as we might just as well allocate the regularizing factor to $(p+q)$ instead of $q$! This problem does not arise in dimensional regularization, however we can surpass it in the toy model below: we simply define the regulator $(q^2)^{-\reg}$ to use the momentum of the fermion-like line ($\psi$-propagator).
\hide{For this purpose it is useful to label each edge of a graph by a number that denotes the power the corresponding propagator is to be raised to. Of course, in the original theory propagators do not occur with exponents different from one, but with this added freedom we can employ \eqref{eq:master-def} to evaluate the graphs avoiding any integrations! For example, the generalized
\begin{align*}
	&\Phi \left( \Graph{twoloopindices} \right)
	= \int \frac{\dd[6] q}{(2\pi)^6}\frac{(q^2)^{-\reg}}{ {\left[(p+q)^2\right]}^{n_5}} \frac{1}{ {\left[ q^2 \right]}^{n_3+n_4}}  \int \frac{\dd[6] k}{(2\pi)^6} \frac{(k^2)^{-\reg}}{ {\left[k^2\right]}^{n_2}}\frac{1}{ {\left[(k+q)^2\right]}^{n_1}} \\
	&= \master{n_1}{n_2+\reg} \int \frac{\dd[6] q}{(2\pi)^6} \frac{(q^2)^{-\reg}}{ {\left[(p+q)^2\right]}^{n_5}} \frac{1}{ {\left[ q^2 \right]}^{n_1+n_2+n_3+n_4+\reg-3 }} 
	= \master{n_1}{n_2+\reg} \Phi \left( \Graph{oneloopindices}  \right) \\
	&= \master{n_1}{n_2+\reg} \master{n_5}{n_1+n_2+n_3+n_4-D/2} {\left( p^2 \right)}^{D-n_1-n_2-n_3-n_4-n_5}
\end{align*}
yields \eqref{eq:twoloop} upon setting $n_1=n_2=n_3=n_4=n_5=1$. Hence we obtain $\Phi$ by replacing a one-loop propagator subdivergence as \eqref{eq:oneloop} with an edge of corresponding weight upon multiplication by $\master{n}{m}$ for appropriate $n$ and $m$, iterating this process until we reduce the graph to $\1$.}

\subsection{The toy model of iterated insertions}
\label{sec:iterated-insertions}
Now consider the Hopf algebra $H_{\mathcal{L}}$ generated by all superficially divergent 1PI Feynman graphs of the scalar Yukawa theory $\mathcal{L}$. For an account of this kind of Hopf algebras we refer to \cite{CK:RH1} and \cite{Yeats}. The \emph{toy model} is defined as the Hopf subalgebra $\Htoy$ generated by the graphs obtained from iterated insertions of $\Graph{+-}$ into itself.% The operation of insertion itself is specified in
\begin{definition}
	The \emph{insertion operator} $\toycocycle \in \End(H_{\mathcal{L}})$ maps a graph $\gamma$ of $\Htoy$ to
	\begin{equation}
		\toycocycle \left( \gamma \right)
		\defas \frac{1}{\abs{\pi_0(\gamma)}!} \sum_{\sigma \in S_{\abs{\pi_0(\gamma)}}} \raisebox{-2mm}{\includegraphics{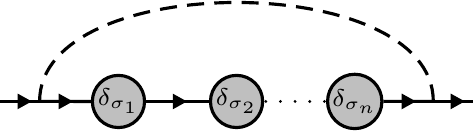}},
		\label{eq:toycocycle}
	\end{equation}
	where $\pi_0(\gamma) = \setexp{\delta_i}{1 \leq i \leq n}$ with $n\defas \abs{\pi_0(\gamma)}$ denotes the multiset of the connected components of $\gamma$ and $\sigma\in S_n$ runs over all permutations of these.
\end{definition}
It is easy to unmask $\toycocycle$ as a cocycle of $\Htoy$ using the coproduct -- however note that this is not the case in the full Hopf algebra $H_{\mathcal{L}}$ (section 3 in \cite{Yeats} gives details on cocycles and insertion operators in general). Thus we can define a morphism
\begin{equation}
	\toymor \defas \unimor{\left[\toycocycle\right]}\!:\ H_R \rightarrow \Htoy,
	\quad
	\toymor \circ B_+ = \toycocycle \circ \toymor
	\label{eq:toymodel-unimor}
\end{equation}
of Hopf algebras using the universal property \eqref{eq:H_R-universal}. As examples observe
\begin{align*}
	\toymor \left( \tree{+-} \right)
	&=\toycocycle \left( \1 \right)
	 = \Graph{+-}
	\qquad\qquad\qquad
	\toymor \left( \tree{++--} \right)
	 = \toycocycle \left( \Graph{+-} \right)
	 = \Graph{++--} \\
	\toymor \left( \tree{++-++---} \right)
	&= \toycocycle \left( \Graph{+-} \cupdot \Graph{++--} \right)
	 = \frac{1}{2}\ \Graph{++-++---} +  \frac{1}{2}\ \Graph{+++--+--}.
\end{align*}
Clearly, the order (determined by the permutation $\sigma$) of the insertions in \eqref{eq:toycocycle} is irrelevant for the \emph{value} assigned to the graph by the Feynman rules $\Phi$, as the same momentum runs through all components $\delta_i$! Hence we find the recursion
\begin{equation}
	\Phi \circ \toycocycle \left(\gamma, p^2 \right)
	= \int \frac{\dd[6]k}{(2\pi)^6} \frac{1}{(p+k)^2} \frac{(k^2)^{-\reg}}{k^2} \prod_{\delta\in\pi_0(\gamma)} \Phi(\delta, k^2) \frac{1}{k^2},
	\label{eq:insertion}
\end{equation}
where the additional factors $\frac{1}{k^2}$ account for the $\psi$-propagators between adjacent components $\delta_{\sigma_i}$ and $\delta_{\sigma_{i+1}}$ in \eqref{eq:toycocycle}.

We obtain the momentum dependence of $\Phi(\gamma)$ through power counting: as the external momentum is the only scale around, we must have $\Phi\left(\gamma,p^2\right) \propto {\left( p^2\right)}^{\sdd(\gamma)/2}$. The \emph{superficial degree of divergence} $\sdd(\gamma)$ is increased by $6-2\reg$ for each loop (yielding an integration $\int {\dd[6]}k\ (k^2)^{-\reg}$ under $\Phi$) and decreased by two for each internal edge (contributing a boson propagator). Define the \emph{loop number} $\loops{\gamma}$ of a graph $\gamma$ as the dimension of its first homology (the cardinality of a \emph{cycle basis} of the graph) and denote the number of internal edges, external edges and nodes by $\internals{\gamma}$, $\externals{\gamma}$ and $\nodes{\gamma}$, then
\begin{equation*}
	\sdd (\gamma)	= (6-2z)\loops{\gamma} - 2\internals{\gamma},
	\quad
	\externals{\gamma} + 2\internals{\gamma} = 3 \nodes{\gamma},
	\quad
	\loops{\gamma} = \internals{\gamma} + \abs{\pi_0(\gamma)} - \nodes{\gamma}
\end{equation*}
together with $\externals{\gamma} = 2\abs{\pi_0(\gamma)}$ (in $\Htoy$ each connected graph $\delta$ has $\externals{\delta} = 2$) leads to
\begin{equation}
	\sdd(\gamma)
%	= (D-6) \loops{\gamma} + 2\abs{\pi_0(\gamma)}
	= 2 \left( \abs{\pi_0(\gamma)} -\reg \loops{\gamma} \right).
	\label{eq:sdd}
\end{equation}
We now separate the trivial powers of $p^2$ in
\begin{equation}
	\widetilde{\Phi}\left( \gamma,p^2 \right)
	\defas {\left( p^2 \right)}^{-\abs{\pi_0(\gamma)}} \Phi\left( \gamma, p^2 \right),
	\label{}
\end{equation}
which still defines a morphism of algebras as $\loops{\cdot}$ is compatible with the multiplication (saying that $\loops{\gamma \delta} = \loops{\gamma \cupdot \delta} = \loops{\gamma} + \loops{\delta}$). Finally we apply \eqref{eq:sdd} to \eqref{eq:insertion} in\hide{
\begin{align*}
	\widetilde{\Phi} \left( \toycocycle (\gamma), p^2 \right)
	&= \frac{1}{p^2} \int \frac{\dd[D]k}{(2\pi)^D} \frac{1}{(p+k)^2} {\left[\frac{1}{k^2}\right]}^{1+\abs{\pi_0(\gamma)}} \Phi\left( \gamma, k^2 \right) \\
	&= \frac{1}{p^2} \int \frac{\dd[D]k}{(2\pi)^D} \frac{1}{(p+k)^2} {\left[\frac{1}{k^2}\right]}^{1+\abs{\pi_0(\gamma)}-\sdd(\gamma)/2} \underbrace{{\left( k^2 \right)}^{-\sdd(\gamma)/2}\Phi\left( \gamma,k^2 \right)}_{\text{independent of $k^2$}} \\
	&= \frac{1}{p^2} {\left( p^2 \right)}^{-\sdd(\gamma)/2} \Phi\left( \gamma,p^2 \right) \int \frac{\dd[D]k}{(2\pi)^D} \frac{1}{(p+k)^2} {\left[ \frac{1}{k^2} \right]}^{1+\reg\loops{\gamma}} \\
	&= {\left( p^2 \right)}^{-1-\abs{\pi_0(\gamma)} + \reg\loops{\gamma}+3-\reg-1-1-\reg\loops{\gamma}} \Phi\left( \gamma,p^2 \right) \master{1}{1+\reg\loops{\gamma}} \\
	&= \widetilde{\Phi}\left(\gamma, p^2 \right) {\left( p^2 \right)}^{-\reg} \master{1}{1+\reg \loops{\gamma}}.
\end{align*}
}\begin{align*}
	\widetilde{\Phi} \left( \toycocycle (\gamma), p^2 \right)
	&= \frac{1}{p^2} \int \frac{\dd[6]k}{(2\pi)^6} \frac{1}{(p+k)^2} {\left[\frac{1}{k^2}\right]}^{1+\reg+\abs{\pi_0(\gamma)}} \Phi\left( \gamma, k^2 \right) \\
	&= \frac{1}{p^2} \int \frac{\dd[6]k}{(2\pi)^6} \frac{1}{(p+k)^2} {\left[\frac{1}{k^2}\right]}^{1+\reg+\abs{\pi_0(\gamma)}-\sdd(\gamma)/2} \underbrace{{\left( k^2 \right)}^{-\sdd(\gamma)/2}\Phi\left( \gamma,k^2 \right)}_{\text{independent of $k^2$}} \\
	&= \frac{1}{p^2} {\left( p^2 \right)}^{-\sdd(\gamma)/2} \Phi\left( \gamma,p^2 \right) \int \frac{\dd[6]k}{(2\pi)^6} \frac{1}{(p+k)^2} {\left[ \frac{1}{k^2} \right]}^{1+\reg+\reg\loops{\gamma}} \\
	&= {\left( p^2 \right)}^{-1-\abs{\pi_0(\gamma)} + \reg\loops{\gamma}+3-1-1-\reg-\reg\loops{\gamma}} \Phi\left( \gamma,p^2 \right) \master{1}{1+\reg\left[1+\loops{\gamma}\right]} \\
	&= \widetilde{\Phi}\left(\gamma, p^2 \right) {\left( p^2 \right)}^{-\reg} \master{1}{1+\reg \left[1+\loops{\gamma}\right]}.
\end{align*}
Identifying the external parameter $s\defas p^2$ and the function $F(\reg)\defas \master{1}{1+\reg}$, this coincides with \eqref{eq:toymodel-mellin} through
\begin{equation}
	\widetilde{\Phi} \circ \toymor \circ B_+ (f)
	= \widetilde{\Phi} \circ \toymor (f) s^{-\reg} F(\abs{B_+(f)}\reg),
	\label{eq:toymodel-graphs}
\end{equation}
where we used that $\toymor$ respects the graduations: a forest $f\in\forests$ of weight $\abs{f}$ is mapped to a linear combination of $\abs{f}$-loop graphs (each node of $f$ corresponds to an application of $\toycocycle$, which adds another loop). So instead of considering $\Htoy$, we can equivalently study $H_R$ with the Feynman rules defined by $\widetilde{\Phi}_{p^2} \circ \toymor \safed \toy_s$.

Hence we realize how for a special choice of $F$, we obtain the toy model of section \ref{sec:toymodel} as the restriction of a {\qft} to graphs obtained by iterated insertions into a single primitive divergence! In particular, therefore all the results derived earlier for the abstract toy model do apply here. For more information on the four dimensional Yukawa toy model and in particular a non-perturbative result, study \cite{Kreimer:ExactDSE}.

Finally we remark that the coefficients $\dsecoeff(t)$ in the solution \eqref{eq:DSE-solution} of the Dyson-Schwinger equation \eqref{eq:DSE-propagator} cancel with the factors $\frac{1}{\abs{\pi_0(\gamma)}}$ in \eqref{eq:toycocycle}, such that $X(\alpha)$ in \eqref{eq:DSE-propagator} indeed sums over all graphs of the toy model with a coefficient of unity. As a simple example, check
\begin{align*}
	\toymor(a_4) 
	&= \toymor\left( \tree{++++----} \right) + \toymor\left(\tree{+++-+---}\right) + 2\toymor\left(\tree{++-++---}\right) + \toymor\left(\tree{++-+-+--}\right) \\
	&= \Graph{++++----} + \Graph{+++-+---} \\
	& \qquad + \Graph{++-++---} + \Graph{+++--+--} + \Graph{++-+-+--}.
\end{align*}
Note how each graph contributing to $\toymor(a_n)$ corresponds uniquely to an ordered rooted tree (whose ordering specifies the permutations $\sigma$ in \eqref{eq:toycocycle} at each node), which is why proposition \ref{satz:DSE-solution} proves that $\toymor(a_n)$ indeed is the sum over all graphs of the toy model with $n$ loops and a coefficient of one!

\chapter{Conclusion}
\label{chap:conclusion}

While investigating the toy model, we saw how the Hopf algebra not only allows for a precise definition of the renormalization process, but is also the key for clean inductive proofs of properties like finiteness. Most importantly, we learned in corollary \eqref{satz:toyphy-hopfmor} that the physical limit of the renormalized Feynman rules results in a morphism \mbox{$\toyphy\!: H_R \rightarrow \K[x]$} of Hopf algebras.

Section \ref{sec:higher-orders} revealed that it is precisely this compatibility with the coproduct that allows for the reduction of $\toyphy$ to its linear terms $\toylog$ in \eqref{eq:toymodel-higher-orders}. This is a tremendous achievement, eliminating the whole dependence on the external parameter ($\toylog$ is just a functional on $H_R$)!
Further, upon application to the correlation functions, these relations take the well-known form of the renormalization group equations of physics, as we proved in section \ref{sec:rge}.

All these results were derived in the momentum scheme (subtraction $\momsch{\rp}$ at a renormalization point $s\mapsto\rp$). We remark that the popular minimal subtraction scheme does behave much worse from an algebraic viewpoint. In particular, by \eqref{toyZ-MS:()} it does not\footnote{Otherwise it would have to map $\ker\counit$ into polynomials without a constant term!} yield a Hopf algebra morphism in the physical limit! Also it is not possible to calculate these values recursively as easily as in the case of the momentum scheme.

For further study it will be interesting to compare the above observations with the similar relations occurring among counterterms in the minimal subtraction scheme as reported in \cite{CK:RH2}, or the differential equation for the physicial limit in this scheme derived in \cite{Brouder}. Considering the correlation between \eqref{eq:toymodel-leading-log} and \eqref{eq:toymodel-leading-pole} it seems worth investigating the general relation connecting the counterterms and the renormalized results.

Finally, the equation given at the end of section \ref{sec:non-perturbative} remains to be analyzed in general. Its understanding would give insight into how the apparently most relevant Mellin transform \eqref{eq:mellin-trafo} determines the correlation functions, non-perturbatively.

\vspace{0.8cm}
I wish to thank my family for their support, Dirk Kreimer for his endless pool of exciting ideas, my colleagues Marko Berghoff, Markus Hihn and Lutz Klaczynski for illuminating discussions and great eats around and everyone else from our group for making this place so friendly and welcoming!

Furthermore I am particularly indebted to Dzmitry Doryn, Henry Ki{\ss}ler and Oliver Schnetz for their careful reading of this work, spotting and kindly pointing out to me numerous errors all over the place.

\newpage\thispagestyle{empty}
\vspace*{\fill}
\begin{center}\large\emph{Cutkosky rulez!}\end{center}
\vspace*{\fill}

\bibliographystyle{plainurl}
\bibliography{qft}

\end{document}